\numberwithin{equation}{section}
\providecommand\@dotsep{5}
\renewcommand{\listoftodos}[1][\@todonotes@todolistname]{%
	\@starttoc{tdo}{#1}}
\definecolor{terminology}{cmyk}{0,0.86,1,0}
\newtheorem{theorem}{Theorem}[section]
\newtheorem{proposition}[theorem]{Proposition}
\newtheorem{conjecture}[theorem]{Conjecture}
\newtheorem{corollary}[theorem]{Corollary}
\newtheorem{lemma}[theorem]{Lemma}
\theoremstyle{definition}
\newtheorem{remark}[theorem]{Remark}
\newtheorem{example}[theorem]{Example}
\newtheorem{definition}[theorem]{Definition}
\theoremstyle{remark}
\newtheorem*{acknowledgment}{Acknowledgment}
\DeclareMathOperator{\id}{id}
\DeclareMathOperator{\diag}{diag}
\DeclareMathOperator{\image}{Im}
\DeclareMathOperator{\lcm}{lcm}
\DeclareMathOperator{\mat}{Mat}
\newcommand*{\CC}{\mathbb{C}}
\newcommand*{\QQ}{\mathbb{Q}}
\newcommand*{\RR}{\mathbb{R}}
\newcommand*{\ZZ}{\mathbb{Z}}
\newcommand*{\PP}{\mathbb{P}}
\newcommand*{\maxzero}[1]{[#1]_{+}}
\newcommand*{\level}{\ell}
\begin{document}
\title{Difference equations arising from cluster algebras}
\author[Y.~Mizuno]{Yuma Mizuno}
\address{Department of Mathematical and Computing Science,
	Tokyo Institute of Technology,
	2-12-1 Ookayama, Meguro-ku, Tokyo 152-8550, Japan.}
\email{mizuno.y.aj@m.titech.ac.jp}
\subjclass[2010]{Primary 13F60}
\begin{abstract}
	We characterize Y/T-system type difference equations arising from cluster algebras by triples of matrices, which we call T-data, that have a certain symplectic property.
	We show that all mutation loops are essentially obtained from T-data, which generalizes the general solution for period $1$ quivers given by Fordy and Marsh.
	We also show that any T-datum associated with a periodic Y/T-system
	has the simultaneous positivity.
	As an application,
	we propose a version of Nahm's conjecture from a viewpoint of cluster algebras.
	We conjecture that
	given a periodic T/Y-system of a certain type,
	we have a family of hypergeometric $q$-series that are also modular functions.
\end{abstract}
\maketitle
\tableofcontents

\section{Introduction}
Cluster algebras were introduced by Fomin and Zelevinsky in the seminal paper~\cite{FZ1}.
A \emph{cluster algebra} is a commutative ring equipped with a combinatorial structure called a \emph{cluster pattern}.
A cluster pattern is a graph whose vertices are \emph{clusters}, which are tuples of \emph{cluster variables}, and edges are \emph{exchange relations}.
Such combinatorial structures have been found in many areas of mathematics, and thus the theory of cluster algebra has many applications.

One of the main applications of the theory of cluster algebras
is the study of discrete dynamical systems.
In their fourth paper on cluster algebras~\cite{FZ4},
Fomin and Zelevinsky introduced \emph{bipartite belts}, which are discrete dynamical systems associated with
bipartite symmetrizable generalized Cartan matrices.
They proved that the bipartite belt associated with a generalized Cartan matrix $A$ is periodic if and only if $A$ is of finite type,
that is, there exists a vector $v>0$ such that $Av>0$.
Thus, periodic bipartite belts are classified by the Cartan-Killing classification.
This result generalizes and refines the periodicity of Zamolodchikov's Y-systems, which was conjectured by Zamolodchikov~\cite{Zamo} in the study of thermodynamic Bethe ansatz,
and proved by Fomin and Zelevinsky in~\cite{FZ_Ysystem} prior to their fourth paper~\cite{FZ4}.
They also proved that
there is a bijection between
the set of terms appear in a bipartite belt associated with a finite type Cartan matrix $A$ and the set of almost positive roots in the root system associated with $A$.
A key fact in the proof of these results is that
terms in a bipartite belt are realized as cluster variables in some cluster algebra,
and recurrence relations of this bipartite belt are realized as exchange relations in the same cluster algebra.

Bipartite belts are very special cases of discrete dynamical systems called \emph{Y-systems} and \emph{T-systems} in cluster algebras, in the sense of Nakanishi's paper~\cite{Nakb}.
These discrete dynamical systems have nice properties inherited from general properties of cluster algebras such as
the Laurent phenomenon~\cite{FZ1},
the Laurent positivity~\cite{LeeSchiffler,GHKK},
the synchronicity phenomenon~\cite{nakanishi2019synchronicity},
and the quantization~\cite{BerensteinZelevinsky,FG}. 
It has been discovered that many interesting discrete dynamical systems can be realized as Y-systems or T-systems in cluster algebras, for example:
\begin{itemize}
	\item periodic discrete dynamical systems that are generalization of Zamolodchikov's Y-systems~\cite{FZ4,GalashinPylyavskyy,IIKKNa,IIKKNb,Keller,NakanishiStella},
	\item non-periodic but integrable discrete dynamical systems such as
	Q-systems~\cite{DFK2009,DFK2010,Kedem2008},
	pentagram maps~\cite{GSTVpentagram,Glick},
	the $q$-Painlev\'{e} equations~\cite{BGM,HoneInoue,Okubo},
	and discrete dynamical systems associated with mutation-periodic quivers~\cite{FordyHone,FordyMarsh} and bipartite recurrent quivers~\cite{galashin2016quivers,galashin2017quivers}.
\end{itemize}
Because of
these nice properties and interesting examples,
it is natural to ask what discrete dynamical systems arise from cluster algebras in general.
In this paper, we give an answer to this question.

\paragraph*{\textbf{Main result}}
Let $r$ be a positive integer, and we denote by $[1,r]$ the set $\{1,\dots, r\}$.
Given a triple of matrices $(N_0,N_+,N_-)$ in $\mat_{r \times r}(\ZZ[z])$ whose entries are written as
\begin{align*}
	N_{\varepsilon}=\biggl(\sum_{p \in \ZZ_{\geq 0}} n_{ab;p}^{\varepsilon} z^p \biggr)_{a,b \in [1,r]},
\end{align*}
we consider the following relation for each $(a,u) \in [1,r] \times \ZZ$ among indeterminates in $\{T_a(u) \mid (a,u) \in [1,r] \times \ZZ \}$ :
\begin{align}\label{intro:T-system}
\prod_{b=1}^{r} \prod_{p\geq 0}  T_b(u+p)^{n_{ba;p}^{0}}  =  \prod_{b=1}^{r} \prod_{p\geq 0}  T_b(u+p)^{n_{ba;p}^{+}} +  \prod_{b=1}^{r} \prod_{p\geq 0} T_b(u+p)^{n_{ba;p}^{-}}.
\end{align}
We impose the following conditions on $(N_0,N_+,N_-)$: 
\begin{enumerate}[label=(N\arabic*),leftmargin=2cm]
	\item $n_{ab;p}^{0} = \delta_{ab} \delta_{p0} + \delta_{a\sigma(b)} \delta_{pp_a}$ for
	some $\sigma \in \mathfrak{S}_r$ and $p_a \in \ZZ_{>0}$,
	\item $n_{ab;p}^{+} \geq 0$ and $n_{ab;p}^{-} \geq 0$ for any $a,b,p$,
	\item $n_{ab;p}^{+} = 0$ and $n_{ab;p}^{-}=0$ unless $0<p<p_a$,
	\item $n_{ab;p}^{+} n_{ab;p}^{-} = 0$ for any $a,b,p$,
\end{enumerate}
where $\mathfrak{S}_r$ is the symmetric group on $[1,r]$ and $\delta$ is the Kronecker delta.
The condition (N1) says that the left-hand side in \eqref{intro:T-system} is equal to $T_a(u) T_{\sigma(a)}(u+p_{\sigma(a)})$.
The condition (N2) says that the right-hand side in \eqref{intro:T-system} is a sum of two monomials.
The condition (N3) together with (N1) implies that any $T_a(u)$ can be written as a rational function in the initial variables $(T_a(p))_{(a,p) \in R_{\mathrm{in}}}$,
where 
\begin{align*}
	R_{\mathrm{in}} = \{ (a,p) \in [1,r] \times \ZZ \mid 0 \leq p <p_a \}.
\end{align*}
The condition (N4) says that the two monomials in the right-hand side in \eqref{intro:T-system} do not have common divisors.

\begin{definition}\label{intro:T-datum}
	We say that a triple of matrices $\alpha=(A_+,A_-,D)$ is a \emph{T-datum} of size $r$ if
	$A_\pm$ can be written as $A_\pm=N_0 - N_\pm$ by a triple of matrices $(N_0,N_+,N_-)$ in $\mat_{r \times r}(\ZZ[z])$ satisfying (N1)--(N4), and $D$ is a positive integer diagonal matrix satisfying the following conditions:
	\begin{itemize}
		\item $N_0 D= D N_0$,
		\item $D^{-1}N_{\pm}D \in \mat_{r \times r}(\ZZ[z])$,
		\item $A_+ D A_-^{\dagger} = A_- D A_+^{\dagger}$,
	\end{itemize}
	where $A_{\pm}^{\dagger} := (A_{\pm}|_{z=z^{-1}})^{\mathsf{T}}$.
\end{definition}

\begin{definition}
	Let $\alpha$ be a T-datum.
	Let $\mathscr{T}(\alpha)$ be the commutative ring generated by the indeterminates $(T_a(u)^{\pm1})_{(a,u) \in [1,r] \times \ZZ }$ subject to the relations \eqref{intro:T-system}
	and $T_a(u) T_a(u)^{-1} = 1$ for any $(a,u) \in [1,r] \times \ZZ$.
	We define $\mathscr{T}^{\circ}(\alpha)$ to be the subring of $\mathscr{T}(\alpha)$ generated by $(T_a(u))_{(a,u) \in [1,r] \times \ZZ }$.
	We say that $\mathscr{T}^{\circ}(\alpha)$ is the \emph{T-algebra} associated with $\alpha$.
	We also say that the family of relations \eqref{intro:T-system} is the \emph{T-system} associated with $\alpha$.
\end{definition}

Let $I$ be a finite index set.
For a pair $(B,x)$ of an $I \times I$ skew-symmetrizable integer matrix $B$ and an $I$-tuple $x=(x_i)_{i \in I}$ of algebraically independent commuting variables,
the cluster algebra associated with the initial seed $(B,x)$ is defined~\cite{FZ1,FZ4}, which is denoted by $\mathcal{A}(B,x)$.
In Section \ref{section:consequences}, we prove the following:

\begin{theorem}\label{intro:main theorem}
	Let $\alpha$ be a T-datum of size $r$.
	Let $x=(x_{a,p})_{(a,p) \in R_{\mathrm{in}}}$ be an $R_{\mathrm{in}}$-tuple of algebraically independent commuting variables.
	Then there exists a unique $R_{\mathrm{in}} \times R_{\mathrm{in}}$ skew-symmetrizable integer matrix $B$ such that
	\begin{enumerate}
		\item there exists a unique injective ring homomorphism $\iota: \mathscr{T}^{\circ}(\alpha) \hookrightarrow \mathcal{A}(B,x)$
		such that $\iota(T_a(p)) = x_{a,p}$ for any $(a,p) \in R_{\mathrm{in}}$,
		\item $\iota(T_a(u))$ is a cluster variable in $\mathcal{A}(B,x)$ for any $(a,u) \in [1,r] \times \ZZ$,
		\item the image of the relation \eqref{intro:T-system} by $\iota$ is an exchange relation in $\mathcal{A}(B,x)$ for any $(a,u) \in [1,r] \times \ZZ$.
	\end{enumerate}
\end{theorem}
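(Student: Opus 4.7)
The plan is to construct $B$ explicitly from the T-datum $\alpha$, exhibit a mutation sequence $\nu$ whose iterates realize every T-system relation as an exchange relation in $\mathcal{A}(B,x)$, and then deduce (1)--(3) together with uniqueness via standard cluster-algebraic arguments. Concretely, I would index rows and columns of $B$ by $R_{\mathrm{in}}$ and define its column at $(a,0)$ for each $a \in [1,r]$ by reading off the exponents in the T-system relation \eqref{intro:T-system} at $(a,0)$, with one monomial contributing the positive entries and the other the negative. The columns at $(a,p)$ with $p > 0$ are then dictated by the requirement that subsequent mutations produce the remaining T-system relations as exchange relations; equivalently, $B$ is given by an explicit closed formula in terms of $A_\pm$. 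The skew-symmetrizer of $B$ is the block-diagonal enlargement of $D$ obtained by repeating each $D_{aa}$ on a block of size $p_a$, and skew-symmetrizability reduces, via a direct matrix manipulation, to the symplectic identity $A_+ D A_-^{\dagger} = A_- D A_+^{\dagger}$ built into the T-datum axioms.

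Next I would define $\nu$ as the sequence of mutations that advances every $T_a(0)$ to $T_{\sigma(a)}(p_{\sigma(a)})$ via the relations \eqref{intro:T-system} at $u=0$, together with the tautological relabeling of indices that reinterprets the resulting cluster as shifted initial data. Then $\nu$ functions as a \emph{periodic mutation loop} whose forward and backward iterates reach, for every $(a,u) \in [1,r] \times \ZZ$, a seed containing $T_a(u)$ as a cluster variable. The technical heart of the argument, and the main obstacle, is the inductive verification that at each step of $\nu$ the exchange matrix has exactly the column needed for the next T-system relation to appear as the exchange relation at the mutated index; this reduces to a matrix-mutation calculation that uses conditions (N1)--(N4) and the symplectic identity in an essential way.

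With this in hand, sending each $T_a(u)$ to the corresponding cluster variable defines a ring homomorphism $\iota : \mathscr{T}^{\circ}(\alpha) \to \mathcal{A}(B,x)$, unique once $\iota(T_a(p)) = x_{a,p}$ is imposed, since (N3) ensures that the initial variables generate $\mathscr{T}^{\circ}(\alpha)$ modulo the T-system relations. Injectivity follows from the Laurent phenomenon: a nonzero element of $\mathscr{T}^{\circ}(\alpha)$ becomes, after elimination via the T-system, a nonzero Laurent polynomial in the $(x_{a,p})$, and this non-vanishing is preserved in $\mathcal{A}(B,x)$. Uniqueness of $B$ itself follows from (3): since every index of $R_{\mathrm{in}}$ is mutated at some point along the iterates of $\nu$, every column of $B$ is pinned down by the exponents of the two monomials in the associated exchange relation, which are prescribed by the corresponding T-system relation.
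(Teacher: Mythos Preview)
Your strategy matches the paper's: give an explicit closed formula for $B$ in terms of the entries of $N_\pm$ (the paper's formula~(3.11)), verify that one pass of simultaneous mutations followed by a relabeling returns the matrix to itself (the paper's Lemma~3.7, whose proof is exactly the matrix-mutation calculation using (N1)--(N4) and the symplectic relation that you anticipate), and iterate to realize every $T_a(u)$ as a cluster variable and every T-system relation as an exchange relation.

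Two places where your sketch does not yet close. First, the injectivity argument is circular as written: asserting that a nonzero element of $\mathscr{T}^{\circ}(\alpha)$ ``becomes a nonzero Laurent polynomial in the $x_{a,p}$'' presupposes that $\mathscr{T}^{\circ}(\alpha)$ embeds in the Laurent ring, which is what you are proving. The paper instead builds an explicit inverse: by the Laurent phenomenon the image of $\iota$ lands in $\ZZ[x_{a,p}^{\pm1}]$, and the ring map $\kappa:\ZZ[x_{a,p}^{\pm1}]\to\mathscr{T}(\alpha)$, $x_{a,p}\mapsto T_a(p)$, satisfies $\kappa\circ\iota=\id$ because both $(T_a(u))$ and $(\kappa(\iota(T_a(u))))$ obey the same recursion with the same initial values.

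Second, your uniqueness argument has a genuine gap. You say the columns of $B$ are pinned down by the exchange relations along $\nu$, but $\nu$ was built from \emph{your} $B$; for a rival $B'$ satisfying (1)--(3), condition (3) only says each T-system relation is an exchange relation \emph{somewhere} in $\mathcal{A}(B',x)$, not at a prescribed seed, so you cannot directly read off columns of $B'$. The paper closes this by invoking two nontrivial structural facts about skew-symmetrizable cluster algebras: a cluster, as an unordered set, determines the exchange relations at its vertices (Cao--Li), and any two clusters sharing $\lvert I\rvert-1$ variables are adjacent (Gekhtman--Shapiro--Vainshtein). These force the T-system relation at $(a,0)$ to be the mutation of the initial seed of $B'$ at $(a,0)$, determining that column, and one then proceeds inductively along the loop.
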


Conversely, we also prove that T-systems in cluster algebras (in the sense in ~\cite{Nakb}) yield T-data (Section \ref{section:T-system and Y-system} and \ref{section:T-data from mutation loops}).
Therefore, our definition of T-data completely characterize
when a system of difference equations of the form \eqref{intro:T-system} is realized as a family of exchange relations in a cluster algebra.
In the following,
we give remarks and applications of Theorem \ref{intro:main theorem}.

\paragraph*{\textbf{Sequences of mutations that preserve exchange matrices}}
The matrix $B$ in Theorem \ref{intro:main theorem} is called the initial exchange matrix in the cluster algebra $\mathcal{A}(B,x)$.
In the proof of Theorem \ref{intro:main theorem},
we give the explicit formula \eqref{eq:def of B from cap} expressing $B$ using a matrix coefficients in a T-datum.
We also construct a sequence of mutations, which are fundamental operations in the theory of cluster algebras, that preserves the exchange matrix $B$ up to relabeling of indices.
Such a sequence of mutations is called a \emph{mutation loop}.

Mutation loops themselves are of interest from a geometric viewpoint:
they are representatives of elements in cluster modular groups~\cite{FG}, which are cluster algebraic counterparts of mapping class groups of surfaces.
We show that essentially all mutation loops are obtained by the formula \eqref{eq:def of B from cap} (Theorem \ref{theorem:Ml Td bijection}).
The formula \eqref{eq:def of B from cap} gives a effective way to find mutation loops since finding T-data is usually easier than finding mutation loops.
We give many examples of T-data in Section \ref{section:examples},
which recover or generalize mutation loops
in the literature.
In Section \ref{section:period 1},
we classify T-data of size $1$ (Theorem \ref{theorem:T-datum of size 1}), which turns out to recover the classification of period $1$ quivers by Fordy and Marsh~\cite{FordyMarsh}.
In Section \ref{section:commuting Cartan},
we define T-data associated with pairs of commuting Cartan matrices.
They are generalization of bipartite belts by Fomin and Zelevinsky~\cite{FZ4}.
In particular, our definition also works for non-bipartite cases such as the ``tadpole type''.
In Section \ref{section:affinization},
we define T-data associated with level restricted T-systems for quantum affinizations~\cite{KNSaff}.
These T-systems are restricted version of T-systems for quantum affinizations discovered by Hernandez~\cite{Hernandez},
where ``T-systems for quantum affinizations'' mean algebraic relations among $q$-characters of Kirillov-Reshetikhin modules over quantum affinizations.
Although mutation loops corresponding to these T-data are already constructed in~\cite{IIKKNa,IIKKNb,KNSaff,NakanishiTamely},
our method gives a simple systematic way to produce these mutation loops.

\paragraph*{\textbf{T-systems with coefficients and Y-systems}}
Theorem \ref{intro:main theorem} can be extended to T-systems \emph{with coefficients}.
In fact, we show Theorem \ref{intro:main theorem} in this generality (Theorem \ref{theorem:embedding into cluster algebra}).
Coefficients of T-systems are governed by Y-systems, which are generalization of Zamolodchikov's Y-systems~\cite{Zamo}.
In terms of T-data,
the coefficients of the T-system associated with a T-datum $\alpha$ is described by the \emph{Langlands dual T-datum}
$\alpha^{\vee} = (A_+^{\vee},A_-^{\vee},D^{\vee})$.
If we write the entries of the matrices in $\alpha^{\vee}$ as
\begin{align*}
	N_{\varepsilon}^{\vee}=\biggl(\sum_{p \in \ZZ_{\geq 0}} \check{n}_{ab;p}^{\varepsilon} z^p \biggr)_{a,b \in [1,r]},
\end{align*}
the coefficients of the T-system associated with $\alpha$ is governed by the following system of relations:
\begin{align*}
	\prod_{b=1}^{r} \prod_{p\geq 0} Y_b (u-p)^{ \check{n}_{ab;p}^{0}}
	&=
	\frac{ \prod_{b=1}^{r} \prod_{p\geq 0}  \bigl(1 \oplus Y_b (u-p)\bigr)^{ \check{n}_{ab;p}^{-}}}{ \prod_{b=1}^{r} \prod_{p\geq 0}  \bigl(1 \oplus Y_b (u-p)^{-1}\bigr)^{ \check{n}_{ab;p}^{+}}},
\end{align*}
where $\oplus$ is the ``auxiliary addition'' in the underlying semifield to which the coefficients belong.
We call this family of relations the Y-system associated with $\alpha$.

\paragraph*{\textbf{Periodic T-systems and Y-systems}}
We say that a T-datum is of \emph{finite type} if
the set $\{ T_a(u) \in \mathscr{T}^{\circ}(\alpha) \mid (a,u) \in [1,r] \times \ZZ \}$ is a finite set.
This is equivalent to saying that the T-system associated with $\alpha$ is periodic.
By the synchronicity phenomenon of cluster algebras~\cite{nakanishi2019synchronicity},
this is also equivalent to the periodicity of the \emph{Y-system} associated with $\alpha$ in universal semifields.

Many examples of finite type T-data have been found in the literature, which are associated with the following data:
\begin{itemize}
	\item finite type Cartan matrices~\cite{Zamo,FZ_Ysystem,FZ4},
	\item tensor products of pairs of finite type Cartan matrices~\cite{RVT,Keller},
	\item untwisted quantum affine algebras~\cite{KunibaNakanishi92,IIKKNa,IIKKNb},
	\item the sine-Gordon Y-systems and the reduced sine-Gordon Y-systems~\cite{Tateo,NakanishiStella}, which are associated with continued fractions,
	\item admissible $ADE$ bigraphs~\cite{GalashinPylyavskyy}.
\end{itemize}
In many cases in this list, the periodicities of Y-systems in universal semifields were conjectured in the 1990s in physics~\cite{Zamo,RVT,KunibaNakanishi92,Tateo}, and proved in the 21st century by using the theory of cluster algebras~\cite{FZ_Ysystem,FZ4,Keller,IIKKNa,IIKKNb,NakanishiStella,GalashinPylyavskyy}.

Since there are many interesting examples of finite type T-data as in this list, the classification of finite type T-data is a interesting problem.
Except for special cases~\cite{FZ4,GalashinPylyavskyy}, however, the classification of finite type T-data is still not well understood.
In this paper, we prove that any finite type T-datum satisfies the following simultaneous positivity:

\begin{theorem}[Theorem \ref{theorem:simultaneous positivity}]\label{intro:simultaneous positivity}
	Let $\alpha=(A_+,A_-,D)$ be a T-datum.
	If $\alpha$ is of finite type,
	then there exists a vector $v>0$ such that $\mathring{A}_+^{\mathsf{T}} v >0$ and $\mathring{A}_-^{\mathsf{T}} v >0$,
	where $\mathring{A}_{\pm} =A_{\pm}|_{z=1}$.
\end{theorem}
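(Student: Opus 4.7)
The plan is to extract $v$ from the tropical behavior of the periodic T-variables evaluated at positive real initial data, paralleling the strategy used by Fomin--Zelevinsky for bipartite belts. I fix a positive direction $\mu\in\RR_{>0}^{R_{\mathrm{in}}}$ and specialize the initial cluster variables to $T_a(p)=\exp(t\mu_{a,p})$. By the Laurent positivity of cluster variables (applicable to $T_a(u)$ through the embedding $\mathscr{T}^{\circ}(\alpha)\hookrightarrow\mathcal{A}(B,x)$ from Theorem~\ref{intro:main theorem}), each $T_a(u)$ is a positive Laurent polynomial in the initial variables, so $\tfrac{1}{t}\log T_a(u)$ converges as $t\to\infty$ to the support function $\phi_a(u;\mu)$ of its Newton polytope. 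Applying this limit to the T-relation \eqref{intro:T-system} and using $\tfrac{1}{t}\log(X+Y)\to\max(\tfrac{1}{t}\log X,\tfrac{1}{t}\log Y)$ produces the piecewise linear identity
\begin{align*}
	\sum_{b,p}n^0_{ba;p}\phi_b(u+p;\mu)=\max\Biggl(\sum_{b,p}n^+_{ba;p}\phi_b(u+p;\mu),\,\sum_{b,p}n^-_{ba;p}\phi_b(u+p;\mu)\Biggr),
\end{align*}
and in particular the two local inequalities $\sum_{b,p}(n^0-n^\pm)_{ba;p}\phi_b(u+p;\mu)\geq 0$ for every $a\in[1,r]$ and $u\in\ZZ$.

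The finite-type hypothesis gives an integer $M>0$ with $T_a(u+M)=T_a(u)$ for all $a$ and $u$. Summing the local inequalities over $u\in\ZZ/M$ absorbs the shifts $u\mapsto u+p$ (the sum is invariant under reindexing by periodicity) and yields
\begin{align*}
	(\mathring{A}_\pm^{\mathsf T}V(\mu))_a\geq 0,\qquad V_a(\mu):=\sum_{u=0}^{M-1}\phi_a(u;\mu).
\end{align*}
The initial contributions $\phi_a(p;\mu)=\mu_{a,p}>0$ for $(a,p)\in R_{\mathrm{in}}$ enter as positive summands in $V_a(\mu)$. The candidate vector is then $v:=V(\mu)$ for a suitable $\mu$.

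The hardest part will be upgrading both $V(\mu)>0$ and $\mathring{A}_\pm^{\mathsf T}V(\mu)>0$ to strict inequalities. Cluster variables generally have Newton polytopes whose vertices can have negative coordinates (from Laurent denominators), so $\phi_a(u;\mu)$ is not automatically nonnegative even when $\mu>0$; showing $V(\mu)>0$ requires arranging $\mu$ so that the positive initial contributions dominate. For strict positivity of $\mathring{A}_\pm^{\mathsf T}v$, the inequality $\max(X,Y)\geq X$ is strict exactly when $Y>X$, so each row index $a$ must witness both $M_+$- and $M_-$-dominant regimes at some step within one period. Condition (N4) (disjoint supports of the monomials $M_\pm$) together with the non-triviality of a periodic orbit should force both regimes to appear, once $\mu$ is chosen generically within an open cone of $\RR_{>0}^{R_{\mathrm{in}}}$.
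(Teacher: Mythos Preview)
Your tropicalize-and-sum-over-a-period strategy is the same skeleton the paper uses: pass to the piecewise-linear (tropical) T-system, sum the local inequalities over one period so that the shifts $u\mapsto u+p$ collapse, and read off $\mathring{A}_\pm^{\mathsf T}V\geq 0$. Where your outline diverges from the paper, and where it currently has a genuine gap, is in the two strictness claims you flag at the end.

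First, $V(\mu)>0$. You note that the initial terms $\phi_a(p;\mu)=\mu_{a,p}$ are positive summands, but the other summands $\phi_a(u;\mu)$ can be negative (cluster denominators), and there is no a priori bound preventing them from swamping the initial ones; a generic choice of $\mu$ does not help here. The paper sidesteps this by taking, for each initial index $c$, \emph{both} the lowest-degree exponent $\mathfrak{t}^{(c)}_a(u)$ and the highest-degree exponent $\tilde{\mathfrak{t}}^{(c)}_a(u)$ of $T_c(0)$ in $T_a(u)$; their sum is the width of the Newton interval and hence $\geq 0$ pointwise, so $v':=\sum_c(v^{(c)}+\tilde v^{(c)})\geq 0$ automatically. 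This two-sided trick is what your single support function $\phi(\,\cdot\,;\mu)$ is missing.

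Second, strictness of $\mathring{A}_\pm^{\mathsf T}V$. Your heuristic that (N4) plus ``non-triviality of a periodic orbit'' forces both $M_+$- and $M_-$-dominant steps at every index $a$ is exactly the hard point, and it is not true without further structure: if the $a$-th column of $N_-$ (say) is zero, the $M_-$-side for row $a$ is the constant $1$, and there is no obvious $u$ at which it dominates. The paper handles this in three pieces. It first reduces to indecomposable $\alpha$. It then shows, via explicit values of the tropical system near the initial seed (Lemma~\ref{lemma:tropical T and Y around initial}), that whenever the $a$-th column of $N_\pm$ is nonzero one can exhibit a specific $(c,u)$ with $[\pm\hat{\mathfrak y}_a^{(c)}(u)]_+>0$, giving strictness in that row. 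When one column vanishes and the other does not, a separate computation using the minimal-degree term produces the missing strict inequality. Finally, if \emph{both} $a$-th columns of $N_+$ and $N_-$ vanish, a structural corollary (Corollary~\ref{cor:Npm=0}, proved through the mutation-loop/connected-component correspondence) forces $N_+=N_-=0$, in which case the statement is trivial. None of these steps follows from genericity of $\mu$ or from (N4) alone; you will need analogues of Lemma~\ref{lemma:tropical T and Y around initial} and the indecomposability argument to close the gap.
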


Theorem \ref{intro:simultaneous positivity} gives a effective method to determine that a given T-datum is not of finite type (see Example \ref{example:simultaneous positivity}).
This theorem is also used in the next topic: relationship between cluster algebras and Nahm's conjecture. 

\paragraph*{\textbf{Nahm's conjecture}}
In~\cite{Nahm}, Nahm gave a connection between rational conformal field theories and torsion elements in Bloch groups.
In particular, Nahm's conjecture states that the modularity of certain hypergeometric $q$-series is related to torsion elements in Bloch groups~(see \cite[Chapter II, Section 3]{Zagier}).
We give a version of Nahm's conjecture from a viewpoint of cluster algebras.

Let $\alpha = (A_+,A_-,D)$ be a Cartan-like T-datum (see Definition \ref{def:Cartan-like}) of finite type.
By Theorem \ref{intro:simultaneous positivity},
we can show that the system of equations
\begin{align}\label{intro:nahm eq}
f_a = \prod_{b=1}^{r} (1-f_b)^{\check{\kappa}_{ab}} \quad\quad (a \in [1,r])
\end{align}
has a unique real solution such that $0<f_a<1$ for any $a \in [1,r]$,
where we define $K^{\vee} = (\check{\kappa}_{ab}) \in \mat_{r \times r}(\QQ)$ by $K^{\vee} = (\mathring{A}_{+}^{\vee})^{-1} \mathring{A}_{-}^{\vee}$.
For this solution, the value
\begin{align}\label{intro:def of c}
	c_{\alpha} := \frac{6}{\pi^2} \sum_{a=1}^{r} d_a L(f_a)
\end{align}
turns out to be a rational number, where $L(x)$ is the Rogers dilogarithm function.
This fact follows from dilogarithm identities in cluster algebras that are proved by Nakanishi~\cite{Nakb}.
Moreover, for any solution $(f_1,\dots, f_r) \in \overline{\QQ}^r$ of \eqref{intro:nahm eq},
we can define a torsion element in the Bloch group $\mathcal{B}(F)$, where $F$ is a number field containing the solution.

Motivated by Nahm's conjecture,
we introduce a family of hypergeometric $q$-series
$(\mathcal{Z}_{\alpha,\sigma}(q))_{\sigma \in S_{\alpha}}$
for any Cartan-like T-datum $\alpha$ of finite type,
where $S_{\alpha}$ is a finite abelian group associated with $\alpha$.
We call these $q$-series the \emph{partition $q$-series} of $\alpha$.
In fact, these are generalization of partition $q$-series of mutation loops introduced by Kato and Terashima~\cite{KatoTerashima}.
We conjecture that partition $q$-series are modular functions:

\begin{conjecture}[Conjecture \ref{conj:q-series}]
	\label{intro:modular conjecture}
	Let $\alpha$ be a Cartan-like T-datum of finite type.
	Then
	$q^{-c_{\alpha}/24} Z_{\alpha,\sigma}(q)$ is a modular function for any $\sigma \in S_{\alpha}$,
	where $c_{\alpha}$ is the rational number defined by \eqref{intro:def of c}.
\end{conjecture}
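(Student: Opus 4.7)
The plan is to attack this conjecture by reducing it to the classical Nahm--Zagier modularity problem, exploiting the cluster-algebraic structure provided by the T-datum formalism. The starting observation is that for any Cartan-like T-datum $\alpha$ of finite type, Theorem \ref{intro:simultaneous positivity} together with the Cartan-like hypothesis should force the matrix $K^{\vee}$ to be positive definite on the relevant cone; consequently the partition $q$-series is expected to unfold into a genuine Nahm sum
\begin{equation*}
	\mathcal{Z}_{\alpha,\sigma}(q)=\sum_{n \in \NN^{r}+\sigma}\frac{q^{\frac{1}{2}n^{\mathsf{T}} K^{\vee} n + \ell_{\sigma}^{\mathsf{T}} n}}{(q)_{n_{1}}\cdots (q)_{n_{r}}}
\end{equation*}
for an explicit shift vector $\ell_{\sigma}$ determined by $\sigma \in S_{\alpha}$. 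My first step is thus to make this unfolding rigorous from the combinatorial definition of $\mathcal{Z}_{\alpha,\sigma}$ that generalizes Kato--Terashima, and then to compute its $q \to 1^{-}$ asymptotics: by the saddle-point analysis of Nahm--Recknagel--Terhoeven, the effective central charge read off from the leading singularity should equal $c_{\alpha}$ precisely because $(f_1,\dots,f_r)$ solves \eqref{intro:nahm eq}. This is already a nontrivial consistency check for the rationality of \eqref{intro:def of c}.

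The second step is to verify the conjecture directly in each of the families already displayed in the paper: the bipartite-belt-like cases from commuting Cartan matrices, the sine-Gordon cases, and the level-restricted quantum affinization cases. In every one of these families, after the unfolding above, the partition $q$-series should coincide with a character of a well-studied rational vertex operator algebra -- an affine Lie algebra coset, a parafermionic algebra, or a $W$-algebra -- whose modularity is classical through the work of Zhu and of Dong--Li--Mason. The task then reduces to identifying $\mathcal{Z}_{\alpha,\sigma}(q)$ with the correct character, which I would do by matching the central charge $c_{\alpha}$, the conformal weights, and the counting $|S_{\alpha}|$ against the Verlinde and fusion data of the proposed VOA.

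The third step, which I expect to be the principal obstacle, is handling a Cartan-like T-datum that is not a priori of any of the listed types. The natural cluster-algebraic leverage here is Theorem \ref{intro:main theorem}: $\alpha$ comes with a mutation loop, and the associated quantum dilogarithm identity in the quantum torus, in the spirit of Kashaev--Nakanishi, should be the ``categorified'' counterpart of the desired modular transformation law. Converting such an identity into a genuine modular statement is essentially the classical Nahm--Zagier problem, so I anticipate that the realistic deliverable of this program is a proof for each of the families above, together with a conjectural identification in the general case of the rational vertex operator algebra whose character is $\mathcal{Z}_{\alpha,\sigma}(q)$, leaving the full conjecture as hard as Nahm's original one.
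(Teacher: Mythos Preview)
The statement is a \emph{conjecture} in the paper, not a theorem; the paper does not prove it in general and does not claim to. What the paper actually establishes is the case $r=1$ (Theorem~\ref{theorem:modularity r=1}), by classifying all size-$1$ Cartan-like T-data of finite type (three families, Example~\ref{example:simultaneous positivity}(1)) and then matching each partition $q$-series with an explicit Rogers--Ramanujan type identity plus the Jacobi triple product or quintuple product identity, so that $q^{-c_\alpha/24}\mathcal{Z}_{\alpha,\sigma}$ becomes a ratio of a theta-series by $\eta$. For $r\ge 2$ the paper only offers supporting evidence (Examples~\ref{example:modular Zagier's lists}--\ref{example:modular nil-daha}); in particular the fermionic-formula family is explicitly \emph{conditional} on the HKOTT conjecture.

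Your proposal is therefore not a proof but a research program, and you say so yourself in the last paragraph. The concrete gaps are: (i) Step~1 is already done in the paper---the total partition $q$-series is by definition a Nahm sum with matrix $K^{\vee}D^{\vee}$, and the individual $\mathcal{Z}_{\alpha,\sigma}$ are lattice-coset restrictions of it, so there is nothing to ``unfold''; the asymptotics and the identification of the effective central charge with $c_\alpha$ are likewise in the paper. (ii) Step~2 is precisely the content of the paper's examples, and even there the identification with VOA characters is not fully proved but relies on open conjectures. (iii) Step~3, converting a quantum dilogarithm identity into a modular transformation law, is the heart of the Nahm--Zagier problem and is wide open; you acknowledge this. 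So your proposal neither proves the conjecture nor goes beyond what the paper already records as evidence. If you want to produce a result comparable to the paper's, the realistic target is a self-contained proof of the $r=1$ case along the lines above, or an unconditional proof for one of the $r\ge 2$ families.
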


We prove this conjecture for $r=1$ using Rogers-Ramanujan type identities (Theorem \ref{theorem:modularity r=1}).
We also give the following examples (Example \ref{example:modular Zagier's lists}-\ref{example:modular nil-daha}) supporting the conjecture for $r \geq 2$: Zagier's lists of $2 \times 2$ and $3 \times 3$ matrices concerning the Nahm's conjecture~\cite{Zagier},
a $q$-series in the Andrew-Gordon identity~\cite{Andrews74},
fermionic formulas for quantum affine algebras~\cite{HKOTT},
and $q$-series appear in the theory of nilpotent double affine Hecke algebras~\cite{CherednikFeigin}.

\begin{acknowledgment}
	The author thanks Yuji Terashima and Shunsuke Kano for fruitful discussions.
	This work is supported by JSPS KAKENHI Grant Number JP18J22576.
\end{acknowledgment}

\section{Y/T-systems in cluster algebras}
\subsection{Preliminaries on cluster algebras}
We review cluster algebras following~\cite{FZ4}.
Let $I$ be a finite index set.
An $I \times I$ integer matrix $B=(B_{ij})_{i,j \in I}$ is called \emph{skew-symmetrizable} if there exist a tuple of positive integers $d=(d_i)_{i \in I}$ such that $B_{ij} d_j = - B_{ji} d_i$.
Such a tuple is called a (right) \emph{symmetrizer} of $B$.
For any $I \times I$ matrix $B=(B_{ij})_{i,j \in I}$ and bijection $\nu: I \to I'$ between finite index sets,
we define an $I' \times I'$ matrix $\nu(B) = (B'_{i'j'})_{i',j' \in I'}$ by $B'_{\nu(i)\nu(j)} = B_{ij}$.

\begin{definition}
	Let $B=(B_{ij})_{i,j \in I}$ be a skew-symmetrizable integer matrix, and let $k \in I$.
	The \emph{matrix mutation} $\mu_k:B \mapsto B'$ is a transformation that transforms $B$ into a new skew-symmetrizable integer matrix $B'=(B'_{ij})_{i,j \in I}$ defined as follows:
	\begin{align}
	\label{eq: B mutation}
	B'_{ij} &=
	\begin{cases}
	-B_{ij} & \text{if $i=k$ or $j=k$},\\
	B_{ij} + \maxzero{B_{ik}}\maxzero{B_{kj}} - \maxzero{-B_{ik}}\maxzero{-B_{kj}} & \text{otherwise},
	\end{cases}
	\end{align}
	where $\maxzero{x} := \max(0,x)$.
\end{definition}

If $d$ is a symmetrizer of $B$, then it is also a symmetrizer of $\mu_k(B)$.
In particular,
if $B$ is a skew-symmetric integer matrix, then $\mu_k(B)$ is also a skew-symmetric integer matrix.
In this case, it is convenient to describe matrix mutations
in terms of quivers.
A \emph{quiver} is a finite oriented graph without loops and $2$-cycles.
For any skew-symmetric integer matrix $B$, we define a quiver $Q(B)$ as follows:
the vertex set is $I$, and there are $\maxzero{B_{ij}}$ arrows from $i$ to $j$.
Conversely, we can recover a skew-symmetric integer matrix $B(Q)$ from a quiver $Q$ by $B(Q)_{ij} = Q_{ij} - Q_{ji}$,
where $Q_{ij}$ is the number of arrows from $i$ to $j$.
This gives a bijection between the set of $I \times I$
skew-symmetric integer matrices and the set of quivers whose vertex set is $I$.

\begin{definition}\label{def:quiver mutation}
	Let $Q$ be a quiver, and let $k$ be a vertex of $Q$.
	The \emph{quiver mutation} $\mu_k$ is a transformation that transforms $Q$ into a quiver $\mu_k(Q)$ defined by the following three steps:
	\begin{enumerate}
		\item For each length two path $i \to k \to j$, add a new arrow $i \to j$.
		\item Reverse all arrows incident to the vertex $k$.
		\item Remove all 2-cycles.
	\end{enumerate}
\end{definition}

Matrix mutations and quiver mutations are compatible.
The transformation
\[
\begin{tikzpicture}
[scale=1.1]
\node(k) at (0,0) []{$k$};
\node(a) at (-1,0) []{$a$};
\node(b) at (0.7,0.7) []{$b$};
\node(c) at (0,-1) []{$c$};
\draw[arrows={-Stealth[scale=1.2] Stealth[scale=1.2]}] (k)--(c);
\foreach \from/\to in {k/b,a/k,b/a}
\draw[arrows={-Stealth[scale=1.2]}] (\from)--(\to);
\node(mutation) at (1.8,-0.2) [scale=1.2] {$\longmapsto$};
\node(mutation) at (1.8,0.2) [] {$\mu_k$};
\node(k') at (4,0) []{$k$};
\node(a') at (3,0) []{$a$};
\node(b') at (4.7,0.7) []{$b$};
\node(c') at (4,-1) []{$c$};
\draw[arrows={-Stealth[scale=1.2] Stealth[scale=1.2]}] (c')--(k');
\draw[arrows={-Stealth[scale=1.2] Stealth[scale=1.2]}] (a')--(c');
\draw[arrows={-Stealth[scale=1.2]}] (b')--(k');
\draw[arrows={-Stealth[scale=1.2]}] (k')--(a');
\end{tikzpicture}
\]
is an example of a quiver mutation.

A set $\PP$ is called a \emph{semifield} if it is an abelian multiplicative group
endowed with an binary operation $\oplus$ which is commutative, associative, and distributive with respect to the multiplication.
We denote by $\ZZ\PP$ the group ring of $\PP$ over $\ZZ$.
This ring is an integral domain since the abelian multiplicative group of $\PP$ is torsion-free.
Throughout this paper, a $\ZZ\PP$-algebra means
a commutative associative $\ZZ\PP$-algebra with an identity element,
and we assume that a $\ZZ\PP$-algebra homomorphism sends the identity element to the identity element.
We denote by $\QQ\PP$ the field of fractions of $\ZZ \PP$.
We fix a field $\mathcal{F}$ that is isomorphic to the field of rational functions over $\QQ \PP$
in $\lvert I \rvert$ variables.

\begin{example}
	Let $J$ be a finite index set.
	\begin{enumerate}
		\item Let $\mathrm{Trop}(u_j)_{j \in J}$ be the abelian multiplicative group generated by the indeterminates $(u_j)_{j \in J}$.
		We define a binary operation $\oplus$ on $\mathrm{Trop}(u_j)_{j \in J}$ by
		\begin{align*}
		\prod_{j \in J} u_j^{a_j} \oplus \prod_{j \in J} u_j^{b_j}
		=\prod_{j \in J} u_j^{\min(a_j,b_j)}.
		\end{align*}
		This binary operation makes $\mathrm{Trop}(u_j)_{j \in J}$ a semifield,
		which is called a \emph{tropical semifield}.
		If $J$ is the empty set, $\mathrm{Trop}(u_j)_{j \in J} = \{ 1 \}$ is called
		the \emph{trivial semifield}.
		\item Let $\QQ_{\mathrm{sf}}(u_j)_{j \in J}$ be the subset of $\QQ(u_j)_{j \in J}$ consisting of all rational functions that can be written as subtraction-free expressions in $(u_i)_{i \in J}$.
		The set $\QQ_{\mathrm{sf}}(u_j)_{j \in J}$ is a semifield with respect to the usual multiplication and addition,
		which is called a \emph{universal semifield}.
	\end{enumerate}
\end{example}

\begin{definition}
	An \emph{($I$-labeled) Y-seed} in $\PP$ is a pair $(B,y)$, where
	\begin{itemize}
		\item $B=(B_{ij})_{i,j \in I}$ is an $I \times I$ skew-symmetrizable integer matrix, 
		\item $y=(y_i)_{i \in I}$ is an $I$-tuple in elements of $\PP$.
	\end{itemize}
\end{definition}

\begin{definition}
	An \emph{($I$-labeled) seed} in $\mathcal{F}$ is a pair $(B,y,x)$, where
	\begin{itemize}
		\item $(B,y)$ is an $I$-labeled Y-seed in $\PP$,
		\item $x=(x_i)_{i \in I}$ is an $I$-tuple of elements in $\mathcal{F}$
		forming a free generating set, that is, $(x_i)_{i \in I}$ is algebraically independent over $\QQ \PP$, and $\mathcal{F} = \QQ \PP (x_i)_{i \in I}$.
	\end{itemize}
\end{definition}

For a seed $(B,y,x)$, we refer to
$B$ as the \emph{exchange matrix},
$y$ as the \emph{coefficient tuple},
$x$ as the \emph{cluster},
$y_i$'s as the \emph{coefficients},
and $x_i$'s as the \emph{cluster variables}.

\begin{definition}
	Let $(B,y,x)$ be an $I$-labeled seed in $\mathcal{F}$,
	and let $k \in I$.
	The \emph{seed mutation} $\mu_k : (B,y,x) \mapsto (B',y',x')$ is a transformation that transforms $(B,x,y)$
	into a new seed $(B',y',x')$
	defined as follows:
	\begin{itemize}
		\item $B'= (B'_{ij})_{i,j \in I}$ is given by \eqref{eq: B mutation},
		\item $y' = (y'_{i})_{i \in I}$ is given by
		\begin{align}\label{eq: y-mutation}
		y_{i}' =
		\begin{cases}
		y_{k}^{-1} & \text{if $i=k$},\\
		y_i (1 \oplus y_k)^{-B_{ki}} & \text{if $i\neq k$ and $B_{ki}\leq 0$},\\
		y_i (1 \oplus y_k^{-1})^{-B_{ki}} & \text{if $i\neq k$ and $B_{ki}\geq 0$},
		\end{cases}
		\end{align}
		\item $x' = (x'_{i})_{i \in I}$ is given by $x'_{i} = x_{i}$ if $i \neq k$, and
		\begin{align}\label{eq: x-mutation}
			x'_{k} = x_{k}^{-1} \biggl(  
			\frac{y_k}{1 \oplus y_k} \prod_{j \in I} x_{j}^{\maxzero{B_{jk}}}
			+
			\frac{1}{1 \oplus y_k} \prod_{j \in I} x_{j}^{\maxzero{-B_{jk}}}
			\biggr).
		\end{align}
	\end{itemize}
	We also say that the transformation $\mu_k: (B,y) \mapsto (B',y')$ is the \emph{Y-seed mutation}.
\end{definition}

The relation \eqref{eq: x-mutation} is called the \emph{exchange relation}.
Seed mutations are involutions, that is, $\mu_k(\mu_k(B,y,x)) = (B,y,x)$.

The \emph{$I$-regular} tree $\mathbb{T}_I$ is the tree such that all vertices have degree $\lvert I \rvert$ and the edges that are incident to each vertex are labeled by the elements in $I$.
A \emph{cluster pattern} is an assignment of an $I$-labeled seed to every vertex in $\mathbb{T}_I$, such that the two seeds
assigned to the endpoints of any edge labeled by $k \in I$
are obtained from each other by the seed mutation $\mu_k$.

\begin{definition}
	The \emph{cluster algebra} $\mathcal{A}$ associated with a given cluster pattern is the $\ZZ\PP$-subalgebra of $\mathcal{F}$ generated by all cluster variables in the pattern.
	We denote $\mathcal{A}=\mathcal{A}(B,y,x)$, where $(B,y,x)$ is any seed in the underlying cluster pattern.
	We often denote $\mathcal{A}(B,y,x)$ by $\mathcal{A}(B,x)$ when $\mathbb{P}$ is the trivial semifield.
\end{definition}

\subsection{T-systems and Y-systems in cluster algebras}
\label{section:T-system and Y-system}
In this section, we review T-systems and Y-systems in cluster algebras following \cite{Nakb}.
Simply put, T-systems and Y-systems are algebraic relations that $x_k$'s and $y_k$'s, respectively, at mutation indices satisfy.

Let $B = (B_{ij})_{i,j \in I}$ be a skew-symmetrizable integer matrix.
Let $r$ be a positive integer.
For any sequence of indices $\mathbf{i} = (i_1 ,\dots, i_r) \in I^r$,
we denote the composition of mutations $\mu_{i_r} \circ \cdots  \circ \mu_{i_1}$ by $\mu_{\mathbf{i}}$.
If $B_{i_ai_b}=0$ for any $a,b \in [1,r]$,
it is easy to see that
$\mu_{\mathbf{i}} (B) = \mu_{\rho(\mathbf{i})} (B)$
for any permutation $\rho \in \mathfrak{S}_r$,
where $\mathfrak{S}_r$ is the group of bijections on $[1,r]$ and $\rho(\mathbf{i}) := (i_{\rho^{-1}(1)} ,\dots , i_{\rho^{-1}(r)})$.
We say that a transformation $B \mapsto \mu_{\mathbf{i}}(B)$ is a \emph{simultaneous mutation} if
$B_{i_ai_b}=0$ for any  $a,b \in [1,r]$, and $a \neq b$ implies $i_a \neq i_b$ for any $a,b \in [1,r]$.

Let $\mathbf{i} = (i_1 ,\dots, i_r) \in I^r$ be a sequence of indices.
Consider a partition of $\mathbf{i}$:
\begin{equation}\label{eq:parition into simultaneous mutations}
	\begin{split}
		\mathbf{i} &= \mathbf{i}(0) \mid \mathbf{i}(1) \mid \dots \mid \mathbf{i}\mathbf(t-1), \\
		\mathbf{i}(u) &= (i(u)_1, \dots, i(u)_{r_u}), \quad
		\sum_{u=0}^{t-1} r_u = r,
	\end{split}
\end{equation}
where we allow $\mathbf{i}(u)$ to be the empty sequence.
Formally, a partition of $\mathbf{i}$ is an order-preserving map $[1,r] \to \{ 0, \dots, t-1 \}$ where $t$ is a positive integer.
A partition \eqref{eq:parition into simultaneous mutations} is called a \emph{partition into simultaneous mutations} 
if all $B(u) \mapsto B(u+1)$ in the following mutation sequence are simultaneous mutations:
\begin{align}\label{eq: mutation sequence}
B =: B(0) \xmapsto{\mu_{\mathbf{i}(0)}}
B(1)  \xmapsto{\mu_{\mathbf{i}(1)}}
\cdots
\xmapsto{\mu_{\mathbf{i}(t-1)}}
B(t).
\end{align}

\begin{definition}\label{def:mutation loop}
	We say that a quadruple $\gamma=(B,d,\mathbf{i},\nu)$
	is a \emph{mutation loop} if
	\begin{itemize}
		\item $B = (B_{ij})_{i,j \in I}$ is a skew-symmetrizable integer matrix,
		\item $d=(d_i)_{i \in I}$ is a right symmetrizer of $B$,
		\item $\mathbf{i}= (i_1 , \dots, i_r)$ is a sequence of elements in $I$
		equipped with a partition into simultaneous mutations $\mathbf{i}= \mathbf{i}(0) \mid \mathbf{i}(1) \mid \dots \mid \mathbf{i}\mathbf(t-1)$,
		\item $\nu:I \to I$ is a bijection such that $\mu_{\mathbf{i}}(B)= \nu(B)$ and $d=\nu(d)$.
	\end{itemize}
\end{definition}

The integer $r$ is called the \emph{length} of $\gamma$.
Many examples of mutation loops are given in \cite[Section 3]{Nakb}.

The partition \eqref{eq:parition into simultaneous mutations} decomposes $[1,r]$ into $t$ parts.
We define the subgroup $\mathfrak{S}_{r_0,\dots,r_{t-1}} \subseteq \mathfrak{S}_r$, which is isomorphic to $\mathfrak{S}_{r_0} \times \dots \times \mathfrak{S}_{r_{t-1}}$, consisting of permutations that fix the each part as a set.

\begin{definition}\label{def:ml equivalence}
We say that two mutation loops $\gamma=(B,d,\mathbf{i},\nu)$ and $\gamma'=(B',d',\mathbf{i}',\nu')$, where $\mathbf{i}=\mathbf{i}(0) \mid \dots \mid \mathbf{i}\mathbf(t-1)$ and $\mathbf{i}'=\mathbf{i}'(0) \mid \dots \mid \mathbf{i}'\mathbf(t'-1)$, are \emph{equivalent} if there exists a bijection $f:I \to I'$ between the index sets of $B$ and $B'$,
and a permutation $\rho \in \mathfrak{S}_{r_0,\dots,r_{t-1}}$
such that
\begin{itemize}
	\item $B'=f(B)$,
	\item $d'=f(d)$,
	\item $t=t'$ and $\mathbf{i}'(u) = f(\rho(\mathbf{i}(u)))$ for each $u=0,\dots,t-1$,
	\item $\nu'=f \circ \nu \circ f^{-1}$.
\end{itemize}
\end{definition}

For any mutation loop $\gamma=(B,d,\mathbf{i},\nu)$, we have the following infinite length mutation sequence that extends \eqref{eq: mutation sequence}:
\begin{align}\label{eq: infinite mutation sequence}
\begin{array}{llllllll}
&&&
&\cdots
&\xmapsto{\mu_{\mathbf{i}(-2)}}
&B(-1) &\xmapsto{\mu_{\mathbf{i}(-1)}}\\
B(0) &\xmapsto{\mu_{\mathbf{i}(0)}}
&B(1)  &\xmapsto{\mu_{\mathbf{i}(1)}}
&\cdots
&\xmapsto{\mu_{\mathbf{i}(t-2)}}
&B(t-1) &\xmapsto{\mu_{\mathbf{i}(t-1)}} \\
B(t) &\xmapsto{\mu_{\mathbf{i}(t)}}
&B(t+1)  &\xmapsto{\mu_{\mathbf{i}(t+1)}}
&\cdots
&\xmapsto{\mu_{\mathbf{i}(2t-2)}}
&B(2t-1) &\xmapsto{\mu_{\mathbf{i}(2t-1)}}\\
B(2t) &\xmapsto{\mu_{\mathbf{i}(2t)}}
&\cdots &&&&
\end{array}
\end{align}
where $B(nt+k)=\nu^n(B(k))$ and $\mathbf{i}(nt+k)=\nu^n(\mathbf{i}(t+k))$ for any $n \in \ZZ$ and $0\leq k \leq t-1$.
Let $P_{\gamma}$ be the set defined by
\begin{align*}
P_{\gamma} = \{  (i,u) \in I \times \ZZ \mid i \in \mathbf{i}(u) \}.
\end{align*}
Elements in $P_{\gamma}$ are called \emph{mutation points} of $\gamma$.
We also define an integer
$\lambda(i,u)$ for any $(i,u) \in I \times \ZZ$ by
\begin{align*}
\lambda(i,u) = 
\min \{ v \in \ZZ_{\geq 0} \mid (i,u+v) \in P_{\gamma} \}
\end{align*}
if there exists $v \in \ZZ_{\geq 0}$ such that $(i,u+v) \in P_{\gamma}$.
Otherwise, we set $\lambda(i,u) = \infty$. 
The number $\lambda(i,u)$ is called the \emph{latency} of $(i,u)$.
For any $(i,u) \in I \times \ZZ$ with $\lambda(i,u)< \infty$, we define an element
$s(i,u) \in P_{\gamma}$ by
\begin{align*}
	s(i,u)=
	\begin{cases}
		(i,u+\lambda(i,u)) & \text{if $(i,u) \notin P_{\gamma}$},\\
		(i,u+1+\lambda(i,u+1)) & \text{if $(i,u) \in P_{\gamma}$}.
	\end{cases}
\end{align*}
The element $s(i,u)$ is called the \emph{next mutation point} of $(i,u)$.

A mutation loop is called \emph{complete}
if all latencies are finite, that is, $\lambda(i,u)< \infty$ for any $(i,u) \in I \times \ZZ$, or equivalently, for any $(i,0) \in I \times \{ 0 \}$.
In the rest of this paper, we usually assume that mutation loops are complete.

In order to describe the T-system and the Y-system so that the relationship between them is apparent, we need another parameterization of the mutation points.
For any elements $(i,u),(j,v) \in P_{\gamma}$, we write $(i,u) \sim (j,v)$ if there exists $g \in \ZZ$ such that
$j=\nu^g (i)$ and $v=u+gt$.
Let $\pi:P_{\gamma} \to [1,r]$ be the surjective map defined by
$\pi(i,u) =a$ where $a$ is the unique element in $[1,r]$ such that
$(i,u)\sim(i_a,v)$ and $0 \leq v \leq t-1$.
We define a set $R_{\gamma}$ by
\begin{align}\label{eq:def of R gamma}
	R_{\gamma} = \{ (\pi(i,u) ,u ) \mid (i,u) \in P_{\gamma} \}.
\end{align}
\begin{lemma}
	The map $P_{\gamma} \to R_{\gamma}$ defined by $(i,u) \mapsto (\pi(i,u),u)$ is a bijection.
\end{lemma}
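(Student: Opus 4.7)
The plan is to unpack the definition of $\sim$ and verify the bijection by a direct set-theoretic argument. Surjectivity is immediate from the definition of $R_{\gamma}$, so the work lies entirely in first establishing the well-definedness of $\pi$ and then proving injectivity of the map $(i,u) \mapsto (\pi(i,u),u)$.

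For well-definedness of $\pi$, I would first observe that $P_{\gamma}$ is stable under the shift $(i,u) \mapsto (\nu(i), u+t)$, because the extended sequence in \eqref{eq: infinite mutation sequence} satisfies $\mathbf{i}(u+t) = \nu(\mathbf{i}(u))$. Hence $P_{\gamma}$ is a disjoint union of $\sim$-equivalence classes, and any such class $\{(\nu^{g}(i),u+gt) : g \in \ZZ\}$ contains exactly one representative $(j,v)$ with $0 \leq v \leq t-1$ (namely the one with $v \equiv u \pmod{t}$). Stability gives $(j,v) \in P_{\gamma}$, so $j \in \mathbf{i}(v)$, and therefore $j = i_{a}$ for some position $a$ lying in the $v$-th block of the partition. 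The key subtle point is uniqueness of $a$: within the single block $\mathbf{i}(v)$ the entries are pairwise distinct as elements of $I$, because $B(v) \mapsto \mu_{\mathbf{i}(v)}(B(v))$ is a simultaneous mutation by assumption.

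For injectivity, suppose $(i,u),(j,u') \in P_{\gamma}$ satisfy $(\pi(i,u),u) = (\pi(j,u'),u')$. Then $u = u'$; setting $a := \pi(i,u) = \pi(j,u)$ and letting $v$ be the unique element of $[0,t-1]$ congruent to $u$ modulo $t$, the definition of $\pi$ gives both $(i,u) \sim (i_{a},v)$ and $(j,u) \sim (i_{a},v)$. Transitivity then yields $(i,u) \sim (j,u)$, i.e. $j = \nu^{g}(i)$ and $u = u + gt$ for some $g \in \ZZ$. The second equation forces $g = 0$, and hence $j = i$.

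The only substantive obstacle is the well-definedness step; the distinctness of indices inside each simultaneous-mutation block, built into Definition \ref{def:mutation loop}, is exactly the ingredient that makes $\pi(i,u)$ unambiguous, and the rest is a mechanical unravelling of the equivalence relation $\sim$.
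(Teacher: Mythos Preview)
Your proof is correct and follows the same route as the paper's: surjectivity by definition of $R_{\gamma}$, and injectivity by noting that $(i,u)\sim(j,u)$ forces $g=0$ and hence $i=j$. The paper's proof is terser and does not separately justify the well-definedness of $\pi$ (it simply asserts uniqueness of $a$ in the definition of $\pi$), so your explicit treatment of that point is a worthwhile addition rather than a deviation.
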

\begin{proof}
	The surjectivity is apparent since $\pi$ is surjective.
	We assume that $(i,u),(j,u) \in P_{\gamma}$ satisfy $\pi(i,u) = \pi(j,u)$.
	Then we obtain $(i,u) \sim (j,u)$,
	and this implies that $i=j$ by the definition of the equivalence relation.
\end{proof}

Let $\sigma \in \mathfrak{S}_r$ be the bijection defined by
\begin{align}\label{eq:def of sigma}
	\sigma(a) = \pi(s(i,u)),
\end{align}
where $(i,u) \in \pi^{-1}(a)$ is any mutation point that maps to $a$ by $\pi$.
The definition of $\sigma$ does not depend on the choice of $(i,u)$.
For any $a \in [1,r]$, we denote by $\lambda_{a}$ the positive integer $1+\lambda(i,u+1)$ where $(i,u) \in \pi^{-1}(a)$.
In other words, $\lambda_{a}$ is the positive integer satisfying $s(i,u)=(i,u+\lambda_a)$.
The definition of $\lambda_{a}$ also does not depend on the choice of $(i,u)$.

Let us describe a Y-system associated with a mutation loop $\gamma$.
For any Y-seed $(B,y)$, we have the following infinite length sequence
of Y-seeds:
\begin{align}\label{eq:infinite Y-seeds}
\begin{array}{lllll}
&
&\cdots
&(B(-1),y(-1)) &\xmapsto{\mu_{\mathbf{i}(-1)}}\\
(B(0),y(0)) &\xmapsto{\mu_{\mathbf{i}(0)}}
&\cdots
&(B(t-1),y(t-1)) &\xmapsto{\mu_{\mathbf{i}(t-1)}} \\
(B(t),y(t)) &\xmapsto{\mu_{\mathbf{i}(t)}}
&\cdots &
\end{array}
\end{align}
where $(B(0),y(0)) = (B,y)$
and we define negative ones using the involution property of mutations.
We define an element $Y_a(u) \in \PP$ for any $(a,u) \in R_{\gamma}$ by
\begin{align}\label{eq: def of Y_a(u)}
	Y_a(u) = y_i (u),
\end{align}
where $i \in I$ is a unique index such that $(i,u) \in P_{\gamma}$ and $a=\pi(i,u)$.

Let $N_{0}^{\vee} = (\sum_{p \in \ZZ} \check{n}_{ab;p}^{0} z^p )_{a,b \in [1,r]} \in \mat_{r \times r}(\ZZ[z])$ be the $r \times r$
matrix whose entries are integer coefficients polynomials in the variable $z$ defined by 
\begin{align}\label{eq: NY0}
	\sum_{p \in \ZZ} \check{n}_{ab;p}^{0} z^p = \delta_{ab} + \delta_{a'b} z^{\lambda_{a'}},
\end{align}
where $a'=\sigma^{-1}(a)$.
We also define two matrices $N_+^{\vee}=(\sum_{p \in \ZZ} \check{n}_{ab;p}^{+} z^p )_{a,b \in [1,r]}$ and $N_-^{\vee} = (\sum_{p \in \ZZ} \check{n}_{ab;p}^{-} z^p )_{a,b \in [1,r]}$ in $\mat_{r \times r}(\ZZ[z])$ by
\begin{align}
	\label{eq: NY+-}
	\sum_{p \in \ZZ} \check{n}_{ab;p}^{\pm} z^p  &= 
	\sum_{\substack{(j,v) \in P_{\gamma} \\ s(k,v) = (k,u), \pi(j,v)=b }}
	\maxzero{\pm B_{jk}(v)} z^{\lambda(k,v)},
\end{align}
where $(k,u) \in \pi^{-1}(a)$.
The definition of $N_{\pm}^{\vee}$ does not depend on the choices of $(k,u)$.

\begin{figure}[t]
	\centering
	\begin{tikzpicture}[label distance=-0.75mm,every node/.style={scale=1.0}]
	\node[inner sep=2pt,label=above:$k$] (o) at (0,0,0) {};
	\node[] (x) at (1,0,0) {};
	\node[] (y) at (0,1,1) {};
	\node[] (z) at (0,1,0) {};
	\fill[black] (o) circle (1.5pt);
	\node[inner sep=2pt] (p) at (-7.5,0,0) {};
	\fill[black] (p) circle (1.5pt);
	\draw[white,line width=3pt] (p) -- (o);
	\draw[black,thick,->] (p) -- (o);
	\draw[white,line width=3pt] (o) -- (1.3,0,0);
	\draw[black,thick] (o) -- (0.8,0,0);
	\draw[black,thick,densely dotted] (0.8,0,0) -- (1.3,0,0);
	\node[inner sep=2pt,label=above:$j_1$] (j1) at (-2,0.9,0.3) {};
	\fill[black] (j1) circle (1.5pt);
	\draw[black,->] (j1) -- (-2,0,0);
	\node[inner sep=2pt,label=below:$j_2$] (j2) at (-4,-1.1,-0.3) {};
	\fill[black] (j2) circle (1.5pt);
	\draw[black,->] (-4,0,0) -- (j2);
	\node[inner sep=2pt,label=below:$j_3$] (j3) at (-6,-0.4,0.3) {};
	\fill[black] (j3) circle (1.5pt);
	\draw[black,->] (j3) -- (-6,0,0);
	\foreach \j in {j1,j2,j3}{
		\draw[white,line width=3pt] (\j) --++ (1.3,0,0);
		\draw[black,thick] (\j) --++ (0.8,0,0);
		\draw[black,thick,densely dotted] (\j) + (0.8,0,0) --++ (1.3,0,0);
	}
	\foreach \a in {-2,-4,-6}{
		\draw[white,line width=2.5pt] (\a,-3,-1) -- ++(0,4,0);
		\draw[gray, draw opacity=0.7] (\a,-3,-1) --++ (0,4,0);
	}
	\begin{pgfonlayer}{background}
	\foreach \a in {-2,-4,-6}{
		\draw[gray,draw opacity=0.4] (\a,-1,1) --++ (0,4,0);
	}
	\end{pgfonlayer}
	\foreach \a in {-2,-4,-6}{
		\draw[gray,draw opacity=0.6] (\a,1,-1) --++ (0,2,2);
	}
	\foreach \a in {-2,-4,-6}{
		\draw[gray,draw opacity=0.5] (\a,-3,-1) --++ (0,2,2);
	}
	\draw[fill=gray,draw opacity=0,fill opacity=0.15] (-2,-3,-1) -- ++(0,4,0) -- ++(0,2,2) -- ++(0,-4,0) -- cycle;
	\draw[fill=gray,draw opacity=0,fill opacity=0.15] (-4,-3,-1) -- ++(0,4,0) -- ++(0,2,2) -- ++(0,-4,0) -- cycle;
	\draw[fill=gray,draw opacity=0,fill opacity=0.15] (-6,-3,-1) -- ++(0,4,0) -- ++(0,2,2) -- ++(0,-4,0) -- cycle;
	\draw[black,thick,->] (-8.5,-4,0) -- (1.5,-4,0);
	\foreach \a in {-2,-4,-6}{
		\draw[black,dashed] (\a,-2,0) -- (\a,-4,0);
	}
	\draw[black,dashed] (o) --++ (0,-4,0);
	\draw[black,dashed] (p) --++ (0,-4,0);
	\node[label=below:$u$] (u) at (0,-4,0) {};
	\node[label=below:$v_1$] (v1) at (-2,-4,0) {};
	\node[label=below:$v_2$] at (-4,-4,0) {};
	\node[label=below:$v_3$] at (-6,-4,0) {};
	\node[label=below:$u-\lambda_{a'}$] at (-7.5,-4,0) {};
	\draw[] (v1.center) to[bend left=35] node[inner sep=1.5pt,fill=white]{$\lambda(k,v_1)$} (u.center);
	\end{tikzpicture}
	\caption{A schematic description of a Y-system. A black point represents a mutation point. An arrow in a plane from (resp. to) a mutation point $(j,v)$ to (resp. from) the right arrow that ends at $(k, u)$ indicates that $\maxzero{B_{jk}(v)} \neq 0$ (resp. $\maxzero{-B_{jk}(v)} \neq 0$).}
	\label{fig:Y-system}
\end{figure}

\begin{proposition}[{\cite[Section 5.5]{Nakb}}]
	\label{prop: Y-system}
	For any mutation loop $\gamma$,
	the family of elements $(Y_a (u))_{(a,u) \in R_{\gamma}}$ satisfy the following relation in $\PP$ for any $(a,u) \in R_{\gamma}$:
	\begin{align*}
	\prod_{b,p}   Y_b (u-p)^{ \check{n}_{ab;p}^{0}}
	&=
	\frac{ \prod_{b,p}  \bigl(1 \oplus Y_b (u-p)\bigr)^{ \check{n}_{ab;p}^{-}}}{ \prod_{b,p}  \bigl(1 \oplus Y_b (u-p)^{-1}\bigr)^{ \check{n}_{ab;p}^{+}}},
	\end{align*}
	where $\prod_{b,p}=\prod_{b=1}^r \prod_{p=0}^{\infty}$.
\end{proposition}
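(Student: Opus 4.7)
The plan is to evaluate both sides of the asserted relation at an arbitrary $(a,u) \in R_{\gamma}$ directly from the definitions, and to match them by telescoping the Y-seed mutation formula \eqref{eq: y-mutation} along the portion of the sequence \eqref{eq:infinite Y-seeds} that sits between two consecutive mutations of one single index $k \in I$. Concretely, by the bijection $P_{\gamma} \to R_{\gamma}$ there is a unique $k$ with $(k,u) \in P_{\gamma}$ and $\pi(k,u)=a$; setting $a' := \sigma^{-1}(a)$, the definitions of $\sigma$ and $\lambda_{a'}$ in \eqref{eq:def of sigma} force $(k,u-\lambda_{a'}) \in P_{\gamma}$ with $\pi(k,u-\lambda_{a'})=a'$, so that the same index $k$ is mutated exactly at the two times $u-\lambda_{a'}$ and $u$.

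Next I would decode each side. For the left-hand side, formula \eqref{eq: NY0} shows that the exponents $\check{n}_{ab;p}^{0}$ are nonzero only at $(b,p)=(a,0)$ and $(b,p)=(a',\lambda_{a'})$ and equal $1$ in both cases, so by \eqref{eq: def of Y_a(u)} the LHS is $Y_a(u) \cdot Y_{a'}(u-\lambda_{a'}) = y_k(u) \cdot y_k(u-\lambda_{a'})$. For the right-hand side, I would analyze when a pair $(j,v) \in P_{\gamma}$ satisfies $s(k,v)=(k,u)$: unpacking the definition of $s$ shows this forces either $(k,v) \in P_{\gamma}$ with $v=u-\lambda_{a'}$, or $(k,v) \notin P_{\gamma}$ with $u-\lambda_{a'} < v < u$. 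In the first case the contribution to $\check{n}_{ab;p}^{\pm}$ vanishes term-by-term, because $B_{kk}(u-\lambda_{a'})=0$ by skew-symmetry and $B_{jk}(u-\lambda_{a'})=0$ for any $j \in \mathbf{i}(u-\lambda_{a'}) \setminus \{k\}$ by the simultaneous-mutation condition built into Definition \ref{def:mutation loop}. Hence the RHS restricts to a product over $(j,v) \in P_{\gamma}$ with $u-\lambda_{a'} < v < u$ (automatically $j \neq k$), weighted by $\maxzero{\pm B_{jk}(v)}$.

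The final step is to iterate \eqref{eq: y-mutation} on the component $y_k$ between the seeds at times $u-\lambda_{a'}$ and $u$. At time $u-\lambda_{a'}$ the simultaneous block $\mu_{\mathbf{i}(u-\lambda_{a'})}$ includes $\mu_k$, which inverts $y_k$, while its partners leave $y_k$ invariant (again because $B_{jk}(u-\lambda_{a'})=0$); the analogous vanishing at time $u$ shows that $y_k(u)$ really is the value of $y_k$ just before $\mu_k$ is applied at time $u$. For each intermediate $(j,v) \in P_{\gamma}$ with $u-\lambda_{a'} < v < u$, the two cases in \eqref{eq: y-mutation} combine into the unified factor
\begin{align*}
\bigl(1 \oplus y_j(v)\bigr)^{\maxzero{-B_{jk}(v)}} \bigl(1 \oplus y_j(v)^{-1}\bigr)^{-\maxzero{B_{jk}(v)}}.
\end{align*}
Multiplying these factors together yields $y_k(u) \cdot y_k(u-\lambda_{a'})$, and separating the positive and negative exponents gives exactly the product-over-quotient form of the RHS displayed in the proposition.

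The main obstacle, though essentially bookkeeping, is the careful treatment of the two boundary times $v=u-\lambda_{a'}$ and $v=u$: the simultaneous-mutation condition must be invoked to show both that these times contribute no factors on the RHS and that the inversion $y_k \mapsto y_k^{-1}$ at $v=u-\lambda_{a'}$ precisely absorbs into the factor $y_k(u-\lambda_{a'})$ on the LHS, so that the telescoped identity matches the stated formula with no residual terms.
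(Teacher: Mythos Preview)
Your proposal is correct. The paper does not supply its own proof of this proposition; it is quoted directly from Nakanishi~\cite[Section~5.5]{Nakb}, and your telescoping argument along the segment between the two consecutive mutations of the index $k$ is exactly the standard derivation given there. The only point worth tightening is your remark about ``the analogous vanishing at time $u$'': this is unnecessary, since $y_k(u)$ is by definition the value in the seed $(B(u),y(u))$ \emph{before} the block $\mu_{\mathbf{i}(u)}$ is applied, so there is nothing further to check at that endpoint.
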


We call the family of relations in Proposition \ref{prop: Y-system} the \emph{Y-system} associated with $\gamma$,
and the triple of matrices $(N_{\gamma,0}^{\vee},N_{\gamma,+}^{\vee},N_{\gamma,-}^{\vee})$ the \emph{Y-system triple} of $\gamma$.
From \eqref{eq: NY0}, the left-hand side in the Y-system can be rewritten as 
\begin{align*}
	\prod_{b,p}   Y_b (u-p)^{ \check{n}_{ab;p}^{0}}
	= Y_a(u) Y_{a'} (u-\lambda_{a'}).
\end{align*}
If we define elements $P_a^{\pm}(u) \in \PP$ by
\begin{align}\label{eq:def of P}
P_a^+(u) = \frac{Y_a(u)}{1 \oplus Y_a(u)},\quad
P_a^-(u) = \frac{1}{1 \oplus Y_a(u)},
\end{align}
the relation in Proposition \ref{prop: Y-system} can be written in a simpler form as
\begin{align*}
	\prod_{b,p} P_b^+(u-p)^{\check{n}_{ab;p}^{0}-\check{n}_{ab;p}^{+}}
	=\prod_{b,p} P_b^-(u-p)^{\check{n}_{ab;p}^{0}-\check{n}_{ab;p}^{-}}.
\end{align*}
Figure \ref{fig:Y-system} is a schematic description of a Y-system.

\begin{figure}[t]
	\centering
	\begin{tikzpicture}[label distance=-0.75mm,every node/.style={scale=1.0}]
	\node[inner sep=2pt,label=left:$k$] (o) at (0,0,0) {};
	\node[] (x) at (1,0,0) {};
	\node[] (y) at (0,1,1) {};
	\node[] (z) at (0,1,0) {};
	\fill[black] (o) circle (1.5pt);
	\node[inner sep=2pt] (p) at (5,0,0) {};
	\fill[black] (p) circle (1.5pt);
	\draw[white,line width=3pt] (o) -- (p);
	\draw[black,thick,->] (o) -- (p);
	\node[inner sep=2pt,label=above:$j_1$] (j1) at (0,0.9,0.3) {};
	\draw[white,line width=2pt] (o) -- (j1);
	\node[inner sep=2pt,label=below:$j_2$] (j2) at (0,-1.1,-0.3) {};
	\draw[white,line width=2pt] (o) -- (j2);
	\node[inner sep=2pt,label=below:$j_3$] (j3) at (0,-0.4,0.3) {};
	\draw[white,line width=2pt] (o) -- (j3);
	\draw[black,->] (j1.center) -- (o);
	\draw[black,->] (o) --(j2.center) ;
	\draw[black,->] (o) -- (j3.center);
	\foreach \a in {0}{
		\draw[white,line width=2.5pt] (\a,-3,-1) -- ++(0,4,0);
		\draw[gray, draw opacity=0.7] (\a,-3,-1) --++ (0,4,0);
	}
	\begin{pgfonlayer}{background}
	\foreach \a in {0}{
		\draw[gray,draw opacity=0.4] (\a,-1,1) --++ (0,4,0);
	}
	\end{pgfonlayer}
	\foreach \a in {0}{
		\draw[gray,draw opacity=0.6] (\a,1,-1) --++ (0,2,2);
	}
	\foreach \a in {0}{
		\draw[gray,draw opacity=0.5] (\a,-3,-1) --++ (0,2,2);
	}
	\node[inner sep=2pt] (j1p) at (2,0.9,0.3) {};
	\node[inner sep=2pt] (j2p) at (4,-1.1,-0.3) {};
	\node[inner sep=2pt] (j3p) at (6,-0.4,0.3) {};
	\fill[black] (j1p) circle (1.5pt);
	\fill[black] (j2p) circle (1.5pt);
	\fill[black] (j3p) circle (1.5pt);
	\begin{pgfonlayer}{background}
	\draw[white,line width=3pt] (j1)++(-0.8,0,0) -- (j1p);
	\draw[black,thick,->] (j1)++(-0.8,0,0) -- (j1p);
	\draw[black,thick,densely dotted] (j1)++(-0.8,0,0) --++ (-0.5,0,0);
	\draw[white,line width=3pt] (j2)++(-0.8,0,0) -- (j2p);
	\draw[black,thick,->] (j2)++(-0.8,0,0) -- (j2p);
	\draw[black,thick,densely dotted] (j2)++(-0.8,0,0) --++ (-0.5,0,0);
	\draw[white,line width=3pt] (j3)++(-0.8,0,0) -- (j3p);
	\draw[black,thick,->] (j3)++(-0.8,0,0) -- (j3p);
	\draw[black,thick,densely dotted] (j3)++(-0.8,0,0) --++ (-0.5,0,0);
	\end{pgfonlayer}
	\draw[fill=gray,draw opacity=0,fill opacity=0.15] (0,-3,-1) -- ++(0,4,0) -- ++(0,2,2) -- ++(0,-4,0) -- cycle;
	\draw[black,thick,->] (-1.5,-4,0) -- (7,-4,0);
	\draw[black,dashed] (j1p) -- (2,-4+0.1,0.3);
	\draw[black,dashed] (j2p) -- (4,-4-0.1,-0.3);
	\draw[black,dashed] (j3p) -- (6,-4+0.1,0.3);
	\draw[black,dashed] (0,-2,0) -- (0,-4,0);
	\draw[black,dashed] (p) --++ (0,-4,0);
	\node[label=below:$u$] (u) at (0,-4,0) {};
	\node[label=below:$v_1$] (v1) at (2,-4+0.1,0.3) {};
	\node[label=below:$v_2$] at (4,-4-0.1,-0.3) {};
	\node[label=below:$v_3$] at (6,-4+0.1,0.3) {};
	\node[label=below:$u+\lambda_{a}$] at (5,-4,0) {};
	\draw[] (v1.center) to[bend right=35] node[inner sep=1.5pt,fill=white]{$\lambda(j_1,v_1)$} (u.center);
	\end{tikzpicture}
	\caption{A schematic description of a T-system. A black point represents a mutation point. An arrow in the plane from (resp. to) the mutation point $(k,u)$ to (resp. from) a right arrow that ends at $(j, v)$ indicates that $\maxzero{-B_{jk}(u)} \neq 0$ (resp. $\maxzero{B_{jk}(u)} \neq 0$).}
	\label{fig:T-system}
\end{figure}

Next we are going to describe T-systems.
Let $\gamma$ be a complete mutation loop.
For any seed $(B,y,x)$,
we have the following infinite length sequence
of seeds:
\begin{align}\label{eq:infinite seeds}
\begin{array}{lllll}
&
&\cdots
&(B(-1),y(-1),x(-1)) &\xmapsto{\mu_{\mathbf{i}(-1)}}\\
(B(0),y(0),x(0)) &\xmapsto{\mu_{\mathbf{i}(0)}}
&\cdots
&(B(t-1),y(t-1),x(t-1)) &\xmapsto{\mu_{\mathbf{i}(t-1)}} \\
(B(t),y(t),x(t)) &\xmapsto{\mu_{\mathbf{i}(t)}}
&\cdots &
\end{array}
\end{align}
where $(B(0),y(0),x(0)) = (B,y,x)$.
We define $Y_a(u) \in \PP$ and $T_a(u) \in \mathcal{F}$ for any $(a,u) \in R_{\gamma}$ by
\begin{align}
	\label{eq: def of T_a(u)}
	Y_a(u) = y_i (u), \quad T_a(u) = x_i(u),
\end{align}
where $i \in I$ is a unique index such that $(i,u) \in P_{\gamma}$ and $a=\pi(i,u)$.

We define a matrix $N_{\gamma,0} = (\sum_{p \in \ZZ} n_{ab;p}^{0} z^p )_{1\leq a,b \leq r} \in \mat_{r \times r}(\ZZ[z])$ by
\begin{align}\label{eq: NT0}
	\sum_{p \in \ZZ} n_{ba;p}^{0} z^p = \delta_{ab} + \delta_{\sigma(a)b} z^{\lambda_a}.
\end{align}
We also define two matrices $N_{\gamma,+}=(\sum_{p \in \ZZ} n_{ab;p}^{+} z^p)_{a,b \in [1,r]}$ and $N_{\gamma,-} = (\sum_{p \in \ZZ} n_{ab;p}^{-} z^p)_{a,b \in [1,r]}$ in $\mat_{r \times r}(\ZZ[z])$ by
\begin{align}
	\label{eq: NT+-}
	\sum_{p \in \ZZ} n_{ba;p}^{\pm} z^p &= \sum_{\substack{j \in I \\ \pi(s(j,u))=b }} \maxzero{ \mp B_{jk}(u) }
	z^{\lambda(j,u)},
\end{align}
where $(k,u) \in \pi^{-1}(a)$.
The definition of $N_\pm$ does not depend on the choices of $(k,u)$.

\begin{proposition}[{\cite[Section 5.5]{Nakb}}]
	\label{prop: T-system}
	For any complete mutation loop,
	the family of elements $(Y_a (u))_{(a,u) \in R_{\gamma}}$ and $(T_a(u))_{(a,u) \in R_{\gamma}}$ satisfy the following relation in $\mathcal{F}$ for any $(a,u) \in R_{\gamma}$:
	\begin{align*}
	\prod_{b,p} T_b(u+p)^{n_{ba;p}^{0}} = P_a^+(u) \prod_{b,p} T_b(u+p)^{n_{ba;p}^{-}} + P_a^-(u) \prod_{b,p} T_b(u+p)^{n_{ba;p}^{+}},
	\end{align*}
	where $\prod_{b,p}=\prod_{b=1}^r \prod_{p=0}^{\infty}$ and $P_a^{\pm}(u) \in \PP$ are defined by \eqref{eq:def of P}.
\end{proposition}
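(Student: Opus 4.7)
The plan is to unpack the single exchange relation at a mutation point $(k,u) \in P_{\gamma}$ with $\pi(k,u)=a$, and then translate every factor of it into the T-system notation via the bijection $P_{\gamma} \to R_{\gamma}$. Applying the exchange relation~\eqref{eq: x-mutation} for the seed $(B(u),y(u),x(u))$ at index $k$ yields
\begin{align*}
x_k(u)\,x_k(u+1) = \frac{y_k(u)}{1\oplus y_k(u)} \prod_{j \in I} x_j(u)^{\maxzero{B_{jk}(u)}} + \frac{1}{1\oplus y_k(u)} \prod_{j \in I} x_j(u)^{\maxzero{-B_{jk}(u)}}.
\end{align*}
By \eqref{eq: def of T_a(u)} the factor $x_k(u)$ equals $T_a(u)$, and by \eqref{eq:def of P} together with $Y_a(u)=y_k(u)$ the two scalar prefactors are exactly $P_a^+(u)$ and $P_a^-(u)$.

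The first key step is to identify $x_k(u+1)$ with $T_{\sigma(a)}(u+\lambda_a)$. Since no index in $\mathbf{i}(u+1)\cup\cdots\cup\mathbf{i}(u+\lambda_a-1)$ equals $k$, the cluster variable at slot $k$ is preserved through these intermediate steps, so $x_k(u+1) = x_k(u+\lambda_a)$; then $(k,u+\lambda_a) = s(k,u)\in P_{\gamma}$ has $\pi$-image $\sigma(a)$ by \eqref{eq:def of sigma}, giving $x_k(u+\lambda_a)=T_{\sigma(a)}(u+\lambda_a)$. By \eqref{eq: NT0} the left-hand side $T_a(u)T_{\sigma(a)}(u+\lambda_a)$ is exactly $\prod_{b,p} T_b(u+p)^{n_{ba;p}^{0}}$.

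The second key step handles each factor $x_j(u)$ in the right-hand side products. If $(j,u)\in P_{\gamma}$ and $j\neq k$, then both $j$ and $k$ lie in $\mathbf{i}(u)$, so the simultaneous-mutation hypothesis forces $B_{jk}(u)=0$ and the factor disappears; the same is true for $j=k$ since $B_{kk}(u)=0$. For the remaining $j$, we have $(j,u)\notin P_{\gamma}$, hence $s(j,u)=(j,u+\lambda(j,u))$ and no mutation at $j$ occurs strictly between $u$ and $u+\lambda(j,u)$, so $x_j(u)=x_j(u+\lambda(j,u))=T_{\pi(s(j,u))}(u+\lambda(j,u))$. Substituting and regrouping according to $b=\pi(s(j,u))$ and $p=\lambda(j,u)$ turns $\prod_j x_j(u)^{\maxzero{\pm B_{jk}(u)}}$ into $\prod_{b,p} T_b(u+p)^{n_{ba;p}^{\mp}}$ by the definition \eqref{eq: NT+-} of $N_{\gamma,\pm}$. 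Assembling the two steps produces the asserted identity.

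The main delicate point is the bookkeeping around mutation points that coincide with the point $(k,u)$: one must verify that the cases $(j,u)\in P_{\gamma}$ (both the $j=k$ case and the simultaneously mutated $j\ne k$ case) contribute nothing, so that the formulas for $N_{\gamma,\pm}$ — which formally sum over all $j\in I$ — indeed reproduce only the surviving factors of the exchange relation. Beyond this, one should note that independence from the choice of representative $(k,u)\in\pi^{-1}(a)$ is built into the definitions of $N_{\gamma,0}$, $N_{\gamma,\pm}$, $P_a^{\pm}(u)$, and of the $T_b(u+p)$'s via the bijection $P_{\gamma} \cong R_{\gamma}$ and the $\nu$-equivariance of \eqref{eq: infinite mutation sequence}, so the same argument applied at the representative $(k,u)$ gives the relation indexed by $(a,u)\in R_{\gamma}$.
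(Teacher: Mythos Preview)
Your argument is correct and is exactly the natural derivation of the T-system from the exchange relation: unfold~\eqref{eq: x-mutation} at the mutation point $(k,u)$, identify $x_k(u)=T_a(u)$ and $x_k(u+1)=T_{\sigma(a)}(u+\lambda_a)$ via the inertness of $x_k$ until its next mutation, and rewrite each $x_j(u)$ with $(j,u)\notin P_\gamma$ as $T_{\pi(s(j,u))}(u+\lambda(j,u))$; the sign bookkeeping $\maxzero{\pm B_{jk}}\leftrightarrow n^{\mp}$ is handled correctly. The paper does not actually give a proof of this proposition (it is quoted from~\cite[Section~5.5]{Nakb}), so there is no alternative argument to compare against; your proof is the standard one.

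One minor point worth making explicit, though you implicitly rely on it: because the mutations in $\mathbf{i}(u)$ are simultaneous (so $B_{jk}(u)=0$ for all $j\in\mathbf{i}(u)$), the exchange relation you write for $x_k$ using the data $B(u),y(u),x(u)$ is valid regardless of the position of $k$ inside the sequence $\mathbf{i}(u)$ --- the preceding mutations in $\mathbf{i}(u)$ neither alter $y_k$, nor the $k$-th column of $B$, nor any $x_j$ that appears with nonzero exponent. This is what justifies applying~\eqref{eq: x-mutation} with the ``pre-step-$u$'' seed rather than an intermediate one.
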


We call the family of relations in Proposition \ref{prop: T-system} the \emph{T-system} associated with $\gamma$,
and the triple of matrices $(N_{\gamma,0},N_{\gamma,+},N_{\gamma,-})$ the \emph{T-system triple} of $\gamma$.
From \eqref{eq: NT0}, the left-hand side in the T-system can be rewritten as 
\begin{align*}
\prod_{b,p}   T_b (u+p)^{ n_{ba;p}^{0}}
= T_a(u) T_{\sigma(a)} (u+\lambda_{a}).
\end{align*}
Figure \ref{fig:T-system} is a schematic description of a T-system.

\subsection{Relation between Y-systems and T-systems}
\label{section:relation betwenn Y and T}
Let $\gamma = (B,d,\mathbf{i},\nu)$ be a mutation loop.

\begin{lemma}\label{lemma: symmetizers for mutation loops}
	The family of positive integers $(d_i(u))_{(i,u) \in I \times \ZZ}$ defined by $d_i(u)=d_i$
	satisfies the following:
	\begin{enumerate}
		\item $B_{ij}(u) d_j(u) = - B_{ji}(u) d_i(u)$ for any $i,j\in I$ and $u \in \ZZ$,
		\item $d_i(u) = d_j(v)$ for any $(i,u),(j,v) \in P_{\gamma}$ such that $\pi(i,u)=\pi(j,v)$.
	\end{enumerate}
\end{lemma}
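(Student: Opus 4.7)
The plan is to verify both assertions by reducing them to general facts about mutation of skew-symmetrizable matrices together with the $\nu$-invariance of $d$ built into the definition of a mutation loop. Since $d_i(u)=d_i$ does not depend on $u$, part (1) amounts to showing that $d$ is a right symmetrizer of every $B(u)$, and part (2) reduces to tracking how the equivalence relation $\sim$ on $P_\gamma$ interacts with $\nu$.

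For part (1), I would first note that $d$ symmetrizes $B(0)=B$ by hypothesis, and then appeal to the observation made just after the definition of matrix mutation: if $d$ is a symmetrizer of $B$, then it is also a symmetrizer of $\mu_k(B)$. Iterating along the mutation sequence \eqref{eq: mutation sequence} shows that $d$ symmetrizes $B(k)$ for every $0 \le k \le t$. For arbitrary $u = nt + k$ with $n \in \ZZ_{>0}$ and $0 \le k \le t-1$, one uses $B(nt+k) = \nu^n(B(k))$ together with $d = \nu(d)$: relabeling indices of a skew-symmetrizable matrix by a bijection that preserves $d$ sends symmetrizers to symmetrizers, so $d$ symmetrizes $\nu^n(B(k))$ as well. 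For $u < 0$ the same argument applies after invoking the involutivity of mutations to propagate backward.

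For part (2), suppose $(i,u),(j,v) \in P_\gamma$ satisfy $\pi(i,u) = \pi(j,v) = a$. By the definition of $\pi$, both points are $\sim$-equivalent to the unique representative of the form $(i_a, u_a)$ with $0 \le u_a \le t-1$, so by transitivity of $\sim$ we obtain $(i,u) \sim (j,v)$. Unfolding the definition of $\sim$ yields some $g \in \ZZ$ with $j = \nu^g(i)$. The condition $d = \nu(d)$ reads $d_{\nu(i)} = d_i$ for all $i \in I$, hence iteratively $d_j = d_{\nu^g(i)} = d_i$, which gives $d_j(v) = d_i(u)$ as required.

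I do not anticipate any serious obstacle here; both parts are essentially bookkeeping once the relevant definitions are unwound. The only small point worth being explicit about, and the closest thing to a hurdle, is the symmetrizer-preservation statement under the relabeling by $\nu$ in part (1): from $\nu(B)_{\nu(i)\nu(j)} = B_{ij}$ and $d_{\nu(i)} = d_i$ one verifies directly that $\nu(B)_{\nu(i)\nu(j)}\, d_{\nu(j)} = -\nu(B)_{\nu(j)\nu(i)}\, d_{\nu(i)}$, and this together with the mutation-preservation property handles every $B(u)$ at once.
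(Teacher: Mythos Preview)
Your proposal is correct and follows essentially the same approach as the paper: part (1) reduces to the preservation of symmetrizers under mutation, and part (2) reduces to the $\nu$-invariance $d=\nu(d)$. The paper's proof is extremely terse (two sentences), and your version simply fills in the details; the only minor remark is that for part (1) you need not invoke $\nu$ for $u>t$, since every $B(u)$ is already obtained from $B(0)$ by a sequence of mutations along the infinite sequence \eqref{eq: infinite mutation sequence}, but your argument via $B(nt+k)=\nu^n(B(k))$ is equally valid.
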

\begin{proof}
	(1) holds since mutations preserve a symmetrizer.
	(2) follows from $d=\nu(d)$.
\end{proof}

From Lemma \ref{lemma: symmetizers for mutation loops}, the positive integers $d'_1 ,\dots, d'_r$
defined by $d'_a=d_i(u)$ where $(i,u) \in \pi^{-1}(a)$ do not depend on the choices of $(i,u)$, and $d'_a = d_i(u)$ for any $(i,u) \in I \times \ZZ$ such that $\pi(s(i,u))=a$.
We denote by $D_{\gamma}$ the positive integer diagonal matrix $\diag(d'_1, \dots, d'_r)$.

\begin{proposition}[cf. Proposition 5.13, \cite{Nakb}]
	\label{proposition:Y/T duality}
	Let $(N_{\gamma,0}^{\vee},N_{\gamma,+}^{\vee},N_{\gamma,-}^{\vee})$
	be the Y-system triple
	and $(N_{\gamma,0},N_{\gamma,+},N_{\gamma,-})$
	be the T-system triple
	of a complete mutation loop $\gamma$.
	Then we have
	\begin{align*}
		N_{\gamma,0}^{\vee} = N_{\gamma,0}
	\end{align*}
	and
	\begin{align*}
		D_{\gamma}N_{\gamma,\varepsilon}^{\vee} = N_{\gamma,\varepsilon} D_{\gamma}
	\end{align*}
	for any $\varepsilon \in \{0,{+},{-}\}$.
\end{proposition}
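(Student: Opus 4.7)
The plan is to prove the two identities entry by entry. For the first, $N_{\gamma,0}^{\vee} = N_{\gamma,0}$, I would compare directly the $(a,b)$ entries defined by \eqref{eq: NY0} and \eqref{eq: NT0}: reading the T-formula in the swapped form that gives $(N_{\gamma,0})_{ab}$ rather than $(N_{\gamma,0})_{ba}$, both entries equal $\delta_{ab} + \delta_{a'b} z^{\lambda_{a'}}$ with $a' = \sigma^{-1}(a)$. The corresponding commutation $D_{\gamma} N_{\gamma,0}^{\vee} = N_{\gamma,0} D_{\gamma}$ reduces to the identity $d'_a = d'_{a'}$ in the only nontrivial entry, and this holds because $s$ preserves first coordinates: $a$ and $\sigma(a)$ share the same underlying index $\alpha \in I$, so $d'_a = d_{\alpha} = d'_{\sigma(a)}$.

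The substantive case is $\varepsilon = \pm$. Here the plan is to show entrywise $d'_a (N_{\gamma,\varepsilon}^{\vee})_{ab} = d'_b (N_{\gamma,\varepsilon})_{ab}$ by rewriting both sides as sums over (transversals of) the set
\[
\mathcal{R}_{a,b} = \{ (\alpha, \beta, \tau_a, \tau_b) \mid (\alpha, \tau_a) \in \pi^{-1}(a),\; (\beta, \tau_b) \in \pi^{-1}(b),\; s(\alpha, \tau_b) = (\alpha, \tau_a) \}
\]
modulo the diagonal shift $(\alpha, \beta, \tau_a, \tau_b) \mapsto (\nu\alpha, \nu\beta, \tau_a + t, \tau_b + t)$. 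Formula \eqref{eq: NY+-}, with $(k,u) = (\alpha_0, \tau_{a,0})$ held fixed and $(j,v) = (\beta, \tau_b)$ varying, enumerates a transversal of $\mathcal{R}_{a,b}$ in which the first pair $(\alpha, \tau_a)$ is fixed, contributing $[\varepsilon B_{\beta\alpha}(\tau_b)]_+ z^{\lambda(\alpha, \tau_b)}$ at each quadruple. Formula \eqref{eq: NT+-}, applied to $(N_{\gamma,\varepsilon})_{ab}$ with $(k', u') = (\beta_0, \tau_{b,0})$ fixed and $j = \alpha$ varying, enumerates a different transversal in which the second pair $(\beta, \tau_b)$ is fixed, contributing $[-\varepsilon B_{\alpha\beta}(\tau_b)]_+ z^{\lambda(\alpha, \tau_b)}$. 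Both summands are invariant under the diagonal shift (using the periodicity $B(v + t) = \nu(B(v))$ and $\lambda(\nu\alpha, \tau + t) = \lambda(\alpha, \tau)$), so each sum depends only on $\mathcal{R}_{a,b}/{\sim}$ and can be evaluated on any fundamental domain.

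The conclusion then follows termwise from Lemma \ref{lemma: symmetizers for mutation loops}~(1): the identity $d_{\alpha} B_{\alpha\beta}(\tau_b) = -d_{\beta} B_{\beta\alpha}(\tau_b)$ together with $d_{\alpha} = d'_a$ and $d_{\beta} = d'_b$ (both positive) gives $d'_a [\varepsilon B_{\beta\alpha}(\tau_b)]_+ = d'_b [-\varepsilon B_{\alpha\beta}(\tau_b)]_+$, and summing over $\mathcal{R}_{a,b}/{\sim}$ yields the desired matrix identity. The main obstacle is the bookkeeping verification that the two transversals cover the same quotient and that the $z$-exponents coincide; this is most delicate in the case of simultaneous mutations, i.e.\ when $(\alpha, \tau_b) \in P_{\gamma}$, where the two-case definition of $s$ must be unwound carefully. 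Once that matching is set up, the skew-symmetry computation is immediate.
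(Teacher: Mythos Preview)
Your proposal is correct and is essentially the same argument as the paper's, just with the bookkeeping made more explicit. The paper also proves $N_{\gamma,0}^{\vee}=N_{\gamma,0}$ by direct comparison of \eqref{eq: NY0} and \eqref{eq: NT0}, deduces the $\varepsilon=0$ commutation from Lemma~\ref{lemma: symmetizers for mutation loops}, and for $\varepsilon=\pm$ rewrites the Y-sum \eqref{eq: NY+-} (indexed by a representative in $\pi^{-1}(a)$) as a sum indexed by a representative in $\pi^{-1}(b)$, then compares termwise with \eqref{eq: NT+-} using $B_{jk}(u)\,d_k = -B_{kj}(u)\,d_j$; your ``two transversals of $\mathcal{R}_{a,b}/{\sim}$'' is precisely this reindexing step, and your handling of the simultaneous-mutation boundary case (where the contribution vanishes because $B_{\alpha\beta}(\tau_b)=0$) matches what the paper uses implicitly.
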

\begin{proof}
	The first identity follows from
	\begin{align*}
		\sum_{p} \check{n}^{0}_{ab;p} z^p = \delta_{ab} + \delta_{a'b} z^{\lambda_{a'}}
		=\delta_{ba} + \delta_{\hat{b}a} z^{\lambda_b}
		=\sum_{p} n^{0}_{ab;p} z^p.
	\end{align*}
	The second identity for $\varepsilon=0$ follows from Lemma \ref{lemma: symmetizers for mutation loops}.
	We now the second identity for $\varepsilon=\pm$.
	Let $(k,u) \in \pi^{-1}(b)$ and $(k',u') \in \pi^{-1}(a)$.
	Then we have
	\begin{align*}
		\sum_{p} \check{n}^{\pm}_{ab;p} z^p &=
		\sum_{\substack{(j,v) \in P_{\gamma} \\ s(k',v)=(k',u'),\pi(j,v)=b}}
		\maxzero{\pm B_{jk'}(v)} z^{\lambda(k,v)} \\ 
		&=
		\sum_{\substack{j' \in I \\ \pi(s(j',u))=a}}
		\maxzero{\pm B_{kj'}(u)} z^{\lambda(j',u)}.
	\end{align*}
	On the other hand, Lemma \ref{lemma: symmetizers for mutation loops} implies that
	\begin{align*}
		\sum_{p} n^{\pm}_{ab;p} z^p &=
		\sum_{\substack{j \in I \\ \pi(s(j,u))=a}}
		\maxzero{\mp B_{jk}(u)} z^{\lambda(j,u)}\\
		&=
		\sum_{\substack{j \in I \\ \pi(s(j,u))=a}}
		\maxzero{\pm d_j(u) d_k(u)^{-1} B_{kj}(u)} z^{\lambda(j,u)}\\
		&=
		d'_a (d'_b)^{-1} \sum_{\substack{j \in I \\ \pi(s(j,u))=a}}
		\maxzero{\pm  B_{kj}(u)} z^{\lambda(j,u)}.
	\end{align*}
\end{proof}

For any matrix $A$,
we denote the transpose of $A$ by $A^{\mathsf{T}}$.
For any $A \in \mat_{r \times r}(\ZZ[z^{\pm1}])$,
we define a matrix $A^{\dagger} \in \mat_{r \times r}(\ZZ[z^{\pm1}])$
by
$A^{\dagger} = ({A}|_{z=z^{-1}})^{\mathsf{T}}$.
Clearly, we have $(A^{\dagger})^{\dagger}=A$
and $(AB)^{\dagger} = B^\dagger A^\dagger$
for any $A,B \in \mat_{r \times r}(\ZZ[z^{\pm1}])$.

Let $(A_{\gamma,+}^{\vee},A_{\gamma,-}^{\vee})$ and
$(A_{\gamma,+},A_{\gamma,-})$ be
the pairs of matrices defined by
$A_{\gamma,\pm}^{\vee} = N_{\gamma,0}^{\vee} - N_{\gamma,\pm}^{\vee}$
and
$A_{\gamma,\pm} = N_{\gamma,0} - N_{\gamma,\pm}$, respectively.
We call them the \emph{Y-system pair} and the \emph{T-system pair} of $\gamma$.
These pairs of matrices describe the following relation between the Y-system and the T-system:

\begin{theorem}\label{theorem: AYp ATm = AYm ATp}
	Let $\gamma$ be a complete mutation loop, and
	$(A_{\gamma,+}^{\vee},A_{\gamma,-}^{\vee})$ and $(A_{\gamma,+},A_{\gamma,-})$ be the Y-system pair and the T-system pair of $\gamma$, respectively.
	Then we have
	\begin{align*}
	A_{\gamma,+}^{\vee} A_{\gamma,-}^{\dagger}
	=A_{\gamma,-}^{\vee} A_{\gamma,+}^{\dagger}.
	\end{align*}
\end{theorem}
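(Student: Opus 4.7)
The plan is to reduce the claim to a $\dagger$-self-adjointness assertion via the duality in Proposition \ref{proposition:Y/T duality}, and then verify it by a combinatorial matching argument governed by the skew-symmetrizability of the $B(u)$'s. Concretely, Proposition \ref{proposition:Y/T duality} gives $D_\gamma N_{\gamma,\varepsilon}^{\vee} = N_{\gamma,\varepsilon} D_\gamma$ for each $\varepsilon \in \{0,+,-\}$, and hence $D_\gamma A_{\gamma,\pm}^{\vee} = A_{\gamma,\pm} D_\gamma$. Left-multiplying the target identity by the invertible diagonal matrix $D_\gamma$ therefore converts it into the equivalent statement
\[
A_{\gamma,+} D_\gamma A_{\gamma,-}^{\dagger} = A_{\gamma,-} D_\gamma A_{\gamma,+}^{\dagger}.
\]
Since $D_\gamma^{\dagger} = D_\gamma$, the right-hand side is the $\dagger$-transpose of the left-hand side, so it suffices to show that $A_{\gamma,+} D_\gamma A_{\gamma,-}^{\dagger}$ is fixed by $\dagger$.

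Next, I would expand $A_{\gamma,\pm} = N_{\gamma,0} - N_{\gamma,\pm}$ and set $E := N_{\gamma,+} - N_{\gamma,-}$. The quadratic term $N_{\gamma,0} D_\gamma N_{\gamma,0}^{\dagger}$ is manifestly $\dagger$-symmetric and so cancels; writing $M := A_{\gamma,+} D_\gamma A_{\gamma,-}^{\dagger} - A_{\gamma,-} D_\gamma A_{\gamma,+}^{\dagger}$, one is left with
\[
M = N_{\gamma,0} D_\gamma E^{\dagger} - E D_\gamma N_{\gamma,0}^{\dagger} + N_{\gamma,+} D_\gamma N_{\gamma,-}^{\dagger} - N_{\gamma,-} D_\gamma N_{\gamma,+}^{\dagger}.
\]
The goal is to show that $M = 0$.

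Finally, I would verify $M = 0$ entry-wise. By \eqref{eq: NT0} and \eqref{eq: NT+-}, each summand of $M_{ab}$ decomposes into a sum indexed by pairs of ``events'' in the infinite mutation sequence---either an arrow $B_{jk}(v)$ at a mutation point $(k,v)$, or a $\sigma$-labelled contribution coming from $N_{\gamma,0}$---weighted by entries of $D_\gamma$. The skew-symmetry $B_{ij}(v) d_j = -B_{ji}(v) d_i$ of Lemma \ref{lemma: symmetizers for mutation loops} pairs these contributions, and the mutation formula \eqref{eq: B mutation} relates $B(v)$ and $B(v+1)$ at mutation points so that matching across steps is possible. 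The main obstacle is the ``mixed'' block $N_{\gamma,+} D_\gamma N_{\gamma,-}^{\dagger} - N_{\gamma,-} D_\gamma N_{\gamma,+}^{\dagger}$: the two $B$-entries involved live at possibly distinct mutation points, and bringing them to a common reference time via the mutation formula produces cross-terms that must be absorbed into the linear $N_{\gamma,0}$-terms. A more structural route I would pursue in parallel is to realise $A_{\gamma,+} D_\gamma A_{\gamma,-}^{\dagger}$ as the $\nu$-periodic compression of a global skew-symmetric matrix attached to the infinite mutation sequence on $P_\gamma$; the desired $\dagger$-symmetry would then be a direct consequence of the skew-symmetrizability of this global matrix, bypassing the explicit combinatorics.
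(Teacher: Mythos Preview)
Your reduction via Proposition \ref{proposition:Y/T duality} to the $\dagger$-symmetry of $A_{\gamma,+} D_\gamma A_{\gamma,-}^{\dagger}$ is correct, and after expansion your identity $M=0$ is exactly the paper's key equation \eqref{eq:NYNT} left-multiplied by $D_\gamma$. The ingredients you name---the skew-symmetrizability of Lemma \ref{lemma: symmetizers for mutation loops} and the mutation rule \eqref{eq: B mutation}---are also precisely the ones the paper uses. So your overall strategy matches the paper's.

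The gap is in the last paragraph: you identify the obstacle (the mixed block $N_{\gamma,+} D_\gamma N_{\gamma,-}^{\dagger} - N_{\gamma,-} D_\gamma N_{\gamma,+}^{\dagger}$) but do not resolve it, and the alternative ``structural route'' is only gestured at. The missing idea is a specific telescoping structure. In the paper's entry-wise computation, for fixed $a,b$ and $p$, choosing $(i,u)\in\pi^{-1}(a)$ and setting $v=u-p$, $u'=u-\lambda_{a'}$, $v'=v-\lambda_{b'}$, a four-case analysis (according to the relative order of $u,v$ and of $u',v'$) shows that the $N_0$-containing part of the $(ab;p)$-entry equals
\[
\sum_{\substack{j \in I\\(j,v)\in P_\gamma,\ \pi(j,v)=b}} \bigl(B_{ji}(\min(u,v)) + B_{ji}(\max(u',v'))\bigr),
\]
while the mixed part equals the sum of $\maxzero{B_{ki}(w)}\maxzero{B_{jk}(w)}-\maxzero{-B_{jk}(w)}\maxzero{-B_{ki}(w)}$ over mutation points $(k,w)$ with $\max(u',v')<w<\min(u,v)$. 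These coincide because iterating the mutation rule \eqref{eq: B mutation} from $\max(u',v')$ to $\min(u,v)$ telescopes exactly to the difference of those two boundary $B$-values (the signs work since neither $i$ nor $j$ is mutated in that range). Your sketch does not isolate this ``two boundary values equal a telescoping interior sum'' mechanism, and without it the claimed absorption of the cross-terms into the linear terms remains unverified.
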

\begin{proof}
	The claim is equivalent to the following equality:
	\begin{align}\label{eq:NYNT}
		N_0^{\vee}N_-^{\dagger}-N_0^{\vee}N_+^{\dagger}-N_-^{\vee}N_0^{\dagger}+N_+^{\vee}N_0^{\dagger}
		=N_+^{\vee}N_-^{\dagger} - N_-^{\vee}N_+^{\dagger}.
	\end{align}
	Let $a,b \in [1,r]$ and $p \in \ZZ$.
	Let us choose an element $(i,u) \in \pi^{-1}(a)$.
	Let $a'=\sigma^{-1}(a)$ and $b'=\sigma^{-1}(b)$.
	Let $p_a = \lambda_{a'}$ and $p_b = \lambda_{b'}$.
	Let $v=u-p$, $u' = u-p_a$, and $v'=v-p_b$.
	Then the $(ab;p)$-th entry in the left-hand side in \eqref{eq:NYNT}
	is given by
	\begin{align*}
	&n_{ba;-p}^{-} + n_{ba';p_a-p}^{-} 
	-n_{ba;-p}^{+} - n_{ba';p_a-p}^{+}
	-\check{n}_{ab;p}^{-} - \check{n}_{ab';p+p_b}^{-}
	+\check{n}_{ab;p}^{+} + \check{n}_{ab';p+p_b}^{+}
	\\
	&=
	\left\{
	\begin{array}{lllll}
		+n_{ba;-p}^{-}&\!\!\!-n_{ba;-p}^{+}&\!\!\!
		-\check{n}_{ab';p+p_b}^{-} &\!\!\!+ \check{n}_{ab';p+p_b}^{+}
		& \text{if $u \leq v$ and $u' \leq v'$},\\
		+n_{ba;-p}^{-}&\!\!\!-n_{ba;-p}^{+}&\!\!\!
		+ n_{ba';p_a-p}^{-} &\!\!\!- n_{ba';p_a-p}^{+} 
		& \text{if $u \leq v$ and $v' \leq u'$},\\
		-\check{n}_{ab;p}^{-} &\!\!\!+ \check{n}_{ab;p}^{+}&\!\!\!
		-\check{n}_{ab';p+p_b}^{-}&\!\!\!+ \check{n}_{ab';p+p_b}^{+}
		& \text{if $v \leq u$ and $u' \leq v'$},\\
		-\check{n}_{ab;p}^{-} &\!\!\!+ \check{n}_{ab;p}^{+}&\!\!\!
		+n_{ba';p_a-p}^{-} &\!\!\!- n_{ba';p_a-p}^{+}
		& \text{if $v \leq u$ and $v' \leq u'$},
	\end{array} \right. \\
	&=
	\sum_{\substack{j \in I\\ (j,v) \in P_{\gamma}, \pi(j,v)=b}}
	\begin{cases}
		B_{ji}(u) + B_{ji}(v')
		& \text{if $u \leq v$ and $u' \leq v'$},\\
		B_{ji}(u) + B_{ji}(u')
		& \text{if $u \leq v$ and $v' \leq u'$},\\
		B_{ji}(v) + B_{ji}(v')
		& \text{if $v \leq u$ and $u' \leq v'$},\\
		B_{ji}(v) + B_{ji}(u')
		& \text{if $v \leq u$ and $v' \leq u'$},
	\end{cases}\\
	&=\sum_{\substack{j \in I \\ (j,v) \in P_{\gamma}, \pi(j,v)=b}} (B_{ji}(\min(u,v)) + B_{ji}(\max(u',v') )).
	\end{align*}
	On the other hand, the $(ab;p)$-th entry in the right-hand side in \eqref{eq:NYNT} is given by
	\begin{align*}
	&\sum_{c=1}^r \sum_{w \in \ZZ} 
	(\check{n}_{ac;u-w}^{+} n_{bc;v-w}^{-}
	-\check{n}_{ac;u-w}^{-} n_{bc;v-w}^{+}) \\
	&=\!\! \sum_{\substack{j \in I,(k,w) \in P_{\gamma}\\ (j,v) \in P_{\gamma},\pi(j,v) =b \\ \max(u',v')<w<\min(u,v) }}
	(\maxzero{B_{ki}(w)}\maxzero{B_{jk}(w)}
	-\maxzero{-B_{jk}(w)}\maxzero{-B_{ki}(w)} ).
	\end{align*}
	These two entries coincide since
	\begin{align*}
		&B_{ji}(\min(u,v)) + B_{ji}(\max(u',v') ) \\
		&=
		\sum_{\substack{(k,w) \in P_{\gamma} \\ \max(u',v')<w<\min(u,v) }}
		(\maxzero{B_{ki}(w)}\maxzero{B_{jk}(w)}
		-\maxzero{-B_{jk}(w)}\maxzero{-B_{ki}(w)} )
	\end{align*}
	by the rule of matrix mutations \eqref{eq: B mutation}.
\end{proof}

\section{Axiomatic approach to Y/T systems}
\subsection{T-data}
Let $r$ be a positive integer.
As in the last section,
we define an involution $\dagger: \mat_{r \times r}(\QQ(z)) \to \mat_{r \times r}(\QQ(z))$ by
$A^{\dagger}=(A|_{z=z^{-1}})^{\mathsf{T}}$.

For a triple $(N_0, N_+, N_-)$ of the matrices in $\mat_{r \times r}(\ZZ[z])$ whose entries are given by
\begin{align}\label{eq:entries of N}
N_{\varepsilon}=\biggl(\sum_{p \in \ZZ_{\geq 0}} n_{ab;p}^{\varepsilon} z^p \biggr)_{a,b \in [1,r]},
\end{align}
we consider the following conditions:
\begin{enumerate}[label=(N\arabic*),ref=N\arabic*,leftmargin=2cm]
	\item \label{item:N1}
	$n_{ab;p}^{0} = \delta_{ab} \delta_{p0} + \delta_{a\sigma(b)} \delta_{pp_a}$ for
	some $\sigma \in \mathfrak{S}_r$ and $p_a \in \ZZ_{>0}$,
	\item \label{item:N2}
	$n_{ab;p}^{+} \geq 0$ and $n_{ab;p}^{-} \geq 0$ for any $a,b,p$,
	\item \label{item:N3}
	$n_{ab;p}^{+} = 0$ and $n_{ab;p}^{-}=0$ unless $0<p<p_a$,
	\item \label{item:N4}
	$n_{ab;p}^{+} n_{ab;p}^{-} = 0$ for any $a,b,p$.
\end{enumerate}

\begin{definition}\label{def:T-datum}
	We say that a triple of matrices $\alpha=(A_+,A_-,D)$ is a \emph{T-datum} of size $r$ if
	$A_\pm$ can be written as $A_\pm=N_0 - N_\pm$ by a triple of matrices $(N_0,N_+N_-)$ in $\mat_{r \times r}(\ZZ[z])$ satisfying (\ref{item:N1})--(\ref{item:N4}), and $D$ is a positive integer diagonal matrix that satisfies the following conditions:
	\begin{itemize}
		\item $N_0 D= D N_0$,
		\item $D^{-1}N_{\pm}D \in \mat_{r \times r}(\ZZ[z])$,
		\item $A_+ D A_-^{\dagger} = A_- D A_+^{\dagger}$.
	\end{itemize}
\end{definition}

It is clear that the triple $(N_0,N_+,N_-)$ that satisfies the conditions (\ref{item:N1})--(\ref{item:N4}) is uniquely recovered from $(A_+, A-)$ as follows:
$N_0=\maxzero{A_+}=\maxzero{A_-}$, $N_+=\maxzero{-A_+}$, and
$N_-=\maxzero{-A_-}$, where we take $\maxzero{\;}$ for each coefficient.
Note that both matrices $A_+$ and $A_-$ have non-zero determinants since their determinants are monic polynomials with constant terms $1$, which follows from the conditions (\ref{item:N1}) and (\ref{item:N3}).

We say that the last equation
\begin{align}\label{eq: symplectic}
	A_+ D A_-^{\dagger} = A_- D A_+^{\dagger}
\end{align}
in Definition \ref{def:T-datum}
is the \emph{symplectic relation} due to the following lemma, which can be easily verified:

\begin{lemma}
	Let $A_+,A_- \in \mat_{r \times r}(\ZZ[z^{\pm1}])$
	be matrices with non-zero determinants,
	and $D$ be a positive integer diagonal matrix. Then the following conditions are equivalent:
	\begin{enumerate}
		\item $A_+ D A_-^{\dagger} = A_- D A_+^{\dagger}$.
		\item $A_+ D A_-^{\dagger}$ is a $\dagger$-invariant.
		\item $A_- D A_+^{\dagger}$ is a $\dagger$-invariant.
		\item $(A_-)^{-1} A_+ D$ is a $\dagger$-invariant.
		\item $(A_+)^{-1} A_- D$ is a $\dagger$-invariant.
		\item $D^{-1} (A_-)^{-1} A_+$ is a $\dagger$-invariant.
		\item $D^{-1} (A_+)^{-1} A_-$ is a $\dagger$-invariant.
		\item The rows of the $r \times 2r$ matrices $[ A_+ \; A_- ]$ are pairwise orthogonal with respect to the symplectic pairing $\langle \, ,\, \rangle: (\ZZ[z^{\pm1}])^{2r} \times (\ZZ[z^{\pm1}])^{2r} \to \ZZ[z^{\pm1}]$ defined by
		\begin{align*}
		\left\langle 
		\begin{bmatrix}
		f(z)\\
		g(z)
		\end{bmatrix},
		\begin{bmatrix}
		f'(z)\\
		g'(z)
		\end{bmatrix}
		\right\rangle 
		=
		\begin{bmatrix}
		f(z)^{\mathsf{T}} & g(z)^{\mathsf{T}}
		\end{bmatrix}
		\begin{bmatrix}
		O & D \\
		-D & O
		\end{bmatrix}
		\begin{bmatrix}
		f'(z^{-1}) \\
		g'(z^{-1})
		\end{bmatrix},
		\end{align*}
		where $f(z),g(z),f'(z),g'(z) \in (\ZZ[z^{\pm1}])^r$.
	\end{enumerate}
\end{lemma}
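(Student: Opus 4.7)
The plan is to leverage that $\dagger$ is an anti-involution on $\mat_{r \times r}(\ZZ[z^{\pm 1}])$, meaning $(AB)^{\dagger} = B^{\dagger} A^{\dagger}$ and $(A^{\dagger})^{\dagger} = A$, and that $D^{\dagger} = D$ because $D$ is a positive integer diagonal matrix. With these two facts alone, every one of the seven equivalences reduces to a short algebraic manipulation. I will establish in turn (1)$\iff$(2)$\iff$(3), (1)$\iff$(4)$\iff$(5), the identifications (4)$\iff$(6) and (5)$\iff$(7), and finally (1)$\iff$(8).

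For (1)$\iff$(2)$\iff$(3), I compute directly: $(A_+ D A_-^{\dagger})^{\dagger} = (A_-^{\dagger})^{\dagger} D^{\dagger} A_+^{\dagger} = A_- D A_+^{\dagger}$, so the $\dagger$-invariance of $A_+ D A_-^{\dagger}$ is literally condition (1); the case of $A_- D A_+^{\dagger}$ is symmetric.

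For (1)$\iff$(4), I multiply (1) on the left by $A_-^{-1}$ and on the right by $(A_-^{\dagger})^{-1}$, obtaining $A_-^{-1} A_+ D = D A_+^{\dagger} (A_-^{\dagger})^{-1} = D (A_-^{-1} A_+)^{\dagger} = (A_-^{-1} A_+ D)^{\dagger}$, which is the $\dagger$-invariance of $A_-^{-1} A_+ D$; each step is reversible. Swapping $+ \leftrightarrow -$ yields (1)$\iff$(5). For (4)$\iff$(6), setting $M = A_-^{-1} A_+$, condition (4) reads $MD = D M^{\dagger}$ (after using $(MD)^{\dagger} = D M^{\dagger}$) while (6) reads $D^{-1} M = M^{\dagger} D^{-1}$, and these two identities coincide after multiplying by $D$; the same argument with $M = A_+^{-1} A_-$ gives (5)$\iff$(7).

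Finally, for (1)$\iff$(8), I identify the $i$-th row of the matrix $[A_+ \; A_-]$ with the column vector $\bigl[(A_+)_{i,\cdot}^{\mathsf{T}} \; (A_-)_{i,\cdot}^{\mathsf{T}}\bigr]^{\mathsf{T}} \in (\ZZ[z^{\pm 1}])^{2r}$ and expand the symplectic pairing of the $i$-th and $j$-th rows. A direct computation, using only the matrix form of the pairing, produces exactly the $(i,j)$-entry of $A_+ D A_-^{\dagger} - A_- D A_+^{\dagger}$, so pairwise orthogonality of all rows is equivalent to the symplectic relation (1). There is no real obstacle here: the whole lemma is formal bookkeeping about how $\dagger$ interacts with products and with the $\dagger$-fixed diagonal matrix $D$, and the only mild point to watch is correctly identifying the rows of $[A_+ \; A_-]$ with column vectors when verifying (8).
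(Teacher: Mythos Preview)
Your proof is correct and complete. The paper itself omits the proof entirely, merely stating that the lemma ``can be easily verified''; your argument supplies exactly the routine verification the author had in mind, using only the anti-involution property of $\dagger$ and $D^{\dagger}=D$.
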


Let $\alpha=(A_+,A_-,D)$ be a T-datum.
Then the triple $\alpha^{\vee}=(A_+^{\vee},A_-^{\vee},D^{\vee})$
defined by $N_{\varepsilon}^{\vee} = D^{-1} N_{\varepsilon} D$ and $D^{\vee} =  \delta D^{-1}$ where $\delta$ is the product of the greatest common divisor and the least common multiple of all the entries in $D$, is also a T-datum.
The T-datum $\alpha^{\vee}$ is called the \emph{Langlands dual} of $\alpha$.
Clearly, we have $\alpha^{{\vee}{\vee}}=\alpha$.
We write the entries of $N_{\varepsilon}^{\vee}$ as
\begin{align}\label{eq:entries of N check}
	N_{\varepsilon}^{\vee}=\biggl(\sum_{p \in \ZZ_{\geq 0}} \check{n}_{ab;p}^{\varepsilon} z^p \biggr)_{a,b \in [1,r]}.
\end{align}

\begin{definition}\label{def:consistent subset}
	Let $\alpha=(A_+,A_-,D)$ be a T-datum of size $r$.
	We say that a subset $R \subseteq [1,r] \times \ZZ$
	is \emph{consistent} for $\alpha$
	if the following conditions are satisfied:
	\begin{enumerate}[label=(R\arabic*),ref=R\arabic*]
		\item \label{item:R1}
		If $(a,u) \in R$ and $n_{ab;p}^{0}$, $n_{ab;p}^{+}$, or $n_{ab;p}^{-} \neq 0$,
		then $(b,u-p) \in R$.
		\item \label{item:R2}
		If $(a,u) \in R$ and $n_{ba;p}^{0}$, $n_{ba;p}^{+}$, or $n_{ba;p}^{-} \neq 0$,
		then $(b,u+p) \in R$.
		\item \label{item:R3}
		There exists a positive integer $t$ such that $R=R^{(t)}$ and 
		\begin{align*}
			[1,r] \times \ZZ = \bigsqcup_{k=0}^{t-1} R^{(k)},
		\end{align*}
		where $R^{(k)}:= \{ (a,u+k) \mid (a,u) \in R \}$.
	\end{enumerate}
\end{definition}

For example, $[1,r] \times \ZZ$ itself is always consistent since (\ref{item:R1}) and (\ref{item:R2}) are obvious, and (\ref{item:R3}) is satisfied by setting $t=1$.
Note that the positive integer $t$ in (\ref{item:R3}) is uniquely determined from $R$.
In the conditions (\ref{item:R1}) and (\ref{item:R2}), we can replace $n$ with $\check{n}$.
From (\ref{item:N1}) together with (\ref{item:R1}) and (\ref{item:R2}), we have
\begin{align*}
	(a,u) \in R \quad 
	&\text{if and only if} \quad
	(\sigma(a),u+p_{\sigma(a)}) \in R\\
	&\text{if and only if} \quad
	(\sigma^{-1}(a),u-p_a) \in R.
\end{align*}

\begin{definition}\label{def:cap R equivalence}
	Let $(\alpha,R)$ and $(\alpha',R')$ be 
	pairs of T-data of size $r$ and consistent subsets for them.
	They are called \emph{equivalent} if there exists a permutation $\rho \in \mathfrak{S}_r$ such that $A'_{\pm} = \rho(A_{\pm})$, $D'=\rho(D)$,
	and $R'=\rho(R)$, where $\rho(R) = \{ (\rho(a),u)  \mid (a,u) \in R \}$.
\end{definition}

\begin{definition}
	Let $\alpha$ be a T-datum of size $r$,
	and $R \subseteq [1,r] \times \ZZ$ be a consistent subset for $\alpha$.
	We say that a family of elements $(Y_a(u))_{(a,u) \in R }$ is a \emph{solution of the Y-system} associated with $(\alpha,R)$ in a semifield $\PP$
	if $Y_a(u) \in \PP$ and the following relation holds in $\PP$ for any $(a,u) \in R$:
	\begin{align}\label{eq:standalone Y-system}
	\prod_{b,p} Y_b (u-p)^{ \check{n}_{ab;p}^{0}}
	&=
	\frac{ \prod_{b,p}  \bigl(1 \oplus Y_b (u-p)\bigr)^{ \check{n}_{ab;p}^{-}}}{ \prod_{b,p}  \bigl(1 \oplus Y_b (u-p)^{-1}\bigr)^{ \check{n}_{ab;p}^{+}}},
	\end{align}
	where $\prod_{b,p} = \prod_{b=1}^{r} \prod_{p=0}^{\infty}$.
\end{definition}

For any solution of the Y-system associated with $(\alpha,R)$ in $\PP$, we define elements $P_a^{\pm}(u) \in \PP$ $((a,u) \in R)$ by
\begin{align*}
P_a^+(u) = \frac{Y_a(u)}{1 \oplus Y_a(u)},\quad
P_a^-(u) = \frac{1}{1 \oplus Y_a(u)}.
\end{align*}

\begin{definition}
	Let $\alpha$ be a T-datum of size $r$,
	and $R \subseteq [1,r] \times \ZZ$ be a consistent subset for $\alpha$.
	Let $Y=(Y_a(u))_{(a,u) \in R }$ be a solution of the Y-system associated with $(\alpha,R)$ in a semifield $\PP$.
	Let $\mathscr{T}(\alpha,R,Y)$ be the $\ZZ\PP$-algebra generated by the indeterminates $(T_a(u)^{\pm1})_{(a,u) \in R }$ subject to the relation
	\begin{align}\label{eq:standalone T-system}
	\prod_{b,p} T_b(u+p)^{n_{ba;p}^{0}} = P_a^+(u) \prod_{b,p} T_b(u+p)^{n_{ba;p}^{-}} + P_a^-(u) \prod_{b,p} T_b(u+p)^{n_{ba;p}^{+}}
	\end{align}
	for any $(a,u) \in R$,
	together with $T_a(u) T_a(u)^{-1} = 1$.
	We define $\mathscr{T}^{\circ}(\alpha,R,Y)$ to be the subalgebra of $\mathscr{T}(\alpha,R,Y)$ generated by $(T_a(u))_{(a,u) \in R }$.
	We say that $\mathscr{T}^{\circ}(\alpha,R,Y)$ is the \emph{T-algebra} associated with $(\alpha,R,Y)$.
	We often denote $\mathscr{T}^{\circ}(\alpha,R,Y)$ by $\mathscr{T}^{\circ}(\alpha)$ when $R=[1,r] \times \ZZ$ and $\mathbb{P}$ is the trivial semifield.
\end{definition}

The family of relations \eqref{eq:standalone Y-system} is called the \emph{Y-system} associated with $(\alpha,R)$,
and the family of relations \eqref{eq:standalone T-system} is called the \emph{T-system} associated with $(\alpha,R,Y)$.

\begin{example}[Somos-4 recurrence]
	\label{example:cap somos4}
	The triple of $1\times 1$ matrices $\alpha=(A_+,A_-,D)$ defined by
	\begin{align*}
		A_+=
		\begin{bmatrix}
			1-2z^2+z^4
		\end{bmatrix} , \quad 
		A_-= 
		\begin{bmatrix}
			1-z-z^3+z^4
		\end{bmatrix}, \quad 
		D=
		\begin{bmatrix}
			1
		\end{bmatrix}
	\end{align*}
	is a T-datum,
	and the whole set $R=\{1\} \times \ZZ$ is consistent for $\alpha$.
	The family $Y=(Y(u))_{(1,u) \in R}$ defined by
	$Y(u) = c_1 c_2^{-1}$ for any $u \in \ZZ$
	is a solution of the Y-system associated with $\alpha$
	in $\mathrm{Trop}(c_1,c_2)$, where we denote $Y_1(u)$ by $Y(u)$.
	The family of relations
	\begin{align*}
		T(u) T(u+4) = c_1 T(u+1)T(u+3) + c_2 T(u+2)^2
	\end{align*}
	for $u \in \ZZ$ is the T-system associated with $(\alpha,R,Y)$, where we denote $T_1(u)$ by $T(u)$.
	This is called the \emph{Somos-4 recurrence} \cite{FZ_LP}.
\end{example}

\begin{example}[Bipartite belt]
	\label{example:cap b belt}
	Let $A=(2 \delta_{ab} - n_{ab})_{a,b \in [1,r]}$ be a symmetrizable generalized Cartan matrix,
	and $D$ be a right symmetrizer of $A$.
	Suppose that $A$ is bipartite, that is, there exists a function $\epsilon:[1,r] \to \{1,-1\}$ such that
	$n_{ab} > 0$ implies $\epsilon(a) \neq \epsilon(b)$
	for any $a,b \in [1,r]$.
	Let $N=2I_r - A$.
	Then the triple of $r \times r$ matrices $\alpha=(A_+,A_-,D)$ defined by
	\begin{align*}
		A_+ = (1+z^2) I_r, \quad
		A_- = (1+z^2) I_r - z N
	\end{align*}
	is a T-datum since
	\begin{align*}
		A_+ D A_-^{\dagger} - A_- D A_+^{\dagger}
		= (z+z^{-1}) (-DN^{\mathsf{T}} + ND) =0,
	\end{align*}
	and the set $R \subseteq [1,r] \times \ZZ$ defined by
	\begin{align*}
		R = \{ (a,u) \in [1,r] \times \ZZ \mid \epsilon(a) = (-1)^{u-1} \}
	\end{align*}
	is consistent for $\alpha$.
	The family of relations
	\begin{align*}
		Y_a(u) Y_a(u-2) = \prod_{b=1}^{r} (1 \oplus Y_b(u-1) )^{n_{ba}}
	\end{align*}
	for $(a,u) \in R$ is the Y-system associated with $\alpha$,
	and
	\begin{align*}
		T_a(u) T_a(u+2)
		=\frac{Y_a(u) \prod_{b=1}^{r} T_b(u+1)^{n_{ba}} + 1}{1 \oplus Y_a(u)}
	\end{align*}
	for $(a,u) \in R$ is the T-system associated with $(\alpha,R,Y)$.
	The discrete dynamical system given by these relations are called the \emph{bipartite belt} associated with $A$ \cite[Section 8]{FZ4}.
	
	\begin{table}[t]
		\scalebox{0.9}{\parbox{.5\linewidth}{
				\begin{align*}
				\begin{array}{|c|ccccccc|}
				\hline
				u & 0 & 1 & 2 & 3 & 4 & 5 & 6 \\
				\hline
				&&&&&&&
				\\
				Y_1(u) & y_1 && \displaystyle{\frac{1 \oplus y_2 \oplus y_1y_2}{y_1}} && \displaystyle{\frac{1}{y_2}}  && (1 \oplus y_1)y_2 \\
				Y_2(u) && (1 \oplus y_1) y_2 && \displaystyle{\frac{1 \oplus y_2}{y_1 y_2}} && y_1 &  \\
				&&&&&&&\\
				T_1(u) & x_1 && \displaystyle{\frac{y_1 x_2 + 1}{(1 \oplus y_1)x_1}} && \displaystyle{\frac{x_1+y_2}{(1 \oplus y_2)x_2}}  && x_2 \\
				T_2(u) && x_2 && \displaystyle{\frac{y_1 y_2 x_2 +x_1 + y_2}{(1 \oplus y_2 \oplus y_1y_2) x_1 x_2}} && x_1 &  \\
				&&&&&&&\\
				\hline 
				\end{array}
				\end{align*}
		}}
		\caption{The bipartite belt associated with the Cartan matrix of type $A_2$.}
		\label{table:A2 Y/T-system}
	\end{table}
	If $A$ is the Cartan matrix of type $A_2$ for instance,
	the triple of matrices in $\alpha$ is given by
	\begin{align*}
	A_+=
	\begin{bmatrix}
	1+z^2 & 0 \\
	0 & 1+z^2
	\end{bmatrix},\quad
	A_-=
	\begin{bmatrix}
	1+z^2 & -z \\
	-z & 1+z^2
	\end{bmatrix},\quad
	D=
	\begin{bmatrix}
	1 & 0\\
	0 & 1
	\end{bmatrix}.
	\end{align*}
	Table \ref{table:A2 Y/T-system} is the bipartite belt
	associated with the Cartan matrix of type $A_2$ with $\epsilon(1)=-1$ and $\epsilon(2)=1$,
	where $y_1$ and $y_2$ are arbitrary elements in the underlying semifield $\PP$, and we write $T_1(0)$ and $T_2(1)$ as $x_1$ and $x_2$, respectively.
\end{example}

\subsection{T-data from mutation loops}
\label{section:T-data from mutation loops}
Let us see that we can obtain T-data from mutation loops.
Let $\gamma$ be a complete mutation loop of length $r$.
Let $(N_{\gamma,0}, N_{\gamma,+}, N_{\gamma,-})$ be the T-system triple and $(A_{\gamma,+},A_{\gamma,-})$ be the T-system pair of $\gamma$, which are defined in Section \ref{section:T-system and Y-system}.
Let $D_{\gamma}$ be the positive integer diagonal matrix in Proposition \ref{proposition:Y/T duality}.

\begin{lemma}
	The triple $(N_{\gamma,0}, N_{\gamma,+}, N_{\gamma,-})$
	satisfies the conditions (\ref{item:N1})--(\ref{item:N4}).
\end{lemma}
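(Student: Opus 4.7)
The plan is to verify each of (N1)--(N4) directly from the defining formulas \eqref{eq: NT0} and \eqref{eq: NT+-}, using only the combinatorics of mutation loops set up in Section~\ref{section:T-system and Y-system}.

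Condition (N1) is essentially a transposition of \eqref{eq: NT0}: reading off the entry $n^0_{ab;p}$ rather than $n^0_{ba;p}$ gives $n^0_{ab;p} = \delta_{ab}\delta_{p0} + \delta_{a\sigma(b)}\delta_{p\lambda_b}$, which matches (N1) with the permutation $\sigma$ of \eqref{eq:def of sigma} and with $p_a := \lambda_{\sigma^{-1}(a)}$; this is a positive integer because every cycle length $\lambda_c$ is so by construction. Condition (N2) is immediate from $\maxzero{x} \geq 0$ in \eqref{eq: NT+-}.

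The substantive work is in (N3) and (N4), and I will fix $(k, u) \in \pi^{-1}(a)$ throughout, so that the $z^p$-coefficient of the $(b,a)$-entry of $N_{\gamma,\pm}$ equals $\sum_{j} \maxzero{\mp B_{jk}(u)}$, summed over $j \in I$ with $\pi(s(j,u)) = b$ and $\lambda(j,u) = p$. For the lower bound $p \geq 1$ in (N3), I observe that a nonzero contribution forces $(j,u) \notin P_\gamma$: when $j = k$ we have $B_{kk}(u) = 0$, and when $(j,u) \in P_\gamma$ with $j \neq k$ both $j$ and $k$ lie in the simultaneous block $\mathbf{i}(u)$, which forces $B_{jk}(u) = 0$ by the definition of a simultaneous mutation. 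For the upper bound $p < p_b$ (this is the form of (N3) once the indices are swapped), set $v = u+p$ so that $(j,v) \in P_\gamma$ with $\pi(j,v) = b$; running the $\sigma$-and-$\lambda$ analysis backward at $(j,v)$ shows the previous mutation of $j$ lies at $v - p_b$. Since $(j,u) \notin P_\gamma$ and no mutation of $j$ occurs anywhere in $[u, v-1]$, this previous mutation must satisfy $v - p_b < u$, giving $p < p_b$.

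For (N4), if two distinct indices $j_1, j_2$ both contribute to the $z^p$-coefficient of $n^{\pm}_{ba;p}$ with $p > 0$, then $(j_1, u+p)$ and $(j_2, u+p)$ both lie in $\pi^{-1}(b)$; the equivalence $(j_1, u+p) \sim (j_2, u+p)$ with matching time coordinates then forces $j_1 = j_2$ (the shift $g$ in the equivalence must be zero). So at most one $j$ survives, and the product $\maxzero{B_{jk}(u)} \maxzero{-B_{jk}(u)}$ is zero; the case $p = 0$ is already handled by the simultaneous-mutation argument above. The only subtlety worth careful checking is keeping the forward cycle length $\lambda_c$ (which appears in \eqref{eq: NT0}) properly distinguished from the backward cycle length at $(j,v)$ that yields the correct upper bound $p_b$ in (N3); once that bookkeeping is pinned down, the verification is routine.
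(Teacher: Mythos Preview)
Your proof is correct and follows essentially the same route as the paper's, reading off each of (N1)--(N4) directly from \eqref{eq: NT0} and \eqref{eq: NT+-}. You are in fact more careful than the paper in two places: you explicitly argue that a nonzero contribution in \eqref{eq: NT+-} forces $(j,u) \notin P_\gamma$ (needed to get $\lambda(j,u) > 0$ in (N3)), and you verify that at most one $j$ contributes to each $z^p$-coefficient (needed so that the observation $\maxzero{b}\maxzero{-b}=0$ actually yields (N4)); the paper treats both as evident.
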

\begin{proof}
	The condition (\ref{item:N1}) is satisfied if
	$p_a = \lambda_{\sigma^{-1}(a)}$ and $\sigma$ is as in \eqref{eq:def of sigma}.
	The condition (\ref{item:N2}) is obvious from the definition.
	The definition \eqref{eq: NT+-} implies (\ref{item:N3}) since
	$0<\lambda(j,u)<\lambda_{\sigma^{-1}(b)}$ if $\pi(s(j,u))=b$ and $(j,u) \notin P_{\gamma}$.
	The definition \eqref{eq: NT+-} also implies (\ref{item:N4}) since at least one of $\maxzero{b}$ and $\maxzero{-b}$ is zero for any integer $b$.
\end{proof}

\begin{proposition}\label{proposition:cap from mutation loop}
	The triple
	$\alpha_{\gamma}:=(A_{\gamma,+},A_{\gamma,-},D_{\gamma})$ is a T-datum.
\end{proposition}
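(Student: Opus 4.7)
The plan is to verify the three bulleted conditions in Definition \ref{def:T-datum} for $\alpha_\gamma$, using the preceding lemma (which already establishes (\ref{item:N1})--(\ref{item:N4}) for the T-system triple) and exploiting the two structural results of the previous section: Proposition \ref{proposition:Y/T duality} and Theorem \ref{theorem: AYp ATm = AYm ATp}.

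First I would record the consequence of Proposition \ref{proposition:Y/T duality}: since $D_\gamma N_{\gamma,\varepsilon}^{\vee} = N_{\gamma,\varepsilon} D_\gamma$ for $\varepsilon \in \{0,+,-\}$ and $N_{\gamma,0}^\vee = N_{\gamma,0}$, taking $\varepsilon=0$ immediately gives $D_\gamma N_{\gamma,0} = N_{\gamma,0} D_\gamma$, which is the first bullet. For the second bullet, rearranging $D_\gamma N_{\gamma,\pm}^{\vee} = N_{\gamma,\pm} D_\gamma$ yields
\begin{equation*}
D_\gamma^{-1} N_{\gamma,\pm} D_\gamma = N_{\gamma,\pm}^\vee,
\end{equation*}
and the right-hand side lies in $\mat_{r \times r}(\ZZ[z])$ because it is defined by the formulas \eqref{eq: NY0}--\eqref{eq: NY+-} as an explicit polynomial matrix with nonnegative integer coefficients.

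For the symplectic relation, I would combine the same intertwining property with Theorem \ref{theorem: AYp ATm = AYm ATp}. Subtracting the two identities of Proposition \ref{proposition:Y/T duality} gives
\begin{equation*}
D_\gamma A_{\gamma,\pm}^{\vee} = D_\gamma (N_{\gamma,0}^{\vee} - N_{\gamma,\pm}^{\vee}) = (N_{\gamma,0}-N_{\gamma,\pm}) D_\gamma = A_{\gamma,\pm} D_\gamma.
\end{equation*}
Now multiply the identity $A_{\gamma,+}^{\vee} A_{\gamma,-}^{\dagger} = A_{\gamma,-}^{\vee} A_{\gamma,+}^{\dagger}$ of Theorem \ref{theorem: AYp ATm = AYm ATp} on the left by $D_\gamma$ and substitute, obtaining
\begin{equation*}
A_{\gamma,+} D_\gamma A_{\gamma,-}^{\dagger} = D_\gamma A_{\gamma,+}^{\vee} A_{\gamma,-}^{\dagger} = D_\gamma A_{\gamma,-}^{\vee} A_{\gamma,+}^{\dagger} = A_{\gamma,-} D_\gamma A_{\gamma,+}^{\dagger},
\end{equation*}
which is the desired third bullet.

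There is no real obstacle here: essentially all the content has been packaged into Proposition \ref{proposition:Y/T duality} (which encodes the compatibility of the symmetrizer with the Y- and T-system triples) and Theorem \ref{theorem: AYp ATm = AYm ATp} (whose combinatorial proof via matrix mutation is the nontrivial input). The present proposition is a short formal consequence of these two results together with the preceding lemma, so the writing should amount to a few lines chaining these identities together.
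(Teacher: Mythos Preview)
Your proposal is correct and follows essentially the same approach as the paper's proof: both verify the three bullets of Definition \ref{def:T-datum} by invoking Proposition \ref{proposition:Y/T duality} for the first two and then combining the intertwining relation $A_{\gamma,\pm} D_\gamma = D_\gamma A_{\gamma,\pm}^{\vee}$ with Theorem \ref{theorem: AYp ATm = AYm ATp} to obtain the symplectic relation. The only cosmetic difference is that the paper writes the symplectic check as a difference equaling zero rather than as a chain of equalities.
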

\begin{proof}
	We have $D_{\gamma} N_{\gamma,0} =D_{\gamma} N_{\gamma,0}^{\vee} = N_{\gamma,0}D_{\gamma}$, and
	$D_{\gamma}^{-1} N_{\gamma,\pm} D_{\gamma} = N_{\gamma,\pm}^{\vee} \in \mat_{r \times r}(\ZZ[z])$ by Proposition \ref{proposition:Y/T duality}.
	We also have
	\begin{align*}
		A_{\gamma,+} D_{\gamma} A_{\gamma,-}^{\dagger} - A_{\gamma,-} D_{\gamma} A_{\gamma,+}^{\dagger}
		&= D_{\gamma} A_{\gamma,+}^{\vee} D_{\gamma}^{-1} D_{\gamma} A_{\gamma,-}^{\dagger}
		-D_{\gamma} A_{\gamma,-}^{\vee} D_{\gamma}^{-1} D_{\gamma} A_{\gamma,+}^{\dagger} \\
		&= D_{\gamma} ( A_{\gamma,+}^{\vee} A_{\gamma,-}^{\dagger}
		-A_{\gamma,-}^{\vee} A_{\gamma,+}^{\dagger}) \\
		&=0
	\end{align*}
	by Proposition \ref{proposition:Y/T duality} and Theorem \ref{theorem: AYp ATm = AYm ATp}.
\end{proof}

Let $D_{\gamma}^{\vee}$ the diagonal matrix defined by $D_{\gamma}^{\vee} =  \delta D_{\gamma}^{-1}$ where $\delta$ is the product of the greatest common divisor and the least common multiple of all the entries in $D_{\gamma}$.

\begin{corollary}
	The triple
	$\alpha_{\gamma}^{\vee}:=(A_{\gamma,+}^{\vee},A_{\gamma,-}^{\vee},D_{\gamma,-}^{\vee})$ is a T-datum, and it is the Langlands dual of $\alpha_{\gamma}$.
\end{corollary}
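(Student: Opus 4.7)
The plan is to factor the corollary into two steps: first, show that $(A_{\gamma,+}^{\vee}, A_{\gamma,-}^{\vee}, D_{\gamma}^{\vee})$ actually coincides with the Langlands dual of $\alpha_\gamma$ (as defined immediately after Definition \ref{def:T-datum}); second, show that the Langlands dual of \emph{any} T-datum is again a T-datum. The first step is essentially free: Proposition \ref{proposition:Y/T duality} gives $N_{\gamma,\varepsilon}^{\vee} = D_{\gamma}^{-1} N_{\gamma,\varepsilon} D_{\gamma}$, which matches the Langlands dual recipe $N_\varepsilon \mapsto D^{-1} N_\varepsilon D$, and $D_\gamma^{\vee} = \delta D_\gamma^{-1}$ is defined in exactly the same way. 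So once the second step is established, the corollary is immediate from Proposition \ref{proposition:cap from mutation loop}.

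For the second step I would fix a T-datum $\alpha = (A_+, A_-, D)$ with defining triple $(N_0, N_+, N_-)$, set $N_\varepsilon^{\vee} = D^{-1} N_\varepsilon D$ and $D^{\vee} = \delta D^{-1}$, and check (\ref{item:N1})--(\ref{item:N4}) for the dual triple. From $N_0 D = D N_0$ we get $N_0^{\vee} = N_0$, so (\ref{item:N1}) is inherited verbatim (with the same $\sigma$ and $p_a$). The entries of $N_\pm^{\vee}$ are $(d_b/d_a)\, n_{ab;p}^\pm$, which are nonnegative (by (\ref{item:N2}) for $\alpha$) and are integers (by the T-datum axiom $D^{-1} N_\pm D \in \mat_{r \times r}(\ZZ[z])$), supported in the same range $0 < p < p_a$. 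This gives (\ref{item:N2}) and (\ref{item:N3}). Condition (\ref{item:N4}) follows since $(d_b/d_a)^2 n_{ab;p}^+ n_{ab;p}^- = 0$.

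Next I would check the three symmetry axioms of Definition \ref{def:T-datum} for $\alpha^{\vee}$. Commutation: $N_0^{\vee} = N_0$ commutes with $D$, hence with $D^{\vee} = \delta D^{-1}$. Integrality: $(D^{\vee})^{-1} N_\pm^{\vee} D^{\vee} = D\,(D^{-1} N_\pm D)\,D^{-1} = N_\pm$ lies in $\mat_{r \times r}(\ZZ[z])$. For the symplectic relation, the key observation is that $D$ has integer diagonal entries, so $D^{\dagger} = D$, hence $(A_\pm^{\vee})^{\dagger} = (D^{-1} A_\pm D)^{\dagger} = D\, A_\pm^{\dagger}\, D^{-1}$. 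Plugging in gives
\[
A_+^{\vee} D^{\vee} (A_-^{\vee})^{\dagger}
= \delta\, D^{-1} \bigl(A_+ D A_-^{\dagger}\bigr) D^{-1},
\]
and swapping $+$ and $-$ together with the symplectic relation for $\alpha$ closes the loop.

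The ``main obstacle'' here is not conceptual but bookkeeping: one has to keep the conjugations by $D$, the $\dagger$-involution, and the scalar $\delta$ straight while verifying the axioms. A nice cross-check on the symplectic relation specifically for $\alpha_\gamma$: the identity $A_{\gamma,+}^{\vee} D_\gamma^{-1} (A_{\gamma,-}^{\vee})^{\dagger} = A_{\gamma,-}^{\vee} D_\gamma^{-1} (A_{\gamma,+}^{\vee})^{\dagger}$ follows by conjugating Proposition \ref{proposition:cap from mutation loop}'s relation by $D_\gamma^{-1}$, and could alternatively be read off from Theorem \ref{theorem: AYp ATm = AYm ATp} combined with Proposition \ref{proposition:Y/T duality}.
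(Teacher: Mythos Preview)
Your proposal is correct and follows the same approach as the paper. The paper states this corollary without proof, treating it as immediate from Proposition~\ref{proposition:cap from mutation loop}, Proposition~\ref{proposition:Y/T duality}, and the assertion (made right after Definition~\ref{def:T-datum}) that the Langlands dual of any T-datum is again a T-datum; your write-up fills in the verification of that last assertion, which the paper left to the reader.
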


\begin{proposition}
	The subset $R_\gamma \subseteq [1,r] \times \ZZ$ defined in \eqref{eq:def of R gamma} is consistent
	for $\alpha_{\gamma}$.
\end{proposition}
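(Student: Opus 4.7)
The plan is to verify conditions (\ref{item:R1})--(\ref{item:R3}) in turn, with the main tool being the $\nu$-equivariance of the infinite mutation sequence \eqref{eq: infinite mutation sequence} and of the mutation points $P_\gamma$.

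Condition (\ref{item:R3}) is the easiest. By the construction of \eqref{eq: infinite mutation sequence}, the set $P_\gamma$ is invariant under the shift $(i,u) \mapsto (\nu(i),u+t)$, so for each $a \in [1,r]$, the set $\{u \in \ZZ \mid (a,u) \in R_\gamma\}$ is stable under the translation $u \mapsto u+t$. By the definition of $\pi$ via the equivalence $\sim$, each $a$ has a unique representative $(i_a, v_a) \in P_\gamma$ with $0 \leq v_a \leq t-1$, and $\pi^{-1}(a) = \{ (\nu^g(i_a), v_a+gt) \mid g \in \ZZ \}$. Hence $\{u \mid (a,u) \in R_\gamma\} = v_a + t\ZZ$, a single coset. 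Writing $R_\gamma^{(k)} = \{(a,u+k) \mid (a,u) \in R_\gamma\}$, each $(a,w) \in [1,r] \times \ZZ$ lies in exactly one $R_\gamma^{(k)}$ for $k = (w - v_a) \bmod t$, which gives the disjoint decomposition and $R_\gamma = R_\gamma^{(t)}$.

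For (\ref{item:R2}), we exploit the freedom to pick any representative. Given $(a,u) \in R_\gamma$, choose $(k,u) \in P_\gamma$ with $\pi(k,u) = a$ and compute $n_{ba;p}^{\varepsilon}$ using this $(k,u)$. If $n_{ba;p}^{0} \neq 0$, then either $(b,p) = (a,0)$ (trivial) or $b = \sigma(a)$ and $p = \lambda_a$; in the latter case $s(k,u) = (k, u+\lambda_a) \in P_\gamma$ has $\pi = \sigma(a)$, hence $(b, u+p) \in R_\gamma$. If $n_{ba;p}^{\pm} \neq 0$, formula \eqref{eq: NT+-} gives some $j \in I$ with $\pi(s(j,u)) = b$ and $\lambda(j,u) = p$; then $s(j,u) = (j, u+p) \in P_\gamma$ and $(b, u+p) \in R_\gamma$.

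For (\ref{item:R1}), the representative is harder to choose directly, so we use $\nu$-equivariance. Given $(a,u) \in R_\gamma$ with witness $(i_0,u) \in P_\gamma$, suppose $n_{ab;p}^{\varepsilon} \neq 0$. For the $n_{ab;p}^0$ part, the nontrivial case forces $b = \sigma^{-1}(a)$, $p = \lambda_{\sigma^{-1}(a)}$; the previous mutation point of $i_0$, say $(i_0, u') \in P_\gamma$ with $s(i_0,u') = (i_0,u)$, satisfies $\pi(i_0,u') = \sigma^{-1}(a)$ and $u-u' = \lambda_{\sigma^{-1}(a)}$, giving $(b,u-p) \in R_\gamma$. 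For the $n_{ab;p}^{\pm}$ part, swap the roles in \eqref{eq: NT+-}: pick any $(k,v) \in \pi^{-1}(b)$; nonvanishing gives $j \in I$ with $\pi(s(j,v)) = a$, $\lambda(j,v) = p$, so $(j, v+p) \in P_\gamma$ with $\pi = a$. Since both $(j,v+p)$ and $(i_0,u)$ are in $\pi^{-1}(a)$, the equivalence $\sim$ yields $g \in \ZZ$ with $j = \nu^g(i_0)$ and $v+p = u+gt$. Applying $\nu$-equivariance to $(k,v) \in P_\gamma$, we obtain $(\nu^{-g}(k), v-gt) \in P_\gamma$ with $\pi = b$, and $v-gt = u-p$, so $(b,u-p) \in R_\gamma$.

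The only subtle step is the last one: the asymmetry between the formula defining $n_{ba;p}^{\varepsilon}$ (which is naturally computed at a representative of $a$) and the condition (\ref{item:R1}) (which starts from a representative of $a$ but asks about $n_{ab;p}^{\varepsilon}$). The $\nu$-translation argument above is precisely what reconciles this mismatch; everything else is a direct unwinding of the definitions of $\pi$, $s$, $\lambda(i,u)$, and $\lambda_a$.
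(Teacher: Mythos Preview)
Your argument is correct. For (\ref{item:R2}) and (\ref{item:R3}) you do essentially what the paper does: the paper simply cites Proposition~\ref{prop: T-system} for (\ref{item:R2}) and observes that $t$ can be taken to be the length of the partition of $\mathbf{i}$ for (\ref{item:R3}), which is exactly the content of your direct unwinding of \eqref{eq: NT+-} and of the coset description $\{u\mid (a,u)\in R_\gamma\}=v_a+t\ZZ$.

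For (\ref{item:R1}) your route differs from the paper's. The paper invokes Proposition~\ref{prop: Y-system}: the Y-system triple formula \eqref{eq: NY+-} for $\check{n}_{ab;p}^{\pm}$ is computed at a representative $(k,u)\in\pi^{-1}(a)$ and, by its very shape, exhibits a mutation point $(j,u-p)\in P_\gamma$ with $\pi(j,u-p)=b$ whenever $\check{n}_{ab;p}^{\pm}\neq 0$; since $n$ and $\check{n}$ have the same support this gives (\ref{item:R1}) immediately. You instead stay with the T-system formula \eqref{eq: NT+-}, evaluate $n_{ab;p}^{\pm}$ at a representative of $b$, and then use the $\nu$-equivariance of $P_\gamma$ to transport the resulting mutation point to time $u-p$. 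Both work; the paper's approach is shorter because \eqref{eq: NY+-} is already adapted to the backward direction, while yours has the virtue of not appealing to the Y-system triple at all.
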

\begin{proof}
	The conditions (\ref{item:R1}) and (\ref{item:R2}) follow from Proposition \ref{prop: Y-system} and \ref{prop: T-system}, respectively.
	The condition (\ref{item:R3}) is satisfied if we define $t$ in (\ref{item:R3}) as the length of the partition of $\mathbf{i}$.
\end{proof}

\subsection{Mutation loops from T-data}
\label{section:Mutation loops from T-data}
In this section, we prove all T-data can be obtained from mutation loops up to equivalence.

\begin{theorem}\label{theorem:mutation loop from T-datum}
	Suppose that $\alpha=(A_+,A_-,D)$ is a T-datum of size $r$,
	and $R \subseteq [1,r] \times \ZZ$ is consistent for $\alpha$.
	Then there exists a complete mutation loop $\gamma$ of length $r$ such that
	$(\alpha_{\gamma},R_{\gamma})$ and $(\alpha,R)$
	are equivalent,
	where $\alpha_{\gamma}=(A_{\gamma,+},A_{\gamma,-},D_{\gamma})$.
\end{theorem}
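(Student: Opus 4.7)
The plan is to reverse the construction of Section~\ref{section:T-data from mutation loops} and read off a mutation loop $\gamma$ directly from the data $(\alpha,R)$. The combinatorial skeleton is extracted as follows: the consistency conditions (\ref{item:R1})--(\ref{item:R3}) imply that the maps $\psi:(a,u)\mapsto(\sigma(a),u+p_{\sigma(a)})$ and $\tau:(a,u)\mapsto(a,u-t)$ are commuting bijections on $R$. Set $I:=R/\langle\psi\rangle$ with projection $\phi:R\to I$, and let $\nu:I\to I$ be the bijection induced by $\tau$. Picking an ordering of each $\mathbf{i}(u):=\phi(R\cap([1,r]\times\{u\}))$ and concatenating gives $\mathbf{i}:=\mathbf{i}(0)\mid\cdots\mid\mathbf{i}(t-1)$; because each row of $R$ meets $[0,t-1]$ in exactly one point, $\mathbf{i}$ has total length $r$. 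The relation $N_0D=DN_0$ forces $D$ to be constant on $\sigma$-orbits, so the rule $d_i:=D_{aa}$ for any $(a,\cdot)\in\phi^{-1}(i)$ defines a symmetrizer $d$ on $I$.

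The exchange matrix is built from $N_+,N_-$ so as to invert the defining formula~\eqref{eq: NT+-}. Condition (\ref{item:N4}) lets me pick out a single signed integer from each pair $(n^+,n^-)$, so for $k\in\mathbf{i}(u)$ with $\pi(k,u)=a$ and any $j\in I$, I set
\begin{align*}
    B(u)_{jk}:=n^-_{\pi(s(j,u))\,a;\,\lambda(j,u)}-n^+_{\pi(s(j,u))\,a;\,\lambda(j,u)},
\end{align*}
which automatically vanishes whenever $\lambda(j,u)\geq p_a$ by~(\ref{item:N3}). Entries with $j\in\mathbf{i}(u)$ and $k\notin\mathbf{i}(u)$ come from the formula with the roles of $j,k$ swapped, and entries with $j,k\notin\mathbf{i}(u)$ are fixed by demanding compatibility with the quiver-mutation formula~\eqref{eq: B mutation} across time steps (equivalently, by prescribing the $k$-column at the first mutation time of $k$ and transporting back by non-$k$ mutations). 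Set $B:=B(0)$; this yields a candidate mutation loop $\gamma=(B,d,\mathbf{i},\nu)$.

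The bulk of the argument is verification. Skew-symmetrizability $B(u)_{jk}d_k=-B(u)_{kj}d_j$ on the directly defined block unpacks to the $(a,b;p)$-coefficient identity underlying the symplectic relation $A_+DA_-^{\dagger}=A_-DA_+^{\dagger}$; the simultaneous-mutation property $B(u)_{ij}=0$ for distinct $i,j\in\mathbf{i}(u)$ is immediate from $\lambda(j,u)=0$ together with~(\ref{item:N3}); and $\nu$-equivariance $B(u+t)=\nu(B(u))$, $\nu(d)=d$ follow from the $\tau$-equivariance of all defining formulas. The main obstacle is the identity $B(u+1)=\mu_{\mathbf{i}(u)}(B(u))$, which is what makes the propagation step above well-defined: expanding both sides by~\eqref{eq: B mutation} produces a telescoping identity whose closure is exactly the calculation behind Theorem~\ref{theorem: AYp ATm = AYm ATp} read in reverse, and it is again the symplectic relation that provides the required cancellation between contributions from intermediate mutation points. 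Once $\gamma$ is established as a complete mutation loop, the T-system triple and symmetrizer recovered from $\gamma$ via~\eqref{eq: NT0}--\eqref{eq: NT+-} are $(N_0,N_+,N_-)$ and $D$ by design, and the map $\phi$ identifies $R_\gamma=\{(\pi(i,u),u)\mid(i,u)\in P_\gamma\}$ with $R$, establishing the equivalence $(\alpha_\gamma,R_\gamma)\sim(\alpha,R)$ modulo the labeling ambiguity allowed by Definition~\ref{def:cap R equivalence}.
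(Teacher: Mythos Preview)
Your overall plan---extract a quiver $B$ from the T-datum and verify it undergoes the right mutations---matches the paper's, and you correctly identify the symplectic relation as the engine driving the key compatibility $B(u+1)=\mu_{\mathbf{i}(u)}(B(u))$. However, the way you define $B$ has a genuine circularity that the paper avoids.

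You prescribe the $k$-column of $B(u)$ directly only when $k\in\mathbf{i}(u)$, and propose to fill in the remaining entries by ``transporting back by non-$k$ mutations'' from the first mutation time of $k$. But to undo a mutation at $i\in\mathbf{i}(w)$ you need both $B(w)_{ji}$ and $B(w)_{ik}$; the latter lies in the $i$-\emph{row}, which you only know via skew-symmetrizability---and skew-symmetrizability of the full matrix $B(w)$ is precisely what has not yet been established. More seriously, the entry $B(u)_{jk}$ can be obtained either by transporting the $k$-column back from the first mutation time of $k$, or (after a sign and symmetrizer) by transporting the $j$-column back from the first mutation time of $j$; you must show these agree. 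Your appeal to ``the calculation behind Theorem~\ref{theorem: AYp ATm = AYm ATp} read in reverse'' is the right instinct, but without a closed formula for $B(u)$ there is nothing concrete on which to perform that calculation---``both sides'' of the claimed telescoping identity are not actually written down.

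The paper resolves this by writing down the full matrix $\bar B(u)$ explicitly, for \emph{all} pairs of indices at once, via the closed formula~\eqref{eq:def of bar B(u)}:
\[
\bar B_{(a,u+p)(b,u+q)}(u)=-n^{+}_{ab;p-q}+n^{-}_{ab;p-q}+\check n^{+}_{ba;q-p}-\check n^{-}_{ba;q-p}+\sum_{c}\sum_{v=0}^{\min(p,q)}\bigl(n^{+}_{ac;p-v}\check n^{-}_{bc;q-v}-n^{-}_{ac;p-v}\check n^{+}_{bc;q-v}\bigr).
\]
The quadratic sum here is exactly the accumulated correction from the mutations you wanted to ``transport back''; having it in closed form lets one verify skew-symmetrizability directly and then prove $\bar B(u+1)=\varphi_u(\mu_{R_0(u)}(\bar B(u)))$ (Lemma~\ref{lemma:B(u+1)=varphi mu B(u)}) by a finite case analysis, each case reducing to a single coefficient of the symplectic relation. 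Once you have this formula, the rest of your outline goes through; without it, the propagation step is not well-defined and the proof does not close.
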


The rest of Section \ref{section:Mutation loops from T-data} is devoted to the proof of Theorem \ref{theorem:mutation loop from T-datum}.
Let $p_1 ,\dots, p_r$ be positive integers and $\sigma$ be the permutation of $[1,r]$ in (\ref{item:N1}).
Let $\psi : [1,r] \times \ZZ \to [1,r] \times \ZZ$ be the bijection defined by
$\psi(a,u) = (a,u+1)$.
We define a family of subsets $R^{(k)}$ $(k \in \ZZ)$ by $R^{(k)} = \psi^k(R)$ as in Definition \ref{def:consistent subset}.
We also define a subset $R^{(k)}(u) \subseteq R^{(k)}$ for any $u \in \ZZ$ by
$R^{(k)}(u) = \{ (a,u+p) \in R^{(k)} \mid 0 \leq p <p_a \}$.
We denote $R^{(0)}(u)$ by $R(u)$.
The map $\psi$
restricts to a bijection $\psi|_{R^{(k)}(u)} : R^{(k)}(u) \to R^{(k+1)}(u+1)$.
We will write this restriction simply $\psi$ when no confusion can arise.
Let $t$ be the integer in (\ref{item:R3}) in Definition \ref{def:consistent subset}.
Then we have $R^{(k)}(u)=R^{(k+t)}(u)$.
In particular, the map $\psi^t$ restricts to a bijection
$\psi^t|_{R^{(k)}(u)} : R^{(k)}(u) \to R^{(k)}(u+t)$.
We also define a family of bijections $\varphi_{u} : R(u) \to R(u+1)$ ($u \in \ZZ$) by
\begin{align}\label{eq:def of varphi}
	\varphi_{u}(a,u+p) = 
	\begin{cases}
		(a,u+p) &\text{if $p \neq 0$,}\\
		(\sigma(a),u+p_{\sigma(a)}) &\text{if $p=0$.}
	\end{cases}
\end{align}
It is easy to check that $\psi^t$ and $\varphi$ commute in the sense that $\psi^t \circ \varphi_{u} = \varphi_{u+t} \circ \psi^t$.
We define $R_0(u) \subseteq R(u)$ by
$R_0(u)  = \{ (a,u+p) \in R(u) \mid p=0 \}$, which is endowed with the linear order coming from the standard linear order on $[1,r]$.

For any $u \in \ZZ$, we define an $R(u) \times R(u)$ matrix $\bar{B}(u)$ by
\begin{equation}\label{eq:def of bar B(u)}
\begin{split}
	&\bar{B}_{(a,u+p)(b,u+q)}(u) = 
	-n_{ab;p-q}^{+} +n_{ab;p-q}^{-}
	+\check{n}_{ba;q-p}^{+}-\check{n}_{ba;q-p}^{-} \\
	&\quad\quad +\sum_{c=1}^{r} \sum_{v=0}^{\min(p,q)}
	\bigl( n_{ac;p-v}^{+} \check{n}_{bc;q-v}^{-}
	- n_{ac;p-v}^{-} \check{n}_{bc;q-v}^{+} \bigr),
\end{split}
\end{equation}
where $n^{\pm}$ and $\check{n}^{\pm}$ are defined in \eqref{eq:entries of N} and \eqref{eq:entries of N check}, respectively.
Note that $\bar{B}(u)$ and $\bar{B}(v)$ may be different matrices if $u \neq v$,
even though they have the same expression \eqref{eq:def of bar B(u)}.
Rather, these matrices are related by mutations, as the following lemma shows:

\begin{lemma}\label{lemma:B(u+1)=varphi mu B(u)}
$\bar{B}(u+1) = \varphi_{u} (\mu_{R_0(u)} (\bar{B}(u)))$ for any $u \in \ZZ$.
\end{lemma}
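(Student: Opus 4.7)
The plan is to verify the identity entry-by-entry, partitioning the index set $R(u)\times R(u)$ according to which endpoints lie in the mutated set $R_0(u)$. Writing $((a,u+p),(b,u+q))$ for a generic pair, I would distinguish four cases: (i) $p=q=0$, (ii) $p=0<q$, (iii) $q=0<p$, (iv) $p,q>0$. First I would check that $\mu_{R_0(u)}$ is actually a simultaneous mutation by showing $\bar{B}_{(a,u)(b,u)}(u)=0$ for all $(a,u),(b,u)\in R_0(u)$: plugging $p=q=0$ into \eqref{eq:def of bar B(u)}, the terms $n^{\pm}_{ab;0}$ and $\check{n}^{\pm}_{ab;0}$ vanish by \ref{item:N3}, and the single remaining summand $v=0$ also vanishes for the same reason. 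Hence $\mu_{R_0(u)}$ is well-defined and order-independent.

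Next I would compute the sign data for arrows incident to $R_0(u)$. For $p>0$ and $(c,u)\in R_0(u)$, plugging into \eqref{eq:def of bar B(u)} and using \ref{item:N3} (which forces $\check{n}^{\pm}_{ca;-p}=0$ and $\check{n}^{\pm}_{cc;0}=0$), one gets $\bar{B}_{(a,u+p)(c,u)}(u) = n^-_{ac;p} - n^+_{ac;p}$; by \ref{item:N4} the two nonnegative integers $n^\pm_{ac;p}$ cannot both be nonzero, so $[\bar{B}_{(a,u+p)(c,u)}(u)]_+ = n^-_{ac;p}$ and $[-\bar{B}_{(a,u+p)(c,u)}(u)]_+ = n^+_{ac;p}$. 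A dual computation yields $[\bar{B}_{(c,u)(b,u+q)}(u)]_+ = \check{n}^+_{bc;q}$ and $[-\bar{B}_{(c,u)(b,u+q)}(u)]_+ = \check{n}^-_{bc;q}$. These are the ingredients fed into the mutation formula.

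For the generic case $p,q>0$, the simultaneous mutation rule adds $\sum_c\bigl(n^-_{ac;p}\check{n}^+_{bc;q} - n^+_{ac;p}\check{n}^-_{bc;q}\bigr)$ to $\bar{B}(u)_{(a,u+p)(b,u+q)}$. On the other side, since $(a,u+p),(b,u+q)$ are reindexed as elements of $R(u+1)$ by $(a,(u+1)+(p-1))$ and $(b,(u+1)+(q-1))$, the first four linear terms in \eqref{eq:def of bar B(u)} are unchanged, while the double sum becomes $\sum_c\sum_{v=0}^{\min(p-1,q-1)}(n^+_{ac;p-1-v}\check{n}^-_{bc;q-1-v}-n^-_{ac;p-1-v}\check{n}^+_{bc;q-1-v})$; reindexing $w=v+1$ shows this differs from the old sum by precisely the missing $w=0$ term, which exactly cancels the mutation correction. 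So this case drops out by direct bookkeeping.

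The cases where one or both endpoints lie in $R_0(u)$ are the real substance and will rely on the symplectic relation from Definition~\ref{def:T-datum}. In Case (ii) the mutation negates $\bar{B}(u)_{(a,u)(b,u+q)} = \check{n}^+_{ba;q}-\check{n}^-_{ba;q}$, and after $\varphi_u$ we must match $\bar{B}(u+1)_{(\sigma(a),u+p_{\sigma(a)})(b,u+q)}$; expanding the latter by \eqref{eq:def of bar B(u)} produces linear terms together with a sum indexed by $v\in[0,\min(p_{\sigma(a)}-1,q-1)]$, and the required equality is precisely the $(\sigma(a),b;p_{\sigma(a)}-q)$-entry of \eqref{eq:NYNT} after using \ref{item:N3} to discard supports lying outside the admissible range (mirroring the $(a,b;p)$-entry computation inside the proof of Theorem~\ref{theorem: AYp ATm = AYm ATp}). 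Case (iii) is obtained by transposing the roles of rows and columns together with $\dagger$. In Case (i) both entries of $\mu_{R_0(u)}(\bar{B}(u))$ are $0$, and we must verify $\bar{B}(u+1)_{(\sigma(a),u+p_{\sigma(a)})(\sigma(b),u+p_{\sigma(b)})}=0$, which again reduces to the entrywise symplectic identity. The main obstacle is this bookkeeping in Cases (i)--(iii): tracking how the supports in \ref{item:N3} truncate the sums when transporting between $\bar{B}(u)$ and $\bar{B}(u+1)$, and carefully invoking \eqref{eq:NYNT}—derived in the proof of Theorem~\ref{theorem: AYp ATm = AYm ATp} but now applied as an axiomatic consequence of $A_+DA_-^\dagger = A_-DA_+^\dagger$—to cancel the residual terms.
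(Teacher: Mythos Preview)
Your proposal is correct and follows essentially the same approach as the paper's proof: both split into the four cases according to whether $p$ or $q$ vanishes, handle the generic case $p,q>0$ by direct bookkeeping (observing that the mutation correction exactly supplies the missing $v=0$ term in the double sum), and reduce the remaining three cases to specific entries of the symplectic relation after pushing through the reindexing $\varphi_u$. The paper's write-up differs only in minor organizational details---it packages the computation of $\mu_{R_0(u)}(\bar{B}(u))$ into a single formula \eqref{eq:bar B'} first, and cites the symplectic relation directly rather than via the equivalent form \eqref{eq:NYNT}---but the substance is identical.
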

\begin{proof}
	Let
	$\bar{B}'(u) = \mu_{R_0(u)}(\bar{B}(u))$
	and $(a,u+p),(b,u+q) \in R(u)$.
	Then we have
	\begin{equation}\label{eq:bar B'}
		\begin{split}
			\bar{B}'_{(a,u+p)(b,u+q)}(u) 
			= 
			\begin{cases}
				-\bar{B}_{(a,u+p)(b,u+q)}(u)  & \text{if $p$ or $q=0$},\\
				\bar{B}_{(a,u+p)(b,u+q)}(u) - {\displaystyle \sum_{c=1}^{r} 
				\bigl( n_{ac;p}^{+} \check{n}_{bc;q}^{-}
				- n_{ac;p}^{-} \check{n}_{bc;q}^{+} \bigr)}
			& \text{if $p,q>0$},
			\end{cases}
		\end{split}
	\end{equation}
	since
	\begin{align*}
		&\sum_{\substack{ c \in [1,r] \\ (c,u) \in R_0(u)}}
		\bigl(
		\maxzero{\bar{B}_{(a,u+p)(c,u)}(u)}\maxzero{\bar{B}_{(c,u)(b,u+q)}(u)} \\
		&\quad\quad\quad-\maxzero{-\bar{B}_{(a,u+p)(c,u)}(u)}\maxzero{-\bar{B}_{(c,u)(b,u+q)}(u)} \bigr) \\
		&=\sum_{c=1}^{r} 
		\bigl( n_{ac;p}^{-} \check{n}_{bc;q}^{+}
		- n_{ac;p}^{+} \check{n}_{bc;q}^{-} \bigr)
	\end{align*}
	by (\ref{item:N2}), (\ref{item:N4}), and (\ref{item:R1}).
	
	The proof is divided into the following cases:
	\begin{enumerate*}[label=(\roman*)]
		\item $p,q>0$,
		\item $p=0$ and $q>0$,
		\item $p>0$ and $q=0$, and
		\item $p=q=0$.
	\end{enumerate*}
	For the case (i), we have
	\begin{align*}
	\bar{B}_{(a,u+p)(b,u+q)}(u)-\bar{B}_{(a,u+p)(b,u+q)}(u+1)
	= \sum_{c=1}^{r} 
	\bigl( n_{ac;p}^{+} \check{n}_{bc;q}^{-}
	- n_{ac;p}^{-} \check{n}_{bc;q}^{+} \bigr)
	\end{align*}
	by \eqref{eq:def of bar B(u)}, and this yields the desired equality since $\varphi_{u}(a,u+p)=(a,u+p)$ and $\varphi_{u}(b,u+q)=(b,u+q)$.
	For the case (ii), we have
	$\varphi_{u}(a,u)=(\hat{a},u+p_{\hat{a}})$
	where $\hat{a}=\sigma(a)$.
	Then we have
	\begin{align*}
		\bar{B}'_{(a,u)(b,u+q)}(u) = -\bar{B}_{(a,u)(b,u+q)}(u)=-\check{n}_{ba;q}^{+} + \check{n}_{ba;q}^{-}
	\end{align*}
	and
	\begin{align*}
		&\bar{B}_{(\hat{a},u+p_{\hat{a}})(b,u+q)}(u+1)
		= 
		-n_{\hat{a}b;p_{\hat{a}}-q}^{+} 
		+n_{\hat{a}b;p_{\hat{a}}-q}^{-}
		+\check{n}_{b\hat{a};q-p_{\hat{a}}}^{+} 
		-\check{n}_{b\hat{a};q-p_{\hat{a}}}^{-} \\
		&\quad\quad +\sum_{c=1}^{r} \sum_{v=1}^{\min(p_{\hat{a}},q)}
		\bigl( n_{\hat{a}c;p_{\hat{a}}-v}^{+} \check{n}_{bc;q-v}^{-}
		- n_{\hat{a}c;p_{\hat{a}}-v}^{-} \check{n}_{bc;q-v}^{+} \bigr).
	\end{align*}
	These coincide by the $(b\hat{a};q-p_{\hat{a}})$-th entry in the symplectic relation, together with (\ref{item:N1}) and (\ref{item:N3}).
	For the case (iii), we have
	$\varphi_{u}(b,u)=(\hat{b},u+p_{\hat{b}})$
	where $\hat{b}=\sigma(b)$.
	Then we have
	\begin{align*}
	\bar{B}'_{(a,u+p)(b,u)}(u) = -\bar{B}_{(a,u+p)(b,u)}(u)
	=n_{ab;p}^{+} - n_{ab;p}^{-}
	\end{align*}
	and
	\begin{align*}
	&\bar{B}_{(a,u+p)(\hat{b},u+p_{\hat{b}})}(u+1)
	= 
	-n_{a\hat{b};p-p_{\hat{b}}}^{+}
	+n_{a\hat{b};p-p_{\hat{b}}}^{-}
	+\check{n}_{\hat{b}a;p_{\hat{b}}-p}^{+}
	-\check{n}_{\hat{b}a;p_{\hat{b}}-p}^{-} \\
	&\quad\quad +\sum_{c=1}^{r} \sum_{v=1}^{\min(p,p_{\hat{b}})}
	\bigl( n_{ac;p-v}^{+} \check{n}_{\hat{b}c;p_{\hat{b}}-v}^{-}
	- n_{ac;p-v}^{-} \check{n}_{\hat{b}c;p_{\hat{b}}-v}^{+} \bigr)
	\end{align*}
	These coincide by the $(a\hat{b};p-p_{\hat{b}})$-th entry in the symplectic relation, together with (\ref{item:N1}) and (\ref{item:N3}).
	For the case (iv), we have
	$\varphi_{u}(a,u)=(\hat{a},u+p_{\hat{a}})$
	and
	$\varphi_{u}(b,u)=(\hat{b},u+p_{\hat{b}})$.
	Then we have
	\begin{align*}
	\bar{B}'_{(a,u)(b,u)}(u) = -\bar{B}_{(a,u)(b,u)}(u)=0.
	\end{align*}
	On the other hand, we have
	\begin{align*}
	&\bar{B}_{(\hat{a},u+p_{\hat{a}})(\hat{b},u+p_{\hat{b}})}(u+1)
	= 
	-n_{\hat{a}\hat{b};p_{\hat{a}}-p_{\hat{b}}}^{+} +n_{\hat{a}\hat{b};p_{\hat{a}}-p_{\hat{b}}}^{-}
	+\check{n}_{\hat{b}\hat{a};p_{\hat{b}}-p_{\hat{a}}}^{+}
	-\check{n}_{\hat{b}\hat{a};p_{\hat{b}}-p_{\hat{a}}}^{-} \\
	&\quad\quad +\sum_{c=1}^{r} \sum_{v=1}^{\min(p_{\hat{a}},p_{\hat{b}})}
	\bigl( n_{\hat{a}c;p_{\hat{a}}-v}^{+} \check{n}_{\hat{b}c;p_{\hat{b}}-v}^{-}
	- n_{\hat{a}c;p_{\hat{a}}-v}^{-} \check{n}_{\hat{b}c;p_{\hat{b}}-v}^{+} \bigr) =0
	\end{align*}
	by the $(\hat{a}\hat{b};p_{\hat{a}}-p_{\hat{b}})$-th entry in the symplectic relation, together with (\ref{item:N1}) and (\ref{item:N3}).
\end{proof}

\begin{lemma}
	$\bar{B}(u+t) = \psi^{t} (\bar{B}(u))$ for any $u \in \ZZ$.
\end{lemma}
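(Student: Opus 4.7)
The statement is essentially a bookkeeping check, and the proof plan reduces to two observations. The key point is that the defining formula \eqref{eq:def of bar B(u)} for the entry $\bar{B}(u)_{(a,u+p)(b,u+q)}$ involves only $a, b \in [1,r]$, the offsets $p, q$, and the coefficients $n^{\pm}_{\cdots}$, $\check{n}^{\pm}_{\cdots}$ of the T-datum $\alpha$, none of which depend on the base point $u$ itself.

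First, I would verify that $\psi^{t}$ restricts to a bijection $R(u) \to R(u+t)$. Since $\psi^{t}$ sends $(a, u+p)$ to $(a, u+t+p)$ and the constraint $0 \leq p < p_{a}$ is manifestly shift-invariant, this reduces to the claim that $R$ itself is invariant under shifting the second coordinate by $t$. But that is precisely the content of condition (\ref{item:R3}), which states $R = R^{(t)} = \psi^{t}(R)$.

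Second, I would unpack the notation $\psi^{t}(\bar{B}(u))$: by definition it is the matrix on $R(u+t)$ whose $(\psi^{t}(x), \psi^{t}(y))$-entry equals $\bar{B}(u)_{xy}$. Writing $x = (a, u+p)$ and $y = (b, u+q)$, so that $\psi^{t}(x) = (a, u+t+p)$ and $\psi^{t}(y) = (b, u+t+q)$, I would compare $\bar{B}(u+t)_{\psi^{t}(x)\psi^{t}(y)}$ with $\bar{B}(u)_{xy}$ directly from \eqref{eq:def of bar B(u)}. Both are given by the same expression in $n^{\pm}_{ab;p-q}$, $\check{n}^{\pm}_{ba;q-p}$, and the finite double sum $\sum_{c=1}^{r}\sum_{v=0}^{\min(p,q)}(\cdots)$; since this expression depends only on $a, b, p, q$ and on the fixed data of $\alpha$, the two entries coincide.

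I do not anticipate any substantive obstacle here: once the invariance $R = R^{(t)}$ from (\ref{item:R3}) is acknowledged, the lemma reduces to the tautological fact that the formula \eqref{eq:def of bar B(u)} is translation-invariant in the base point $u$. An alternative route would be to iterate Lemma \ref{lemma:B(u+1)=varphi mu B(u)} a total of $t$ times and then invoke some periodicity of the mutation sequence, but this is strictly more work than the direct approach outlined above and obscures the point.
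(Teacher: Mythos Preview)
Your proposal is correct and follows essentially the same approach as the paper's own proof, which is a one-line observation that $R(u)$ and $R(u+t)$ are identified via $\psi^t$ (using (\ref{item:R3})) and that the formula \eqref{eq:def of bar B(u)} depends only on the offsets $p,q$ and the indices $a,b$, not on the base point $u$. Your write-up is simply more explicit about the bookkeeping than the paper's terse version.
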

\begin{proof}
	Since $R(u)=R(u+t)$,
	the lemma follows from the fact that 
	$\bar{B}(u+t)$ and $\bar{B}(u)$ have the same expression \eqref{eq:def of bar B(u)}.
\end{proof}

We define an index set $I$ by $I=R(0)$,
and define an $I \times I$ integer matrices $B$ by $B=\bar{B}(0)$, that is, $B=(B_{(a,p)(b,q)})_{(a,p),(b,q) \in R(0)}$ and 
\begin{equation}\label{eq:def of B from cap}
\begin{split}
	&B_{(a,p)(b,q)} = 
	-n_{ab;p-q}^{+} 
	+n_{ab;p-q}^{-}
	+\check{n}_{ba;q-p}^{+}
	-\check{n}_{ba;q-p}^{-} \\
	&\quad\quad +\sum_{c=1}^{r} \sum_{v=0}^{\min(p,q)}
	\bigl( n_{ac;p-v}^{+} \check{n}_{bc;q-v}^{-}
	- n_{ac;p-v}^{-} \check{n}_{bc;q-v}^{+} \bigr).
\end{split}
\end{equation}
We define a tuple of positive integer $d = (d_{a,u})_{(a,u)\in R(u)}$ by $d_{a,u} = d_a$, where $d_a$ is the $a$-th entry in $D$.
We also define $\mathbf{i}=\mathbf{i}(0) \mid \dots \mid \mathbf{i}(t-1)$ by
$\mathbf{i}(u) =(\varphi_{u-1} \circ \dots \circ \varphi_{0})^{-1} (R_0(u))$,
where each $\mathbf{i}(u)$ is endowed with the linear order coming from the linear order on $R_0(u)$.
Finally, we define $\nu = (\varphi_{t-1} \circ \dots \circ \varphi_{0})^{-1} \circ \psi^{t}$.

\begin{lemma}
	$\gamma=(B,d,\mathbf{i},\nu)$ is a complete mutation loop of length $r$.
\end{lemma}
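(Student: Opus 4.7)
Plan: I verify each clause of Definition \ref{def:mutation loop} and the completeness condition in turn. Lemma \ref{lemma:B(u+1)=varphi mu B(u)} and the unnamed lemma $\bar{B}(u+t)=\psi^t(\bar{B}(u))$ already contain most of the substance; the task is to repackage them.

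For skew-symmetrizability of $B$ with right symmetrizer $d_{(a,p)}=d_a$, I would compute $B_{(a,p)(b,q)}d_b+B_{(b,q)(a,p)}d_a$ directly from \eqref{eq:def of B from cap} and show it vanishes. The relation $n^{\pm}_{ab;p}d_b=d_a\check{n}^{\pm}_{ab;p}$, equivalent to $D^{-1}N_{\pm}D=N_{\pm}^{\vee}$, converts each ``unchecked'' coefficient appearing in $B_{(a,p)(b,q)}d_b$ into a ``checked'' one with weight $d_a$; each of the three groups of terms in \eqref{eq:def of B from cap}—the $n^{\pm}$ terms, the $\check{n}^{\pm}$ terms, and the bilinear sum—then cancels term by term against the corresponding group in $-B_{(b,q)(a,p)}d_a$.

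For the partition structure of $\mathbf{i}$: the length is $\sum_{u=0}^{t-1}|R_0(u)|=|R\cap([1,r]\times[0,t-1])|$, which equals $r$ because (\ref{item:R3}) decomposes $[1,r]\times[0,t-1]$ into $t$ shifted intersections of $R$, each of the same size by $t$-periodicity of $R$. Distinctness of indices within each $\mathbf{i}(u)$ is automatic because $R_0(u)$ is a set and the $\varphi_v$ are bijections. That $\mathbf{i}(u)$ is a simultaneous mutation for the matrix $B(u)=\mu_{\mathbf{i}(u-1)}\circ\cdots\circ\mu_{\mathbf{i}(0)}(B)$ is case (iv) in the proof of Lemma \ref{lemma:B(u+1)=varphi mu B(u)}, which forced $\bar{B}(u)_{(a,u)(b,u)}=0$ on $R_0(u)\times R_0(u)$; since $\bar{B}(u)=(\varphi_{u-1}\circ\cdots\circ\varphi_0)(B(u))$, this pulls back to the required vanishing in $B(u)$. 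Iterating Lemma \ref{lemma:B(u+1)=varphi mu B(u)} $t$ times and using that mutation commutes with bijective relabeling yields $\bar{B}(t)=(\varphi_{t-1}\circ\cdots\circ\varphi_0)(\mu_{\mathbf{i}}(B))$; combined with $\bar{B}(t)=\psi^t(B)$, this gives $\mu_{\mathbf{i}}(B)=\nu(B)$. The condition $d=\nu(d)$ reduces to $d_a=d_{\sigma(a)}$: by (\ref{item:N1}) the off-diagonal support of $N_0$ is concentrated at positions $(a,\sigma^{-1}(a))$, and comparing those entries on both sides of $N_0D=DN_0$ yields the equality; the bijections $\psi^t$ and $\varphi_u$ then all preserve $d$, hence so does $\nu$.

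For completeness, observe that in the $\nu$-extended partition the element $(a,p)\in I=R(0)$ belongs to $\mathbf{i}(p)$: tracking $(a,p)$ forward through the $\varphi$'s, it stays put until time $p$, at which moment $\varphi_p$ first moves it, so tracing back identifies $(a,p)$ as an element of $(\varphi_{p-1}\circ\cdots\circ\varphi_0)^{-1}(R_0(p))=\mathbf{i}(p)$. Therefore the latency of $(a,p)$ from time $0$ is at most $p<p_a<\infty$. The main obstacle is not any single step but the coordination of three distinct labellings—indices of $B$, points in $R(u)$, and mutation points in $P_\gamma$—together with the interplay of $\sigma$, $\varphi$, $\psi$, and $\nu$; once this bookkeeping is streamlined each clause reduces to either an appeal to an earlier lemma or a one-line algebraic calculation.
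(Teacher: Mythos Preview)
Your proposal is correct and follows essentially the same approach as the paper: both arguments iterate Lemma \ref{lemma:B(u+1)=varphi mu B(u)} to obtain $\mu_{\mathbf{i}}(B)=(\vec{\varphi}_t)^{-1}(\bar{B}(t))=\nu(B)$, use (\ref{item:R3}) to count the length, and observe that the latency of $(a,p)\in I$ from time $0$ is $p$. You are simply more explicit than the paper about the skew-symmetrizability calculation, the simultaneous-mutation condition (your appeal to case (iv)), and the $\nu$-invariance of $d$, all of which the paper absorbs into ``it is easy to check''.
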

\begin{proof}
	It is easy to check that
	$B$ is a skew-symmetrizable matrix with the symmetrizer $d$.
	Let us denote by $\vec{\varphi}_{u}$ the composition $\varphi_{u-1} \circ \dots \circ \varphi_{0}$.
	We now prove
	\begin{align}\label{eq:mutation induction}
	(\mu_{\mathbf{i}(u-1)} \circ \dots \circ \mu_{\mathbf{i}(0)})(B)
	=(\vec{\varphi}_{u})^{-1} (\bar{B}(u))
	\end{align}
	for any $u=0,\dots,t-1$ by induction on $u$.
	The equation \eqref{eq:mutation induction} 
	holds when $u=0$ since $B = \bar{B}(0)$ by definition.
	Suppose that $u>0$.
	By the induction hypothesis and Lemma \ref{lemma:B(u+1)=varphi mu B(u)}, we have
	\begin{align*}
	&\quad (\mu_{\mathbf{i}(u-1)} \circ \mu_{\mathbf{i}(u-2)} \circ \dots \circ \mu_{\mathbf{i}(0)}) (B) \\
	&= (\mu_{\mathbf{i}(u-1)} \circ (\vec{\varphi}_{u-1})^{-1}) (\bar{B}(u-1))\\
	&=((\vec{\varphi}_{u-1})^{-1} \circ \mu_{R_0(u)}) (\bar{B}(u-1))\\
	&=((\vec{\varphi}_{u-1})^{-1} \circ (\varphi_{u-1})^{-1}) (\bar{B}(u))\\
	&=(\vec{\varphi}_{u})^{-1} (\bar{B}(u)),
	\end{align*}
	and \eqref{eq:mutation induction} is proved.
	Applying \eqref{eq:mutation induction} for $u=t-1$ yields
	\begin{align*}
	\mu_{\mathbf{i}}(B)=(\vec{\varphi}_{t})^{-1} (\bar{B}(t))=((\vec{\varphi}_{t})^{-1} \circ \psi^{t}) (B).
	\end{align*}
	This shows that $\gamma$ is a mutation loop.
	By (\ref{item:R3}) in Definition \ref{def:consistent subset},
	we have
	\begin{align*}
		\{ (a,0) \mid a \in [1,r] \} = \bigsqcup_{u=0}^{t-1} \psi^{-u}(R_0(u))
	\end{align*}
	as a set.
	This implies that the length of $\gamma$ is $r$.
	The completeness follows from the fact that the latency of $((a,p),0) \in I \times \{ 0 \}$ is $p$.
\end{proof}

Now we complete the proof of Theorem \ref{theorem:mutation loop from T-datum} by showing the following lemma:

\begin{lemma}\label{lemma:equivalent}
	$(\alpha_{\gamma},R_{\gamma})$ and $(\alpha,R)$ are equivalent.
\end{lemma}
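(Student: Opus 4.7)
The plan is to exhibit an explicit permutation $\rho \in \mathfrak{S}_r$ witnessing the equivalence and to verify the three identities in Definition~\ref{def:cap R equivalence} by tracing through the construction of $\gamma$. By (\ref{item:R3}), for each $a \in [1,r]$ there is a unique $\tau_a \in \{0,\dots,t-1\}$ with $(a,\tau_a) \in R$, so $(a,\tau_a) \in R_0(\tau_a)$. On the other hand, each position $c \in [1,r]$ of the sequence $\mathbf{i} = \mathbf{i}(0) \mid \cdots \mid \mathbf{i}(t-1)$ corresponds to an element $i_c$ lying in a unique $\mathbf{i}(\tau'_c) = (\vec{\varphi}_{\tau'_c})^{-1}(R_0(\tau'_c))$, with $\vec{\varphi}_{\tau'_c}(i_c) = (a_c, \tau'_c)$ for a unique $a_c \in [1,r]$. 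Setting $\rho(a_c) = c$ yields a bijection $\rho \colon [1,r] \to [1,r]$ with $\tau'_{\rho(a)} = \tau_a$.

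The identity $R_\gamma = \rho(R)$ is then immediate: the $\sim$-orbit of $(i_c, \tau'_c) \in P_\gamma$ consists of points whose second coordinates form $\tau'_c + t\ZZ = \tau_{\rho^{-1}(c)} + t\ZZ$, matching the second coordinates of $\rho(R)$ at level $c$. The identity $D_\gamma = \rho(D)$ follows from the observation that $N_0 D = D N_0$ together with (\ref{item:N1}) forces $d_{\sigma(a)} = d_a$, so the symmetrizer value is preserved along every $\varphi$-trajectory (whose action on first components is a power of $\sigma$); hence the $c$-th diagonal entry of $D_\gamma$ is $d_{i_c} = d_{a_c} = d_{\rho^{-1}(c)}$.

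The main part is $A_{\gamma,\pm} = \rho(A_\pm)$, i.e.\ $N_{\gamma,\varepsilon} = \rho(N_\varepsilon)$ for $\varepsilon \in \{0,+,-\}$. Tracing the trajectory of $i_c$ under $\varphi_{\tau'_c}, \varphi_{\tau'_c+1},\dots$, its image first returns to $R_0$ at step $p_{\sigma(a_c)}$ at the point $(\sigma(a_c), \tau'_c + p_{\sigma(a_c)})$; using the equivariance $\vec{\varphi}_{u+t} \circ \nu = \psi^t \circ \vec{\varphi}_u$ (which follows from $\nu = (\vec{\varphi}_t)^{-1} \circ \psi^t$ together with $\psi^t \circ \varphi_u = \varphi_{u+t} \circ \psi^t$) one reduces to the canonical period to obtain $\sigma_\gamma(c) = \rho(\sigma(a_c))$ and $\lambda^\gamma_c = p_{\sigma(a_c)}$. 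Substituting into \eqref{eq: NT0} reproduces $\rho(N_0)$. For $\varepsilon = \pm$, I would apply the identity $B(\tau'_c) = (\vec{\varphi}_{\tau'_c})^{-1}(\bar{B}(\tau'_c))$ obtained in the construction of $\gamma$: for each $j \in I$ contributing to \eqref{eq: NT+-} at $(k,u) = (i_c, \tau'_c)$, set $(b, \tau'_c + p) := \vec{\varphi}_{\tau'_c}(j)$. The case $p = 0$ contributes nothing by the simultaneous mutation condition, and for $p > 0$ the trajectory argument gives $\pi(s(j,\tau'_c)) = \rho(b)$, so the constraint $\pi(s(j,\tau'_c)) = c'$ forces $b = \rho^{-1}(c')$. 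Evaluating $\bar{B}_{(b,\tau'_c+p),(a_c,\tau'_c)}(\tau'_c)$ from \eqref{eq:def of bar B(u)} with $q = 0$, the $\check{n}^\pm_{a_c b;-p}$ terms and the interaction sum all vanish by (\ref{item:N3}), leaving $B_{j i_c}(\tau'_c) = -n^+_{b a_c; p} + n^-_{b a_c; p}$. Then (\ref{item:N4}) yields $[\mp B_{j i_c}(\tau'_c)]_+ = n^\pm_{b a_c; p}$, which is precisely $(\rho(N_\pm))_{c'c;p}$.

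The main obstacle will be the bookkeeping in the $\varepsilon = \pm$ step, specifically identifying $\pi(s(j,\tau'_c)) = \rho(b)$ when the trajectory time $\tau'_c + p$ exceeds $t$; the equivariance relation above is exactly what is needed to translate back to the canonical period $\{0,\dots,t-1\}$. Once this identification and the reduction $B = (\vec{\varphi})^{-1}(\bar{B})$ are in place, matching each entry cleanly reduces to (\ref{item:N3}) and (\ref{item:N4}) for the original T-datum.
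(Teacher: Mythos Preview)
Your proposal is correct. It differs from the paper's proof in presentation rather than substance: the paper first replaces $(\alpha,R)$ by an equivalent pair in which the indices are ordered so that $u<v$ implies $a<b$ for $(a,u)\in R_0(u)$, $(b,v)\in R_0(v)$ with $0\le u,v\le t-1$, and then asserts in one line that the construction of $\gamma$ gives $(\alpha_\gamma,R_\gamma)=(\alpha,R)$ on the nose---i.e.\ it arranges that your $\rho$ is the identity before doing anything. Your approach instead keeps $(\alpha,R)$ as given, writes down $\rho$ explicitly, and verifies each of the three conditions in Definition~\ref{def:cap R equivalence} by hand.

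What your approach buys is that it actually carries out the verification the paper leaves implicit: the reduction of $B_{j i_c}(\tau'_c)$ via \eqref{eq:def of bar B(u)} to $-n^+_{ba_c;p}+n^-_{ba_c;p}$ using (\ref{item:N3}) and (\ref{item:N4}), the trajectory argument identifying $\pi(s(j,\tau'_c))$ with $\rho(b)$ via the equivariance $\vec\varphi_{u+t}\circ\nu=\psi^t\circ\vec\varphi_u$, and the check that $d_{\sigma(a)}=d_a$ from $N_0D=DN_0$. These are exactly the computations hidden behind the paper's phrase ``the construction of $\gamma$ ensures that''. The paper's approach is shorter because the normalization lets one avoid carrying $\rho$ through the formulas, but the content is the same.
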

\begin{proof}
	By replacing $(\alpha,R)$ with a suitable equivalent one,
	we can assume that
	$u<v$ implies that $a<b$
	for any $(a,u) \in R_0(u)$ and $(b,v) \in R_0(v)$ such that $0\leq u,v \leq t-1$.
	Then the construction of $\gamma$ ensures that $(\alpha_{\gamma},R_{\gamma})=(\alpha,R)$.
\end{proof}

\subsection{Consequences}
\label{section:consequences}
For any complete mutation loop $\gamma$,
we denote by $F(\gamma)$ the pair
$(\alpha_{\gamma},R_{\gamma})$
defined in Section \ref{section:T-data from mutation loops}.
For any pair $(\alpha, R)$ of a T-datum and a consistent subset $R$ for $\alpha$,
we denote $G(\alpha,R)$ by
the complete mutation loop defined in Section \ref{section:Mutation loops from T-data}.

We define
\begin{align*}
	\mathrm{Ml}_r 
	= \{ [\gamma] \mid \text{$\gamma$ is a complete mutation loop of length $r$}  \},
\end{align*}
where $[\gamma]$ is the equivalence class of $\gamma$ (see Definition \ref{def:ml equivalence}).
We also define
\begin{align*}
	\mathrm{Td}_r
	=\{ \alpha \mid \text{$\alpha$ is a T-datum of size $r$} \}
\end{align*}
and
\begin{align*}
	\mathrm{Td}'_r
	=\{ [(\alpha,R)] \mid \text{$\alpha \in \mathrm{Td}_r$ and $R \subseteq [1,r] \times \ZZ$ is consistent for $\alpha$} \},
\end{align*}
where $[(\alpha,R)] $ is the equivalence class of $(\alpha,R)$ (see Definition \ref{def:cap R equivalence}).
We define
\begin{align*}
\hat{F}_r: \mathrm{Ml}_r \to \mathrm{Td}'_r, \quad
\hat{G}_r: \mathrm{Td}'_r \to \mathrm{Ml}_r 
\end{align*}
by $\hat{F}_r([\gamma]) = [F(\gamma)]$
and $\hat{G}_r([(\alpha,R)])= [G(\alpha,R)]$.

\begin{theorem}\label{theorem:Ml Td bijection}
	$\hat{F}_r \circ \hat{G}_r = \id$ and $\hat{G}_r \circ \hat{F}_r = \id$.
\end{theorem}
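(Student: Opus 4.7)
My plan is to prove the two identities separately.  The identity $\hat{F}_r \circ \hat{G}_r = \id$ is essentially Lemma \ref{lemma:equivalent}: for any representative $(\alpha, R)$ of a class in $\mathrm{Td}'_r$, the mutation loop $\gamma = G(\alpha, R)$ satisfies $F(\gamma) \sim (\alpha, R)$, and passing to equivalence classes gives $\hat{F}_r(\hat{G}_r([(\alpha, R)])) = [(\alpha, R)]$.

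For the nontrivial direction $\hat{G}_r \circ \hat{F}_r = \id$, I fix a complete mutation loop $\gamma = (B, d, \mathbf{i}, \nu)$ with index set $I$, set $F(\gamma) = (\alpha_\gamma, R_\gamma)$, and $\gamma' = G(\alpha_\gamma, R_\gamma)$ with index set $I' = R_\gamma(0)$.  The plan is to exhibit an explicit equivalence $\gamma' \sim \gamma$ via a bijection $f : I' \to I$ and a block permutation $\rho$.  I take $f(a, p)$ to be the unique $i \in I$ with $(i, p) \in P_\gamma$ and $\pi(i, p) = a$; this is bijective with inverse $i \mapsto (\pi(i, \lambda(i, 0)), \lambda(i, 0))$, the inverse landing in $R_\gamma(0)$ because $p_a = \lambda_{\sigma^{-1}(a)}$ reduces the bound $\lambda(i, 0) < p_{\pi(i, \lambda(i, 0))}$ to the strict negativity of the mutation time preceding $\lambda(i, 0)$.

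With $f$ in hand, the symmetrizer matching $d_\gamma = f(d_{\gamma'})$ is immediate from Lemma \ref{lemma: symmetizers for mutation loops}.  For the mutation sequence and $\nu$, one traces the action of $\vec{\varphi}$: starting from $(a_0, u_0) = f^{-1}(i) \in R(0)$, the $\vec{\varphi}$-trajectory visits exactly the points $(a_k, u_k) \in R_0(u_k)$ at the successive mutation times of vertex $i$, so $f \circ \vec{\varphi}_u^{-1}$ gives a bijection $R_0(u) \to \mathbf{i}(u)$; applying the same analysis through a full period and using $\mathbf{i}(u + t) = \nu(\mathbf{i}(u))$ yields $\nu_\gamma = f \circ \nu_{\gamma'} \circ f^{-1}$.

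The main obstacle is the exchange matrix identity $B_\gamma = f(B_{\gamma'})$, since $B_{\gamma'} = \bar{B}(0)$ is given by the intricate formula \eqref{eq:def of B from cap}.  I plan to prove the stronger matrix-level identity $\bar{B}(U)_{(a, v)(b, w)} = B(U)_{g_U(a, v), g_U(b, w)}$ for all $U \in \ZZ$ and $(a, v), (b, w) \in R(U)$, where $g_U := f \circ \vec{\varphi}_U^{-1} : R(U) \to I$; the case $U = 0$ gives the desired conclusion.  The ``boundary'' entries where one coordinate lies in $R_0(U)$ are verified by a direct computation: condition (\ref{item:N3}) collapses \eqref{eq:def of bar B(u)} to a single $\check{n}^\pm$ or $n^\pm$ term, and choosing $(k, w) = (g_U(b, U + q), U + q)$ in \eqref{eq: NY+-} (respectively its analog in \eqref{eq: NT+-}) identifies this with a single entry of $B(U)$, while the corner case $v = w = U$ vanishes on both sides by the simultaneous mutation condition.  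For entries with $v, w > U$, I fix $(a, v), (b, w) \in R(0)$ and vary $U$ upward: the mutation consistency $\bar{B}(U+1) = \varphi_U(\mu_{R_0(U)}(\bar{B}(U)))$ from Lemma \ref{lemma:B(u+1)=varphi mu B(u)}, combined with $B(U+1) = \mu_{\mathbf{i}(U)}(B(U))$ and the identity $g_{U+1}(a, v) = g_U(a, v)$ for $v > U$, shows that the ``error'' $\bar{B}(U)_{(a, v)(b, w)} - B(U)_{g_U(a, v), g_U(b, w)}$ is preserved under $U \to U + 1$.  At $U = \min(v, w)$ the entry enters the boundary layer and the error vanishes, so it vanishes at $U = 0$ as well, completing the proof.
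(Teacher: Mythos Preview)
Your proof is correct and follows the same overall strategy as the paper: invoke Lemma~\ref{lemma:equivalent} for $\hat{F}_r\circ\hat{G}_r=\id$, and for $\hat{G}_r\circ\hat{F}_r=\id$ construct the explicit bijection between $I$ and $I'=R_\gamma(0)$ sending $i$ to $(\pi(i,\lambda(i,0)),\lambda(i,0))$ (your $f$ is precisely the inverse of the paper's map). The paper then simply asserts that this bijection ``gives the equivalence between $\gamma$ and $\gamma'$'' and leaves the verification to the reader; you actually carry it out.

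Your verification of the exchange matrix identity is the genuinely new content compared with the paper's write-up, and it is sound. The key device---proving $\bar{B}(U)_{(a,v)(b,w)}=B(U)_{g_U(a,v),g_U(b,w)}$ for all $U$ by first handling the boundary layer $R_0(U)$ directly via \eqref{eq: NY+-} and \eqref{eq: NT+-}, and then propagating to the interior using Lemma~\ref{lemma:B(u+1)=varphi mu B(u)} together with $B(U+1)=\mu_{\mathbf{i}(U)}(B(U))$---works because the mutation update for interior entries depends only on the boundary entries, which you have already matched, and because $g_{U+1}=g_U$ on $R(U)\setminus R_0(U)$. This is essentially a second incarnation of the inductive argument \eqref{eq:mutation induction} used in Section~\ref{section:Mutation loops from T-data}, now run in reverse to compare $\gamma'$ with the given $\gamma$.

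One small omission: you do not check that $\hat{F}_r$ and $\hat{G}_r$ are well defined on equivalence classes. The paper does address this (a permutation $\rho$ realizing $(\alpha,R)\sim(\alpha',R')$ induces a block permutation realizing $G(\alpha,R)\sim G(\alpha',R')$, and conversely). It is routine, but worth a sentence.
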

\begin{proof}
	We first show that $\hat{F}_r$ and $\hat{G}_r$ are well-defined.
	Let $\gamma$ and $\gamma'$ be equivalent complete
	mutation loops, and $\rho$ be the permutation in Definition \ref{def:ml equivalence}.
	Then we have $A_{\gamma',\pm}=\rho(A_{\gamma,\pm})$ and
	$R_{\gamma'}=\rho(R_{\gamma})$.
	Thus $\hat{F}_r$ is well-defined.
	
	Let $(\alpha,R)$ and $(\alpha',R')$ be equivalent pairs of T-data and consistent subsets for them.
	Let $\rho \in \mathfrak{S}_r$ be the permutation in Definition \ref{def:cap R equivalence}.
	Let $\gamma=(B,d,\mathbf{i},\nu)$ and
	$\gamma'=(B',d',\mathbf{i}',\nu')$ be the mutation loops given by $\gamma=G(\alpha,R)$ and $\gamma'=G(\alpha',R')$, respectively.
	Then the bijection
	$f_{\rho,u}:R(u) \to R'(u)$ defined by $f_{\rho,u}(a,u)=(\rho(a),u)$
	satisfies $B'=f_{\rho,0}(B)$, $d'=f_{\rho,0}(d)$ and $\nu'=f_{\rho,0} \circ \nu \circ f_{\rho,0}^{-1}$.
	Moreover, $R'_0(u)$ and $f_{\rho,u}(R_0(u))$ coincide as sets.
	Thus $\gamma$ and $\gamma'$ are equivalent, and $\hat{G}_r$ is well-defined.
	
	By Lemma \ref{lemma:equivalent}, we have $\hat{F}_r \circ \hat{G}_r = \id$.
	It remains to show that $\hat{G} \circ \hat{F} = \id$.
	Let $\gamma$ be any complete mutation loop,
	and $I$ be the index set of $\gamma$.
	Let
	$\gamma'= G(\alpha_{\gamma},R_\gamma)$.
	Then the index set $I'$ of $\gamma'$ is given by
	$I' = \{ (a,u) \in R_{\gamma} \mid 0 \leq u < p_a \}$.
	Let $f:I \to I'$ be the map defined by
	\begin{align*}
		f(i) = 
		\begin{cases}
			(\pi(i,0),0) & \text{if $(i,0) \in P_\gamma$},\\
			(\pi(s(i,0)),\lambda(i,0)) & \text{if $(i,0) \notin P_\gamma$},
		\end{cases}
	\end{align*}
	where $\pi$, $s$, and $\lambda$, $P_\gamma$ are defined in Section \ref{section:T-system and Y-system}.
	It is easy to check that $f$ is a bijection,
	and gives the equivalence between $\gamma$ and $\gamma'$. 
\end{proof}

For any consistent subset $R \subseteq [1,r] \times \ZZ$ for a T-datum of size $r$,
we define the set $R_{\mathrm{in}}$ (the subscript ``in'' stands for initial)
by $R_{\mathrm{in}} = \{ (a,p) \in R \mid 0 \leq p < p_a \}$.

\begin{theorem}
	\label{theorem:embedding into cluster algebra}
	Let $\alpha$ be a T-datum of size $r$.
	Suppose that $R \subseteq [1,r] \times \ZZ$ is consistent for $\alpha$.
	Let $\PP$ be a semifield, and $\mathcal{F}$ be a filed that is isomorphic to the field of rational functions over $\QQ\PP$ in $\lvert R_{\mathrm{in}} \rvert$ variables.
	Let $x=(x_{a,p})_{(a,p) \in R_{\mathrm{in}}}$ be an $R_{\mathrm{in}}$-tuple of elements in $\mathcal{F}$ forming a free generating set.
	Let $Y=(Y_a(u))_{(a,u) \in R }$ be a solution of the Y-system associated with $(\alpha,R)$ in $\PP$.
	Then there exists a unique $R_{\mathrm{in}}$-labeled Y-seed $(B,y)$ in $\PP$ such that
	\begin{enumerate}
		\item there exists a unique injective $\ZZ\PP$-algebra homomorphism $\iota: \mathscr{T}^{\circ}(\alpha,R,Y) \hookrightarrow \mathcal{A}(B,y,x)$ such that $\iota(T_a(p)) = x_{a,p}$ for any $(a,p) \in R_{\mathrm{in}}$,
		\item $\iota(T_a(u))$ is a cluster variable in $\mathcal{A}(B,y,x)$ for any $(a,u) \in R$,
		\item the image of the relation \eqref{eq:standalone T-system} by $\iota$ is an exchange relation in $\mathcal{A}(B,y,x)$ for any $(a,u) \in R$.
	\end{enumerate}
\end{theorem}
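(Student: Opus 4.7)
The plan is to transport the theorem to the setting of mutation loops via Theorem~\ref{theorem:mutation loop from T-datum}, and then to read off the required Y-seed from the given solution $Y$. First, I would apply Theorem~\ref{theorem:mutation loop from T-datum} to produce a complete mutation loop $\gamma = G(\alpha, R) = (B, d, \mathbf{i}, \nu)$ of length $r$ with $(\alpha_\gamma, R_\gamma)$ equivalent to $(\alpha, R)$. After absorbing this equivalence by relabeling, the index set of the exchange matrix $B$ equals $R(0) = R_{\mathrm{in}}$ and its entries are given explicitly by~\eqref{eq:def of B from cap}. Define the coefficient tuple $y = (y_{a,p})_{(a,p) \in R_{\mathrm{in}}}$ in $\PP$ by $y_{a,p} := Y_a(p)$, and take $(B, y)$ as the candidate Y-seed.

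To prove existence, I would feed $(B, y, x)$ into the infinite mutation sequence~\eqref{eq:infinite seeds} dictated by $\gamma$ and use the reparameterization~\eqref{eq: def of T_a(u)} to obtain families $(T'_a(u))_{(a,u) \in R}$ of cluster variables in $\mathcal{A}(B, y, x)$ and $(Y'_a(u))_{(a,u) \in R}$ of coefficients in $\PP$. Propositions~\ref{prop: Y-system} and~\ref{prop: T-system} show that $(Y'_a(u))$ solves the $(\alpha, R)$-Y-system and that $(T'_a(u))$ satisfies the T-system for $(\alpha, R, Y')$. Both $Y$ and $Y'$ are solutions of the same Y-system and agree on $R_{\mathrm{in}}$ by construction; axioms~\eqref{item:N1} and~\eqref{item:N3} make each Y-equation solvable for a single value in terms of others at strictly smaller time indices, so the Y-system extends any initial data on $R_{\mathrm{in}}$ to at most one solution on $R$, forcing $Y' = Y$. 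Consequently $(T'_a(u))$ satisfies the T-system relations of $\mathscr{T}^\circ(\alpha, R, Y)$, and the assignment $T_a(u) \mapsto T'_a(u)$ defines a $\ZZ\PP$-algebra homomorphism $\iota: \mathscr{T}^\circ(\alpha, R, Y) \to \mathcal{A}(B, y, x)$; properties (1)--(3) follow automatically from the construction.

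Injectivity of $\iota$ follows from the parallel T-system recursion inside $\mathscr{T}^\circ(\alpha, R, Y)$---each $T_a(u)$ is expressible as a rational function in $(T_b(p))_{(b,p) \in R_{\mathrm{in}}}$---combined with the Laurent phenomenon for cluster algebras, which makes each $T'_a(u)$ a Laurent polynomial in the algebraically independent generators $x_{b,p}$ of $\mathcal{F}$. For uniqueness of $(B, y)$, if $(B'', y'')$ also satisfies (1)--(3), condition~(1) fixes the initial cluster as $(x_{a,p})$, and condition~(3) applied to the T-system relation at each $(a, 0)$---whose image under the corresponding $\iota''$ is an initial-seed exchange relation---reads off $y''_{(a,0)} = Y_a(0)$ from the prefactor $P_a^\pm(0)$ and the column $B''_{\cdot, (a,0)}$ from the exponents of the two monomials (disambiguated by axiom~\eqref{item:N4}); propagating this identification along the mutation sequence determines the remaining entries. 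The main obstacle is the rigorous control of $\mathscr{T}^\circ(\alpha, R, Y)$ needed for injectivity: one must verify that the T-system is a \emph{complete} presentation, so that iteratively solving its relations never produces hidden inconsistencies. This rests on the symplectic relation of Definition~\ref{def:T-datum}---the same relation underpinning Lemma~\ref{lemma:B(u+1)=varphi mu B(u)}---together with the Laurent phenomenon, which supplies explicit Laurent polynomial representatives certifying the recursion is coherent.
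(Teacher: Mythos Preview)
Your overall architecture matches the paper's proof: pass to the mutation loop $\gamma=G(\alpha,R)$, run the infinite seed sequence~\eqref{eq:infinite seeds}, and identify the resulting cluster variables $T'_a(u)$ with the generators of $\mathscr{T}^\circ(\alpha,R,Y)$ via the Laurent phenomenon. The injectivity and the uniqueness argument are essentially what the paper does (for uniqueness the paper cites two general facts about cluster algebras rather than arguing ad hoc, but your outline is in the same spirit).

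There is, however, a genuine error in your choice of initial coefficients. You set $y_{a,p}:=Y_a(p)$ and then assert that $Y'$ and $Y$ ``agree on $R_{\mathrm{in}}$ by construction''. This is false for $p>0$. The value $Y'_a(p)$ is defined as the coefficient $y_{(a,p)}(p)$ \emph{after} the first $p$ rounds of mutation, and the index $(a,p)$---although not yet itself mutated---has had its coefficient modified by every earlier mutation at an index $(b,v)$ with $\bar{B}_{(b,v)(a,p)}(v)\neq 0$, according to the rule~\eqref{eq: y-mutation}. Hence in general $Y'_a(p)=y_{(a,p)}(p)\neq y_{(a,p)}(0)=y_{a,p}$. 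With your choice the two Y-solutions do not share initial data, and the uniqueness-from-initial-data step collapses.

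The fix is exactly the point of the paper's formula~\eqref{eq:y from Y}:
\[
y_{a,p}\;=\;Y_a(p)\,
\frac{\prod_{b=1}^{r}\prod_{q=0}^{p}\bigl(1\oplus Y_b(p-q)^{-1}\bigr)^{\check n^{+}_{ab;q}}}
     {\prod_{b=1}^{r}\prod_{q=0}^{p}\bigl(1\oplus Y_b(p-q)\bigr)^{\check n^{-}_{ab;q}}},
\]
which precisely undoes the accumulated $(1\oplus y_k)^{\mp B_{ki}}$ factors picked up during steps $0,\dots,p-1$, so that $y_{(a,p)}(p)=Y_a(p)$. The exponents $\check n^{\pm}_{ab;q}$ appear because, by~\eqref{eq: NY+-} and Proposition~\ref{proposition:Y/T duality}, they record exactly the signed entries $\bar{B}_{(b,p-q)(a,p)}(p-q)$ at the moment $(b,p-q)$ is mutated. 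Once $y$ is defined this way, your remaining steps go through as written.
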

\begin{proof}
	Let $\gamma=(B,d,\mathbf{i},\nu)$ be the mutation loop given by $\gamma=G(\alpha,R)$.
	By Lemma \ref{lemma:equivalent}, we can assume that $\alpha=\alpha_{\gamma}$ and $R=R_{\gamma}$.
	We define a family of elements $y=(y_{a,p})_{(a,p) \in R_{\mathrm{in}}}$ in $\PP$ by
	\begin{align}\label{eq:y from Y}
		y_{a,p} = Y_a(p) \frac{\prod_{b=1}^{r} \prod_{q=0}^{p} \bigl( 1 \oplus Y_b(p-q)^{-1} \bigr)^{\check{n}_{ab;q}^{+}}}{\prod_{b=1}^{r} \prod_{q=0}^{p} \bigl( 1 \oplus Y_b(p-q) \bigr)^{\check{n}_{ab;q}^{-}}}.
	\end{align}
	Then we can define a family of seeds
	$(B(u),y(u),x(u))_{u \in \ZZ}$
	by \eqref{eq:infinite seeds},
	where $(B(0),y(0),x(0)) = (B,y,x)$.
	Note that $\mathbf{i}(u) = (\vec{\varphi}_u)^{-1}(R_0(u))$ for any $u \in \ZZ$ by the commutativity of $\psi^t$ and $\varphi$,
	where we define $\vec{\varphi}_u:R(0) \to R(u)$ by
	\begin{align}\label{eq:def of vec varphi}
		\vec{\varphi}_u =
		\begin{cases}
			\varphi_{u-1} \circ \dots \circ \varphi_0
			& \text{if $u\geq 0$},\\
			(\varphi_{-1} \circ \dots \circ \varphi_u)^{-1}
			& \text{if $u< 0$}.
		\end{cases}
	\end{align}
	We define $Y' = (Y'_a(u))_{(a,u) \in R_{\gamma}}$
	by \eqref{eq: def of Y_a(u)},
	where $Y_a(u)$ in \eqref{eq: def of Y_a(u)} is replaced with $Y'_a(u)$.
	Then we have $Y_a(p) = Y'_a(p)$ for any $(a,p) \in R_{\mathrm{in}}$ by \eqref{eq:y from Y}.
	Moreover, $Y$ and $Y'$ are solutions of the same Y-system,
	we have $Y_a(u) = Y'_a(u)$ for any $(a,u) \in R$.
	We define $T'_a(u)$ for any $(a,u) \in R_{\gamma}$ by \eqref{eq: def of T_a(u)},
	where $T_a(u)$ in \eqref{eq: def of T_a(u)} is replaced with $T'_a(u)$.
	Let $\mathcal{A}_{\gamma}(B,y,x)$ be the $\ZZ\PP$-subalgebra of $\mathcal{A}(B,y,x)$ generated by $(x_{a,p}(u))_{(a,p) \in R_{\mathrm{in}},u \in \ZZ}$.
	It is also generated by $(T'_a(u))_{(a,p) \in R_{\gamma}}$.
	We now show that $\mathcal{A}_{\gamma}(B,y,x)$ is isomorphic to $\mathscr{T}^{\circ}(\alpha,R,Y)$.
	Let $\bar{\iota}:\mathscr{T}^{\circ}(\alpha,R,Y) \to \mathcal{A}_{\gamma}(B,y,x)$ be the algebra homomorphism defined by $\bar{\iota}(T_a(u)) = T'_a(u)$.
	This is well-defined by Proposition \ref{prop: T-system}.
	To construct the inverse of $\bar{\iota}$, we define an algebra homomorphism
	$\kappa: \ZZ\PP[x_{a,p}^{\pm1}]_{(a,p) \in R_{\mathrm{in}}} \to \mathscr{T}(\alpha,R,Y)$ by $\kappa(x_{a,p}^{\pm1}) = T_a(p)^{\pm1}$.
	By the Laurent phenomenon of cluster algebras~\cite{FZ1},
	$\mathcal{A}_{\gamma}(B,y,x)$ is a subalgebra of $\ZZ\PP[x_{a,p}^{\pm1}]_{(a,p) \in R_{\mathrm{in}}}$.
	Then we have $\kappa(T'_a(u)) = T_a(u)$ since
	$(T'_a(u))_{(a,p) \in R_{\gamma}}$ and $(T_a(u))_{(a,p) \in R}$
	satisfy the same T-system.
	Thus we obtain the algebra homomorphism $\bar{\kappa}: \mathcal{A}_{\gamma}(B,y,x) \to \mathscr{T}^{\circ}(\alpha,R,Y)$
	as the restriction of $\kappa$,
	which is the inverse of $\bar{\iota}$.
	Therefore, the existence of a Y-seed satisfying (1)--(3) is proved.
	
	The uniqueness follows from the following facts, which hold for any skew-symmetrizable cluster algebra:
	\begin{enumerate*}[label=(\roman*)]
		\item given a cluster as a set, the exchange relations involving its elements are uniquely determined~\cite[Proposition 6.1]{cao2018enough},
		\item any two clusters that have $\lvert I \rvert-1$ common cluster variables are related by an exchange relation~\cite[Theorem 5]{GSV08}.
	\end{enumerate*}
\end{proof}

\begin{example}[Somos-4 recurrence]
	\label{example:somos4 ml}
	Let $\alpha$, $R$, and $Y$ be as in Example \ref{example:cap somos4}.
	Then the Y-seed given by Theorem \ref{theorem:embedding into cluster algebra} is
	\begin{align}\label{eq:y-seed somos4}
		\begin{tikzpicture}[scale=1.9,baseline=(base.base),vertex/.style={scale=1}]
			\node (base) at (0.5,-0.5) [opacity=1] {};
			\node (1) at (0,0)[vertex] {$(1,0)$};
			\node (2) at (1,0)[vertex] {$(1,1)$};
			\node (3) at (1,-1)[vertex] {$(1,2)$};
			\node (4) at (0,-1)[vertex] {$(1,3)$};
			\node (c1) at (1.6,-0.5)[vertex] {$\boxed{c_1}$};
			\node (c2) at (-0.6,-0.5)[vertex] {$\boxed{c_2}$};
			\foreach \from/\to in {2/1,4/3,4/1,1/c2,c2/4,c1/1,c1/2,3/c1,4/c1}
			\draw[arrows={-Stealth[scale=1.2]}] (\from)--(\to);
			\foreach \from/\to in {1/3,2/4}
			\draw[arrows={-Stealth[scale=1.2]Stealth[scale=1.2]}] (\from)--(\to);
			\foreach \from/\to in {3/2}
			\draw[arrows={-Stealth[scale=1.2]Stealth[scale=1.2]Stealth[scale=1.2]}] (\from)--(\to);
		\end{tikzpicture},
	\end{align}
	where we represent the Y-seed using a quiver with frozen vertices as in the cluster algebra literature (see e.g., \cite{fomin2016introduction}).
	For instance,
	$\begin{tikzpicture}[scale=2.1,baseline=(i.base),vertex/.style={scale=1}]
	\node (i) at (0,0)[vertex] {$i$};
	\node (c1) at (0.5,0)[vertex] {$\boxed{c_1}$};
	\node (c2) at (-0.5,0)[vertex] {$\boxed{c_2}$};
	\foreach \from/\to in {c1/i,i/c2}
	\draw[arrows={-Stealth[scale=1.2]}] (\from)--(\to);
	\end{tikzpicture}$
	means $y_i = c_1 c_2^{-1}$.
	The mutation loop $\gamma = (B,d,\mathbf{i},\nu)=G(\alpha,R)$ is given by
	\begin{itemize}
		\item $B=B(Q)$,
		\item $d_i=1$ for any $i \in I$,
		\item $\mathbf{i}= \mathbf{i}(0)$ with
		$\mathbf{i}(0) =((1,0))$,
		\item $\nu=((1,0) \, (1,1)\, (1,2)\, (1,3))$,
	\end{itemize}
	where $Q$ is an underlying quiver in \eqref{eq:y-seed somos4}, and
	$\nu$ is the cyclic
	permutation corresponding to the right $\pi/2$ rotation of the quiver.
	In fact, the quiver mutation $\mu_{(1,0)}$ is given by
	\begin{align*}
	\begin{tikzpicture}[scale=1.9,baseline=(base.base),vertex/.style={scale=1}]
	\node (base) at (0.5,-0.5) {};
	\node (1) at (0,0)[vertex] {$(1,0)$};
	\node (2) at (1,0)[vertex] {$(1,1)$};
	\node (3) at (1,-1)[vertex] {$(1,2)$};
	\node (4) at (0,-1)[vertex] {$(1,3)$};
	%
	\foreach \from/\to in {2/1,4/3,4/1}
	\draw[arrows={-Stealth[scale=1.2]}] (\from)--(\to);
	\foreach \from/\to in {1/3,2/4}
	\draw[arrows={-Stealth[scale=1.2]Stealth[scale=1.2]}] (\from)--(\to);
	\foreach \from/\to in {3/2}
	\draw[arrows={-Stealth[scale=1.2]Stealth[scale=1.2]Stealth[scale=1.2]}] (\from)--(\to);
	\end{tikzpicture}
	\xmapsto{\mu_{(1,0)}}
	\begin{tikzpicture}[scale=1.9,baseline=(base.base),vertex/.style={scale=1}]
	\node (base) at (0.5,-0.5) [opacity=1] {};
	\node (1) at (0,0)[vertex] {$(1,0)$};
	\node (2) at (1,0)[vertex] {$(1,1)$};
	\node (3) at (1,-1)[vertex] {$(1,2)$};
	\node (4) at (0,-1)[vertex] {$(1,3)$};
	%
	\foreach \from/\to in {1/2,3/2,1/4}
	\draw[arrows={-Stealth[scale=1.2]}] (\from)--(\to);
	\foreach \from/\to in {2/4,3/1}
	\draw[arrows={-Stealth[scale=1.2]Stealth[scale=1.2]}] (\from)--(\to);
	\foreach \from/\to in {4/3}
	\draw[arrows={-Stealth[scale=1.2]Stealth[scale=1.2]Stealth[scale=1.2]}] (\from)--(\to);
	\end{tikzpicture},
	\end{align*}
	and we have $\mu_{(1,0)}(Q)=\nu(Q)$.
\end{example}

\begin{example}[Bipartite belt]
	\label{example:b belt ml}
	Let $\alpha$ and $R$ be as in Example \ref{example:cap b belt}.
	The mutation loop $\gamma = (B,d,\mathbf{i},\nu) = G(\alpha,R)$ is given by
	\begin{itemize}
		\item $B=(B_{(a,p)(b,q)})_{(a,p),(b,q) \in I}$, where $B_{(a,p)(b,q)} = \epsilon(a) n_{ab}$,
		\item $d=(d_{a,p})_{(a,p)\in I}$, where $D=\diag (d_1 ,\dots, d_r)$ and $d_{a,p}=d_a$,
		\item $\mathbf{i} = \mathbf{i}(0) \mid \mathbf{i}(1)$ with $\mathbf{i}(0)=\{ (a,0) \mid \epsilon(a)=-1 \}$ and
		$\mathbf{i}(1)=\{ (a,1) \mid \epsilon(a)=1 \}$,
		\item 	$\nu = \id$,
	\end{itemize}
	where $I= \{ (a,0) \mid \epsilon(a)=-1 \} \sqcup \{ (a,1) \mid \epsilon(a)=1 \}$.
	If $A$ is the Cartan matrix of type $A_2$, for instance, the quiver $Q(B)$ is given by
	\begin{align*}
		Q(B)=
		\begin{tikzpicture}[scale=1.9,baseline=(1.base),vertex/.style={scale=1}]
		\node (1) at (0,0)[vertex] {$(1,0)$};
		\node (2) at (1,0)[vertex] {$(2,1)$};
		\foreach \from/\to in {2/1}
		\draw[arrows={-Stealth[scale=1.2]}] (\from)--(\to);
		\end{tikzpicture}.
	\end{align*}
	If $A$ is a Cartan matrix of finite type,
	$\mathcal{A}(B,y,x)$ is a finite type cluster algebra and
	the embedding $\iota:\mathscr{T}^{\circ}(\alpha,R,Y) \hookrightarrow \mathcal{A}(B,y,x)$ in Theorem \ref{theorem:embedding into cluster algebra} is an isomorphism~\cite[Proposition 11.1]{FZ4}.
\end{example}

\subsection{Tropical T-system}
\label{section:tropical T}
By Theorem \ref{theorem:embedding into cluster algebra} and the Laurent phenomenon of cluster algebras~\cite{FZ1}, we obtain the following:

\begin{corollary}\label{corollary:laurent phenomenon}
	Let $\mathscr{T}^{\circ}(\alpha,R,Y)$ be a T-algebra.
	Then
	$T_a(u) \in \mathscr{T}^{\circ}(\alpha,R,Y)$ can be written as a Laurent polynomial in $(T_c(p))_{(c,p) \in R_{\mathrm{in}}}$ with coefficients in $\ZZ\PP$, for any $(a,u) \in R$.
\end{corollary}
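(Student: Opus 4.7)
The plan is to obtain the Laurent representation by pulling back, along the embedding $\iota$ constructed in Theorem \ref{theorem:embedding into cluster algebra}, the classical Laurent phenomenon for cluster algebras.

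First I would fix any $(a,u) \in R$ and apply Theorem \ref{theorem:embedding into cluster algebra} to obtain a $\ZZ\PP$-seed $(B,y)$ together with the injective $\ZZ\PP$-algebra homomorphism $\iota: \mathscr{T}^{\circ}(\alpha,R,Y) \hookrightarrow \mathcal{A}(B,y,x)$ sending the initial generators $T_c(p)$ to the cluster variables $x_{c,p}$ for $(c,p) \in R_{\mathrm{in}}$. By part (2) of that theorem, $\iota(T_a(u))$ is itself a cluster variable in $\mathcal{A}(B,y,x)$.

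Next I would invoke the Laurent phenomenon of Fomin--Zelevinsky~\cite{FZ1}: every cluster variable of $\mathcal{A}(B,y,x)$ is a Laurent polynomial in the initial cluster $(x_{c,p})_{(c,p) \in R_{\mathrm{in}}}$ with coefficients in $\ZZ\PP$. Applied to $\iota(T_a(u))$, this yields some
\[
P \in \ZZ\PP\bigl[x_{c,p}^{\pm 1}\bigr]_{(c,p) \in R_{\mathrm{in}}}
\quad\text{with}\quad
\iota(T_a(u)) = P\bigl(x_{c,p}\bigr).
\]

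Finally, I would transfer this identity back through $\iota$. Extend $\iota$ to an embedding of the localization $\mathscr{T}(\alpha,R,Y)$ (in which the inverses $T_c(p)^{-1}$ are available) into the corresponding localization of $\mathcal{A}(B,y,x)$. Since the extended $\iota$ is injective and sends $T_c(p)^{\pm 1}$ to $x_{c,p}^{\pm 1}$, the equality in $\mathcal{A}(B,y,x)$ pulls back to the identity $T_a(u) = P\bigl(T_c(p)\bigr)$ in $\mathscr{T}(\alpha,R,Y)$, which is the required Laurent polynomial expression. The only subtlety (not a real obstacle) is the bookkeeping reminder that the Laurent expression is read inside the localization $\mathscr{T}(\alpha,R,Y)$ rather than in $\mathscr{T}^{\circ}(\alpha,R,Y)$ itself; but this is precisely what the statement asserts.
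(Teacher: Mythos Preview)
Your proposal is correct and matches the paper's approach: the corollary is stated as an immediate consequence of Theorem~\ref{theorem:embedding into cluster algebra} combined with the Laurent phenomenon of~\cite{FZ1}, and the pullback you describe is precisely the map $\kappa$ constructed in the proof of that theorem.
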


Let $R= [1,r] \times \ZZ$.
By Corollary \ref{corollary:laurent phenomenon},
any $T_a(u)$ can be uniquely written as
\begin{align}\label{eq:denominator}
	T_a(u) = \frac{N}{ \prod_{(c,p) \in R_{\mathrm{in}}} T_c(p)^{d_{c,p}} },
\end{align}
where $N$ is a polynomial in $(T_c(p))_{(c,p) \in R_{\mathrm{in}}}$
with coefficients in $\ZZ\PP$ which is not divisible by any $T_c(p)$ ($(c,p) \in R_{\mathrm{in}}$).
We denote by $\mathfrak{t}_{a}^{(c)}(u)$ the integer $d_{c,0}$ in \eqref{eq:denominator}.
The family of integers $(\mathfrak{t}_a^{(c)} (u))_{(a,u) \in [1,r] \times \ZZ}$ is uniquely
determined by the initial conditions
\begin{align}\label{eq:tropical T initial}
\mathfrak{t}_a^{(c)} (p)
=
\begin{cases}
-1 & \text{if $(a,p) = (c,0)$,}\\
0	& \text{if $(a,p) \neq (c,0)$ and $0 \leq p <p_a$,}
\end{cases}
\end{align}
together with the following recurrence relation for each $(a,u) \in [1,r] \times \ZZ$:
\begin{align}\label{eq:tropical T}
\sum_{b,p} n_{ba;p}^0 \mathfrak{t}_b^{(c)} (u+p) &=
\max \biggl(\sum_{b,p} n_{ba;p}^- \mathfrak{t}_b^{(c)} (u+p),\sum_{b,p} n_{ba;p}^+ \mathfrak{t}_b^{(c)} (u+p) \biggr) .
\end{align}
In particular, the integer $\mathfrak{t}_{a}^{(c)}(u)$ is independent of the choice of $Y$.
The family of relations \eqref{eq:tropical T} is called the \emph{tropical T-system} associated with $\alpha$.

We also define a family of integers $(\hat{\mathfrak{y}}_a^{(c)} (u))_{(a,u) \in [1,r] \times \ZZ}$ by
\begin{align}
\label{eq:tropical Y hat 1}
\hat{\mathfrak{y}}_a^{(c)}(u)
&=
\sum_{b,p} \big( n_{ba;p}^{-} \mathfrak{t}_b^{(c)} (u+p)
-n_{ba;p}^{+} \mathfrak{t}_b^{(c)} (u+p) \bigr) \\
\label{eq:tropical Y hat 2}
&=
\sum_{b,p} \bigl( (n_{ba;p}^{0}-n_{ba;p}^{+}) \mathfrak{t}_b^{(c)} (u+p)
-(n_{ba;p}^{0}-n_{ba;p}^{-}) \mathfrak{t}_b^{(c)} (u+p) \bigr).
\end{align}
By the relation \eqref{eq:tropical T}, we have
\begin{align}\label{eq:tropical Y hat pm}
\maxzero{ \pm \hat{\mathfrak{y}}_a^{(c)} (u)}
&= \sum_{b,p} (n_{ba;p}^{0}-n_{ba;p}^{\pm}) \mathfrak{t}_b^{(c)} (u+p).
\end{align}

The following lemma will be used in Section \ref{section:periodic yt}.

\begin{lemma}\label{lemma:tropical T and Y around initial}
	The following equalities hold for any T-datum:
	\begin{enumerate}
		\item For any $a \in [1,r]$, we have
		\begin{align*}
		\mathfrak{t}_a^{(c)} (p_a)
		=
		\begin{cases}
		1	& \text{if $a = \sigma(c)$,}\\
		0 & \text{otherwise.}
		\end{cases}
		\end{align*}
		\item For any $a \in [1,r]$ and $0 \leq p \leq p_c$,
		we have
		\begin{align*}
		\mathfrak{t}_a^{(c)} (-p)
		=
		\begin{cases}
		-1 & \text{if $(a,p)=(c,0)$},\\
		1	& \text{if $(a,p) = (\sigma^{-1}(c),p_c)$,}\\
		0 & \text{otherwise.}
		\end{cases}
		\end{align*}
		\item For any $a \in [1,r]$ and $0 \leq p \leq p_c$,
		we have
		\begin{align*}
		\maxzero{ \pm \hat{\mathfrak{y}}_a^{(c)}(-p) }
		= n_{ca;p}^{\pm}.
		\end{align*}
	\end{enumerate}
\end{lemma}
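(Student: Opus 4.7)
The plan is to establish (1), (2), and (3) in sequence, each by direct analysis of the tropical T-system \eqref{eq:tropical T} together with the initial data \eqref{eq:tropical T initial} and the structural axioms (\ref{item:N1})--(\ref{item:N4}).

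For (1), I would substitute $u=0$ into \eqref{eq:tropical T}. By (\ref{item:N3}) only $(b,q)$ with $0 < q < p_b$ contribute to the right-hand side, and for such $(b,q)$ the initial condition forces $\mathfrak{t}_b^{(c)}(q)=0$, so both maxima vanish. The left-hand side reduces via (\ref{item:N1}) to $-\delta_{ac}+\mathfrak{t}_{\sigma(a)}^{(c)}(p_{\sigma(a)})$, giving $\mathfrak{t}_{\sigma(a)}^{(c)}(p_{\sigma(a)})=\delta_{ac}$; reindexing $a\mapsto \sigma^{-1}(a)$ yields (1).

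For (2), I will run induction on $p$ from $0$ to $p_c$. The base $p=0$ is immediate from \eqref{eq:tropical T initial} (the exceptional case $(a,0)=(\sigma^{-1}(c),p_c)$ cannot occur since $p_c>0$). For $1\le p\le p_c$, apply \eqref{eq:tropical T} at $u=-p$ with row index $a$ and solve for $\mathfrak{t}_a^{(c)}(-p)$. The companion left-hand term $\mathfrak{t}_{\sigma(a)}^{(c)}(-p+p_{\sigma(a)})$ is covered by the initial data when $-p+p_{\sigma(a)}\ge 0$ and by the inductive hypothesis when $-p+p_{\sigma(a)}<0$; note that $p\le p_c$ forces $-p+p_{\sigma(a)}>-p_c$, so the induction always applies. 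A short case analysis shows this term vanishes except when $(p,a)=(p_c,\sigma^{-1}(c))$, where it equals $-1$. On the right, each summand $\mathfrak{t}_b^{(c)}(-p+q)$ with $0<q<p_b$ is likewise handled by the initial data or by induction, and the only $(b,q)$ giving a nonzero contribution is $(b,q)=(c,p)$ with $0<p<p_c$, producing $-n_{ca;p}^{\pm}\le 0$ by (\ref{item:N2}). Since (\ref{item:N4}) forces at least one of $n_{ca;p}^{\pm}$ to vanish, both maxima evaluate to $0$. Assembling these ingredients produces exactly the values asserted in (2). The main obstacle lies in this bookkeeping: for every $(b,q)$ with $0<q<p_b$ one must decide whether $-p+q$ lies in the initial window $[0,p_b)$, in the range $[-p_c,0)$ where the inductive hypothesis applies, or outside the controlled region, and the inequality $p\le p_c$ is essential to exclude the third possibility.

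For (3), I will substitute the values from (2) into \eqref{eq:tropical Y hat 1}. The same case analysis isolates the contribution $(b,q)=(c,p)$ for $0<p<p_c$, producing $\hat{\mathfrak{y}}_a^{(c)}(-p)=n_{ca;p}^{+}-n_{ca;p}^{-}$, while $\hat{\mathfrak{y}}_a^{(c)}(-p)=0$ at $p=0$ and $p=p_c$. By (\ref{item:N4}) at most one of $n_{ca;p}^{\pm}$ is nonzero, so $\maxzero{\pm(n_{ca;p}^{+}-n_{ca;p}^{-})}=n_{ca;p}^{\pm}$, and (\ref{item:N3}) kills the right-hand side at the endpoints, matching the left and completing (3).
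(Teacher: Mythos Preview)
Your proposal is correct and follows essentially the same route as the paper: both prove (1) by reading off the tropical T-system at $u=0$, prove (2) by induction on $p$ using that the right-hand maxima reduce to $\max(-n_{ca;p}^{+},-n_{ca;p}^{-})=0$ via (\ref{item:N4}), and then deduce (3) by substituting the values from (2). The only cosmetic difference is that in (3) the paper works from the identity \eqref{eq:tropical Y hat pm} (so the $N_0$-contribution $\mathfrak{t}_a^{(c)}(-p)+\mathfrak{t}_{\sigma(a)}^{(c)}(-p+p_{\sigma(a)})$ appears explicitly and is shown to vanish using both (1) and (2)), whereas you compute $\hat{\mathfrak{y}}_a^{(c)}(-p)$ directly from \eqref{eq:tropical Y hat 1} and then take positive parts, which bypasses any appeal to (1).
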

\begin{proof}
	(1) is clear from \eqref{eq:tropical T initial} and \eqref{eq:tropical T}.
	We prove (2) by induction on $p$.
	The case $p=0$ follows from \eqref{eq:tropical T initial}.
	Suppose that $p>0$. Then we have
	\begin{align*}
		\mathfrak{t}_a^{(c)} (-p) &= -\mathfrak{t}_{\sigma(a)}^{(c)}(-p+p_{\sigma(a)})
		+\max \biggl(\sum_{b,q} n_{ba;q}^- \mathfrak{t}_b^{(c)} (-p+q),\sum_{b,q} n_{ba;q}^+ \mathfrak{t}_b^{(c)} (-p+q) \biggr)\\
		&= \delta_{\sigma(a)c}\delta_{pp_c} + \max( -n_{ca;p}^{+},-n_{ca;p}^{-} ) \\
		&=\delta_{\sigma(a)c}\delta_{pp_c},
	\end{align*}
	and (2) is proved.
	We now prove (3).
	From \eqref{eq:tropical Y hat pm}, we have
	\begin{align*}
	\maxzero{ \pm \hat{\mathfrak{y}}_a^{(c)}(-p) }
	&= 
	\sum_{b,q} (n_{ba;q}^{0} - n_{ba;q}^{\pm}) \mathfrak{t}_b^{(c)} (-p+q)\\
	&=
	\mathfrak{t}_{a}^{(c)}(-p)
	+\mathfrak{t}_{\sigma(a)}^{(c)}(-p+p_{\sigma(a)})
	-n_{ca;p}^{\pm} \mathfrak{t}_{c}^{(c)}(0) \\
	&=
	\mathfrak{t}_{a}^{(c)}(-p)
	+\mathfrak{t}_{\sigma(a)}^{(c)}(-p+p_{\sigma(a)})
	+n_{ca;p}^{\pm}.
	\end{align*}
	By (1) and (2) in this lemma,
	we have
	\begin{align*}
		\mathfrak{t}_a^{(c)} (-p)
		=
		\begin{cases}
			1	& \text{if $(a,p) = (\sigma^{-1}(c),p_c)$},\\
			-1 & \text{if $(a,p)=(c,0)$},\\
			0 & \text{otherwise},
		\end{cases}
	\end{align*}
	and
	\begin{align*}
		\mathfrak{t}_{\sigma(a)}^{(c)} (-p+p_{\sigma(a)})
		=
		\begin{cases}
			1	& \text{if $(a,p) = (c,0)$},\\
			-1 & \text{if $(a,p) = (\sigma^{-1}(c),p_c)$},\\
			0 & \text{otherwise}.
		\end{cases}
	\end{align*}
	Thus we have
	$\mathfrak{t}_{a}^{(c)}(-p)+\mathfrak{t}_{\sigma(a)}^{(c)}(-p+p_{\sigma(a)}) = 0$.
	This completes the proof of (3).
\end{proof}

\subsection{Indecomposable T-data}
If $(A_+,A_-,D)$ and $(A'_+,A'_-,D')$ are T-data, the direct sum
\begin{align*}
\left( 
\begin{bmatrix}
A_{+} & O\\
O & A'_{+}
\end{bmatrix},
\begin{bmatrix}
A_{-} & O\\
O & A'_{-}
\end{bmatrix},
\begin{bmatrix}
D & O\\
O & D'
\end{bmatrix}
\right)
\end{align*}
is also a T-datum.
A T-datum $(A_+, A_-,D)$ is called \emph{decomposable} 
if it can be written as a nontrivial direct sum after reordering the indices of matrices.
A T-datum that is not decomposable is called \emph{indecomposable}.

We say that a skew-symmetrizable matrix $B$ is \emph{connected} if it cannot be written as a nontrivial direct sum.
We also say that a connected skew-symmetrizable matrix $B'=(B'_{ij})_{i,j \in I'}$ is a \emph{connected component} of a skew-symmetrizable matrix $B=(B_{ij})_{i,j \in I}$ if $I' \subseteq I$, $B'_{ij} = B_{ij}$ for any $i,j \in I'$, and $B_{ij}=0$ for any $i \in I'$ and $j \in I \setminus I'$.

\begin{proposition}\label{prop:R from connected component}
	Let $\alpha$ be an indecomposable T-datum.
	Let $I'$ be the index set of a connected component of $B$,
	where $B$ is the skew-symmetrizable matrix in the mutation loop $G(\alpha,[1,r] \times \ZZ)$.
	Then the set
	\begin{align*}
		R' := \bigcup_{u \in \ZZ} \vec{\varphi}_u(I')
	\end{align*}
	is consistent for $\alpha$,
	where $\vec{\varphi}_u$ is defined by \eqref{eq:def of vec varphi}.
	Moreover, the index set of $B'$ is $I'$, where $B'$ is the skew-symmetrizable matrix in the mutation loop $G(\alpha,R')$.
\end{proposition}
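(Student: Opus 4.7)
The plan is to verify the three consistency conditions (R1)--(R3) for $R'$, and then identify the index set of $B'$ with $I'$. The backbone of the argument is the observation, coming from Lemma \ref{lemma:B(u+1)=varphi mu B(u)}, that the matrices $\bar{B}(u)$ differ only by simultaneous mutations and relabelings via $\varphi_u$; since both operations preserve the partition of vertices into connected components of the underlying undirected graph, the connected components of $\bar{B}(u)$ are precisely $\vec{\varphi}_u(I_1), \ldots, \vec{\varphi}_u(I_m)$, where $I_1, \ldots, I_m$ are the connected components of $B$. A second preparatory observation is that whenever $(a, u) \in R(v) \cap R(v+1)$ one has $u > v$, so $\varphi_v$ fixes $(a, u)$; inductively, the preimage $\vec{\varphi}_v^{-1}(a, u) \in R(0)$ is the \emph{same} element of $R(0)$ for every $v$ with $(a, u) \in R(v)$. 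Consequently, membership $(a, u) \in R'$ forces $(a, u) \in \vec{\varphi}_v(I')$ for every such $v$.

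To verify (R1), suppose $n^{\varepsilon}_{ab;p} \neq 0$. The $n^0$ case reduces by (N1) to $(b, p) = (\sigma^{-1}(a), p_a)$; taking $v = u - p_a$, the identity $\varphi_v^{-1}(a, u) = (\sigma^{-1}(a), v)$ places $(\sigma^{-1}(a), u - p_a)$ in $\vec{\varphi}_v(I') \subseteq R'$. For the $n^{\pm}$ case with $0 < p < p_a$, I would set $v = u - p$ in \eqref{eq:def of bar B(u)}: the terms $\check{n}^{\pm}_{ba;-p}$ and $\check{n}^{\pm}_{bc;0}$ vanish (by polynomiality in $z$ and by \ref{item:N3}, respectively), so
\[
\bar{B}_{(a, v+p)(b, v)}(v) \;=\; -n^+_{ab;p} + n^-_{ab;p},
\]
which is nonzero by \ref{item:N4}. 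Thus $(a, u)$ and $(b, u-p)$ lie in the same connected component $\vec{\varphi}_v(I')$ of $\bar{B}(v)$, giving $(b, u-p) \in R'$. Condition (R2) follows symmetrically by taking $v = u$.

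Condition (R3) is the main work. Using $\psi \circ \varphi_u = \varphi_{u+1} \circ \psi$ together with $\nu = \varphi_0^{-1} \circ \psi$ on $R(0)$, one deduces $\psi(\vec{\varphi}_u(X)) = \vec{\varphi}_{u+1}(\nu(X))$ for $X \subseteq R(0)$, so that $R'^{(k)} = \bigcup_u \vec{\varphi}_u(\nu^k(I'))$. The integer $t$ of (R3) is then the $\nu$-period of $I'$, and disjointness of the shifts $R'^{(0)}, \ldots, R'^{(t-1)}$ follows from the $v$-independence of preimages combined with the disjointness of the components $\nu^k(I')$. The real difficulty lies in the partition $[1, r] \times \ZZ = \bigsqcup_{k=0}^{t-1} R'^{(k)}$, which is equivalent to the $\nu$-orbit of $I'$ exhausting all connected components of $B$. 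I plan to argue this by contradiction using indecomposability: if $J$ is the union of components in a proper $\nu$-invariant subset of $\{I_1, \ldots, I_m\}$, then $R_J := \bigcup_u \vec{\varphi}_u(J)$ is $\psi$-invariant, hence of the form $S \times \ZZ$ for some proper $S \subsetneq [1, r]$. The analogues of (R1) and (R2) for $R_J$ (which follow from the same connected-component argument) then force $\sigma(S) = S$ and $n^{\pm}_{ab;p} = 0$ for $a \in S$, $b \notin S$ (and vice versa), yielding a nontrivial decomposition $\alpha = \alpha_S \oplus \alpha_{[1, r] \setminus S}$ that contradicts indecomposability.

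For the last assertion, the index set of $B'$ equals $R'(0) = \{(a, p) \in R' : 0 \le p < p_a\}$, and the $v$-independence of preimages at $v = 0$ (where $\vec{\varphi}_0 = \id$) gives $R'(0) = I'$. I expect the main obstacle to be the indecomposability step: beyond the combinatorial closure of $S$ under the supports of $n^{\pm}$ and under $\sigma$, one must verify that \emph{all} structural ingredients of Definition \ref{def:T-datum} --- the $D$-compatibilities $N_0 D = D N_0$ and $D^{-1} N_{\pm} D \in \mat_{r \times r}(\ZZ[z])$, and the symplectic relation $A_+ D A_-^{\dagger} = A_- D A_+^{\dagger}$ --- respect the block decomposition, so that the candidate $\alpha_S \oplus \alpha_{[1, r] \setminus S}$ is a genuine direct sum of T-data in the sense introduced just before the proposition.
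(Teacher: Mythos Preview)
Your proposal is correct and follows essentially the same route as the paper's proof: both use that $\varphi_u$ and simultaneous mutation preserve connected components to handle (R1)--(R2), and both reduce (R3) to showing that the $\nu$-orbit of $I'$ exhausts the connected components of $B$, invoking indecomposability of $\alpha$. Your final worry is unwarranted: once you have $\sigma(S)=S$ and $n^{\pm}_{ab;p}=0$ for $a\in S$, $b\notin S$ (and vice versa), the matrices $N_0,N_+,N_-$ and hence $A_\pm$ block-decompose, $D$ is diagonal, and every condition in Definition~\ref{def:T-datum} is a matrix identity that automatically restricts to blocks---so $\alpha_S\oplus\alpha_{[1,r]\setminus S}$ is a genuine direct sum of T-data with no further verification needed.
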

\begin{proof}
	Let $(B,d,\mathbf{i},\nu) = G(\alpha,[1,r]\times \ZZ)$.
 	The bijection $\nu$ is given by $\nu = \varphi_{0}^{-1} \circ \psi$.
	Let $I$ be the index set of $B$, that is,
	$I= \{ (a,p) \mid a\in [1,r] , 0\leq p <p_a \}$.
	We recursively define subsets $I'_k \subseteq I$ for $k\in \ZZ_{>0}$ by $I'_0 = I'$ and $I'_k = \nu|_{I'_{k-1}}(I'_{k-1})$.
	Then $B|_{I'_k}$ is a connected component of $B$ since mutations preserve connected components.
	We now prove (\ref{item:R1}) and (\ref{item:R2}) in Definition \ref{def:consistent subset} for $R'$.
	The proof of (\ref{item:R1}) and (\ref{item:R2}) are almost the same, we only prove (\ref{item:R1}).
	Suppose that $(a,u) \in R'$.
	If $\check{n}_{ab;p}^{0} \neq 0$, then $(b,u-p) \in R'$ by the definition of $R'$.
	Suppose that $\check{n}_{ab;p}^{+}$ or $\check{n}_{ab;p}^{-} \neq 0$.
	Then we have $(a,u),(b,u-p) \in \vec{\varphi}_{u-p}(I)$ and $\bar{B}_{(b,u-p)(a,u)}(u-p) \neq 0$, where $\bar{B}(u-p)$ is defined in \eqref{eq:def of bar B(u)}.
	This shows that $(a,p)$ and $(b,0)$ lie in the same connected component $B|_{I'_{p-u}}$.
	Thus we have $(b,u-p) \in R'$ since
	$\vec{\varphi}_{u-p}(\nu^{u-p}(b,0)) = (b,u-p)$ and
	$\nu^{u-p}(b,0) \in I'$.
	
	We next prove (\ref{item:R3}).
	Let $t$ be the smallest positive integer such that $I'_t = I'$.
	We now show that
	\begin{align}\label{eq:I'= sqcup I'k}
		I = \bigsqcup_{k=0}^{t-1} I'_k.
	\end{align}
	The equality $I = \bigcup_{k=0}^{t-1} I'_k$ follows from the fact that $\alpha$ is indecomposable. 
	Suppose that $I'_{k_1} \cap I'_{k_2} \neq \emptyset$ for some $0\leq k_1 < k_1 <t$.
	Then we have $I'_{k_1} = I'_{k_2}$ since $B|_{I'_{k_1}}$ and $B|_{I'_{k_2}}$ are connected components of $B$.
	But this implies $I' = I'_{k_2-k_1}$, which contradicts the minimality of $t$.
	Thus \eqref{eq:I'= sqcup I'k} is proved.
	We now have
	\begin{align*}
		&\bigcup_{k=0}^{t-1} \psi^{k} (R') = \bigcup_{k=0}^{t-1}\bigcup_{u \in \ZZ} (\psi^{k} \circ \vec{\varphi}_u) (I')
		=\bigcup_{k=0}^{t-1} \bigcup_{u \in \ZZ}  (\vec{\varphi}_{u+k} \circ \nu^{k}) (I')\\
		&=\bigcup_{k=0}^{t-1} \bigcup_{u \in \ZZ}  \vec{\varphi}_{u+k} (I'_k)
		=\bigcup_{k=0}^{t-1}\bigcup_{u \in \ZZ} \vec{\varphi}_{u} (I'_k)
		=\bigcup_{u \in \ZZ} \vec{\varphi}_{u} \biggl(\bigcup_{k=0}^{t-1} I'_k\biggr)\\
		&=\bigcup_{u \in \ZZ} \vec{\varphi}_{u} (I)
		=[1,r] \times \ZZ.
	\end{align*}
	It remains to prove the disjointness.
	Suppose that there exists a element $(a,u) \in \psi^{k_1}(R') \cap \psi^{k_2}(R')$ for some $0\leq k_1 <k_2 <t$.
	Then we have $((\vec{\varphi}_{u-k_i})^{-1} \circ \psi^{-k_i}) (a,u) \in I'$ for $i=1,2$.
	Thus we have $(\nu^{k_i} \circ (\vec{\varphi}_{u-k_i})^{-1} \circ \psi^{-k_i}) (a,u) \in I'_{k_i}$ for $i=1,2$.
	On the other hand, we have $\nu^{k_i} \circ (\vec{\varphi}_{u-k_i})^{-1} \circ \psi^{-k_i} = (\vec{\varphi}_u)^{-1}$.
	This implies that $(\vec{\varphi}_u)^{-1} (a,u) \in I'_{k_1} \cap I'_{k_2}$,
	which contradicts \eqref{eq:I'= sqcup I'k}.
\end{proof}

\begin{corollary}\label{cor:Npm=0}
	Let $\alpha=(A_+,A_-,D)$ be an indecomposable T-datum of size $r$.
	Suppose that there exists $a \in [1,r]$ such that
	both the $a$-th columns in $N_+$ and $N_-$ are zero vectors.
	Then both $N_+$ and $N_-$ are zero matrices.
\end{corollary}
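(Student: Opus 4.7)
The plan is to translate the hypothesis on $N_\pm$ into a statement about the exchange matrix $B$ of the mutation loop $\gamma = G(\alpha, [1,r] \times \ZZ)$ via the explicit formula \eqref{eq:def of B from cap}, then feed it into Proposition \ref{prop:R from connected component} to exploit the indecomposability of $\alpha$, and finally read the desired vanishing of $N_\pm$ back off from $B = 0$. The case $r = 1$ is trivial since the $a$-th column is all of $N_\pm$, so we may assume $r \geq 2$.

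First, I would show that $(a, 0) \in I$ is an isolated vertex of $Q(B)$. Expanding $B_{(a,0)(b,q)}$ via \eqref{eq:def of B from cap}, the terms $n^\pm_{ab;-q}$ vanish by (\ref{item:N3}) because $-q \leq 0$, and the inner sum collapses to $v = 0$ with summands containing $n^\pm_{ac;0}$, which also vanish by (\ref{item:N3}). The remaining dual terms $\check{n}^\pm_{ba;q}$ are positive scalar multiples of $n^\pm_{ba;q}$ (since $N_\pm^\vee = D^{-1} N_\pm D$), and these vanish because the $a$-th column of $N_\pm$ is assumed zero. Skew-symmetrizability yields $B_{(b,q)(a,0)} = 0$ as well, so $I' := \{(a, 0)\}$ is a connected component of $B$.

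Next, I would invoke Proposition \ref{prop:R from connected component}: by indecomposability of $\alpha$, the index set decomposes as $I = \bigsqcup_{k=0}^{t-1} \nu^k(I')$ with each $\nu^k(I')$ a connected component of $B$; since $|I'| = 1$, every component is a singleton and therefore $B = 0$ identically.

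Finally, I would plug $B = 0$ back into \eqref{eq:def of B from cap}. For $b, c \in [1, r]$ and $1 \leq p < p_b$, the entry $B_{(b,p)(c,0)}$ simplifies: $\check{n}^\pm_{cb;-p}$ vanishes (negative power of $z$), and the sum reduces to $v = 0$ with factors $\check{n}^\pm_{cd;0}$ that vanish by (\ref{item:N3}) applied to $\alpha^\vee$, leaving $-n^+_{bc;p} + n^-_{bc;p} = 0$. Together with (\ref{item:N4}) this forces $n^\pm_{bc;p} = 0$, and (\ref{item:N3}) handles the remaining values of $p$, yielding $N_+ = N_- = 0$. The main obstacle is really just the careful bookkeeping of vanishing terms in \eqref{eq:def of B from cap}, all of which trace back to (\ref{item:N3}) together with the fact that $N_\pm, N_\pm^\vee \in \mat_{r \times r}(\ZZ[z])$ have no negative powers of $z$; once $(a, 0)$ is identified as isolated, the cascade to $B = 0$ via the proposition is automatic.
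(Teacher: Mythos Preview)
Your proof is correct and follows essentially the same route as the paper's: isolate $(a,0)$ via \eqref{eq:def of B from cap}, invoke Proposition \ref{prop:R from connected component}, and conclude $N_\pm = 0$. The only variation is in the final step: you use the decomposition $I = \bigsqcup_k \nu^k(I')$ (which appears as \eqref{eq:I'= sqcup I'k} in the \emph{proof} of the proposition, not its statement) to force $B = 0$ and then read $N_\pm = 0$ off from \eqref{eq:def of B from cap}, whereas the paper uses the proposition's stated conclusion that the reduced matrix $B'$ has index set $I'$ (hence is $1\times 1$ and zero) together with Lemma \ref{lemma:equivalent} and \eqref{eq: NT+-}.
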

\begin{proof}
	Let $(B,d,\mathbf{i},\nu)=G(\alpha,[1,r]\times \ZZ)$.
	By the assumption and \eqref{eq:def of B from cap},
	the set $I'=\{ (a,0) \}$ is the index set of a connected component of $B$.
	Then the set $R'$ defined in Proposition \ref{prop:R from connected component} is consistent for $\alpha$.
	Let $(B',d',\mathbf{i}',\nu')=G(\alpha,R')$.
	By Proposition \ref{prop:R from connected component},
	$B'$ is an $I' \times I'$ matrix.
	This implies that $B'=0$ since
	any skew-symmetrizable matrix of size $1$ should be the zero matrix.
	From \eqref{eq: NT+-} and Lemma \ref{lemma:equivalent}, we have $N_\pm = 0$.
\end{proof}

\section{Examples of T-data}
\label{section:examples}
\subsection{Period 1 quivers}
\label{section:period 1}

\begin{theorem}\label{theorem:T-datum of size 1}
	Let $p > 0$ be a positive integer,
	and $a(z) = 1 + n_1 z + \dots + n_{p-1} z^{p-1} + z^{p} \in \ZZ[z]$
	be a monic palindromic polynomial of degree $p$, that is,
	$n_{q} = n_{p-q}$ for any $0< q < p$.
	Let $d$ be any positive integer.
	Then the triple $\alpha = (A_+,A_-,[d])$ given
	by $A_\pm = N_0-N_\pm$ where
	\begin{align*}
		N_0 = 
		\begin{bmatrix}
			1 + z^p
		\end{bmatrix}, \quad
		N_+ = 
		\begin{bmatrix}
			{\displaystyle \sum_{q=1}^{p-1} \maxzero{n_q} z^q}
		\end{bmatrix}, \quad
		N_- = 
		\begin{bmatrix}
			{\displaystyle \sum_{q=1}^{p-1} \maxzero{-n_q} z^q}
		\end{bmatrix}, \quad
	\end{align*}
	is a T-datum of size $1$.
	Furthermore, any T-datum of size $1$ is of this form.
\end{theorem}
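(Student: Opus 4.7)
The plan is to prove the two assertions separately: sufficiency (that the triple constructed from a palindromic $a(z)$ is a T-datum) and necessity (that every T-datum of size $1$ has this form).

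For sufficiency, the conditions (\ref{item:N1})--(\ref{item:N4}) are immediate from the definitions: (\ref{item:N1}) holds with $\sigma=\id$ and $p_1=p$; (\ref{item:N2}) and (\ref{item:N3}) follow from the support of $N_\pm$; and (\ref{item:N4}) from the identity $\maxzero{n_q}\maxzero{-n_q}=0$. The two conditions $N_0 D = DN_0$ and $D^{-1}N_\pm D \in \mat_{1\times 1}(\ZZ[z])$ are trivial in size $1$. The real content is the symplectic relation, and the point is that palindromicity of $a(z)$ translates into $z^p a(z^{-1}) = a(z)$, and similarly $\maxzero{\pm n_q} = \maxzero{\pm n_{p-q}}$ gives $z^p N_\pm(z^{-1}) = N_\pm(z)$. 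Hence $A_\pm(z^{-1}) = z^{-p} A_\pm(z)$, and the symplectic relation collapses to the tautology
\[
A_+(z)A_-(z^{-1}) = z^{-p} A_+(z)A_-(z) = z^{-p} A_-(z)A_+(z) = A_-(z)A_+(z^{-1}).
\]

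For necessity, let $\alpha=([A_+],[A_-],[d])$ be any T-datum of size $1$. Since $\mathfrak{S}_1$ is trivial, (\ref{item:N1}) forces $N_0 = [1+z^p]$ for some $p:=p_1>0$, and (\ref{item:N2})--(\ref{item:N4}) force $N_\pm = \sum_{q=1}^{p-1} m_q^\pm z^q$ with $m_q^\pm\geq 0$ and $m_q^+ m_q^- = 0$. Setting $n_q := m_q^+ - m_q^-$ recovers $m_q^+ = \maxzero{n_q}$ and $m_q^- = \maxzero{-n_q}$, so $A_\pm$ has exactly the claimed form. The remaining task is to deduce palindromicity $n_q = n_{p-q}$ from the symplectic relation.

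To carry this out, write $A_\pm(z) = \sum_{q=0}^p u^\pm_q z^q$ with $u^\pm_0 = u^\pm_p = 1$, $u^+_q = -\maxzero{n_q}$, $u^-_q = -\maxzero{-n_q}$ for $0 < q < p$, and consider the Laurent polynomial
\[
P(z) := A_+(z) A_-(z^{-1}) - A_-(z) A_+(z^{-1}),
\]
which vanishes by the symplectic relation (after dividing by $d$). For $1 \leq s \leq p-1$, the coefficient $[P]_{z^{p-s}}$ equals $\sum_{j=0}^{s}(u^+_{j+p-s} u^-_j - u^-_{j+p-s} u^+_j)$. A direct computation shows the boundary terms $j=0$ and $j=s$ contribute $-n_{p-s}$ and $n_s$, while each middle term ($1 \leq j \leq s-1$) equals $\maxzero{n_{j+p-s}}\maxzero{-n_j}-\maxzero{-n_{j+p-s}}\maxzero{n_j}$. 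I prove $n_s = n_{p-s}$ by induction on $s$: assuming palindromicity for indices less than $s$, one substitutes $n_{j+p-s} = n_{s-j}$ in the middle terms, whereupon the expression becomes antisymmetric under $j \leftrightarrow s-j$ (with the possible fixed point $j=s/2$ vanishing by (\ref{item:N4})), so those terms sum to zero. Consequently $[P]_{z^{p-s}} = n_s - n_{p-s}$, and its vanishing yields the desired equality. The main obstacle is the combinatorial bookkeeping of this inductive step; the cancellation of middle terms hinges crucially on combining the palindromic inductive hypothesis with condition (\ref{item:N4}).
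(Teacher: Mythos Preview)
Your proof is correct and follows essentially the same approach as the paper's own argument: sufficiency via the observation that palindromicity makes $z^{-p/2}A_\pm$ invariant under $z\mapsto z^{-1}$, and necessity via an induction on the index $s$, comparing the coefficient of $z^{p-s}$ in the symplectic relation and using the inductive palindromic hypothesis to cancel the cross terms. The only cosmetic difference is that the paper expands the symplectic relation in terms of $N_0,N_+,N_-$ separately rather than working directly with the $u^\pm_q$, and it phrases the induction on $n_q^\pm$ rather than on $n_q$ (using (\ref{item:N4}) at the end of each step to upgrade $n_q=n_{p-q}$ to $n_q^\pm=n_{p-q}^\pm$); since $n_q^\pm=\maxzero{\pm n_q}$ these formulations are equivalent.
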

\begin{proof}
	The conditions (\ref{item:N1})--(\ref{item:N4}) follow immediately from the definition.
	Since $a(z)$ is a monic palindromic polynomial of degree $p$,
	both $z^{-p/2} A_+$ and $z^{-p/2} A_-$ are $\dagger$-invariant.
	This implies that $A_+ A_-^{\dagger} = A_- A_+^{\dagger}$.
	Thus $\alpha$ is a T-datum.
	
	Conversely, let $\alpha = (A_+,A_-,[d]) = (N_0-N_+,N_0-N_-,[d])$ be
	any T-datum of size $1$.
	We now identify $1 \times 1$ matrices with their entries. 
	By the condition (\ref{item:N1}), the matrix $N_0$ can be written as $N_0=1+z^p$ for some positive integer $p>0$.
	Then the matrices $N_\pm$ can be written as $N_\varepsilon =  \sum_{q=1}^{p-1} n_q^\varepsilon z^q$
	since the degrees of $N_\pm$ are greater that $0$ and less than $p$ by the condition (\ref{item:N3}).
	We also have $n_q^\varepsilon \in \ZZ_{\geq 0}$ by the condition (\ref{item:N2}).
	By the condition (\ref{item:N4}), these numbers can be written as $n_q^{\pm}=\maxzero{\pm n_q}$, where we define $n_q := n_q^{+}-n_q^{-}$.
	We now show by induction on $q$ that
	$n_{q}^{\pm}=n_{p-q}^{\pm}$ for any $0 \leq q \leq p$,
	where we set $n_0^{\pm}=n_p^{\pm}=0$.
	The case $q=0$ is obvious from the definition.
	Suppose that $q>0$.
	Let $m_q$ be the coefficient of $z^{p-q}$
	in the polynomial $N_0 N_+^{\dagger} +N_+ N_-^{\dagger} + N_- N_0^{\dagger}$, that is,
	\begin{align}
		\label{eq:m_q}
		m_q = n_q^+ + n_{p-q}^- + \sum_{k=0}^q n_{p-q+k}^+ n_k^-.
	\end{align}
	On the other hand, $m_q$ is also the coefficient of $z^{p-q}$ in the polynomial $N_0 N_-^{\dagger} +N_- N_+^{\dagger} + N_+ N_0^{\dagger}$ by the symplectic relation.
	Thus we obtain
	\begin{align}
		\label{eq:m_q 2}
		m_{q} = n_q^- + n_{p-q}^+ + \sum_{k=0}^q n_{p-k}^- n_{q-k}^+.
	\end{align}
	The sum parts in \eqref{eq:m_q} and \eqref{eq:m_q 2} coincide by the induction hypothesis, so we obtain $n_q^+ + n_{p-q}^- = n_q^- + n_{p-q}^+ $.
	Then we conclude from (\ref{item:N4}) that $n_q^{\pm} = n_{p-q}^{\pm}$.
\end{proof}

Let $\alpha$ be a T-datum of size $1$ given in Theorem \ref{theorem:T-datum of size 1}.
Let $\gamma=(B,d,\mathbf{i},\nu)$
be the complete mutation loop given by $\gamma=G(\alpha,\{1\} \times \ZZ)$.
Then the index set $I$ of $B$ is given by
$I=\{ (1,i) \in \{1\} \times \ZZ \mid  0 \leq i < p \}$.
We identify $I$ with the set $\{0,1, \dots , p-1\}$.
Then $\mathbf{i}=\mathbf{i}(0)$ with $\mathbf{i}(0) = (0)$, and $\nu$ is the cyclic permutation given by
$\nu(i) =i+1 \pmod t$.
The exchange matrix $B=(B_{ij})_{i,j \in I}$ can be computed from the formula \eqref{eq:def of B from cap} as follows:
\begin{align}\label{eq:period 1 general solution}
	B_{ij} = -n_{i-j} + n_{j-i}
	+\sum_{k=0}^{\min(i,j)} \bigl( n_{i-k}^+ n_{j-k}^- -n_{i-k}^- n_{j-k}^+ \bigr),
\end{align}
where $n_i^{\pm} := \maxzero{\pm n_i}$ and $n_i:=0$ unless $0<i<p$.

\begin{remark}
	The formula \eqref{eq:period 1 general solution} is precisely the general solution of \emph{period $1$ quivers} given by Fordy and Marsh~\cite[Theorem 6.1]{FordyMarsh}.
	We can regard Theorem \ref{theorem:T-datum of size 1} as another proof of the classification for period 1 quivers, which was also given in~\cite[Theorem 6.1]{FordyMarsh}.
\end{remark}

In Example \ref{example:cap somos4} and \ref{example:somos4 ml} (the Somos-4 recurrence), we give an example of a T-datum of size 1 and a period $1$ quiver.

\subsection{Commuting Cartan matrices}
\label{section:commuting Cartan}
In this section, we give T-data associated with pairs of Cartan matrices, which are generalization of T-data associated with bipartite belts in Example \ref{example:cap b belt} and \ref{example:b belt ml}.
\begin{definition}
	A matrix $C=(c_{ab})_{a,b \in [1,r]} \in \mat_{r \times r}(\ZZ)$
	is called a \emph{symmetrizable weak generalized Cartan matrix}
	if
	\begin{enumerate}
		\item $c_{aa} \leq 2$ for any $a$,
		\item $c_{ab} \geq 0$ for any $a,b$,
		\item there exists a positive integer diagonal matrix $D$ such that $CD$ is a symmetric matrix.
	\end{enumerate}
\end{definition}

The diagonal matrix $D$ is called
a (right) \emph{symmetrizer} of $C$. 
Note that a symmetrizable generalized Cartan matrix is a symmetrizable weak generalized Cartan matrix satisfying $c_{aa}=2$ for any $a \in [1,r]$.

\begin{proposition}\label{prop:double weak Cartan}
	Let $A$ and $A'$ be symmetrizable weak generalized Cartan matrices that have a common symmetrizer $D$.
	Let $N = (n_{ab})_{a,b \in [1,r]} := 2I_r - A$ and $N' =(n'_{ab})_{a,b \in [1,r]} := 2 I_r - A'$.
	Then the triple $\alpha = (A_+,A_-,D)$ defined by
	\begin{align*}
		N_0 = (1+z^2) I_r ,\quad
		N_+ =  z N, \quad
		N_- =  z N'
	\end{align*}
	is a T-datum if and only if $A A'= A' A$ and $n_{ab} n'_{ab}=0$ for any $a,b \in [1,r]$.
\end{proposition}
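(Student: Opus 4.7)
The plan is to check each clause of Definition \ref{def:T-datum} separately and read off when it holds.

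First, observe that (\ref{item:N1}) is immediate with $\sigma=\id$ and $p_a=2$ for all $a$, since $N_0=(1+z^2)I_r$. Conditions (\ref{item:N2}) and (\ref{item:N3}) follow automatically from the weak Cartan hypothesis (so that $N$ and $N'$ have nonnegative entries) together with the fact that $N_\pm$ is supported only at $z^1$, which lies in the required range $0<p<2$. Condition (\ref{item:N4}), which compares supports at $z^1$, reduces exactly to $n_{ab}n'_{ab}=0$ for all $a,b$. Hence (\ref{item:N1})--(\ref{item:N4}) together are equivalent to the condition $n_{ab}n'_{ab}=0$ stated in the proposition.

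Next I verify the three conditions on $D$. Commutativity $N_0D=DN_0$ is obvious since $N_0$ is a scalar matrix. The assumption that $D$ is a symmetrizer of $A$ means $AD$ is symmetric, i.e.\ $AD=DA^{\mathsf{T}}$, and therefore $ND=DN^{\mathsf{T}}$, so $D^{-1}ND=N^{\mathsf{T}}\in\mat_{r\times r}(\ZZ)$; the same holds with $N'$ in place of $N$. This handles the second bullet in Definition \ref{def:T-datum}.

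The remaining step is the symplectic relation $A_+DA_-^{\dagger}=A_-DA_+^{\dagger}$. Writing $A_+=(1+z^2)I_r-zN$, $A_-=(1+z^2)I_r-zN'$, and using $A_\pm^{\dagger}=(1+z^{-2})I_r-z^{-1}M_\pm^{\mathsf{T}}$ (with $M_+=N$, $M_-=N'$), I expand both sides and form the difference. The terms proportional to $D$ cancel identically, while the cross terms linear in $N$ or $N'$ combine as
\[
(ND-N'D)\bigl[(1+z^2)z^{-1}-(1+z^{-2})z\bigr]=(ND-N'D)\cdot 0=0,
\]
after applying $DN^{\mathsf{T}}=ND$ and $DN'^{\mathsf{T}}=N'D$. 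The only surviving part is $NDN'^{\mathsf{T}}-N'DN^{\mathsf{T}}$, which by the same two identities equals $(NN'-N'N)D$. Since $D$ is invertible, the symplectic relation holds iff $NN'=N'N$, equivalently $AA'=A'A$ (because $N=2I_r-A$ and $N'=2I_r-A'$).

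The step that requires the most care is not a serious obstacle, just bookkeeping: one must correctly apply $\dagger$ (which negates $z$-powers and transposes), then consistently use the symmetrizer identity $MD=DM^{\mathsf{T}}$ for $M\in\{N,N'\}$ to move $D$ past these matrices. Once that is done, the four pieces of the proposition fall into place and yield the stated equivalence.
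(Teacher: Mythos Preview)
Your proof is correct and follows essentially the same approach as the paper: verify (\ref{item:N1})--(\ref{item:N3}) and the first two $D$-conditions directly, identify (\ref{item:N4}) with $n_{ab}n'_{ab}=0$, and reduce the symplectic relation by direct expansion to $(NN'-N'N)D=0$, hence to $AA'=A'A$. The only cosmetic difference is in how the linear cross terms are organized: the paper factors out the common scalar $(z+z^{-1})$ and cancels the matrix bracket via the symmetrizer identities, whereas you group by $ND$ and $N'D$ and observe the scalar factor $(1+z^2)z^{-1}-(1+z^{-2})z$ vanishes; both lead to the same vanishing.
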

\begin{proof}
	The conditions (\ref{item:N1})--(\ref{item:N3}) are obvious by the definition.
	We also have $N_0D = D N_0$ and $D^{-1} N_\pm D \in \mat_{r\times r}(\ZZ[z])$ by the definition.
	The condition (\ref{item:N4}) is equivalent to $n_{ab} n'_{ab}=0$ for any $a,b \in [1,r]$.
	Thus it is sufficient to show that
	the symplectic relation is equivalent to $A A' = A' A$.
	Clearly, $A A' = A' A$ if and only if $N N' = N' N$.
	We now have
	\begin{align*}
		&A_+ D A_-^{\dagger} - A_- D A_+^{\dagger}\\
		&= \bigl((1+z^2)I_r - zN \bigr) D \bigl((1+z^{-2})I_r - z^{-1}N'^{\mathsf{T}}\bigr) \\
		&\quad -\bigl((1+z^2)I_r - zN' \bigr) D \bigl((1+z^{-2})I_r - z^{-1}N^{\mathsf{T}}\bigr)\\
		&= (z+z^{-1}) ( -N D - D N'^{\mathsf{T}} + DN^{\mathsf{T}} + N'^{\mathsf{T}}D )
		+N D N'^{\mathsf{T}} - N' D N^{\mathsf{T}} \\
		&=N D N'^{\mathsf{T}} - N' D N^{\mathsf{T}}\\
		&=(N N' - N' N) D,
	\end{align*}
	which completes the proof.
\end{proof}

\begin{proposition}\label{prop:bipartite double weak Cartan}
	Suppose that $\alpha$ given in Proposition \ref{prop:double weak Cartan} is a T-datum.
	Suppose further that there exists a function $\epsilon: [1,r] \to \{1,-1\}$ such that $n_{ab}$ or $n'_{ab} > 0$ implies $\epsilon(a) \neq \epsilon(b)$ for any $a,b \in [1,r]$.
	Then the set
	\begin{align*}
		R_{\epsilon} := \{  (a,u) \in [1,r] \times \ZZ \mid \epsilon(a)=(-1)^{u-1} \}
	\end{align*}
	is consistent for $\alpha$.
\end{proposition}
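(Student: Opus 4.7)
The plan is to directly verify the three conditions (\ref{item:R1}), (\ref{item:R2}), (\ref{item:R3}) from Definition \ref{def:consistent subset} by inspecting the explicit form of the triple $(N_0,N_+,N_-)$ given in Proposition \ref{prop:double weak Cartan}. The key observation will be that every nonzero entry of these matrices forces the shift in the second coordinate to have a specific parity that is compatible with the bipartite condition on $\epsilon$.

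First I would read off from $N_0 = (1+z^2)I_r$, $N_+ = zN$, $N_- = zN'$ that the only nonzero coefficients are
\begin{align*}
	n_{ab;0}^{0} = \delta_{ab}, \quad n_{ab;2}^{0} = \delta_{ab}, \quad n_{ab;1}^{+} = n_{ab}, \quad n_{ab;1}^{-} = n'_{ab},
\end{align*}
so that in (\ref{item:N1}) one has $\sigma = \mathrm{id}$ and $p_a = 2$ for every $a$. In particular the Langlands dual triple $(N_0^{\vee}, N_+^{\vee}, N_-^{\vee}) = D^{-1}(N_0,N_+,N_-)D$ has exactly the same support in the index $p$, so the replacement of $n$ by $\check{n}$ in (\ref{item:R1}) and (\ref{item:R2}) does not change anything.

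Next I would verify (\ref{item:R1}). Take $(a,u) \in R_\epsilon$, so $\epsilon(a) = (-1)^{u-1}$. The $N_0$-contributions give $(a,u) \in R_\epsilon$ and $(a,u-2) \in R_\epsilon$, both obvious since $(-1)^{u-1} = (-1)^{(u-2)-1}$. For the $N_{\pm}$-contributions, a nonzero $n_{ab}$ or $n'_{ab}$ forces $\epsilon(a) \neq \epsilon(b)$ by the bipartite hypothesis on $\epsilon$, so $\epsilon(b) = -(-1)^{u-1} = (-1)^{(u-1)-1}$, which gives $(b,u-1) \in R_\epsilon$ as required. The verification of (\ref{item:R2}) is completely symmetric: nonzero $n_{ba;p}^{\varepsilon}$ gives either $b=a$ with $p\in\{0,2\}$ (trivial) or $p=1$ with $\epsilon(b) \neq \epsilon(a)$, hence $(b,u+1) \in R_\epsilon$.

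Finally, for (\ref{item:R3}) I would take $t = 2$. Writing
\begin{align*}
	R_\epsilon^{(k)} = \{(a,u) \in [1,r] \times \ZZ \mid \epsilon(a) = (-1)^{u-k-1}\},
\end{align*}
one sees at once that $R_\epsilon^{(0)} = R_\epsilon$, $R_\epsilon^{(2)} = R_\epsilon^{(0)}$, and $R_\epsilon^{(0)} \sqcup R_\epsilon^{(1)} = [1,r] \times \ZZ$, since for each fixed $(a,u)$ exactly one of the two possible parities of $(-1)^{u-1}$ agrees with $\epsilon(a)$. There is no substantive obstacle here; the entire argument is a parity check, and the only place the hypotheses on $\alpha$ enter is through the bipartite property of $\epsilon$ used in the analysis of the $N_{\pm}$-contributions.
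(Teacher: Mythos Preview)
Your proof is correct and takes essentially the same approach as the paper's own proof, which simply states that (\ref{item:R1}) and (\ref{item:R2}) follow from the assumption on $\epsilon$ and that (\ref{item:R3}) holds with $t=2$. Your version spells out the parity check in full detail, but the underlying argument is identical.
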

\begin{proof}
	The conditions (\ref{item:R1}) and (\ref{item:R2}) follow from the assumption on the function $\epsilon$.
	The condition (\ref{item:R3}) is satisfied by setting $t=2$.
\end{proof}

Let $\gamma=(B,d,\mathbf{i},\nu)=G(\alpha,R_{\epsilon})$
be the mutation loop obtained from the data given in Proposition \ref{prop:bipartite double weak Cartan}.
The index set of $B$ is given by
\begin{align*}
	I= \{ (a,0) \mid \epsilon(a)=-1 \} \sqcup \{ (a,1) \mid \epsilon(a)=1 \},
\end{align*}
and we identify it with $[1,r]$ by taking the first components.
Then $B = (B_{ab})_{a,b \in I}$ is given by
\begin{align}\label{eq:bipartite recurrent}
	B_{ab} = 
	\begin{cases}
		-\epsilon(a) n_{ab} &\text{if $n'_{ab}=0$},\\
		+\epsilon(a) n'_{ab} &\text{if $n_{ab}=0$}.
	\end{cases}
\end{align}
The symmetrizer $d$ is given by $d=(d_a)_{a\in I}$,
where $d_a$ is the $a$-th entry in the common symmetrizer $D$.
The sequence $\mathbf{i}$ is given by
$\mathbf{i} = \mathbf{i}(0) \mid \mathbf{i}(1)$ with
$\mathbf{i}(0) = \{ a \in I \mid \epsilon(a)=-1 \}$
and $\mathbf{i}(1) = \{ a \in I \mid \epsilon(a)=1 \}$.
The permutation $\nu$ is the trivial permutation.

\begin{remark}
	If $B$ is skew-symmetric, the corresponding quiver $Q(B)$ is called a \emph{bipartite recurrent quiver}~\cite{GalashinPylyavskyy}.
	Galashin and Pylyavskyy developed the classification theory of bipartite recurrent quivers~\cite{GalashinPylyavskyy,galashin2016quivers,galashin2017quivers}.
	In particular, they gave a complete classification of bipartite recurrent quivers with which the associated T-system is periodic.
\end{remark}

\begin{example}[Tensor product construction]
	\label{example:tensor}
	Let $\bar{A}$ and $\bar{A}'$ be symmetrizable weak generalized Cartan matrices of size $\bar{r}$ and $\bar{r}'$, respectively.
	Suppose that one of them is non-weak.
	Let $D$ and $D'$ be right symmetrizers of $\bar{A}$ and $\bar{A}'$, respectively.
	Let $A = \bar{A} \otimes I_{\bar{r}'}$, $A' = I_{\bar{r}} \otimes \bar{A}'$, and $D= \bar{D} \otimes \bar{D}'$.
	The matrices $A$ and $A'$ are symmetrizable weak generalized Cartan matrices that have the common symmetrizer $D$.
	Then the triple $\alpha = (A_+,A_-,D)$ given in Proposition \ref{prop:double weak Cartan}
	is a T-datum of size $\bar{r}\bar{r}'$ since
	$(\bar{A} \otimes I_{\bar{r}'})(I_{\bar{r}} \otimes \bar{A}') = 
	(I_{\bar{r}} \otimes \bar{A}') (\bar{A} \otimes I_{\bar{r}'})$
	and $n_{ab} \delta_{a'b'} \delta_{ab} n'_{a'b'} = 0$
	for any $a,b \in [1,\bar{r}]$ and $a',b' \in [1,\bar{r}']$.
	
	Suppose further that both $\bar{A}$ and $\bar{A}'$ are bipartite by functions $\bar{\epsilon}$ and $\bar{\epsilon}$, respectively.
	Then the function $\epsilon: [1,\bar{r}] \times [1,\bar{r}'] \to \{ 1, -1 \}$ defined by $\epsilon(a,a') = \bar{\epsilon}(a)\bar{\epsilon}'(a')$ satisfies the assumption in Proposition \ref{prop:bipartite double weak Cartan}.
	Thus we get the consistent subset $R_{\epsilon}$ for $\alpha$.
	For example, let
	\begin{align*}
		A=
		\begin{bmatrix}
			2 & -1 & 0 \\
			-1 & 2 & -1 \\
			0 & -1 & 2
		\end{bmatrix},\quad
		A'=
		\begin{bmatrix}
			2 & -1 \\
			-1 & 2
		\end{bmatrix}
	\end{align*}
	be Cartan matrices of types $A_3$ and $A_2$, respectively.
	Define $\bar{\epsilon}$ and $\bar{\epsilon}'$ by
	$\bar{\epsilon}(1)=\bar{\epsilon}(3) = \bar{\epsilon}'(2) = 1$ and 
	$\bar{\epsilon}(2)= \bar{\epsilon}'(1) = -1$.
	Then the bipartite recurrent quiver $Q(B)$ is given by
	\begin{align*}
		Q(B)=
		\begin{tikzpicture}[scale=2,every node/.style={scale=0.9},baseline=(base.base),vertex/.style={scale=1}]
		\node (base) at (1,1.35) {};
		\node (11) at (1,1)[vertex] {$(11,0)$};
		\node (21) at (2,1)[vertex] {$(21,1)$};
		\node (31) at (3,1)[vertex] {$(31,0)$};
		\node (12) at (1,1.7)[vertex] {$(12,1)$};
		\node (22) at (2,1.7)[vertex] {$(22,0)$};
		\node (32) at (3,1.7)[vertex] {$(32,1)$};
		\foreach \from/\to in {12/11,11/21,21/22,22/12,22/32,32/31,31/21}
		\draw[arrows={-Stealth[scale=1.2]}] (\from)--(\to);
		\end{tikzpicture},
	\end{align*}
	where we denote $(a,a')$ by $aa'$.
\end{example}

\begin{example}[Tadpole type]
	\label{example:tadpole}
	Although Proposition \ref{prop:bipartite double weak Cartan} is for bipartite Cartan matrices,
	non-bipartite Cartan matrices are sometimes interesting.
	Let $A=2 I_r$ and $A'= (2 \delta_{ab} - n'_{ab})_{a,b \in [1,r]}$
	where 
	\begin{align}\label{eq:def of tadpole}
		n'_{ab} =
		\begin{cases}
			1 & \text{if $\lvert a-b \rvert = 1$},\\
			1 & \text{if $a=b=r$},\\
			0 & \text{otherwise}.
		\end{cases}
	\end{align}
	The matrix $A'$ is called the Cartan matrix of the \emph{tadpole type} $T_r$.
	The tadpole type is non-bipartite since \eqref{eq:def of tadpole} has a non-zero entry in the diagonal.
	Let $D=I_r$.
	Then $\alpha$ in Proposition \ref{prop:double weak Cartan}
	is a T-datum.
	For example, $\alpha=(A_+,A_-,I_r)$ for $r=3$ is given by
	\begin{align*}
		A_+ = 
		\begin{bmatrix}
		1+z^2 & 0 & 0\\
		0& 1+z^2 & 0\\
		0 & 0 & 1+z^2
		\end{bmatrix},\quad
		A_- = 
		\begin{bmatrix}
			1+z^2 & -z & 0\\
			-z& 1+z^2 & -z\\
			0 & -z & 1-z+z^2
		\end{bmatrix}.
	\end{align*}
	Let $\gamma = (B,d,\mathbf{i},\nu)$ be the mutation loop
	given by $\gamma = G(\alpha, [1,r]\times \ZZ)$.
	Then the quiver mutation
	$Q(B) \xmapsto{\mu_{\mathbf{i}}} \mu_{\mathbf{i}}(Q(B)) = \nu(Q(B))$
	is given as follows (we set $r=3$ for simplicity) :
	\begin{align*}
		\begin{tikzpicture}[scale=1.7,every node/.style={scale=0.9},baseline=(base.base),vertex/.style={scale=1}]
		\node (base) at (1,0.6) {};
		\node (10) at (1,1)[vertex] {$(1,0)$};
		\node (21) at (2,1)[vertex] {$(2,1)$};
		\node (30) at (3,1)[vertex] {$(3,0)$};
		\node (31) at (3,0.2)[vertex] {$(3,1)$};
		\node (20) at (2,0.2)[vertex] {$(2,0)$};
		\node (11) at (1,0.2)[vertex] {$(1,1)$};
		\foreach \from/\to in {21/10,21/30,31/30,31/20,11/20}
		\draw[arrows={-Stealth[scale=1.2]}] (\from)--(\to);
		\end{tikzpicture} \xmapsto{\mu_{\mathbf{i}}}
		\begin{tikzpicture}[scale=1.7,every node/.style={scale=0.9},baseline=(base.base),vertex/.style={scale=1}]
		\node (base) at (1,0.6) {};
		\node (10) at (1,1)[vertex] {$(1,0)$};
		\node (21) at (2,1)[vertex] {$(2,1)$};
		\node (30) at (3,1)[vertex] {$(3,0)$};
		\node (31) at (3,0.2)[vertex] {$(3,1)$};
		\node (20) at (2,0.2)[vertex] {$(2,0)$};
		\node (11) at (1,0.2)[vertex] {$(1,1)$};
		\foreach \from/\to in {10/21,30/21,30/31,20/31,20/11}
		\draw[arrows={-Stealth[scale=1.2]}] (\from)--(\to);
		\end{tikzpicture},
	\end{align*}
	where $\mathbf{i}=\mathbf{i}(0)=\{(a,0) \mid a \in [1,r]\}$, and $\nu(a,p) = (a,1-p)$ is the permutation that swaps vertices at the same position in the top and bottom rows.
	Intuitively, this mutation loop is explained as follows.
	If we identify the vertices lying in the same $\nu$-orbits and forget the orientation of the quiver,
	we obtain the following graph:
	\begin{align*}
		\begin{tikzpicture}[scale=1.7,every node/.style={scale=0.9},baseline=(1.base),vertex/.style={scale=1}]
		\node (1) at (1,1)[vertex] {$1$};
		\node (2) at (2,1)[vertex] {$2$};
		\node (3) at (3,1)[vertex] {$3$};
		\foreach \from/\to in {1/2,2/3}
		\draw (\from)--(\to);
		\draw (3) to[in=30,out=-30,loop]  (3);
		\end{tikzpicture}.
	\end{align*}
	This is the Dynkin diagram of type $T_r$.
	Therefore, the mutation loop involves the ``folding method'' that constructs $T_r$ diagram from $A_{2r}$ diagram.
	In general,
	one advantage of the strategy of constructing mutation loops from T-data is that it can ``automatically'' perform such a folding method.
\end{example}

\subsection{T-systems associated with quantum affinizations}
\label{section:affinization}
In this section,
we assign T-data to generalized Cartan matrices that satisfy a certain condition, including all finite and affine types.
The T-data in this section are
different from that in Section \ref{section:commuting Cartan} even though both use Cartan matrices.

Fix a positive integer $n$.
Let $C=(c_{ab})_{1 \leq a,b \leq n}$ be a symmetrizable generalized Cartan matrix.
We assume that $C$ is indecomposable.
Let $\diag(c_1, \dots, c_n)$ be a \textit{left} symmetrizer of $C$.
We define integers $t_a$ ($1 \leq a  \leq n$) by
\begin{align*}
	t_a = c_a^{-1} \lcm(c_1, \dots, c_n).
\end{align*}
We also define integers $t_{ab}$ ($1 \leq a,b \leq n$) by
\begin{align*}
	t_{ab} =c_a^{-1} \lcm(c_a,c_b).
\end{align*}
These integers do not depend on the choice of a symmetrizer.
Let $[k]_z \in \ZZ[z^{\pm1}]$ be the $z$-integer defined by
\begin{align*}
	[k]_{z} &= \frac{z^{k} - z^{-k}}{z-z^{-1}} \\
	&= z^{k-1}+z^{k-3}+ \dots + z^{-(k-3)} + z^{-(k-1)}.
\end{align*}
We denote $[k]_{z^{c_a}}$ by $[k]_{z_a}$.

Let $\level$ be an integer with $\level \geq 2$.
Let $H$ be the index set defined by
\begin{align*}
	H= \{ (a,m) \mid 1 \leq a \leq n, 1 \leq m \leq t_a \level -1 \}.
\end{align*}
We often denote an element $(a,m) \in H$ by $am$.
For any $(a,m),(b,k) \in H$,
we define polynomials $\tilde{n}_{am,bk}^0,\tilde{n}_{am,bk}^+,\tilde{n}_{am,bk}^- \in \ZZ[z^{\pm1}]$ by
\begin{align*}
	\tilde{n}_{am,bk}^0 
	&=  [2]_{z_a} \delta_{ab}, \\
	\tilde{n}_{am,bk}^+ 
	&= \delta_{ab} (\delta_{m,k+1}+\delta_{m,k-1}), \\
	\tilde{n}_{am,bk}^- 
	&=
	\begin{cases}
		t_{ab}^{-1} \lvert c_{ab} \rvert [t_{ba} - \lvert p-k \rvert]_{z_b} & \text{if $a\sim b$, $p \in \ZZ$, and $\lvert p-k \rvert <t_{ba}$},\\
		0 &\text{otherwise},
	\end{cases}
\end{align*}
where we write $a \sim b$ if $c_{ab}<0$, and $p=m t_{ab}^{-1} t_{ba}$.
We define two $H \times H$-matrices $\widetilde{A}_+$ and $\widetilde{A}_-$ by
\begin{align*}
	\widetilde{A}_{\pm} = \bigl(\tilde{n}_{am,bk}^0 - \tilde{n}_{am,bk}^{\pm}\bigr)_{am,bk \in H}.
\end{align*}

To illustrate the pair of matrices $(\widetilde{A}_+,\widetilde{A}_-)$, it is useful to consider the graph $\Gamma(\widetilde{A}_+,\widetilde{A}_-)$ defined as follows:
\begin{itemize}
	\item the set of vertices of $\Gamma(\widetilde{A}_+,\widetilde{A}_-)$ is $H$,
	\item for any pair of vertices $(a,m),(b,k) \in H$,
	we draw a \textcolor{cyan}{blue edge} equipped with the pair of polynomials $(f_+,g_+) := (\tilde{n}_{am,bk}^+, \tilde{n}_{bk,am}^+ )$, and a \textcolor{red}{red edge} equipped with the pair of polynomials $(f_-,g_-) := ( \tilde{n}_{am,bk}^-, \tilde{n}_{bk,am}^-)$:
	\begin{align*}
	\begin{tikzpicture}[scale=2,vertex/.style={circle,scale=0.40},red edge/.style={red},blue edge/.style={cyan, thick}]
	\node[vertex] (am) at (1,1) {};
	\node[vertex] (bk) at (2,1) {};
	\draw[blue edge] (am) to[bend left=30] node[pos=0.25,auto=left] {$f_+$} node[pos=0.75,auto=left] {$g_+$}  (bk) ;
	\draw[red edge] (am) to[bend left=-30] node[pos=0.25,auto=right] {$f_-$} node[pos=0.75,auto=right] {$g_-$} (bk) ;
	\node[vertex,draw,fill=white][label=left:${(a,m)}$] at (1,1) {};
	\node[vertex,draw,fill=white][label=right:${(b,k)}$] at (2,1) {};
	\end{tikzpicture}.
	\end{align*}
\end{itemize}

For a red edge
\begin{align*}
\begin{tikzpicture}[scale=2,vertex/.style={circle,scale=0.40},red edge/.style={red},blue edge/.style={cyan, thick}]
\node[vertex] (am) at (1,1) {};
\node[vertex] (bk) at (2,1) {};
\draw[red edge] (am) to node[pos=0.25,auto=left] {$f_-$} node[pos=0.75,auto=left] {$g_-$}  (bk) ;
\node[vertex,draw,fill=white][label=left:${(a,m)}$]  at (1,1) {};
\node[vertex,draw,fill=white][label=right:${(b,k)}$]  at (2,1) {};
\end{tikzpicture},
\end{align*}
we use the following abbreviations:
\begin{align*}
&\begin{tikzpicture}[scale=1.5, baseline=(am.south),vertex/.style={circle,scale=0.40},red edge/.style={red},dashed red edge/.style={red, dash pattern=on 2pt off 1pt},blue edge/.style={cyan, thick}]
\node[vertex] (am) at (1,1/1) {};
\node[vertex] (bk) at (2,1/1) {};
\node[vertex,draw,fill=white] at (1,1/1) {};
\node[vertex,draw,fill=white] at (2,1/1) {};
\end{tikzpicture}
\quad \text{if $(f_-,g_-)=(0,0)$,} \\
&\begin{tikzpicture}[scale=1.5, baseline=(am.south),vertex/.style={circle,scale=0.40},red edge/.style={red},dashed red edge/.style={red, dash pattern=on 2pt off 1pt},blue edge/.style={cyan, thick}]
\node[vertex] (am) at (1,1/1) {};
\node[vertex] (bk) at (2,1/1) {};
\draw[red edge]  (am) to   (bk);
\node[vertex,draw,fill=white] at (1,1/1) {};
\node[vertex,draw,fill=white] at (2,1/1) {};
\end{tikzpicture}
\quad \text{if $(f_-,g_-)=(1,1)$,} \\
&\begin{tikzpicture}[scale=1.5, baseline=(am.south),vertex/.style={circle,scale=0.40},red edge/.style={red},blue edge/.style={cyan, thick}]
\node[vertex] (am) at (1,1) {};
\node[vertex]  (bk) at (2,1) {};
\draw[red edge]  (am.south) to  (bk.south);
\draw[red edge,draw opacity=0,text opacity=1]  (am) to [edge node={node {\scalebox{1.2}{$>$}}},sloped] (bk);
\draw[red edge]  (am.north) to  (bk.north);
\node[vertex,draw,fill=white] at (1,1/1) {};
\node[vertex,draw,fill=white] at (2,1/1) {};
\end{tikzpicture}
\quad \text{if $(f_-,g_-)=(1,[2]_{z_a})$,} \\
&\begin{tikzpicture}[scale=1.5, baseline=(am.south),vertex/.style={circle,scale=0.40},red edge/.style={red},blue edge/.style={cyan, thick}]
\node[vertex] (am) at (1,1) {};
\node[vertex]  (bk) at (2,1) {};
\draw[red edge]  (am.south) to  (bk.south);
\draw[red edge]  (am) to [edge node={node {\scalebox{1.2}{$>$}}},sloped] (bk);
\draw[red edge]  (am.north) to  (bk.north);
\node[vertex,draw,fill=white] at (1,1/1) {};
\node[vertex,draw,fill=white] at (2,1/1) {};
\end{tikzpicture}
\quad \text{if $(f_-,g_-)=(1,[3]_{z_a})$,}\\
&\begin{tikzpicture}[scale=1.5, baseline=(am.south),vertex/.style={circle,scale=0.40},dashed red edge/.style={red, dash pattern=on 2pt off 1pt},blue edge/.style={cyan, thick}]
\node[vertex] (am) at (1,1) {};
\node[vertex]  (bk) at (2,1) {};
\draw[dashed red edge]  (am) to [edge node={node {\scalebox{1.2}{$>$}}},sloped] (bk);
\node[vertex,draw,fill=white] at (1,1/1) {};
\node[vertex,draw,fill=white] at (2,1/1) {};
\end{tikzpicture}
\quad \text{if $(f_-,g_-)=(0,1)$,} \\
&\begin{tikzpicture}[scale=1.5, baseline=(am.south),vertex/.style={circle,scale=0.40},dashed red edge/.style={red, dash pattern=on 2pt off 1pt},blue edge/.style={cyan, thick}]
\node[vertex] (am) at (1,1) {};
\node[vertex]  (bk) at (2,1) {};
\draw[dashed red edge]  (am.south) to  (bk.south);
\draw[dashed red edge,draw opacity=0,text opacity=1]  (am) to [edge node={node {\scalebox{1.2}{$>$}}},sloped] (bk);
\draw[dashed red edge]  (am.north) to  (bk.north);
\node[vertex,draw,fill=white] at (1,1/1) {};
\node[vertex,draw,fill=white] at (2,1/1) {};
\end{tikzpicture}
\quad \text{if $(f_-,g_-)=(0,[2]_{z_a})$,} \\
&\begin{tikzpicture}[scale=1.5, baseline=(am.south),vertex/.style={circle,scale=0.40},dashed red edge/.style={red, dash pattern=on 2pt off 1pt},blue edge/.style={cyan, thick}]
\node[vertex] (am) at (1,1) {};
\node[vertex]  (bk) at (2,1) {};
\draw[dashed red edge]  (am.south) to  (bk.south);
\draw[dashed red edge]  (am) to [edge node={node {\scalebox{1.2}{$>$}}},sloped] (bk);
\draw[dashed red edge]  (am.north) to  (bk.north);
\node[vertex,draw,fill=white] at (1,1/1) {};
\node[vertex,draw,fill=white] at (2,1/1) {};
\end{tikzpicture}
\quad \text{if $(f_-,g_-)=(0,[3]_{z_a})$.} 
\end{align*}
We may use the same abbreviations for blue edges, but these are not needed here.
When we use these abbreviations, we keep in mind the symmetrizer.

\begin{example}\label{example:F4}
Consider a Cartan matrix of type $F_4$:
\begin{align*}
C=
\left[
\begin{array}{rrrr}
2 & -1 & 0 & 0 \\
-1 &2 & -1 & 0 \\
0 & -2 & 2 & -1 \\
0 & 0 & -1 & 2
\end{array}
\right],
\end{align*}
and choose a symmetrizer as $\diag(2,2,1,1)$.
When $\level=2$,
the index set $H$ is given by
\begin{align*}
H=\{ (1, 1), (2, 1), (3, 1), (3, 2), (3, 3), (4, 1), (4, 2), (4, 3) \} ,
\end{align*}
and the matrices $\widetilde{A}_+$ and $\widetilde{A}_-$ are given by
\begin{align*}
\widetilde{A}_+ &=
\begin{bmatrix*}
[2]_{z^2} & 0 & 0 & 0 & 0 & 0 & 0 & 0 \\
0 & [2]_{z^2} & 0 & 0 & 0 & 0 & 0 & 0 \\
0 & 0 & [2]_{z} & -1 & 0 & 0 & 0 & 0 \\
0 & 0 & -1 & [2]_{z} & -1 & 0 & 0 & 0 \\
0 & 0 & 0 & -1 & [2]_{z} & 0 & 0 & 0 \\
0 & 0 & 0 & 0 & 0 & [2]_{z} & -1 & 0 \\
0 & 0 & 0 & 0 & 0 & -1 & [2]_{z} & -1 \\
0 & 0 & 0 & 0 & 0 & 0 & -1 & [2]_{z}
\end{bmatrix*},\\
\widetilde{A}_- &=
\begin{bmatrix*}
[2]_{z^2} & -1 & 0 & 0 & 0 & 0 & 0 & 0 \\
-1 & [2]_{z^2} & -1 & -[2]_{z} & -1 & 0 & 0 & 0 \\
0 & 0 & [2]_{z} & 0 & 0 & -1 & 0 & 0 \\
0 & -1 & 0 & [2]_{z} & 0 & 0 & -1 & 0 \\
0 & 0 & 0 & 0 & [2]_{z} & 0 & 0 & -1 \\
0 & 0 & -1 & 0 & 0 & [2]_{z} & 0 & 0 \\
0 & 0 & 0 & -1 & 0 & 0 & [2]_{z} & 0 \\
0 & 0 & 0 & 0 & -1 & 0 & 0 & [2]_{z}
\end{bmatrix*}.
\end{align*}
The diagram $\Gamma(\widetilde{A}_+,\widetilde{A}_-)$ is given by
\begin{align*}
\begin{tikzpicture}[scale=1.6,vertex/.style={circle,scale=0.40},red edge/.style={red},dashed red edge/.style={red, dash pattern=on 2pt off 1pt},blue edge/.style={cyan, thick},label distance=-0.35mm]
\node[vertex][label=above:${(1,1)}$] (01) at (1,1/1) {};
\node[vertex][label=above:${(2,1)}$] (11) at (2,1/1) {};
\node[vertex][label=above right:${(3,1)}$] (21) at (3,1/2) {};
\node[vertex][label=above right:${(3,2)}$] (22) at (3,2/2) {};
\node[vertex][label=above right:${(3,3)}$] (23) at (3,3/2) {};
\node[vertex][label=above right:${(4,1)}$] (31) at (4,1/2) {};
\node[vertex][label=above right:${(4,2)}$] (32) at (4,2/2) {};
\node[vertex][label=above right:${(4,3)}$] (33) at (4,3/2) {};
\draw[red edge]  (01) to   (11);
\draw[dashed red edge]  (11) to [edge node={node {\scalebox{1.2}{$<$}}},sloped] (21);
\draw[red edge]  (11.south) to  (22.south);
\draw[red edge,draw opacity=0,text opacity=1]  (11) to [edge node={node {\scalebox{1.2}{$<$}}},sloped] (22);
\draw[red edge]  (11.north) to   (22.north);
\draw[dashed red edge]  (11) to [edge node={node {\scalebox{1.2}{$<$}}},sloped] (23);
\draw[red edge]  (21) to   (31);
\draw[red edge]  (22) to   (32);
\draw[red edge]  (23) to   (33);
\draw[blue edge] (21) to (22);
\draw[blue edge] (22) to (23);
\draw[blue edge] (31) to (32);
\draw[blue edge] (32) to (33);
\node[vertex,draw] at (1,1/1) {};
\node[vertex,draw] at (2,1/1) {};
\node[vertex,draw] at (3,1/2) {};
\node[vertex,draw] at (3,2/2) {};
\node[vertex,draw] at (3,3/2) {};
\node[vertex,draw] at (4,1/2) {};
\node[vertex,draw] at (4,2/2) {};
\node[vertex,draw] at (4,3/2) {};
\end{tikzpicture}\quad .
\end{align*}
\end{example}

\begin{figure}
\begin{align*}
\begin{tikzpicture}[scale=2,baseline=(25.base),vertex/.style={circle,scale=0.40},red edge/.style={red},dashed red edge/.style={red, dash pattern=on 2pt off 1pt},blue edge/.style={cyan, thick}]
\node[vertex] (01) at (1,1/1) {};
\node[vertex] (02) at (1,2/1) {};
\node[vertex] (03) at (1,3/1) {};
\node[vertex] (04) at (1,4/1) {};
\node[vertex] (11) at (2,1/1) {};
\node[vertex] (12) at (2,2/1) {};
\node[vertex] (13) at (2,3/1) {};
\node[vertex] (14) at (2,4/1) {};
\node[vertex] (21) at (3,1/2) {};
\node[vertex] (22) at (3,2/2) {};
\node[vertex] (23) at (3,3/2) {};
\node[vertex] (24) at (3,4/2) {};
\node[vertex] (25) at (3,5/2) {};
\node[vertex] (26) at (3,6/2) {};
\node[vertex] (27) at (3,7/2) {};
\node[vertex] (28) at (3,8/2) {};
\node[vertex] (29) at (3,9/2) {};
\draw[blue edge] (01) to (02);
\draw[blue edge] (02) to (03);
\draw[blue edge] (03) to (04);
\draw[blue edge] (11) to (12);
\draw[blue edge] (12) to (13);
\draw[blue edge] (13) to (14);
\draw[blue edge] (21) to (22);
\draw[blue edge] (22) to (23);
\draw[blue edge] (23) to (24);
\draw[blue edge] (24) to (25);
\draw[blue edge] (25) to (26);
\draw[blue edge] (26) to (27);
\draw[blue edge] (27) to (28);
\draw[blue edge] (28) to (29);
\draw[red edge]  (01) to   (11);
\draw[red edge]  (02) to   (12);
\draw[red edge]  (03) to   (13);
\draw[red edge]  (04) to   (14);
\draw[dashed red edge]  (11) to [edge node={node {\scalebox{1.2}{$<$}}},sloped] (21);
\draw[red edge]  (11.south) to  (22.south);
\draw[red edge,draw opacity=0,text opacity=1]  (11) to [edge node={node {\scalebox{1.2}{$<$}}},sloped] (22);
\draw[red edge]  (11.north) to   (22.north);
\draw[dashed red edge]  (11) to [edge node={node {\scalebox{1.2}{$<$}}},sloped] (23);
\draw[dashed red edge]  (12) to [edge node={node {\scalebox{1.2}{$<$}}},sloped] (23);
\draw[red edge]  (12.south) to  (24.south);
\draw[red edge,draw opacity=0,text opacity=1]  (12) to [edge node={node {\scalebox{1.2}{$<$}}},sloped] (24);
\draw[red edge]  (12.north) to   (24.north);
\draw[dashed red edge]  (12) to [edge node={node {\scalebox{1.2}{$<$}}},sloped] (25);
\draw[dashed red edge]  (13) to [edge node={node {\scalebox{1.2}{$<$}}},sloped] (25);
\draw[red edge]  (13.south) to  (26.south);
\draw[red edge,draw opacity=0,text opacity=1]  (13) to [edge node={node {\scalebox{1.2}{$<$}}},sloped] (26);
\draw[red edge]  (13.north) to   (26.north);
\draw[dashed red edge]  (13) to [edge node={node {\scalebox{1.2}{$<$}}},sloped] (27);
\draw[dashed red edge]  (14) to [edge node={node {\scalebox{1.2}{$<$}}},sloped] (27);
\draw[red edge]  (14.south) to  (28.south);
\draw[red edge,draw opacity=0,text opacity=1]  (14) to [edge node={node {\scalebox{1.2}{$<$}}},sloped] (28);
\draw[red edge]  (14.north) to   (28.north);
\draw[dashed red edge]  (14) to [edge node={node {\scalebox{1.2}{$<$}}},sloped] (29);
\node[vertex,draw,fill=white] at (1,1/1) {};
\node[vertex,draw,fill=white] at (1,2/1) {};
\node[vertex,draw,fill=white] at (1,3/1) {};
\node[vertex,draw,fill=white] at (1,4/1) {};
\node[vertex,draw,fill=white] at (2,1/1) {};
\node[vertex,draw,fill=white] at (2,2/1) {};
\node[vertex,draw,fill=white] at (2,3/1) {};
\node[vertex,draw,fill=white] at (2,4/1) {};
\node[vertex,draw,fill=white] at (3,1/2) {};
\node[vertex,draw,fill=white] at (3,2/2) {};
\node[vertex,draw,fill=white] at (3,3/2) {};
\node[vertex,draw,fill=white] at (3,4/2) {};
\node[vertex,draw,fill=white] at (3,5/2) {};
\node[vertex,draw,fill=white] at (3,6/2) {};
\node[vertex,draw,fill=white] at (3,7/2) {};
\node[vertex,draw,fill=white] at (3,8/2) {};
\node[vertex,draw,fill=white] at (3,9/2) {};
\end{tikzpicture}
\quad\quad\quad
\begin{tikzpicture}[scale=5,baseline=(02.base),vertex/.style={circle,scale=0.40},red edge/.style={red},dashed red edge/.style={red, dash pattern=on 2pt off 1pt},blue edge/.style={cyan,thick}]
\node[vertex] (01) at (1.70000000000000,1/2) {};
\node[vertex] (02) at (1.70000000000000,2/2) {};
\node[vertex] (03) at (1.70000000000000,3/2) {};
\node[vertex] (11) at (2,1/6) {};
\node[vertex] (12) at (2,2/6) {};
\node[vertex] (13) at (2,3/6) {};
\node[vertex] (14) at (2,4/6) {};
\node[vertex] (15) at (2,5/6) {};
\node[vertex] (16) at (2,6/6) {};
\node[vertex] (17) at (2,7/6) {};
\node[vertex] (18) at (2,8/6) {};
\node[vertex] (19) at (2,9/6) {};
\node[vertex] (110) at (2,10/6) {};
\node[vertex] (111) at (2,11/6) {};
\node[vertex] (21) at (2.35000000000000,0.800000000000000/3) {};
\node[vertex] (22) at (2.35000000000000,1.80000000000000/3) {};
\node[vertex] (23) at (2.35000000000000,2.80000000000000/3) {};
\node[vertex] (24) at (2.35000000000000,3.80000000000000/3) {};
\node[vertex] (25) at (2.35000000000000,4.80000000000000/3) {};
\node[vertex] (31) at (2.25000000000000,1.20000000000000/3) {};
\node[vertex] (32) at (2.25000000000000,2.20000000000000/3) {};
\node[vertex] (33) at (2.25000000000000,3.20000000000000/3) {};
\node[vertex] (34) at (2.25000000000000,4.20000000000000/3) {};
\node[vertex] (35) at (2.25000000000000,5.20000000000000/3) {};
\draw[blue edge] (01) to (02);
\draw[blue edge] (02) to (03);
\draw[blue edge] (11) to (12);
\draw[blue edge] (12) to (13);
\draw[blue edge] (13) to (14);
\draw[blue edge] (14) to (15);
\draw[blue edge] (15) to (16);
\draw[blue edge] (16) to (17);
\draw[blue edge] (17) to (18);
\draw[blue edge] (18) to (19);
\draw[blue edge] (19) to (110);
\draw[blue edge] (110) to (111);
\draw[blue edge] (21) to (22);
\draw[blue edge] (22) to (23);
\draw[blue edge] (23) to (24);
\draw[blue edge] (24) to (25);
\draw[blue edge] (31) to (32);
\draw[blue edge] (32) to (33);
\draw[blue edge] (33) to (34);
\draw[blue edge] (34) to (35);
\draw[dashed red edge]  (01) to [edge node={node {\scalebox{1.2}{$<$}}},sloped] (11);
\draw[dashed red edge]  (01.south) to  (12.south);
\draw[dashed red edge,draw opacity=0,text opacity=1]  (01) to [edge node={node {\scalebox{1.2}{$<$}}},sloped] (12);
\draw[dashed red edge]  (01.north) to   (12.north);
\draw[red edge]  (01.south) to  (13.south);
\draw[red edge]  (01) to [edge node={node {\scalebox{1.2}{$<$}}},sloped] (13);
\draw[red edge]  (01.north) to   (13.north);
\draw[dashed red edge]  (01.south) to  (14.south);
\draw[dashed red edge,draw opacity=0,text opacity=1]  (01) to [edge node={node {\scalebox{1.2}{$<$}}},sloped] (14);
\draw[dashed red edge]  (01.north) to   (14.north);
\draw[dashed red edge]  (01) to [edge node={node {\scalebox{1.2}{$<$}}},sloped] (15);
\draw[dashed red edge]  (02) to [edge node={node {\scalebox{1.2}{$<$}}},sloped] (14);
\draw[dashed red edge]  (02.south) to  (15.south);
\draw[dashed red edge,draw opacity=0,text opacity=1]  (02) to [edge node={node {\scalebox{1.2}{$<$}}},sloped] (15);
\draw[dashed red edge]  (02.north) to   (15.north);
\draw[red edge]  (02.south) to  (16.south);
\draw[red edge]  (02) to [edge node={node {\scalebox{1.2}{$<$}}},sloped] (16);
\draw[red edge]  (02.north) to   (16.north);
\draw[dashed red edge]  (02.south) to  (17.south);
\draw[dashed red edge,draw opacity=0,text opacity=1]  (02) to [edge node={node {\scalebox{1.2}{$<$}}},sloped] (17);
\draw[dashed red edge]  (02.north) to   (17.north);
\draw[dashed red edge]  (02) to [edge node={node {\scalebox{1.2}{$<$}}},sloped] (18);
\draw[dashed red edge]  (03) to [edge node={node {\scalebox{1.2}{$<$}}},sloped] (17);
\draw[dashed red edge]  (03.south) to  (18.south);
\draw[dashed red edge,draw opacity=0,text opacity=1]  (03) to [edge node={node {\scalebox{1.2}{$<$}}},sloped] (18);
\draw[dashed red edge]  (03.north) to   (18.north);
\draw[red edge]  (03.south) to  (19.south);
\draw[red edge]  (03) to [edge node={node {\scalebox{1.2}{$<$}}},sloped] (19);
\draw[red edge]  (03.north) to   (19.north);
\draw[dashed red edge]  (03.south) to  (110.south);
\draw[dashed red edge,draw opacity=0,text opacity=1]  (03) to [edge node={node {\scalebox{1.2}{$<$}}},sloped] (110);
\draw[dashed red edge]  (03.north) to   (110.north);
\draw[dashed red edge]  (03) to [edge node={node {\scalebox{1.2}{$<$}}},sloped] (111);
\draw[dashed red edge]  (21) to [edge node={node {\scalebox{1.2}{$>$}}},sloped] (11);
\draw[red edge]  (21.south) to  (12.south);
\draw[red edge,draw opacity=0,text opacity=1]  (21) to [edge node={node {\scalebox{1.2}{$>$}}},sloped] (12);
\draw[red edge]  (21.north) to   (12.north);
\draw[dashed red edge]  (21) to [edge node={node {\scalebox{1.2}{$>$}}},sloped] (13);
\draw[dashed red edge]  (22) to [edge node={node {\scalebox{1.2}{$>$}}},sloped] (13);
\draw[red edge]  (22.south) to  (14.south);
\draw[red edge,draw opacity=0,text opacity=1]  (22) to [edge node={node {\scalebox{1.2}{$>$}}},sloped] (14);
\draw[red edge]  (22.north) to   (14.north);
\draw[dashed red edge]  (22) to [edge node={node {\scalebox{1.2}{$>$}}},sloped] (15);
\draw[dashed red edge]  (23) to [edge node={node {\scalebox{1.2}{$>$}}},sloped] (15);
\draw[red edge]  (23.south) to  (16.south);
\draw[red edge,draw opacity=0,text opacity=1]  (23) to [edge node={node {\scalebox{1.2}{$>$}}},sloped] (16);
\draw[red edge]  (23.north) to   (16.north);
\draw[dashed red edge]  (23) to [edge node={node {\scalebox{1.2}{$>$}}},sloped] (17);
\draw[dashed red edge]  (24) to [edge node={node {\scalebox{1.2}{$>$}}},sloped] (17);
\draw[red edge]  (24.south) to  (18.south);
\draw[red edge,draw opacity=0,text opacity=1]  (24) to [edge node={node {\scalebox{1.2}{$>$}}},sloped] (18);
\draw[red edge]  (24.north) to   (18.north);
\draw[dashed red edge]  (24) to [edge node={node {\scalebox{1.2}{$>$}}},sloped] (19);
\draw[dashed red edge]  (25) to [edge node={node {\scalebox{1.2}{$>$}}},sloped] (19);
\draw[red edge]  (25.south) to  (110.south);
\draw[red edge,draw opacity=0,text opacity=1]  (25) to [edge node={node {\scalebox{1.2}{$>$}}},sloped] (110);
\draw[red edge]  (25.north) to   (110.north);
\draw[dashed red edge]  (25) to [edge node={node {\scalebox{1.2}{$>$}}},sloped] (111);
\draw[dashed red edge]  (31) to [edge node={node {\scalebox{1.2}{$>$}}},sloped] (11);
\draw[red edge]  (31.south) to  (12.south);
\draw[red edge,draw opacity=0,text opacity=1]  (31) to [edge node={node {\scalebox{1.2}{$>$}}},sloped] (12);
\draw[red edge]  (31.north) to   (12.north);
\draw[dashed red edge]  (31) to [edge node={node {\scalebox{1.2}{$>$}}},sloped] (13);
\draw[red edge]  (31) to   (21);
\draw[dashed red edge]  (32) to [edge node={node {\scalebox{1.2}{$>$}}},sloped] (13);
\draw[red edge]  (32.south) to  (14.south);
\draw[red edge,draw opacity=0,text opacity=1]  (32) to [edge node={node {\scalebox{1.2}{$>$}}},sloped] (14);
\draw[red edge]  (32.north) to   (14.north);
\draw[dashed red edge]  (32) to [edge node={node {\scalebox{1.2}{$>$}}},sloped] (15);
\draw[red edge]  (32) to   (22);
\draw[dashed red edge]  (33) to [edge node={node {\scalebox{1.2}{$>$}}},sloped] (15);
\draw[red edge]  (33.south) to  (16.south);
\draw[red edge,draw opacity=0,text opacity=1]  (33) to [edge node={node {\scalebox{1.2}{$>$}}},sloped] (16);
\draw[red edge]  (33.north) to   (16.north);
\draw[dashed red edge]  (33) to [edge node={node {\scalebox{1.2}{$>$}}},sloped] (17);
\draw[red edge]  (33) to   (23);
\draw[dashed red edge]  (34) to [edge node={node {\scalebox{1.2}{$>$}}},sloped] (17);
\draw[red edge]  (34.south) to  (18.south);
\draw[red edge,draw opacity=0,text opacity=1]  (34) to [edge node={node {\scalebox{1.2}{$>$}}},sloped] (18);
\draw[red edge]  (34.north) to   (18.north);
\draw[dashed red edge]  (34) to [edge node={node {\scalebox{1.2}{$>$}}},sloped] (19);
\draw[red edge]  (34) to   (24);
\draw[dashed red edge]  (35) to [edge node={node {\scalebox{1.2}{$>$}}},sloped] (19);
\draw[red edge]  (35.south) to  (110.south);
\draw[red edge,draw opacity=0,text opacity=1]  (35) to [edge node={node {\scalebox{1.2}{$>$}}},sloped] (110);
\draw[red edge]  (35.north) to   (110.north);
\draw[dashed red edge]  (35) to [edge node={node {\scalebox{1.2}{$>$}}},sloped] (111);
\draw[red edge]  (35) to   (25);
\node[vertex,draw,fill=white] at (1.70000000000000,1/2) {};
\node[vertex,draw,fill=white] at (1.70000000000000,2/2) {};
\node[vertex,draw,fill=white] at (1.70000000000000,3/2) {};
\node[vertex,draw,fill=white] at (2,1/6) {};
\node[vertex,draw,fill=white] at (2,2/6) {};
\node[vertex,draw,fill=white] at (2,3/6) {};
\node[vertex,draw,fill=white] at (2,4/6) {};
\node[vertex,draw,fill=white] at (2,5/6) {};
\node[vertex,draw,fill=white] at (2,6/6) {};
\node[vertex,draw,fill=white] at (2,7/6) {};
\node[vertex,draw,fill=white] at (2,8/6) {};
\node[vertex,draw,fill=white] at (2,9/6) {};
\node[vertex,draw,fill=white] at (2,10/6) {};
\node[vertex,draw,fill=white] at (2,11/6) {};
\node[vertex,draw,fill=white] at (2.35000000000000,0.800000000000000/3) {};
\node[vertex,draw,fill=white] at (2.35000000000000,1.80000000000000/3) {};
\node[vertex,draw,fill=white] at (2.35000000000000,2.80000000000000/3) {};
\node[vertex,draw,fill=white] at (2.35000000000000,3.80000000000000/3) {};
\node[vertex,draw,fill=white] at (2.35000000000000,4.80000000000000/3) {};
\node[vertex,draw,fill=white] at (2.25000000000000,1.20000000000000/3) {};
\node[vertex,draw,fill=white] at (2.25000000000000,2.20000000000000/3) {};
\node[vertex,draw,fill=white] at (2.25000000000000,3.20000000000000/3) {};
\node[vertex,draw,fill=white] at (2.25000000000000,4.20000000000000/3) {};
\node[vertex,draw,fill=white] at (2.25000000000000,5.20000000000000/3) {};
\end{tikzpicture}
\end{align*}
\caption{Examples of the diagram $\Gamma(\widetilde{A}_+,\widetilde{A}_-)$.}
\label{fig: tamely cap examples}
\end{figure}

More complicated examples are given in Figure \ref{fig: tamely cap examples}.
The left diagram is of
\begin{align*}
C=
\begin{bmatrix*}[r]
2 & -1 & 0 \\
-1 & 2 & -1 \\
0 & -2 & 2
\end{bmatrix*}
\end{align*}
(the Cartan matrix of type $B_3$) and $\level = 5$,
and the right diagram is of
\begin{align*}
C=
\begin{bmatrix*}[r]
2 & -1 & 0 & 0 \\
-3 & 2 & -2 & -2 \\
0 & -1 & 2 & -1 \\
0 & -1 & -1 & 2
\end{bmatrix*}
\end{align*}
and $\level = 2$.

\begin{proposition}\label{prop:contains Dynkin}
	The red part of the diagram $\Gamma(\widetilde{A}_+,\widetilde{A}_-)$ contains the Dynkin diagram of the transpose of $C$.
	More precisely,
	we have
	\begin{align*}
		(\widetilde{A}_-|_{z=1})_{am,bk} = c_{ba}
	\end{align*}
	if $t_{ba} m = t_{ab} k$.
\end{proposition}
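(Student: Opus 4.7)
The plan is to unwind the definition of the matrix entry $(\widetilde{A}_-)_{am,bk} = \tilde{n}_{am,bk}^0 - \tilde{n}_{am,bk}^-$ at $z = 1$ and match it directly with $c_{ba}$ by a short case analysis on the relation between $a$ and $b$. The two basic ingredients are:
\begin{enumerate}
  \item The fact that $[k]_{z}|_{z=1} = k$ for any positive integer $k$ (read off from $[k]_z = z^{k-1}+z^{k-3}+\cdots+z^{-(k-1)}$, which has $k$ unit terms at $z=1$); and
  \item The symmetrizer identity $c_a c_{ab} = c_b c_{ba}$ coming from the assumption that $\diag(c_1,\dots,c_n)$ is a left symmetrizer of $C$.
\end{enumerate}
I also note that the hypothesis $t_{ba} m = t_{ab} k$ is exactly the statement $p = k$, where $p = m\, t_{ba}/t_{ab}$ is the integer appearing in the definition of $\tilde{n}^-_{am,bk}$.

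First I split on whether $a=b$ or $a\neq b$. When $a=b$, the hypothesis forces $m=k$, and since $c_{aa}=2>0$ we have $a\not\sim a$, so $\tilde n^-_{am,am}=0$; thus the entry equals $\tilde n^0_{am,am}|_{z=1} = [2]_{z_a}|_{z=1} = 2 = c_{aa}$. When $a\neq b$ the $\delta_{ab}$ factor kills $\tilde n^0$, and the hypothesis gives $|p-k|=0$. If further $c_{ab}=0$ then $a\not\sim b$ so $\tilde n^-_{am,bk}=0$ and both sides of the claimed identity vanish, matching $c_{ba}=0$.

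The main (but routine) step is the case $a\neq b$ with $c_{ab}<0$. Here
\[
\tilde n^-_{am,bk}\bigl|_{z=1} = t_{ab}^{-1}\lvert c_{ab}\rvert\,[t_{ba}]_{z_b}\bigl|_{z=1}
= t_{ab}^{-1}\lvert c_{ab}\rvert\, t_{ba} = \lvert c_{ab}\rvert\,\frac{c_a}{c_b},
\]
using $t_{ba}/t_{ab} = c_a/c_b$. The symmetrizer identity rewrites this as $\lvert c_{ba}\rvert$, so $(\widetilde A_-|_{z=1})_{am,bk} = -\lvert c_{ba}\rvert = c_{ba}$, since $c_{ba}<0$ whenever $c_{ab}<0$.

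I do not anticipate a real obstacle: the proof is a direct computation. The only subtlety worth flagging in the write-up is to record carefully that $p = k$ under the hypothesis (so the polynomial $[t_{ba}-|p-k|]_{z_b}$ specializes cleanly) and that the symmetrizer relation converts the off-diagonal factor $\lvert c_{ab}\rvert\,t_{ba}/t_{ab}$ into exactly $\lvert c_{ba}\rvert$, which is the mechanism by which the \emph{transpose} of $C$ emerges from an asymmetric-looking formula.
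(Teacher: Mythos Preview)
Your proof is correct and follows essentially the same approach as the paper: a case split on $a=b$ versus $a\neq b$, evaluating $[k]_z$ at $z=1$, and in the off-diagonal case using $t_{ba}/t_{ab}=c_a/c_b$ together with the symmetrizer relation $c_a c_{ab}=c_b c_{ba}$ to turn $t_{ab}^{-1}\lvert c_{ab}\rvert t_{ba}$ into $\lvert c_{ba}\rvert$. The only cosmetic difference is that you explicitly separate the $a\neq b$, $c_{ab}=0$ subcase, whereas the paper absorbs it into the single formula $t_{ab}^{-1}c_{ab}t_{ba}=c_{ba}$, which already vanishes when $c_{ab}=0$.
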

\begin{proof}
	Suppose that $(a,m),(b,k) \in H$ satisfy
	$t_{ba} m = t_{ab} k$.
	Note that such pairs exist for any $a,b$ since
	$m:=t_{ab} \leq t_a \leq t_a \level -1 $ and
	$k:=t_{ba} \leq t_b \leq t_b \level -1 $
	satisfy the condition.
	If $a=b$, we have
	\begin{align*}
		(\widetilde{A}_-|_{z=1})_{am,bk}
		=(\tilde{n}_{am,am}^0)|_{z=1}
		= 2,
	\end{align*}
	and this is equal to $c_{aa}$.
	If $a \neq b$, we have
	\begin{align*}
		(\widetilde{A}_-|_{z=1})_{am,bk}
		=-(\tilde{n}_{am,bk}^- )|_{z=1}
		= t_{ab}^{-1}  c_{ab}  t_{ba}
		= c_a c_{ab} c_b^{-1}
		= c_{ba}.
	\end{align*}
\end{proof}

\begin{lemma}\label{lemma:symmetric cartan level}
	The matrix $\widetilde{A}_+ \widetilde{A}_-^{\mathsf{T}}$ is symmetric.
\end{lemma}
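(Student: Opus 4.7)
The plan is to reduce the symmetry of $\widetilde{A}_+\widetilde{A}_-^{\mathsf{T}}$ to an entrywise identity for $\tilde{n}^-$ and then verify that identity using the Chebyshev-type recurrence $[2]_z[k]_z = [k-1]_z + [k+1]_z$ together with the symmetrizability relation $|c_{ab}|c_a = |c_{ba}|c_b$. First, every entry of $\widetilde{A}_\pm$ lies in $\ZZ[z+z^{-1}]$, so $\widetilde{A}_\pm^\dagger = \widetilde{A}_\pm^{\mathsf{T}}$. Write $\widetilde{A}_\pm = D_0 - N_\pm$, where $D_0 := \diag_{am}([2]_{z_a})$; then $D_0$ is diagonal, $N_+$ is symmetric, $\widetilde{A}_+$ is symmetric (block-diagonal in $a$, with each block a symmetric tridiagonal matrix), and $D_0$ commutes with $\widetilde{A}_+$ since both respect the $a$-block structure and $D_0$ depends only on $a$. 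Expanding and using these facts, all terms cancel except
\begin{align*}
\widetilde{A}_+\widetilde{A}_-^{\mathsf{T}} - \widetilde{A}_-\widetilde{A}_+^{\mathsf{T}} = N_-\widetilde{A}_+ - \widetilde{A}_+ N_-^{\mathsf{T}},
\end{align*}
so it suffices to show $N_-\widetilde{A}_+$ is symmetric.

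Computing the $(am,cl)$-entry of $N_-\widetilde{A}_+$ using the tridiagonal form of $\widetilde{A}_+$, and doing the same for $\widetilde{A}_+ N_-^{\mathsf{T}}$, the required symmetry reduces to the identity
\begin{align*}
[2]_{z_c}\tilde{n}^-_{am,cl} - \tilde{n}^-_{am,c,l-1} - \tilde{n}^-_{am,c,l+1} = [2]_{z_a}\tilde{n}^-_{cl,am} - \tilde{n}^-_{cl,a,m-1} - \tilde{n}^-_{cl,a,m+1},
\end{align*}
where $\tilde{n}^-$ is set to zero when any index leaves its valid range. The cases $a=c$ and $a\not\sim c$ are immediate, since $\tilde{n}^-$ itself vanishes in both. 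In the main case $a\sim c$, symmetrizability gives the common scalar $E := |c_{ac}|/t_{ac} = |c_{ca}|/t_{ca}$, and the two sides are built from $\tilde{n}^-_{am,cl} = E[t_{ca} - |p-l|]_{z_c}$ and $\tilde{n}^-_{cl,am} = E[t_{ac} - |q-m|]_{z_a}$, with $p = mt_{ca}/t_{ac}$, $q = lt_{ac}/t_{ca}$, each nonzero only under the corresponding divisibility and support conditions.

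Applying the Chebyshev recurrence on each side and performing a short sign analysis of $p-l$ (respectively $q-m$), the interior contributions cancel except on the diagonal locus $m/t_{ac} = l/t_{ca}$, where both sides evaluate to $E(z^T + z^{-T})$ with $T = \lcm(c_a,c_c)$, via $[k+1]_z - [k-1]_z = z^k + z^{-k}$. The main obstacle is the boundary analysis: at the edge of support $|p-l| = t_{ca}$, the indicator condition truncates a Chebyshev neighbor and produces a nonzero residue. The decisive observation is that this edge in the $l$-variable of $\tilde{n}^-_{am,cl}$ corresponds exactly to the edge in the $m$-variable of $\tilde{n}^-_{cl,am}$ (both occur at $l = p \pm t_{ca}$, equivalently $m = q \pm t_{ac}$), so the truncated terms on the two sides match, each contributing $-E$. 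Finally, the level boundaries $l \in \{1, t_c\ell-1\}$ and $m \in \{1, t_a\ell-1\}$ can affect a nonzero term only when $t_{ca}\mid l$ or $t_{ac}\mid m$; since $t_{ca}\mid t_c$ and $t_{ac}\mid t_a$, the fictitious values at $l \in \{0, t_c\ell\}$ or $m \in \{0, t_a\ell\}$ are precisely the vanishing values $[0]_z = 0$ matched through the same correspondence. Enumerating these matched boundary configurations completes the verification of the identity, and hence the lemma.
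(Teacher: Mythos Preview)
Your proof is correct and follows essentially the same route as the paper. The paper expands $\widetilde{A}_+\widetilde{A}_-^{\mathsf T}-\widetilde{A}_-\widetilde{A}_+^{\mathsf T}$ directly into a six-term sum and then runs the same case analysis (diagonal $t_{ba}m=t_{ab}k$, interior $0<|p-k|<t_{ba}$, edge $|p-k|=t_{ba}$) using the recurrence $[2]_z[k]_z=[k-1]_z+[k+1]_z$ and the identity $|c_{ab}|/t_{ab}=|c_{ba}|/t_{ba}$; your preliminary reduction via the symmetry of $\widetilde{A}_+$ and $[D_0,N_+]=0$ just collapses those six terms to the cleaner identity for $N_-\widetilde{A}_+$, after which the cases coincide with the paper's (i), (ii)/(ii'), (iii)/(iii'). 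One small remark: at the level boundaries $l\in\{0,t_c\ell\}$ the fictitious term vanishes not always literally as $[0]_z$ but because $p\ge t_{ca}$ (resp.\ $t_c\ell-p\ge t_{ca}$) forces the support condition to fail; the conclusion is the same, and the paper makes the analogous elision.
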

\begin{proof}	
	The matrix $\widetilde{A}_+ \widetilde{A}_-^{\mathsf{T}}$ is symmetric if and only if
	\begin{equation}\label{eq: nnn-nnn=0}
	\begin{split}
		\sum_{(c,l) \in H} (
		&\tilde{n}_{am,cl}^0\tilde{n}_{bk,cl}^+
		+\tilde{n}_{am,cl}^+\tilde{n}_{bk,cl}^-
		+\tilde{n}_{am,cl}^-\tilde{n}_{bk,cl}^0 \\
		&-\tilde{n}_{am,cl}^0\tilde{n}_{bk,cl}^-
		-\tilde{n}_{am,cl}^-\tilde{n}_{bk,cl}^+
		-\tilde{n}_{am,cl}^+\tilde{n}_{bk,cl}^0)=0
	\end{split}
	\end{equation}
	for any $(a,m),(b,k) \in H$.
	Let $X$ be the left-hand side in \eqref{eq: nnn-nnn=0}.
	Then $X=0$ is trivial except for the following cases:
	\begin{enumerate}
		\item[(i)] $a\sim b$ and $t_{ba}m = t_{ab} k$,
		\item[(ii)] $a \sim b$, $p=mt_{ab}^{-1}t_{ba} \in \ZZ$, and $0< \vert p-k \rvert < t_{ba}$,
		\item[(ii')] $a \sim b$, $p'=kt_{ba}^{-1}t_{ab} \in \ZZ$, and $0< \vert p'-m \rvert < t_{ab}$,
		\item[(iii)] $a \sim b$, $p=mt_{ab}^{-1}t_{ba} \in \ZZ$, and $\vert p-k \rvert = t_{ba}$,
		\item[(iii')] $a \sim b$, $p'=kt_{ba}^{-1}t_{ab} \in \ZZ$, and $\vert p'-m \rvert = t_{ab}$.
	\end{enumerate}
	Moreover, the cases (ii') and (iii') reduce to the cases (ii) and (iii), respectively, since the left-hand side in \eqref{eq: nnn-nnn=0} is skew-symmetric under $am \leftrightarrow bk$.
	For the case (i), we have
	\begin{align*}
	X &=
	2 t_{ba}^{-1} \lvert c_{ba} \rvert [t_{ab}-1]_{z_a}
	+t_{ab}^{-1} \lvert c_{ab} \rvert [t_{ba}]_{z_b} \cdot [2]_{z_b}\\
	&\quad 
	- [2]_{z_a} \cdot t_{ba}^{-1} \lvert c_{ba} \rvert [t_{ab}]_{z_a}
	-2 t_{ab}^{-1} \lvert c_{ab} \rvert [t_{ba}-1]_{z_b} \\
	&=  t_{ab}^{-1} \lvert c_{ab} \rvert ( [t_{ba}+1]_{z_b} - [t_{ba}-1]_{z_b})
	- t_{ba}^{-1} \lvert c_{ba} \rvert ( [t_{ab}+1]_{z_a} - [t_{ab}-1]_{z_a}) \\
	&=  t_{ab}^{-1} \lvert c_{ab} \rvert (z^{t_{ba}c_b} + z^{-t_{ba}c_b})
	- t_{ba}^{-1} \lvert c_{ba} \rvert (z^{t_{ab}c_a} + z^{-t_{ab}c_a}) \\
	&=0.
	\end{align*}
	Here, we use $[n]_{z} \cdot [2]_{z} = [n+1]_{z} + [n-1]_{z}$
	to derive the second equality.
	For the case (ii), we have
	\begin{align*}
	X &=
	t_{ab}^{-1} \lvert c_{ab} \rvert [t_{ba}- \lvert p-k \rvert ]_{z_b} \cdot [2]_{z_b}\\
	&\quad - t_{ab}^{-1} \lvert c_{ab} \rvert ([t_{ba}-\lvert p-k \rvert -1]_{z_b} + [t_{ba}-\lvert p-k \rvert +1]_{z_b} )  \\
	&=0.
	\end{align*}
	For the case (iii), we have
	\begin{align*}
	X = t_{ab}^{-1} \lvert c_{ab} \rvert [1]_{z^b}
	-t_{ba}^{-1} \lvert c_{ba} \rvert [1]_{z^a} = 0.
	\end{align*}
\end{proof}

\begin{theorem}\label{theorem:T-datum from cartan}
	Let $N_0,N_+,N_- \in \mat_{H \times H} (\ZZ[z])$ be the matrices defined by
	\begin{align*}
	N_{\varepsilon} &= \bigl( z^{c_a} \tilde{n}_{am,bk}^{\varepsilon} \bigr)_{am,bk \in H} \quad\quad (\varepsilon \in \{0, {+}, {-}\}).
	\end{align*}
	Then the triple $(N_0,N_+,N_-)$ and the pair $(A_+,A_-)=(N_0-N_+,N_0-N_-)$ satisfy
	\begin{enumerate}
		\item the conditions (\ref{item:N1}), (\ref{item:N2}), and (\ref{item:N4}),
		\item the symplectic relation $A_+ A_-^\dagger = A_- A_+^\dagger$,
		\item and the condition (\ref{item:N3}) if and only if
		\begin{align}\label{eq:Cartan condition}
			\text{$c_{ab}  \mid c_{ba} $ or $c_{ba}  \mid c_{ab} $ for any $1 \leq a,b \leq n$}.
		\end{align}
	\end{enumerate}
	Consequently, for any Cartan matrix $C$ satisfies the condition \eqref{eq:Cartan condition} and any integer $\level$ greater than or equal to $2$, the triple $(A_+,A_-,I_{H})$ is a T-datum of size $\lvert H \rvert$.
\end{theorem}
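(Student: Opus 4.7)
The plan is to handle the three assertions separately, with the real work concentrated in assertion (3). Conditions (\ref{item:N1}), (\ref{item:N2}), (\ref{item:N4}) will be verified by direct inspection of the formulas for $\tilde n^\varepsilon_{am,bk}$. Since $\tilde N_0 = ([2]_{z_a}\delta_{ab})_{am,bk}$ is diagonal, multiplying each entry by $z^{c_a}$ yields $(1+z^{2c_a})\delta_{(am),(bk)}$, which is exactly (\ref{item:N1}) with $\sigma = \id$ and $p_{am} = 2c_a$. For (\ref{item:N2}) the only non-trivial point is that $t_{ab}^{-1}\lvert c_{ab}\rvert$ is a non-negative integer: writing $c_a = d\alpha$, $c_b = d\beta$ with $\gcd(\alpha,\beta)=1$, the symmetrizability identity $c_a\lvert c_{ab}\rvert = c_b\lvert c_{ba}\rvert$ forces $\alpha\lvert c_{ab}\rvert = \beta\lvert c_{ba}\rvert$, so $\beta = t_{ab}$ divides $\lvert c_{ab}\rvert$; the coefficients of $[k]_{z_b}$ are already non-negative. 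Condition (\ref{item:N4}) is immediate since $\tilde n^+$ is supported on $\{a=b\}$ while $\tilde n^-$ is supported on $\{a\sim b\}$, and $c_{aa}=2$ makes these disjoint.

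For (2), the plan is to reduce the symplectic relation to the identity $\widetilde A_+\widetilde A_-^{\mathsf T}=\widetilde A_-\widetilde A_+^{\mathsf T}$ already supplied by Lemma \ref{lemma:symmetric cartan level}. Setting $Z:=\diag(z^{c_a})$, we have $N_\varepsilon = Z\tilde N_\varepsilon$ and hence $A_\pm = Z\widetilde A_\pm$. Since $Z$ is diagonal, $Z^\dagger = Z^{-1}$, and since every entry $\tilde n^\varepsilon_{am,bk}$ is a palindromic Laurent polynomial (either a constant or a $[k]_{z_b}$), one has $\widetilde A_\pm^\dagger = \widetilde A_\pm^{\mathsf T}$. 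The computation
\[
A_+ A_-^\dagger \;=\; Z\,\widetilde A_+\widetilde A_-^{\mathsf T}\,Z^{-1} \;=\; Z\,\widetilde A_-\widetilde A_+^{\mathsf T}\,Z^{-1} \;=\; A_- A_+^\dagger
\]
then completes (2), with the middle equality being Lemma \ref{lemma:symmetric cartan level}.

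Assertion (3) is the heart of the proof and will be established by a careful exponent count in $z^{c_a}\tilde n^-_{am,bk}$. The polynomial $[t_{ba}-\lvert p-k\rvert]_{z_b}$ contains exponents ranging from $-(t_{ba}-\lvert p-k\rvert-1)c_b$ to $+(t_{ba}-\lvert p-k\rvert-1)c_b$, so after multiplying by $z^{c_a}$ the exponent range becomes $c_a \pm (t_{ba}-\lvert p-k\rvert-1)c_b$. Condition (\ref{item:N3}) demands that every such exponent lie strictly between $0$ and $p_{am}=2c_a$; both bounds collapse to the single inequality $c_a>(t_{ba}-\lvert p-k\rvert-1)c_b$, whose extremal case $\lvert p-k\rvert=0$ reads $c_a+c_b>\lcm(c_a,c_b)$. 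With $c_a=d\alpha$, $c_b=d\beta$, $\gcd(\alpha,\beta)=1$, this becomes $(\alpha-1)(\beta-1)<1$, i.e.\ $\alpha=1$ or $\beta=1$, that is, $c_a\mid c_b$ or $c_b\mid c_a$. Via the symmetrizability relation $\alpha\lvert c_{ab}\rvert=\beta\lvert c_{ba}\rvert$ this is precisely $c_{ab}\mid c_{ba}$ or $c_{ba}\mid c_{ab}$. The final consequence is then immediate: with $D=I_H$, both $N_0D=DN_0$ and $D^{-1}N_\pm D\in\mat_{H\times H}(\ZZ[z])$ are trivial once (\ref{item:N3}) holds (the latter because (\ref{item:N3}) is exactly the statement that all exponents are non-negative), and the symplectic relation is furnished by (2). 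The main obstacle is carrying out the exponent bookkeeping in (3) uniformly in $(a,m,b,k)$ and, in particular, correctly translating the resulting divisibility condition on the symmetrizer entries $(c_a,c_b)$ into the stated divisibility condition on the Cartan entries $(c_{ab},c_{ba})$.
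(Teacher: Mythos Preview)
Your proposal is correct and follows essentially the same approach as the paper's proof: direct inspection for (\ref{item:N1}), (\ref{item:N2}), (\ref{item:N4}); the reduction of the symplectic relation to Lemma~\ref{lemma:symmetric cartan level} via the diagonal shift $Z=\diag(z^{c_a})$ and the palindromicity of $\tilde n^\varepsilon_{am,bk}$; and the exponent analysis for (\ref{item:N3}) leading to $(t_{ba}-1)c_b<c_a$, i.e.\ $\lcm(c_a,c_b)<c_a+c_b$. You supply more detail than the paper (the integrality of $t_{ab}^{-1}\lvert c_{ab}\rvert$, the explicit $(\alpha-1)(\beta-1)<1$ reformulation, and the translation from divisibility of $(c_a,c_b)$ to divisibility of $(c_{ab},c_{ba})$ via $\alpha\lvert c_{ab}\rvert=\beta\lvert c_{ba}\rvert$), but the logical skeleton is identical.
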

\begin{proof}
	The conditions (\ref{item:N1}), (\ref{item:N2}), and (\ref{item:N4}) are obvious from the definition.
	The symplectic relation follows from Lemma \ref{lemma:symmetric cartan level}
	and the fact that $\tilde{n}_{am,bk}^{\varepsilon}$ are invariant under $z \mapsto z^{-1}$.
	The condition (\ref{item:N3}) is equivalent to $(t_{ba}-1)c_b < c_a$ for any $a,b$ such that $a \sim b$, and this is equivalent to $\lcm(c_a,c_b) < c_a + c_b$ for any $a,b$ such that $a \sim b$.
	This happens if and only if $c_a \mid c_b$ or $c_b \mid c_a$ for any $a,b$ such that $a \sim b$, and this is equivalent to the condition \eqref{eq:Cartan condition}.
\end{proof}

\begin{remark}
	If the Cartan matrix in Theorem \ref{theorem:T-datum from cartan} and its symmetrizer satisfy the condition
	\begin{align*}
		c_{ab} < -1 \; \Rightarrow \; c_a = - c_{ba}=1,
	\end{align*}
	which implies \eqref{eq:Cartan condition}, the mutation loop corresponding to the T-datum
	$(A_+,A_-,I_H)$ is explicitly constructed in \cite{NakanishiTamely}.
	The T-system associated with this T-datum is a certain truncation
	a T-system associated with Kirillov-Reshetikhin modules of the quantum affinization of a quantum Kac-Moody algebra~\cite{Hernandez,KNSaff} (a truncation and a quantum Kac-Moody algebra are associated with $\level$ and $C$, respectively).
\end{remark}

\section{Periodic Y/T-systems}
\label{section:periodic yt}
\subsection{Finite type T-data}

\begin{definition}\label{def:finite type}
	We say that a T-datum $\alpha$ is of \emph{finite type} if
	the set
	$\{T_a(u) \in \mathscr{T}^{\circ}(\alpha,R,Y) \mid (a,u) \in R \}$ is a finite set.
\end{definition}

Definition \ref{def:finite type} does not depend on $R$ since the set being considered is a finite set for some $R$ if and only if it is a finite set for $[1,r] \times \ZZ$ by (\ref{item:R3}).
We will see that this is also independent of the choice of $Y$.

\begin{definition}\label{def:principal and universal solutions}
	Let $\alpha$ be a T-datum and $R$ be a consistent subset for $\alpha$.
	\begin{enumerate}
		\item We define $Y_{\mathrm{prin}}(\alpha,R)$ to be the solution of the Y-system
		associated with $(\alpha,R)$ in $\mathrm{Trop}(u_{a,p})_{(a,p) \in R_{\mathrm{in}}}$ such that
		$u_{a,p} = y_{a,p}$ for any $(a,p) \in R_{\mathrm{in}}$, where $y_{a,p}$ is defined by \eqref{eq:y from Y}.
		By Theorem \ref{theorem:embedding into cluster algebra}, the T-algebra $\mathscr{T}^{\circ}(\alpha,R,Y)$ is embedded into the cluster algebra with principal coefficients (see~\cite{FZ4} for the definition of cluster algebras with principal coefficients).
		\item We define $Y_{\mathrm{univ}}(\alpha,R)$ to be the solution of the Y-system
		associated with $(\alpha,R)$ in $\QQ_{\mathrm{sf}}(u_{a,p})_{(a,p) \in R_{\mathrm{in}}}$ such that
		$u_{a,p} = y_{a,p}$ for any $(a,p) \in R_{\mathrm{in}}$, where $y_{a,p}$ is defined by \eqref{eq:y from Y}.
	\end{enumerate}
\end{definition}

\begin{definition}\label{def:periodic}
	Let $\alpha$ be a T-datum and $R$ be a consistent subset for $\alpha$.
	Let $\Omega$ be a integer with $t \mid \Omega$,
	where $t$ is the integer in (\ref{item:R3}) in Definition \ref{def:consistent subset}.
	\begin{enumerate}
		\item 
		We say that a solution $(Y_a(u))_{(a,u) \in R}$ of the Y-system associated with $(\alpha,R)$ is \emph{periodic with period $\Omega$} if $Y_a(u)=Y_a(u+\Omega)$ for any $(a,u) \in R$.
		\item 
		We say that the T-system associated with $(\alpha,R,Y)$ is \emph{periodic with period $\Omega$} if
		$Y$ is periodic with period $\Omega$ and $T_a(u) = T_a(u+\Omega)$ in $\mathscr{T}^{\circ}(\alpha,R,Y)$ for any $(a,u) \in R$.
	\end{enumerate}
\end{definition}

Definition \ref{def:periodic} also does not depend on $R$.
By the synchronicity phenomenon in cluster algebras~\cite{nakanishi2019synchronicity},
we have the following assertion:

\begin{theorem}\label{theorem:periodic conditions}
	Let $\alpha$ be a T-datum and $R$ be a consistent subset for $\alpha$.
	Let $\Omega$ be a integer with $t \mid \Omega$,
	where $t$ is the integer in (\ref{item:R3}) in Definition \ref{def:consistent subset}.
	Then the following conditions are equivalent:
	\begin{enumerate}
		\item The T-system associated with $(\alpha,R,Y)$ is periodic with period $\Omega$ for some $Y$.
		\item The T-system associated with $(\alpha,R,Y)$ is periodic with period $\Omega$ for any $Y$.
		\item $Y_{\mathrm{prin}}(\alpha,R)$ is periodic with period $\Omega$.
		\item $Y_{\mathrm{univ}}(\alpha,R)$ is periodic with period $\Omega$.
	\end{enumerate}
\end{theorem}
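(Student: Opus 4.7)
The implication $(2) \Rightarrow (1)$ is immediate, so the plan is to prove $(1) \Rightarrow (3) \Rightarrow (4) \Rightarrow (2)$. The strategy is to transport everything into a cluster algebra and invoke the synchronicity theorem of \cite{nakanishi2019synchronicity}, which asserts that along any mutation sequence (with any labeling automorphism), periodicity of cluster variables, periodicity of tropical Y-variables (equivalently, of $C$-matrices), and periodicity of Y-variables in the universal semifield are all mutually equivalent.

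By Theorem \ref{theorem:mutation loop from T-datum} together with Lemma \ref{lemma:equivalent}, we may replace $(\alpha, R)$ by $(\alpha_\gamma, R_\gamma)$ for the complete mutation loop $\gamma = G(\alpha, R)$. Given any solution $Y$ of the Y-system in a semifield $\PP$, Theorem \ref{theorem:embedding into cluster algebra} produces an $R_{\mathrm{in}}$-labeled Y-seed $(B, y)$ in $\PP$ and an embedding $\iota : \mathscr{T}^{\circ}(\alpha, R, Y) \hookrightarrow \mathcal{A}(B, y, x)$, under which the cluster variables $x_i(u)$ and coefficients $y_i(u)$ at mutation points $(i, u) \in P_\gamma$ realize $\iota(T_{\pi(i,u)}(u))$ and $Y_{\pi(i,u)}(u)$, respectively. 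From the infinite mutation sequence \eqref{eq:infinite seeds} together with the identities $B(nt+k) = \nu^n(B(k))$ etc., the periodicity of the T-system (resp.\ Y-system) with period $\Omega$ in the sense of Definition \ref{def:periodic} translates precisely into the statement that the cluster (resp.\ coefficient tuple) obtained after $\Omega/t$ iterations of $\mu_{\mathbf{i}}$, composed with the relabeling $\nu^{\Omega/t}$, coincides with the initial one.

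Applying the synchronicity theorem to this mutation sequence yields $(1) \Leftrightarrow (3) \Leftrightarrow (4)$: the existence of a periodic T-system (for some $Y$) is equivalent to periodicity of tropical Y-variables, which by the construction of $Y_{\mathrm{prin}}$ via \eqref{eq:y from Y} and Definition \ref{def:principal and universal solutions}(1) is exactly periodicity of $Y_{\mathrm{prin}}(\alpha, R)$; and tropical periodicity is in turn equivalent to periodicity of $Y_{\mathrm{univ}}(\alpha, R)$ in the universal semifield. For $(4) \Rightarrow (2)$, any solution $Y$ in any semifield $\PP$ arises by specializing $Y_{\mathrm{univ}}(\alpha, R)$ through the unique semifield homomorphism $\QQ_{\mathrm{sf}}(u_{a,p}) \to \PP$ sending $u_{a,p}$ to the corresponding $y_{a,p}$ defined by \eqref{eq:y from Y}; thus periodicity of $Y_{\mathrm{univ}}$ forces periodicity of every $Y$, and one more application of synchronicity upgrades this to periodicity of every associated T-system.

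The main obstacle is simply the bookkeeping in the second paragraph: one must carefully check that Definition \ref{def:periodic}, phrased in terms of the identities $Y_a(u) = Y_a(u+\Omega)$ and $T_a(u) = T_a(u+\Omega)$ inside $\mathscr{T}^{\circ}(\alpha, R, Y)$ under the $\pi$-parametrization of $P_\gamma$, matches the notion of periodicity of a mutation sequence to which the synchronicity theorem applies. In particular, the composition of $\mu_{\mathbf{i}}^{\Omega/t}$ with $\nu^{\Omega/t}$ must be read as the trivial permutation on the indexing set $R$, so that the statement ``$T_a(u) = T_a(u+\Omega)$ for all $(a,u) \in R$'' is genuinely the periodicity (as a seed) after $\Omega/t$ loops; once this identification is in place, the synchronicity theorem gives the remaining implications directly.
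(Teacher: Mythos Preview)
Your proposal is correct and follows essentially the same approach as the paper: the paper's proof is a single sentence invoking Theorem~\ref{theorem:embedding into cluster algebra} together with the synchronicity phenomenon \cite[Theorems~5.2 and~5.5]{nakanishi2019synchronicity}, and you have simply spelled out the bookkeeping that this sentence leaves implicit. Your cycle of implications and the translation of Definition~\ref{def:periodic} into seed-periodicity after $\Omega/t$ iterations of $\mu_{\mathbf{i}}$ composed with $\nu^{\Omega/t}$ is exactly what is needed.
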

\begin{proof}
	This follows from Theorem \ref{theorem:embedding into cluster algebra} together with the synchronicity phenomenon in cluster algebras~\cite[Theorem 5.2 and 5.5]{nakanishi2019synchronicity}.
\end{proof}

It is easy to see that $\alpha$ is of finite type (for some $Y$) if and only if the condition (1) in Theorem \ref{theorem:periodic conditions} holds for some $\Omega>0$.
Therefore, Theorem \ref{theorem:periodic conditions} implies that Definition \ref{def:finite type} does not depend on $Y$.

\subsection{Simultaneous positivity of finite type T-data}
For any matrix $A \in \mat_{r \times r} (\ZZ[z^{\pm1}])$,
we define $\mathring{A} \in \mat_{r \times r} (\ZZ)$ by
$\mathring{A} = A|_{z=1}$.
For any vector $u,v \in \RR^r$, we write $u>v$ and $u \geq v$ if all components of the vector $u-v$ are positive and non-negative, respectively.
The following is the main theorem in this section,
which gives a effective method to determine that a given T-datum is not of finite type.

\begin{theorem}\label{theorem:simultaneous positivity}
	Let $\alpha=(A_+,A_-,D)$ be a T-datum.
	If $\alpha$ is of finite type, then there exists a vector $v>0$ such that $\mathring{A}_+^{\mathsf{T}} v >0$ and $\mathring{A}_-^{\mathsf{T}} v >0$.
\end{theorem}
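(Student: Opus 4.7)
The plan is to extract $v$ from the tropical T-system of Section~\ref{section:tropical T} by averaging over one period. By Theorem~\ref{theorem:periodic conditions}, the finite type hypothesis yields a positive integer $\Omega$ with $\mathfrak{t}_a^{(c)}(u+\Omega) = \mathfrak{t}_a^{(c)}(u)$ for all $(a,c,u)$. For each $c \in [1,r]$ I would set
\[
w^{(c)}_b = \sum_{u=0}^{\Omega-1} \mathfrak{t}_b^{(c)}(u), \qquad v = \sum_{c=1}^{r} w^{(c)}.
\]
Summing the tropical T-system \eqref{eq:tropical T} over one period, the shifts $u \mapsto u+p$ are absorbed by periodicity, so the left-hand side becomes $(\mathring{N}_0^{\mathsf{T}} w^{(c)})_a$. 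Since $\max(X,Y) \geq X, Y$ pointwise, this yields $\mathring{A}_\pm^{\mathsf{T}} w^{(c)} \geq 0$ for every $c$, and hence $\mathring{A}_\pm^{\mathsf{T}} v \geq 0$. Comparing with \eqref{eq:tropical Y hat pm} refines this to the identity
\[
(\mathring{A}_\pm^{\mathsf{T}} v)_a \;=\; \sum_{c=1}^{r} \sum_{u=0}^{\Omega-1} \maxzero{\pm \hat{\mathfrak{y}}_a^{(c)}(u)},
\]
which translates strict positivity of $\mathring{A}_\pm^{\mathsf{T}} v$ into a sign-realization statement for the tropical $\hat{y}$-variables.

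Next I would reduce to the case that $\alpha$ is indecomposable: direct sums of finite-type T-data are finite type, and the conclusion passes through direct sums by concatenating the candidate vectors. Under indecomposability, to prove $v_b > 0$ I would argue that each initial variable $T_b(0)$ must appear as a genuine denominator of some non-initial $T_a(u)$ inside one period. Otherwise, the contribution indexed by $b$ would be disconnected from the rest of the dynamics, contradicting indecomposability via Corollary~\ref{cor:Npm=0}. Combined with denominator positivity of cluster variables, obtained by transporting the embedding of Theorem~\ref{theorem:embedding into cluster algebra} through $d$-vector positivity in cluster algebras, this forces $v_b \geq 1$.

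The main obstacle is verifying the strict inequalities $(\mathring{A}_\pm^{\mathsf{T}} v)_a > 0$. By the identity above, this reduces to showing that for each $a$ the tropical $\hat{y}$-variable $\hat{\mathfrak{y}}_a^{(c)}(u)$ attains a strictly positive value at some $(c,u)$ and a strictly negative value at another $(c,u)$ inside one period. The natural tool is sign-coherence of tropical $\hat{y}$-variables, by which the vector $(\hat{\mathfrak{y}}_a^{(c)}(u))_c$ is either entirely non-negative or entirely non-positive for each fixed $(a,u)$; the crux is then to combine sign-coherence with the indecomposable finite-type structure to rule out the degenerate scenario in which all $\hat{y}$-variables at a fixed index $a$ share the same sign across the whole period. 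This non-degeneracy step is where I expect the bulk of the work to lie, and if it resists a purely combinatorial treatment, I would fall back on specializing the universal Y-system to a positive real solution and exploiting the non-trivial dynamics of finite-type periodic T-systems together with the dilogarithm identities to detect both signs.
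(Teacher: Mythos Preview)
Your overall framework---averaging the tropical T-system over one period and translating strict positivity of $(\mathring{A}_\pm^{\mathsf{T}} v)_a$ into the identity $\sum_{c,u}\maxzero{\pm\hat{\mathfrak{y}}_a^{(c)}(u)}$---is exactly the paper's approach. However, both of the steps you flag as needing work contain gaps.

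For the positivity of $v$: your argument has the indices reversed. You write that $T_b(0)$ must appear in some denominator, but $v_b=\sum_{c,u}\mathfrak{t}_b^{(c)}(u)$ sums $d$-vector entries of the variables $T_b(u)$, not occurrences of $T_b(0)$. The paper avoids this issue entirely: it observes that one only needs a vector $\geq 0$ (then perturb by a small positive vector), and produces it as $v'=v+\tilde{v}$, where $\tilde{v}$ is the analogous sum using the \emph{highest} power of $T_c(0)$ in $T_a(u)$ (these obey the same tropical recurrence by \cite{ReadingStella}). Since the highest power is trivially at least the lowest, $v'\geq 0$ is automatic; and since $\mathring{A}_\pm^{\mathsf{T}}\tilde{v}\geq 0$ as well, it then suffices to prove $\mathring{A}_\pm^{\mathsf{T}}v>0$.

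For the strict inequalities $(\mathring{A}_\pm^{\mathsf{T}}v)_a>0$: this is the real gap. Sign-coherence constrains the sign pattern of $(\hat{\mathfrak{y}}_a^{(c)}(u))_c$ at a fixed $(a,u)$ but does not by itself force both signs to occur as $u$ varies, and the dilogarithm fallback does not yield such a statement either. The paper's key input is Lemma~\ref{lemma:tropical T and Y around initial}(3), which already computes $\maxzero{\pm\hat{\mathfrak{y}}_a^{(c)}(-p)}=n_{ca;p}^{\pm}$ explicitly for $0\leq p\leq p_c$. Thus, whenever the $a$-th column of $N_{\pm}$ has a nonzero entry $n_{ca;p}^{\pm}$, periodicity places $-p$ inside one period and gives the required strictly positive term. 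If the $a$-th column of $N_{\pm}$ vanishes while that of $N_{\mp}$ does not (the case both vanish is handled by Corollary~\ref{cor:Npm=0}), the paper pushes the explicit tropical computation one step further back, evaluating $\maxzero{\mp\hat{\mathfrak{y}}_a^{(\sigma(c))}(-p_{\sigma(c)}-p)}$ for the minimal-degree nonzero entry $n_{ca;p}^{\mp}$ and reading off the required positivity directly. No appeal to sign-coherence or real specializations is needed.
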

\begin{proof}
	Without loss of generality we can assume that $\alpha$ is indecomposable.
	It is sufficient to find a vector $v \geq 0$ such that $\mathring{A}_+^{\mathsf{T}} v >0$ and $\mathring{A}_-^{\mathsf{T}} v >0$ since such a vector plus a sufficiently small positive vector is a desired vector.
	Let $c \in [1,r]$.
	Let $\mathfrak{t}^{(c)} = (\mathfrak{t}_a^{(c)}(u))_{(a,u) \in [1,r] \times \ZZ}$ be the family of integers defined in Section \ref{section:tropical T},
	that is, $\mathfrak{t}_a^{(c)}(u)$ is the minus of the lowest power of $T_c(0)$ in $T_a(u)$, where $T_a(u)$ is written as a Laurent polynomial in $(T_c(p))_{(c,p) \in R_{\mathrm{in}}}$.
	We also define the family of integers $\tilde{\mathfrak{t}}^{(c)} = (\tilde{\mathfrak{t}}_a^{(c)}(u))_{(a,u) \in [1,r] \times \ZZ}$,
	where $\tilde{\mathfrak{t}}_a^{(c)}(u)$ is the highest power of $T_c(0)$ in $T_a(u)$.
	By the definitions, we have $\mathfrak{t}_a^{(c)}(u) + \tilde{\mathfrak{t}}_a^{(c)}(u) \geq 0$ for any $(a,u) \in [1,r] \times \ZZ$.
	By Proposition 2.6 in~\cite{ReadingStella},
	the family of integers $\tilde{\mathfrak{t}}^{(c)}$ is uniquely
	determined by the initial conditions
	\begin{align}\label{eq:tropical T initial tilde}
	\tilde{\mathfrak{t}}_a^{(c)} (p)
	=
	\begin{cases}
	1 & \text{if $(a,p) = (c,0)$,}\\
	0	& \text{if $(a,p) \neq (c,0)$ and $0 \leq p <p_a$,}
	\end{cases}
	\end{align}
	together with the following recurrence relation for each $(a,u) \in [1,r] \times \ZZ$:
	\begin{align}\label{eq:tropical T tilde}
	\sum_{b,p} n_{ba;p}^0 \tilde{\mathfrak{t}}_b^{(c)} (u+p) &=
	\max \biggl(\sum_{b,p} n_{ba;p}^- \tilde{\mathfrak{t}}_b^{(c)} (u+p),\sum_{b,p} n_{ba;p}^+ \tilde{\mathfrak{t}}_b^{(c)} (u+p) \biggr) .
	\end{align}
	The family of integers $\mathfrak{t}^{(c)}$ and $\tilde{\mathfrak{t}}^{(c)}$ satisfy the same recurrence relation, but have the different initial conditions.
	
	Let $v_a^{(c)}$ and $\tilde{v}_a^{(c)}$ be the integers defined by
	\begin{align*}
		v_a^{(c)} = \sum_{u=0}^{\Omega-1} \mathfrak{t}_a^{(c)} (u),\quad
		\tilde{v}_a^{(c)} = \sum_{u=0}^{\Omega-1} \tilde{\mathfrak{t}}_a^{(c)} (u),
	\end{align*}
	where $\Omega$ is a period of the T-system.
	By the periodicity of the T-system,
	we have
	\begin{align*}
		v_a^{(c)} = \sum_{u=0}^{\Omega-1} \mathfrak{t}_a^{(c)} (u+p),\quad
		\tilde{v}_a^{(c)} = \sum_{u=0}^{\Omega-1} \tilde{\mathfrak{t}}_a^{(c)} (u+p)
	\end{align*}
	for any $p \in \ZZ$.
	By summing up \eqref{eq:tropical T} with respect to the period,
	we have
	\begin{align}
	\label{eq:periodic tropical T 1}
	\sum_{b} \mathring{n}_{ba}^0 v_b^{(c)} &=
	\sum_{u=0}^{\Omega-1}
	\max \biggl(\sum_{b,p} n_{ba;p}^- \mathfrak{t}_b^{(c)} (u+p),\sum_{ba;p} n_{ba;p}^+ \mathfrak{t}_b^{(c)} (u+p) \biggr) \\
	\label{eq:periodic tropical T 2}
	&\geq
	\max \biggl(\sum_{b} \mathring{n}_{ba}^- v_b^{(c)} ,\sum_{b} \mathring{n}_{ba}^+ v_b^{(c)} \biggr),
	\end{align}
	where $\mathring{n}_{ba}^{\varepsilon} = \sum_{p} n_{ba,p}$.
	This implies that
	$\mathring{A}_+^\mathsf{T} v^{(c)} \geq 0$ and
	$\mathring{A}_-^\mathsf{T} v^{(c)} \geq 0$.
	Similarly, we have $\mathring{A}_+^\mathsf{T} \tilde{v}^{(c)} \geq 0$ and
	$\mathring{A}_-^\mathsf{T} \tilde{v}^{(c)} \geq 0$ by summing up \eqref{eq:tropical T tilde} with respect to the period.
	Let $v$ and $\tilde{v}$ be the vectors defined by
	\begin{align*}
		v=
		\sum_{c=1}^r
		\begin{bmatrix}
			v_1^{(c)}\\
			\vdots \\
			v_r^{(c)}
		\end{bmatrix},\quad
		\tilde{v}=
		\sum_{c=1}^r
		\begin{bmatrix}
		\tilde{v}_1^{(c)}\\
		\vdots \\
		\tilde{v}_r^{(c)}
		\end{bmatrix}.
	\end{align*}
	We then define a vector $v'$ by $v'=v + \tilde{v}$.
	We have $v' \geq 0$ since $\mathfrak{t}_a^{(c)}(u) + \tilde{\mathfrak{t}}_a^{(c)}(u) \geq 0$.
	We also have $\mathring{A}_+^\mathsf{T} v' \geq 0 $ and $\mathring{A}_-^\mathsf{T} v' \geq 0 $.
	Therefore, if we prove that
	$\mathring{A}_+^\mathsf{T} v>0 $ and $\mathring{A}_-^\mathsf{T} v>0 $, the assertion of the theorem follows.
	
	From \eqref{eq:tropical Y hat 1}, \eqref{eq:tropical Y hat pm}, and \eqref{eq:periodic tropical T 2},
	the $a$-th component of $\mathring{A}_{\pm}^{\mathsf{T}} v$ is positive
	if and only if there exists $(c,u) \in [1,r] \times \ZZ$ such that $\maxzero{\pm \mathfrak{y}_{a}^{(c)} (u) } \neq 0$.
	Therefore the $a$-th component of $\mathring{A}_{\pm}^{\mathsf{T}} v$ is positive if the $a$-th column of $N_{\pm}$ is non-zero by (3) in Lemma \ref{lemma:tropical T and Y around initial}.
	It remains to prove that the $a$-th component of $\mathring{A}_{\pm}^{\mathsf{T}} v$
	is also positive when the $a$-th column of $N_{\pm}$ is zero.
	If both the $a$-th columns of $N_+$ and $N_-$ are zero,
	the assertion of the theorem follows from Corollary \ref{cor:Npm=0}.
	Thus we can assume that either the $a$-th column of $N_+$ or $N_-$ is non-zero.
	Without loss of generality we assume that the $a$-th column of $N_+$ is non-zero and the $a$-th column of $N_-$ is zero.
	Let $n_{ca;p}^{+} z^p$ be a term in the $a$-th column of $N_+$
	with the minimal degree among the terms in this column.
	Now we have
	\begin{align*}
		\maxzero{-\hat{\mathfrak{y}}_a^{(\sigma(c))}(-p_{\sigma(c)}-p)} &= 
		\sum_{b,q} (n_{ba;q}^{0}-n_{ba;q}^{-}) \mathfrak{t}_b^{(\sigma(c))} (-p_{\sigma(c)}-p+q)\\
		&=\sum_{b,q} n_{ba;q}^{0} \mathfrak{t}_b^{(\sigma(c))} (-p_{\sigma(c)}-p+q)\\
		&=\max\biggl(\sum_{b,q} n_{ba;q}^{+} \mathfrak{t}_b^{(\sigma(c))}(-p_{\sigma(c)}-p+q) ,0 \biggr) \\
		&= \max\bigl( n_{ca;p}^{+} \mathfrak{t}_c^{(\sigma(c))}(-p_{\sigma(c)}) ,0 \bigr) \\
		&= \max( n_{ca;p}^{+} ,0) \\
		&= n_{ca;p}^+,
	\end{align*}
	and this implies that the $a$-th component of $\mathring{A}_{-}^{\mathsf{T}} v$ is positive.
\end{proof}

\begin{example}\label{example:simultaneous positivity}
	\leavevmode
	\begin{enumerate}
		\item A T-datum of size $1$ (Theorem \ref{theorem:T-datum of size 1}) is of finite type if and only if $(A_+,A_-)$ is one of the following three pairs of matrices for some $p>0$:
		\begin{align*}
			\begin{array}{ll}
				A_+ =	
				\begin{bmatrix}
					1+z^{2p}
				\end{bmatrix},
				&A_- =	
				\begin{bmatrix}
					1+z^{2p}
				\end{bmatrix},\vspace{1ex}\\
				A_+ =	
				\begin{bmatrix}
					1-z^{p}+z^{2p}
				\end{bmatrix},
				&A_- =	
				\begin{bmatrix}
					1+z^{2p}
				\end{bmatrix},\vspace{1ex}\\
				A_+ =	
				\begin{bmatrix}
					1+z^{2p}
				\end{bmatrix},
				&A_- =	
				\begin{bmatrix}
					1-z^{p}+z^{2p}
				\end{bmatrix}.
			\end{array}
		\end{align*}
		The if part is proved by direct calculations,
		and the only if part follows from Theorem \ref{theorem:simultaneous positivity}.
		\item A T-datum associated with a bipartite recurrent quiver, which is a special case of a T-datum in Proposition \ref{prop:double weak Cartan}, is of finite type if and only if both $A$ and $A'$ are direct sums of $ADE$ Cartan matrices~\cite{GalashinPylyavskyy}.
		In fact, Theorem \ref{theorem:simultaneous positivity} generalizes Proposition 7.1 in \cite{GalashinPylyavskyy} to arbitrary T-data.
		\item A T-datum in Example \ref{example:tensor} is of finite type if and only if both $\bar{A}$ and $\bar{A}'$ are of finite type Cartan matrices, except that one of them can be of tadpole type.
		The if part is proved in \cite{Keller},
		and the only if part follows from Theorem \ref{theorem:simultaneous positivity}.
		\item A T-datum in Theorem \ref{theorem:T-datum from cartan} is of finite type if and only if $C$ is of finite type Cartan matrix.
		The if part is proved in \cite{IIKKNa,IIKKNb},
		and the only if part follows from Proposition \ref{prop:contains Dynkin} and Theorem \ref{theorem:simultaneous positivity}.
	\end{enumerate}
\end{example}

\subsection{Special values of the dilogarithm function}
\label{section:dilog}

\begin{definition}\label{def:Cartan-like}
	Let $\alpha=(A_+,A_-,D)$ be a T-datum.
	Let $P=\diag(z^{-p_a/2})_{a \in [1,r]}$, where $p_a$ is the integer in (\ref{item:N1}).
	We say that $\alpha$ is \emph{Cartan-like} if
	both the matrices $PA_+$ and $PA_-$
	are invariant under $z \mapsto z^{-1}$.
\end{definition}

This terminology comes from the fact that T-data in Section \ref{section:commuting Cartan} satisfy this property.
All examples in Section \ref{section:examples} are also Cartan-like.
Note that $\mathring{A}_\pm$ are not Cartan matrices in general since they may not be sign-symmetric (see examples in Table \ref{table:T data r=2} and \ref{table:T-data r=3}).
The matrix $N_0$ in the Cartan-like T-datum should be a diagonal matrix.
This property is useful due to the following fact on real square matrices whose off-diagonal entries are non-positive.
As a result, we assign a positive definite symmetric matrix to any Cartan-like T-datum of finite type (Proposition \ref{prop:K and K check}).

\begin{lemma}[{\cite[Theorem 4.3]{FiedlerPtak}}]
	\label{lemma:positive matrix}
	Let $A$ be a real square matrix whose off-diagonal entries are all non-positive.
	Then the following conditions are equivalent:
	\begin{enumerate}
		\item
		there exists $v > 0$ such that $Av>0$,
		\item
		all real eigenvalues of $A$ are positive.
	\end{enumerate}
\end{lemma}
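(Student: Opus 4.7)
The plan is to reduce the statement to Perron--Frobenius theory via the shift $B := sI - A$, where $s > 0$ is chosen large enough that $B$ has all non-negative entries (possible because the off-diagonal entries of $-A$ are already non-negative, and any $s \geq \max_i A_{ii}$ handles the diagonal). Under this shift, the real eigenvalues of $A$ are exactly $s - \mu$ for $\mu$ a real eigenvalue of $B$, and the inequality $Av > 0$ becomes $Bv < sv$.

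For $(1) \Rightarrow (2)$: Suppose $v > 0$ with $Av > 0$, and fix $\epsilon > 0$ with $Bv \leq (s - \epsilon)v$ componentwise. Let $\lambda \in \RR$ be a real eigenvalue of $A$ with eigenvector $u \neq 0$, and write $|u|$ for the componentwise absolute value. The identity $(s - \lambda)u = Bu$ together with $B \geq 0$ gives $|s - \lambda|\, |u| \leq B|u|$. Setting $\beta := \max_i |u_i|/v_i > 0$, attained at some index $i_0$ with $|u_{i_0}| > 0$, we have $|u| \leq \beta v$, and evaluating at $i_0$ yields
\[
|s - \lambda|\, \beta v_{i_0} \leq (B|u|)_{i_0} \leq \beta (Bv)_{i_0} \leq \beta (s - \epsilon) v_{i_0}.
\]
Hence $|s - \lambda| \leq s - \epsilon$, which forces $\lambda \geq \epsilon > 0$.

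For $(2) \Rightarrow (1)$: By Perron--Frobenius applied to the non-negative matrix $B$, the spectral radius $\rho(B)$ is itself an eigenvalue of $B$, so $s - \rho(B)$ is a real eigenvalue of $A$ and is positive by hypothesis. Thus $\rho(B/s) < 1$, and the Neumann series
\[
A^{-1} = (sI - B)^{-1} = \frac{1}{s}\sum_{k=0}^{\infty} (B/s)^k
\]
converges to an entrywise non-negative matrix. Since $A^{-1}$ is invertible, no row of $A^{-1}$ is identically zero, so $v := A^{-1} \mathbf{1}$ (with $\mathbf{1}$ the all-ones vector) is strictly positive and satisfies $Av = \mathbf{1} > 0$.

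The technical heart of the argument is the eigenvector comparison in $(1) \Rightarrow (2)$, since a real eigenvector $u$ need not have a definite sign pattern; the passage to $|u|$ and the scaling against the witness $v$ is what converts the single componentwise inequality $Bv \leq (s - \epsilon)v$ into a uniform bound on all real eigenvalues. Once this is set up, both directions reduce to the standard Perron--Frobenius toolbox and the Neumann series for the resolvent.
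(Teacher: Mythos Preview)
The paper does not give its own proof of this lemma; it simply quotes the result from Fiedler--Pt\'ak \cite{FiedlerPtak}, so there is nothing to compare your argument against. That said, your proof is correct and self-contained: the shift $B=sI-A$ with $B\geq 0$ is the standard reduction to Perron--Frobenius theory, the scaling trick $\beta=\max_i |u_i|/v_i$ in $(1)\Rightarrow(2)$ is exactly what is needed to handle an eigenvector of indefinite sign, and in $(2)\Rightarrow(1)$ the Neumann series yields $A^{-1}\geq 0$ with no zero row, so $v=A^{-1}\mathbf{1}>0$ works. Your write-up could stand in for the citation if one wanted the paper to be more self-contained.
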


\begin{proposition}\label{prop:K and K check}
	Let $\alpha=(A_+,A_-,D)$ is a Cartan-like T-datum of finite type.
	Then the following assertions hold:
	\begin{enumerate}
		\item $\mathring{A}_+$ and $\mathring{A}_-$ are invertible.
		\item Let $K=(\kappa_{ab})_{a,b\in[1,r]}$ be the matrix defined by $K=\mathring{A}_+^{-1} \mathring{A}_-$.
		Then $KD$ is a positive definite symmetric matrix.
		\item Let $K^{\vee} = (\check{\kappa}_{ab})_{a,b\in[1,r]}$ be the matrix defined by $K^{\vee}=D^{-1}KD$.
		Then $K^{\vee} D^{\vee}$ is a positive definite symmetric matrix.
	\end{enumerate}
\end{proposition}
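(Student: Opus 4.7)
The plan is to reduce all three assertions to establishing a single auxiliary fact: the matrix $\tilde{K}:=D^{-1/2}KD^{1/2}$ is symmetric positive definite. Once that is known, the straightforward identities
\begin{equation*}
KD = D^{1/2}\tilde{K}D^{1/2}, \qquad D^{-1}K = D^{-1/2}\tilde{K}D^{-1/2}
\end{equation*}
realize $KD$ and $D^{-1}K$ as $D^{\pm 1/2}$-congruences of $\tilde{K}$, and hence both inherit positive definiteness. Combined with $K^{\vee}D^{\vee}=\delta D^{-1}K$ (a direct computation using $D^{\vee}=\delta D^{-1}$) and $\delta>0$, this disposes of (2) and (3) at once; (1) is only the prerequisite needed for $K$ to exist.

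For (1), I would first unpack the Cartan-like hypothesis. Palindromy of every entry of $PA_\varepsilon$ under $z\leftrightarrow z^{-1}$ amounts to $c_j=c_{p_a-j}$ for each entry $A_\varepsilon(a,b)=\sum c_jz^j$; applied to the diagonal entry $A_\varepsilon(a,a)=1+\delta_{a\sigma(a)}z^{p_a}-N_\varepsilon(a,a)$ and using that $N_\varepsilon(a,a)$ is supported in degrees strictly between $0$ and $p_a$, the identity $c_0=c_{p_a}$ forces $1=\delta_{a\sigma(a)}$, so $\sigma=\mathrm{id}$ and $N_0=\diag(1+z^{p_a})$. Consequently $\mathring{A}_\pm=2I_r-\mathring{N}_\pm$ has non-positive off-diagonal entries. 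Theorem \ref{theorem:simultaneous positivity} provides $v>0$ with $\mathring{A}_\pm^{\mathsf{T}}v>0$, so Lemma \ref{lemma:positive matrix} applied to $\mathring{A}_\pm^{\mathsf{T}}$ makes every real eigenvalue of $\mathring{A}_\pm$ strictly positive; since non-real eigenvalues of a real matrix occur in conjugate pairs with positive product, $\det\mathring{A}_\pm>0$ and $\mathring{A}_\pm$ is invertible.

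Symmetry of $\tilde{K}$ comes for free: evaluating the symplectic relation at $z=1$ yields $\mathring{A}_+D\mathring{A}_-^{\mathsf{T}}=\mathring{A}_-D\mathring{A}_+^{\mathsf{T}}$, which rearranges to $K^{\mathsf{T}}=D^{-1}KD$, i.e.\ $\tilde{K}^{\mathsf{T}}=\tilde{K}$. The only substantive step---and the main obstacle---is to show that the (real) spectrum of $K$ is strictly positive. My plan is a convex homotopy: set $A_t:=(1-t)\mathring{A}_++t\mathring{A}_-$ and $K_t:=\mathring{A}_+^{-1}A_t=(1-t)I_r+tK$ for $t\in[0,1]$. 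Each $A_t$ still has non-positive off-diagonal entries, and $A_t^{\mathsf{T}}v$ is a positive convex combination of $\mathring{A}_\pm^{\mathsf{T}}v$, so Lemma \ref{lemma:positive matrix} again forces its real eigenvalues to be positive; in particular $A_t$, and therefore $K_t$, is invertible throughout $[0,1]$. The eigenvalues of $K_t$ are $(1-t)+t\lambda_i$, where $\lambda_1,\dots,\lambda_r$ are the eigenvalues of $K$ (real because $K$ is similar to the symmetric $\tilde{K}$); a non-positive $\lambda_i$ would make $K_{t^*}$ singular at some $t^*\in(0,1]$, where the case $\lambda_i=0$ at $t=1$ is already ruled out by part (1), contradicting invertibility. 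Hence every $\lambda_i>0$, so $\tilde{K}$ is symmetric with positive spectrum---i.e.\ positive definite---and the two congruences announced in the first paragraph complete (2) and (3).
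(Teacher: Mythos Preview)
Your proof is correct and follows essentially the same route as the paper. Both arguments hinge on the observation that the Cartan-like hypothesis makes the off-diagonal entries of $\mathring{A}_\pm$ non-positive, so Theorem~\ref{theorem:simultaneous positivity} together with Lemma~\ref{lemma:positive matrix} applies not only to $\mathring{A}_\pm$ but to any non-negative combination $\lambda\mathring{A}_+ + \mu\mathring{A}_-$; your homotopy $A_t=(1-t)\mathring{A}_++t\mathring{A}_-$ is just a reparametrization of the paper's direct contradiction, since the singular $A_{t^*}$ at $t^*=1/(1-\lambda_i)$ is, up to a positive scalar, exactly the paper's matrix $\mathring{A}_-+\lvert\lambda_i\rvert\,\mathring{A}_+$. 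Your use of $\tilde K=D^{-1/2}KD^{1/2}$ to handle (2) and (3) simultaneously is a tidy bookkeeping device but not a substantive departure.
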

\begin{proof}
	By Theorem \ref{theorem:simultaneous positivity} and Lemma \ref{lemma:positive matrix}, all the real eigenvalues of $\mathring{A}_{\pm}$ are positive.
	This implies (1).
	Since $K^{\vee} = (\mathring{A}_+^{\vee})^{-1} \mathring{A}_-^{\vee}$, the assertion (3) follows from the assertion (2) for $\alpha^{\vee}$.
	We now prove (2).
	We first see that $K$ is symmetric due to the symplectic relation.
	Suppose that there exists an eigenvector $v$ of $K$
	with a non-positive eigenvalue.
	Let us denote by $-\lambda$ this eigenvalue.
	Then we have $(\mathring{A}_- + \lambda \mathring{A}_+) v =0$.
	Thus $0$ is an eigenvalue of $\mathring{A}_- + \lambda \mathring{A}_+$.
	Since $\lambda \geq 0$, all off-diagonal entries in $\mathring{A}_- + \lambda \mathring{A}_+$ are non-positive.
	Moreover, this matrix satisfies the condition (1) in Lemma \ref{lemma:positive matrix} by Theorem \ref{theorem:simultaneous positivity}.
	Thus its real eigenvalues are positive by Lemma \ref{lemma:positive matrix},
	a contradiction. 
\end{proof}

The function
\begin{align*}
\mathrm{Li}_2(z) := \sum_{n=1}^\infty \frac{z^n}{n^2} \quad (\lvert z \rvert < 1)
\end{align*}
is called the \emph{dilogarithm function}.
The \emph{Rogers dilogarithm function} is a function on the interval $(0,1)$ defined as follows:
\begin{align*}
L(x) = \mathrm{Li}_2(x) + \frac{1}{2} \log (x) \log(1-x).
\end{align*}
We can define $L(0)=0$ and $L(1)=\pi^2 / 6$ by continuity.

For any T-datum $\alpha=(A_+,A_-,D)$,
we denote by $d_a$ and $d_a^{\vee}$ the
$a$-th entries in $D$ and $D^{\vee}$, respectively.

\begin{theorem}
	\label{theorem:dilog rational}
	Let $\alpha = (A_+,A_-,D)$ be a Cartan-like T-datum of finite type.
	Let $K^{\vee} = (\check{\kappa}_{ab})_{a,b\in[1,r]}$ be the matrix defined in Proposition \ref{prop:K and K check}.
	\begin{enumerate}
		\item The system of equations
		\begin{align}\label{eq:f nahm eq}
			f_a = \prod_{b=1}^{r} (1-f_b)^{\check{\kappa}_{ab}} \quad\quad (a \in [1,r])
		\end{align}
		has a unique real solution such that $0<f_a<1$ for any $a \in [1,r]$.
		\item Let $(f_a)_{a \in [1,r]}$ be the unique solution in (1).
		Define the real number $c_{\alpha}$ by
		\begin{align*}
			c_{\alpha} := \frac{6}{\pi^2} \sum_{a=1}^{r} d_a L(f_a).
		\end{align*}
		Then we have $c_\alpha \in \QQ$.
	\end{enumerate}
\end{theorem}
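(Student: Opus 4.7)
The plan is to establish (1) via a convexity argument exploiting the positive definiteness of $K^\vee D^\vee$ from Proposition \ref{prop:K and K check}, and to deduce (2) by specializing Nakanishi's constant-coefficient dilogarithm identity \cite{Nakb} to the constant solution of the Y-system built from the Nahm solution found in (1).

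For (1), I would take logarithms in \eqref{eq:f nahm eq} and substitute $h_a := \log(1-f_a) < 0$ followed by $w_a := h_a/\sqrt{d_a^\vee}$. Set $\hat{K}^\vee := (D^\vee)^{-1/2}(K^\vee D^\vee)(D^\vee)^{-1/2}$, which is symmetric positive definite (being congruent to $K^\vee D^\vee$). The Nahm equation then becomes $\frac{1}{\sqrt{d_a^\vee}} \log\bigl(1-e^{\sqrt{d_a^\vee} w_a}\bigr) = (\hat{K}^\vee w)_a$ for $a \in [1,r]$. Consider the potential
\begin{align*}
G(w) = \tfrac{1}{2} w^{\mathsf{T}} \hat{K}^\vee w + \sum_{a=1}^{r} \tfrac{1}{d_a^\vee} \mathrm{Li}_2\bigl(e^{\sqrt{d_a^\vee}\, w_a}\bigr)
\end{align*}
on $(-\infty,0)^r$. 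One checks that $\nabla G = 0$ is exactly the transformed Nahm system, and that the Hessian of $G$ is $\hat{K}^\vee$ plus a positive diagonal matrix, hence positive definite; so $G$ is strictly convex. Combining this with $G(w) \to +\infty$ as $\|w\| \to \infty$ within $(-\infty,0)^r$ and with $\partial G/\partial w_a \to +\infty$ as $w_a \to 0^-$ (which forces any minimizer into the interior), $G$ has a unique critical point, which corresponds to the unique Nahm solution in $(0,1)^r$.

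For (2), the finite-type assumption together with Theorem \ref{theorem:periodic conditions} implies that the Y- and T-systems associated with $\alpha$ are periodic with some common period $\Omega$ divisible by $t$. The constant family $Y_a(u) := f_a/(1-f_a)$ furnishes a positive real solution of the Y-system: substituting $f_a = Y_a/(1+Y_a)$ and $1+Y_a = 1/(1-f_a)$ into the constant-$Y$ form of \eqref{eq:standalone Y-system} and taking logarithms reduces the Y-relation to $\mathring{A}_+^\vee \log f = \mathring{A}_-^\vee \log(1-f)$, which is the Nahm equation \eqref{eq:f nahm eq} written in matrix form. Transferring this solution to the mutation loop $\gamma = G(\alpha,[1,r]\times \ZZ)$ and applying the periodic dilogarithm identity of \cite{Nakb} yields
\begin{align*}
\frac{6}{\pi^2} \sum_{\substack{(k,u) \in P_\gamma \\ 0 \le u < \Omega}} d_k\, L\!\left(\frac{y_k(u)}{1+y_k(u)}\right) \in \QQ.
\end{align*}
At our constant solution, $y_k(u)/(1+y_k(u)) = f_{\pi(k,u)}$ and $d_k = d_{\pi(k,u)}$ by Lemma \ref{lemma: symmetizers for mutation loops}. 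Since $\pi$ maps the $r$ mutation points in one basic period $[0,t)$ bijectively to $[1,r]$, and one full period $[0,\Omega)$ contains $\Omega/t$ copies of such a basic period, the sum collapses to $(\Omega/t) \sum_{a=1}^{r} d_a L(f_a)$. Consequently $c_\alpha = (t/\Omega) \cdot (\text{rational}) \in \QQ$.

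The main technical obstacles will be (a) matching the weighting and indexing conventions of Nakanishi's dilogarithm identity precisely to the T-datum formalism adopted here, including the symmetrizer choice, and (b) verifying the coercivity of $G$ together with the boundary blowup of its gradient that rules out minimizers on $\partial(-\infty,0)^r$; the strict convexity of $G$ and the correspondence between constant Y-solutions and Nahm solutions are essentially immediate from Proposition \ref{prop:K and K check}.
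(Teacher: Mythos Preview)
Your proposal is correct and follows essentially the same approach as the paper's proof: for (1) the paper also reduces the Nahm system to the unique critical point of a strictly convex $\mathrm{Li}_2$-potential (with the change of variables $f_a = 1 - \exp(-d_a^{\vee} x_a)$ instead of your $w_a$, citing \cite{VlasenkoZwegers}), and for (2) the paper likewise plugs the constant solution $Y_a(u)=f_a/(1-f_a)$ into Nakanishi's periodic dilogarithm identity \cite{Nakb} to get $\Omega c_\alpha \in \ZZ$. The only cosmetic difference is that the paper sums directly over $(a,u)\in[1,r]\times\ZZ$ (where $t=1$) rather than over mutation points of $\gamma$, which spares your bookkeeping step involving $\pi$ and $\Omega/t$.
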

\begin{proof}
	We define a function $F_{\alpha}(x): [0,\infty)^r \to \RR$ by
	\begin{align*}
		F_{\alpha} (x) = \frac{1}{2} x^{\mathsf{T}} K^{\vee} D^{\vee} x
		+ \sum_{a=1}^{r} (d_a^{\vee})^{-1} \mathrm{Li}_2 (\exp(-d_a^{\vee} x_a)).
	\end{align*}
	By setting $f_a = 1- \exp(-d_a^{\vee} x_a)$,
	we see that
	the statement (1) is equivalent to saying that the function $F_\alpha(x)$ has a unique critical point in $(0,\infty)^r$.
	This follows from the fact that $K^{\vee} D^{\vee}$ is a positive definite symmetric matrix, as in the proof of Lemma 2.1 in \cite{VlasenkoZwegers}.
	
	We now prove (2).
	From the result in \cite[Section 6]{Nakb},	
	the value
	\begin{align}\label{eq:sum of dilog 0<=u<N}
		\frac{6}{\pi^2}
		\sum_{\substack{(a,u) \in [1,r] \times \ZZ \\ 0 \leq u <\Omega}}  d_a L\biggl(\frac{Y_a(u)}{1 \oplus Y_a(u)}\biggr)
	\end{align}
	is an integer for any solution $Y=(Y_a(u))_{(a,u)\in [1,r]\times \ZZ}$ of the Y-system associated with $(\alpha,[1,r]\times \ZZ)$ in the semifield $\RR_{>0}$,
	where $\Omega>0$ is a period of $Y_{\mathrm{univ}}(\alpha,[1,r] \times \ZZ)$.
	Moreover, this value is independent of the choice of $Y$.
	In fact, Nakanishi~\cite{Nakb} proved that these facts follow from the sign coherence property of cluster algebras, which was proved by Gross, Hacking, Keel, and Kontsevich~\cite{GHKK} for skew-symmetrizable cluster algebras.
	
	It is easy to see that
	the system of equations \eqref{eq:f nahm eq} is equivalent to
	\begin{align*}
		\prod_{b=1}^r f_b^{\sum_{p\in\ZZ} (\check{n}_{ab;p}^{0}-\check{n}_{ab;p}^{+})} = 
		\prod_{b=1}^r (1-f_b)^{\sum_{p\in\ZZ}(\check{n}_{ab;p}^{0}-\check{n}_{ba;p}^{-})}\quad\quad(a \in [1,r]).
	\end{align*}
	Thus the family $Y=(Y_a(u))_{(a,u) \in [1,r] \times \ZZ}$ defined by
	$Y_a(u) = f_a/(1-f_a)$ is a solution of the Y-system associated with $(\alpha,[1,r] \times \ZZ)$ in $\RR_{>0}$.
	Since this is a constant solution with respect to $u$, the integer \eqref{eq:sum of dilog 0<=u<N} is equal to $\Omega c_{\alpha}$.
	Thus $c_{\alpha}$ is a rational number.
\end{proof}

\begin{table}[t]
	\begin{align*}
	\begin{array}{llll}
	A_+ & A_- & K & c_{\alpha} \\ \hline \vspace{-2ex} \\
	\begin{bmatrix}
	1+z^2 & -z \\
	-z & 1+z^2
	\end{bmatrix} &
	\begin{bmatrix}
	1+z^2 & 0 \\
	0 & 1+z^2
	\end{bmatrix}  &
	\begin{bmatrix}
	4/3 & 2/3 \\
	2/3 & 4/3
	\end{bmatrix} &
	4/5 \\ \vspace{-2ex} \\
	\begin{bmatrix}
	1+z^2 & -z \\
	-z & 1+z^2
	\end{bmatrix} &
	\begin{bmatrix}
	1-z+z^2 & 0 \\
	0 & 1-z+z^2
	\end{bmatrix}  &
	\begin{bmatrix}
	2/3 & 1/3 \\
	1/3 & 2/3
	\end{bmatrix} &
	1 \\ \vspace{-2ex} \\
	\begin{bmatrix}
	1+z^2 & -z\\
	-z-z^5 & 1+z^6
	\end{bmatrix}&
	\begin{bmatrix}
	1+z^2 & 0\\
	-z^3 & 1+z^6
	\end{bmatrix}  &
	\begin{bmatrix}
	3/2 & 1\\
	1 & 2
	\end{bmatrix} &
	5/7   \\ \vspace{-2ex} \\
	\begin{bmatrix}
	1 + z^{2} & -z \\
	-z - z^{2} & 1 + z^{3}
	\end{bmatrix} &
	\begin{bmatrix}
	1 -z + z^{2} & 0 \\
	0 & 1 + z^{3}
	\end{bmatrix}&
	\begin{bmatrix}
	1 & 1 \\
	1 & 2
	\end{bmatrix}&
	3/4  \\ \vspace{-2ex} \\
	\begin{bmatrix}
	1 + z^{2} & -z \\
	-z - z^{5} - z^{9} & 1 + z^{10}
	\end{bmatrix}&
	\begin{bmatrix}
	1 + z^{2} & 0 \\
	-z^{3} - z^{7} & 1 + z^{10}
	\end{bmatrix}&
	\begin{bmatrix}
	2 & 2 \\
	2 & 4
	\end{bmatrix} &
	4/7
	\end{array}
	\end{align*}
	\caption{Examples of Cartan-like T-data of finite type of size $2$, where $D=I_2$ in these examples.}
	\label{table:T data r=2}
\end{table}

\begin{example}
	We give some examples of Cartan-like T-data of finite type
	of size $2$ and size $3$ in Table \ref{table:T data r=2} and \ref{table:T-data r=3}, respectively,
	where the matrix $D$ in these examples are the identity matrices.
	We also show the positive definite symmetric matrix $K$ and the rational number $c_{\alpha}$ associated with these T-data.
	The rational number $c_{\alpha}$ can be computed by using Theorem 6.8 in~\cite{Nakb}.
\end{example}

\begin{table}[t]
	{\footnotesize 
	\begin{align*}
		\begin{array}{llll}
			A_+ & A_- & K & c_{\alpha} \\ 
			\hline 
			\vspace{-2ex} \\
			\begin{bmatrix}
			1 + z^{2} & -z & 0 \\
			-z & 1 + z^{2} & -z \\
			0 & -z & 1 + z^{2}
			\end{bmatrix}&
			\begin{bmatrix}
			1 + z^{2} & 0 & 0 \\
			0 & 1 + z^{2} & 0 \\
			0 & 0 & 1 + z^{2}
			\end{bmatrix}&
			\begin{bmatrix}
			3/2 & 1 & 1/2 \\
			1 & 2 & 1 \\
			1/2 & 1 & 3/2
			\end{bmatrix} &
			1 \\
			\vspace{-2ex} \\
			\begin{bmatrix}
			1 + z^{2} & -z & 0 \\
			-z & 1 + z^{2} & -z \\
			0 & -z & 1 + z^{2}
			\end{bmatrix}&
			\begin{bmatrix}
			1 -z+ z^{2} & 0 & 0 \\
			0 & 1 -z+ z^{2} & 0 \\
			0 & 0 & 1 -z+ z^{2}
			\end{bmatrix}&
			\begin{bmatrix}
			3/4 & 1/2 & 1/4 \\
			1/2 & 1 & 1/2 \\
			1/4 & 1/2 & 3/4
			\end{bmatrix} &
			9/7 \\
			\vspace{-2ex} \\
			\begin{bmatrix}
			1 + z^2 & -z & 0 \\
			-z & 1 + z^2 & -z \\
			0 & -z - z^2 & 1 + z^3
			\end{bmatrix} &
			\begin{bmatrix}
			1 - z + z^2 & 0 & 0 \\
			0 & 1 - z + z^2 & 0 \\
			0 & 0 & 1 + z^3
			\end{bmatrix}&
			\begin{bmatrix}
			1 & 1 & 1 \\
			1 & 2 & 2 \\
			1 & 2 & 3
			\end{bmatrix} &
			9/10 \\
			\vspace{-2ex} \\
			\begin{bmatrix}
			1+z^2 & 0 & -z\\
			-z^3 & 1+z^6 & 0\\
			-z-z^7 & -z^2-z^6 & 1+z^8
			\end{bmatrix}  &
			\begin{bmatrix}
			1+z^2 & -z & 0\\
			-z-z^5 & 1+z^6 & 0\\
			0 & 0 & 1+z^8
			\end{bmatrix} &
			\begin{bmatrix}
			2 &0 & 2 \\
			0 & 1 & 1 \\
			2 & 1 & 4
			\end{bmatrix}  & 
			1 \\
			\vspace{-2ex} \\
			\begin{bmatrix}
			1+z^2 & -z & 0\\
			-z & 1+z^2 & -z\\
			0 & -z & 1-z+z^2
			\end{bmatrix}  &
			\begin{bmatrix}
			1+z^2 & 0 & 0\\
			0 & 1+z^2 & 0\\
			0 & 0 & 1+z^2
			\end{bmatrix} &
			\begin{bmatrix}
			2 &2 & 2 \\
			2 & 4 & 4 \\
			2 & 4 & 6
			\end{bmatrix}  & 
			2/3\\
			\vspace{-2ex} \\
			\begin{bmatrix}
			1+z^2 & -z & 0\\
			-z-z^5 & 1+z^6 & -z^3\\
			0 & -z^3 & 1+z^6
			\end{bmatrix}  &
			\begin{bmatrix}
			1+z^2 & 0 & 0\\
			-z^3 & 1+z^6 & 0\\
			0 & 0 & 1+z^6
			\end{bmatrix} &
			\begin{bmatrix}
			2 &2 & 1 \\
			2 & 4 & 2 \\
			1 & 2 & 2
			\end{bmatrix}  & 
			4/5\\
			\vspace{-2ex} \\
			\begin{bmatrix}
			1-z+z^2 & -z & 0\\
			-z & 1+z^2 & 0\\
			0 & 0 & 1+z^{5}
			\end{bmatrix}  &
			\begin{bmatrix}
			1+z^2 & 0 & 0\\
			0 & 1+z^2 & -z\\
			-z^2-z^3 & -z-z^4 & 1+z^{5}
			\end{bmatrix} &
			\begin{bmatrix}
			4 &2 & -1 \\
			2 & 2 & -1 \\
			-1 & -1 & 1
			\end{bmatrix}  & 
			3/2
		\end{array}
	\end{align*}}
	\caption{Examples of Cartan-like T-data of finite type of size $3$, where $D=I_3$ in these examples.}
	\label{table:T-data r=3}
\end{table}

\subsection{Partition $q$-series}
Let $\alpha=(A_+,A_-,D)$ be a Cartan-like T-datum of finite type of size $r$.
We define two sets $H_{\alpha}$ and $H'_{\alpha}$ by
\begin{align*}
	H_{\alpha}&=\bigl\{ (m,l) \in \ZZ^r \times \QQ^r \mid \mathring{A}_- m = \mathring{A}_+ l \bigr\},\\
	H'_{\alpha}&= \bigl\{ \bigl( (\mathring{A}_+^{\vee})^{\mathsf{T}} n,(\mathring{A}_-^{\vee})^{\mathsf{T}} n \bigr) \mid n \in \ZZ^r \bigr\}.
\end{align*}
These are free abelian groups of rank $r$,
and the symplectic relation implies that $H'_{\alpha}$ is a subgroup of $H_{\alpha}$.
Let $S_\alpha$ be the quotient group of $H_{\alpha}$ by $H'_{\alpha}$:
$S_\alpha = H_{\alpha} / H'_{\alpha}$.
This is a finite abelian group that is isomorphic to $\ZZ^r / (\text{the rows space of $\mathring{A}_+^{\vee}$})$.
In particular, the order of $S_{\alpha}$ is $\det \mathring{A}_+$.
For any $\sigma \in S_{\alpha}$,
we denote by $\sigma_{\geq 0}$ the set $\{ (m,l) \in \sigma \mid m \geq 0 \}$.

\begin{definition}
	Let $\alpha=(A_+,A_-,D)$ be a Cartan-like T-datum of finite type.
	Let $\sigma \in S_{\alpha}$.
	We define the \emph{partition $q$-series} of $\alpha$ at $\sigma$ by
	\begin{align*}
	\mathcal{Z}_{\alpha,\sigma}(q) := \sum_{(m,l) \in \sigma_{\geq 0}} \frac{q^{\frac{1}{2}\langle m,\,l \rangle}}{\prod_{a=1}^{r} (q^{d_a^{\vee}})_{m_a}},
\end{align*}
	where $\langle m,l \rangle := m^{\mathsf{T}} D^{\vee} l$
	and $(q)_n = \prod_{i=1}^n (1-q^i)$ is the $q$-Pochhammer symbol.
	We also define the \emph{total partition $q$-series} of $\alpha$
	by
	\begin{align*}
		\mathcal{Z}_{\alpha,\mathrm{tot}}(q) := \sum_{\sigma \in S_{\alpha}}
		\mathcal{Z}_{\alpha,\sigma}(q)
		= \sum_{ m \in (\ZZ_{\geq 0})^r} \frac{q^{\frac{1}{2} m^{\mathsf{T}} K^{\vee} D^{\vee} m}}{\prod_{a=1}^{r} (q^{d_a^{\vee}})_{m_a}}.
	\end{align*}
\end{definition}

\begin{proposition}
	\begin{enumerate}
		\item The partition $q$-series $\mathcal{Z}_{\alpha,\sigma}(q)$ with $q=e^{2 \pi i \tau}$ converges to a holomorphic function on the upper half plane $\mathbb{H}=\{ \tau \in \CC \mid \image \tau > 0 \}$,
		where we set $q^{\kappa} = e^{2 \pi i \tau \kappa}$ for any $\kappa \in \QQ$.
		\item We have 
		\begin{align*}
		\lim_{\varepsilon \searrow 0} \varepsilon \log \mathcal{Z}_{\alpha,\mathrm{tot}} (e^{-\varepsilon}) = \frac{\pi^2}{6\delta} c_{\alpha},
		\end{align*}
		where $\delta =\lcm(d_1,\dots, d_r)\gcd(d_1,\dots, d_r)$ and $c_{\alpha}$ is the rational number in Theorem \ref{theorem:dilog rational}.
	\end{enumerate}
\end{proposition}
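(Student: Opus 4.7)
The plan is to prove (1) by a direct convergence estimate and (2) by a classical Laplace-type (saddle-point) asymptotic for the Nahm-like sum.

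For (1), first observe that the projection $H_\alpha \to \ZZ^r$ sending $(m,l) \mapsto m$ is a bijection: $l = Km$ is forced because $\mathring{A}_+$ is invertible by Proposition \ref{prop:K and K check}(1). Under this identification the exponent of $q$ becomes $\tfrac{1}{2} m^{\mathsf{T}} D^{\vee} K m = \tfrac{1}{2} m^{\mathsf{T}} K^{\vee} D^{\vee} m$ (using that $D$ is diagonal and $K^{\vee} = D^{-1} K D$), which is a positive definite quadratic form in $m$ by Proposition \ref{prop:K and K check}(3). For $q = e^{2\pi i \tau}$ with $\image \tau > 0$, each summand of $\mathcal{Z}_{\alpha,\sigma}(q)$ is therefore dominated in modulus by $e^{-\pi\,\image\tau\,\lambda \|m\|^2}/\prod_{a}(|q|^{d^{\vee}_a};|q|^{d^{\vee}_a})_\infty$, where $\lambda > 0$ is the smallest eigenvalue of $K^{\vee} D^{\vee}$. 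This yields absolute convergence uniformly on compact subsets of the upper half plane, hence holomorphy of the limit.

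For (2), set $q = e^{-\varepsilon}$ and introduce the scaled variables $y_a = \varepsilon m_a$. The Euler--Maclaurin estimate
\[
\log(q^{d};q^{d})_n = \frac{1}{\varepsilon d}\Bigl[\mathrm{Li}_2(e^{-\varepsilon d n}) - \tfrac{\pi^2}{6}\Bigr] + O\bigl(\log(1/\varepsilon)\bigr),
\]
uniform in $n$ with $\varepsilon d n$ bounded, combined with $\log q^{Q(m)} = -\varepsilon^{-1} Q(y)$ where $Q(m) = \tfrac{1}{2}m^{\mathsf{T}} K^{\vee} D^{\vee} m$, gives
\[
\log\frac{q^{Q(m)}}{\prod_a (q^{d^{\vee}_a};q^{d^{\vee}_a})_{m_a}} = \frac{1}{\varepsilon}\tilde F_\alpha(y) + O\bigl(\log(1/\varepsilon)\bigr),\quad \tilde F_\alpha(y) := -\tfrac{1}{2} y^{\mathsf{T}} K^{\vee} D^{\vee} y + \sum_{a=1}^r \frac{\pi^2/6 - \mathrm{Li}_2(e^{-d^{\vee}_a y_a})}{d^{\vee}_a}.
\]
The function $\tilde F_\alpha$ differs from the $F_\alpha$ in the proof of Theorem \ref{theorem:dilog rational} only by a sign and an additive constant; its Hessian is $-K^{\vee} D^{\vee} - \diag\bigl(d^{\vee}_a e^{-d^{\vee}_a y_a}/(1-e^{-d^{\vee}_a y_a})\bigr)$, which is negative definite by Proposition \ref{prop:K and K check}(3). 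So $\tilde F_\alpha$ is strictly concave on $(0,\infty)^r$ and attains its unique maximum at the critical point $y^*$ where $f_a := 1 - e^{-d^{\vee}_a y^*_a}$ is the unique Nahm solution of \eqref{eq:f nahm eq}, by Theorem \ref{theorem:dilog rational}(1).

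Evaluating this maximum: the critical-point relation reads $(K^{\vee} D^{\vee} y^*)_a = -\log f_a$ with $y^*_a = -(d^{\vee}_a)^{-1}\log(1-f_a)$, so $\tfrac{1}{2}(y^*)^{\mathsf{T}} K^{\vee} D^{\vee} y^* = \tfrac{1}{2}\sum_a \log(f_a)\log(1-f_a)/d^{\vee}_a$; combined with $\mathrm{Li}_2(1-f) = \pi^2/6 - L(f) - \tfrac{1}{2}\log(f)\log(1-f)$ this telescopes to
\[
\tilde F_\alpha(y^*) = \sum_{a=1}^r \frac{L(f_a)}{d^{\vee}_a} = \frac{1}{\delta}\sum_{a=1}^r d_a L(f_a) = \frac{\pi^2}{6\delta}\,c_\alpha,
\]
using $d_a d^{\vee}_a = \delta$ and the definition of $c_\alpha$.

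It remains to run the Laplace asymptotic. A lower bound of $\exp\bigl(\varepsilon^{-1}\tilde F_\alpha(y^*) - O(\log(1/\varepsilon))\bigr)$ for $\mathcal{Z}_{\alpha,\mathrm{tot}}(e^{-\varepsilon})$ comes from keeping only the term with $m_a = \lfloor y^*_a/\varepsilon\rfloor$; the matching upper bound uses strict concavity to bound each summand by $\exp\bigl(\varepsilon^{-1}[\tilde F_\alpha(y^*) - c\|y-y^*\|^2] + O(\log(1/\varepsilon))\bigr)$ for some $c > 0$, so $\mathcal{Z}_{\alpha,\mathrm{tot}}(e^{-\varepsilon}) \leq \exp(\varepsilon^{-1}\tilde F_\alpha(y^*))\cdot \varepsilon^{-O(1)}$. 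Multiplying by $\varepsilon$ and letting $\varepsilon \to 0^+$ yields the claimed limit. The main technical obstacle is uniform control of the error in the Euler--Maclaurin estimate above (in particular near $\varepsilon d n = 0$, where $-\log(1-e^{-t})$ is integrably singular at $t = 0$); this is a classical analysis going back to Meinardus and is standard in the Nahm-sum literature.
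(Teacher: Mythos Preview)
Your argument is correct and follows exactly the approach the paper intends: part (1) reduces to positive definiteness of $K^{\vee}D^{\vee}$ (Proposition~\ref{prop:K and K check}), and part (2) is the saddle-point/Laplace analysis for Nahm-type sums that the paper delegates to the reference~\cite{VlasenkoZwegers}. Your write-up simply unpacks that citation---the Euler--Maclaurin estimate for $\log(q^d;q^d)_n$, the identification of the concave potential $\tilde F_\alpha$, and the evaluation $\tilde F_\alpha(y^*)=\tfrac{\pi^2}{6\delta}c_\alpha$ are precisely the ingredients of the Vlasenko--Zwegers argument specialized to the present quadratic form.
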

\begin{proof}
	(1) follows from the fact that $K^\vee D^\vee$ is a positive definite symmetric matrix (Proposition \ref{prop:K and K check}).
	(2) follows from the asymptotic analysis in~\cite{VlasenkoZwegers}.
\end{proof}

Let $\Gamma \subseteq \mathrm{SL}(2,\ZZ)$ be a congruence subgroup.
We say that a holomorphic function $f(\tau)$ on the upper half plane is a \emph{modular function} with respect to $\Gamma$ if $f(\tau) = f(\tfrac{a \tau +b}{c\tau +d })$ for any $\tau \in \mathbb{H}$ and
$\left[ \begin{smallmatrix}
a & b \\ c & d
\end{smallmatrix}\right] \in \Gamma$,
and $f(\tau)$ is meromorphic at each cusp of $\Gamma$.

\begin{conjecture}\label{conj:q-series}
	Let $\alpha=(A_+,A_-,D)$ be a Cartan-like T-datum of finite type.
	Then there exists a congruence subgroup $\Gamma \subseteq \mathrm{SL}(2,\ZZ)$ such that
	$q^{-c_{\alpha}/24} \mathcal{Z}_{\alpha,\sigma} (q)$ with $q=e^{2 \pi i \tau}$ is a modular function with respect to $\Gamma$ for any $\sigma \in S_{\alpha}$,
	where $c_{\alpha}$ is the rational number in Theorem \ref{theorem:dilog rational}.
\end{conjecture}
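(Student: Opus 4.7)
The plan is to reduce modularity of $\mathcal{Z}_{\alpha,\sigma}(q)$ to known modularity results for fermionic character formulas, proceeding first through an explicit base case and then by matching the partition $q$-series of structural families of Cartan-like T-data to known characters and branching functions of rational conformal field theories.

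For the base case $r=1$, Theorem~\ref{theorem:T-datum of size 1} combined with the simultaneous positivity of Theorem~\ref{intro:simultaneous positivity} (cf.\ Example~\ref{example:simultaneous positivity}(1)) gives an essentially complete list of Cartan-like T-data of finite type. For each entry of this list I would identify $\mathcal{Z}_{\alpha,\sigma}(q)$ with one side of a Rogers--Ramanujan or Andrews--Gordon type identity~\cite{Andrews74}, whose other side is an infinite eta-like product. Modularity of $q^{-c_\alpha/24}\mathcal{Z}_{\alpha,\sigma}(q)$ then follows from the standard transformation properties of the Dedekind eta function, and a careful tracking of quadratic forms in the exponent pins down the congruence subgroup $\Gamma$. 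This is the approach already carried out in Theorem~\ref{theorem:modularity r=1}.

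For $r \geq 2$, I would proceed by classifying Cartan-like T-data of finite type into structural families and appealing to known character formulas in each. For T-data arising from pairs of commuting Cartan matrices (Section~\ref{section:commuting Cartan}) and tensor-product constructions (Example~\ref{example:tensor}), I would match $\mathcal{Z}_{\alpha,\sigma}(q)$ against the fermionic formulas of Kirillov, Hatayama--Kuniba--Okado--Takagi--Tsuboi \cite{HKOTT}, and Kedem--Klassen--McCoy--Melzer, whose bosonic counterparts are known string functions or branching functions for affine Lie algebras; modularity is then a consequence of the Kac--Peterson theorem, and the decomposition into sectors indexed by $\sigma \in S_\alpha$ matches the decomposition of an affine lattice modulo the root lattice. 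For T-data arising from level-restricted T-systems for quantum affinizations (Section~\ref{section:affinization}), the analogous identification is with characters of restricted solid-on-solid models, and modularity again follows from known modular invariance of coset model characters. Throughout, the rational number $c_\alpha$ of Theorem~\ref{theorem:dilog rational} plays the role of the central charge of the corresponding CFT: its rationality, which follows from Nakanishi's dilogarithm identities~\cite{Nakb} via the sign-coherence property of cluster algebras \cite{GHKK}, is precisely the statement that the Nahm-group element $\sum_a d_a [f_a]$ attached to the critical solution of \eqref{intro:nahm eq} is torsion in the Bloch group $\mathcal{B}(\overline{\QQ})$, which is the expected modularity criterion in Nahm's original framework.

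The main obstacle is twofold. First, no classification of Cartan-like T-data of finite type is currently available: beyond the infinite families listed in Section~\ref{section:examples}, there are sporadic examples (as in Tables~\ref{table:T data r=2} and~\ref{table:T-data r=3}) whose origin from a known CFT structure is not transparent, so a case-by-case strategy cannot cover all cases without a prior structure theorem. Second, the direct implication from torsion of the Bloch-group element to modularity of the associated Nahm sum is the hard half of Nahm's conjecture and remains open in general. A structural proof of Conjecture~\ref{conj:q-series} would therefore require either (i) a classification theorem identifying every finite-type Cartan-like T-datum with one of the known modular families, for which Theorem~\ref{intro:simultaneous positivity} is likely the key input, or (ii) a proof of the relevant case of Nahm's conjecture within the cluster-algebraic framework, perhaps by using the periodic cluster pattern produced by Theorem~\ref{intro:main theorem} to construct a vertex operator algebra whose characters realize the $\mathcal{Z}_{\alpha,\sigma}(q)$. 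In the absence of either, the practical approach is to establish the conjecture for each infinite family of Section~\ref{section:examples} and to verify the sporadic examples from \cite{Zagier,HKOTT,CherednikFeigin} by matching with Zagier's tables and known $q$-series identities, as discussed in Examples~\ref{example:modular Zagier's lists}--\ref{example:modular nil-daha}.
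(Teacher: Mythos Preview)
The statement you are attempting to prove is labeled as a \emph{Conjecture} in the paper, not a theorem; the paper does not claim a proof of it. What the paper does is prove the case $r=1$ (Theorem~\ref{theorem:modularity r=1}) via Rogers--Ramanujan type identities and then present supporting evidence for $r\geq 2$ through the families in Examples~\ref{example:modular Zagier's lists}--\ref{example:modular nil-daha}. Your proposal mirrors this structure exactly: the $r=1$ base case by product identities, and the higher-rank cases by matching to known fermionic formulas whose modularity follows from Kac--Peterson or Cherednik--Feigin.

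Where your proposal is honest and correct is in its final paragraph: you identify precisely why this is not a proof. The two obstacles you name---the absence of a classification of finite-type Cartan-like T-data, and the open status of the hard direction of Nahm's conjecture---are genuine and are the reason the paper leaves the statement as a conjecture. Your strategy would establish the conjecture for every T-datum that belongs to one of the known families, but the sporadic examples in Tables~\ref{table:T data r=2} and~\ref{table:T-data r=3} already show that the known families do not exhaust all cases, and nothing in your outline (or in the paper) handles an arbitrary finite-type Cartan-like T-datum. So your proposal is a faithful account of the current state of the problem rather than a proof, and matches what the paper itself offers.
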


\begin{remark}
	For any solution $(f_a)_{a \in [1,r]} \in \overline{\QQ}^r$ of \eqref{eq:f nahm eq}, we can define the element
	\begin{align}\label{eq:bloch element}
		\sum_{a=1}^r d_a [ f_a ] \in \mathcal{B}(F),
	\end{align}
	where $F$ is a number field containing the solution, and $\mathcal{B}(F)$ is the Bloch group of $F$.
	By the result in \cite[Section 6]{Nakb},
	we see that the element \eqref{eq:bloch element} is a torsion (see \cite{Lee}).
	Conjecture \ref{conj:q-series} can be regarded as a version of Nahm's Conjecture~\cite{Nahm,Zagier}, which relates torsions in Bloch groups and the modularity of $q$-hypergeometric series.
\end{remark}

\begin{theorem}\label{theorem:modularity r=1}
	Conjecture \ref{conj:q-series} holds for $r=1$.
\end{theorem}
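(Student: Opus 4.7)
The plan is a case analysis using the classification of size-$1$ Cartan-like finite-type T-data. By Example \ref{example:simultaneous positivity}(1), every such T-datum with $D=[d]$ falls into one of three families, parameterized by positive integers $p, d$: (a) $A_+ = A_- = 1+z^{2p}$, giving $K=1$, $c_{\alpha}=d/2$, and $S_{\alpha} = \ZZ/2\ZZ$; (b) $A_+ = 1-z^p+z^{2p}$, $A_- = 1+z^{2p}$, giving $K=2$, $c_{\alpha}=2d/5$, and $S_{\alpha}$ trivial; (c) $A_+ = 1+z^{2p}$, $A_- = 1-z^p+z^{2p}$, giving $K=1/2$, $c_{\alpha}=3d/5$, and $S_{\alpha} = \ZZ/2\ZZ$. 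All three are Cartan-like. Crucially, $\mathcal{Z}_{\alpha,\sigma}(q)$ depends only on $K$ and $d$ (not on $p$), and the $d$-dependence amounts to the substitution $q \mapsto q^d$, namely $\mathcal{Z}_{\alpha_d,\sigma}(q) = \mathcal{Z}_{\alpha_1,\sigma}(q^d)$ with $c_{\alpha_d} = d\, c_{\alpha_1}$. Since $q \mapsto q^d$ preserves modularity (for a smaller congruence subgroup), I reduce to the three cases with $d=p=1$.

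For case (b), $\mathcal{Z}_{\alpha}(q) = \sum_{m \geq 0} q^{m^2}/(q)_m = G(q)$ is the first Rogers--Ramanujan function, so the classical Rogers--Ramanujan product identity shows that $q^{-1/60}G(q)$ is a modular function, matching $-c_\alpha/24 = -1/60$. For case (a), Euler's identity $\sum_{m \geq 0} q^{m(m-1)/2} z^m / (q)_m = (-z;q)_\infty$ applied with $z = \pm q^{1/2}$ gives
\[
\sum_{m \geq 0} \frac{q^{m^2/2}}{(q)_m} = (-q^{1/2};q)_\infty, \qquad \sum_{m \geq 0} \frac{(-1)^m q^{m^2/2}}{(q)_m} = (q^{1/2};q)_\infty.
\]
Both products are standard eta quotients (modular after multiplication by $q^{-1/48} = q^{-c_\alpha/24}$), so the half-sum and half-difference exhibit $\mathcal{Z}_{\alpha,\mathrm{even}}$ and $\mathcal{Z}_{\alpha,\mathrm{odd}}$ as modular linear combinations.

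For case (c), the total sum is $\sum_{m \geq 0} q^{m^2/4}/(q)_m$, the Nahm sum with $A = 1/2$, $B = 0$; together with its sign-twisted counterpart $\sum (-1)^m q^{m^2/4}/(q)_m$, these appear in Zagier's list of modular Nahm sums for $r = 1$ and admit explicit eta-quotient / Jacobi theta expressions. From these two modular identities one recovers $\mathcal{Z}_{\alpha,\mathrm{even}}(q) = \sum_k q^{k^2}/(q)_{2k}$ and $\mathcal{Z}_{\alpha,\mathrm{odd}}(q) = q^{1/4} \sum_k q^{k^2+k}/(q)_{2k+1}$ as modular functions after multiplication by $q^{-1/40} = q^{-c_\alpha/24}$. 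The hard part will be case (c): unlike case (a), the sum $\sum q^{m^2/4}/(q)_m$ is not a direct consequence of Euler's identity, and one must invoke an appropriate Rogers--Ramanujan-type identity from Slater's list (or Zagier's Nahm list) to obtain the required theta-function expression. Cases (a) and (b) follow directly from Euler and Rogers--Ramanujan, respectively.
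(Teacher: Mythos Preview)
Your case structure, the reduction to $d=p=1$, and your treatment of case (b) match the paper exactly. For case (a) (the $K=1$ family), your approach via Euler's identity is genuinely different from and more elementary than the paper's: the paper instead invokes two Slater-type identities of modulus $16$ for $\sum_{n\ge 0} q^{2n^2}/(q)_{2n}$ and $\sum_{n\ge 0} q^{2n^2+2n}/(q)_{2n+1}$, and then the quintuple product identity, to write each partition $q$-series individually as a ratio $\theta/\eta$. Your observation that $\sum_m (\pm 1)^m q^{m^2/2}/(q)_m = (\mp q^{1/2};q)_\infty$ are, up to $q^{-1/48}$, Weber modular functions---so that the half-sum and half-difference extract the even and odd parts as modular functions---is a cleaner route to the same conclusion and avoids both Slater's list and the quintuple product for this case.

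For case (c), however, your claim that the sign-twisted sum $\sum_m (-1)^m q^{m^2/4}/(q)_m$ ``appears in Zagier's list of modular Nahm sums for $r=1$'' is not accurate as stated: Zagier's rank-one list contains only genuine Nahm sums $f_{A,B,C}$, and there is no Euler-type product for this generating series, so the even/odd decomposition trick from case (a) does not go through directly. Your acknowledged fallback to Slater's list is the right move and is exactly what the paper does: it uses two modulus-$20$ identities for $\sum_n q^{n^2}/(q)_{2n}$ and $\sum_n q^{n^2+n}/(q)_{2n+1}$ together with the quintuple product identity to obtain the $\theta/\eta$ form and hence modularity of $q^{-1/40}\mathcal{Z}_{\alpha_3,\sigma}(q)$ for both $\sigma$.
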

\begin{proof}
	From (1) in Example \ref{example:simultaneous positivity},
	it is sufficient to prove the following three cases:
	\begin{align*}
		\alpha_1&=\bigl(
			1+z^{2p},
			1+z^{2p},
			d
		\bigr),\\
		\alpha_2&=\bigl(	
		1-z^p+z^{2p},
		1+z^{2p},
		d
		\bigr),\\
		\alpha_3&=\bigl(	
		1+z^{2p},
		1-z^p+z^{2p},
		d
		\bigr).
	\end{align*}
	For these three cases,
	we have $S_{\alpha_1} \cong \ZZ / 2\ZZ$, $S_{\alpha_2} \cong 0$, and $S_{\alpha_3} \cong \ZZ / 2\ZZ$.
	We also have $L_{\alpha_1} = d/2$, $L_{\alpha_2} = 2d/5$, and $L_{\alpha_3} = 3d/5$.
	
	We first consider $\alpha_2$ because its proof is the simplest and follows from a well-known discussion (e.g., see~\cite[Chapter II, Section 3]{Zagier}).
	In this case, the partition $q$-series is given by
	\begin{align*}
		\mathcal{Z}_{\alpha_2,0}(q) = \sum_{n \in \ZZ_{\geq 0}} \frac{q^{dn^2}}{(q^d)_n}.
	\end{align*}
	Using the Rogers-Ramanujan identity
	\begin{align*}
		\sum_{n=0}^\infty \frac{q^{n^2}}{(q)_{n}} = 
		\prod_{\substack{n>0 \\ n \equiv \pm 1 \!\!\! \pmod{5}}} \frac{1}{1-q^{n}},
	\end{align*}
	together with the Jacobi triple product identity,
	we have
	\begin{align}\label{eq:Z alpha 2}
		q^{-d/60} \mathcal{Z}_{\gamma}(q) = \frac{1}{2 \eta(q^d)}
		\sum_{n \in \ZZ} a(n) q^{dn^2/40},
	\end{align}
	where $\eta(q) =q^{1/24} \prod_{n=1}^{\infty} (1-q^n)$ is the Dedekind eta, and
	\begin{align*}
		a(n) =
		\begin{cases}
		1 & \text{if $n \equiv \pm 1 \pmod{20}$},\\
		-1 & \text{if $n \equiv \pm 9 \pmod{20}$},\\
		0 & \text{otherwise}.
		\end{cases}
	\end{align*}
	Since the right-hand side in \eqref{eq:Z alpha 2} is the ratio of modular forms of weight $1/2$, it is a modular function.
	Thus we obtain the assertion for $\alpha_2$.

	We now prove the assertion for $\alpha_1$ and $\alpha_3$.
	The partition $q$-series in these cases are given by
	\begin{align*}
		\begin{array}{ll}
			{\displaystyle
			\mathcal{Z}_{\alpha_3,0}(q) = \sum_{n \in \ZZ_{\geq 0}}
			\frac{q^{dn^2}}{(q^d)_{2n}}},
			&
			{\displaystyle
			\mathcal{Z}_{\alpha_3,1}(q) = \sum_{n \in \ZZ_{\geq 0}}
			\frac{q^{d(n^2+n+\frac{1}{4})}}{(q^d)_{2n+1}}},\\
			{\displaystyle
			\mathcal{Z}_{\alpha_1,0}(q) = \sum_{n \in \ZZ_{\geq 0}}
			\frac{q^{2dn^2}}{(q^d)_{2n}}},
			&
			{\displaystyle
			\mathcal{Z}_{\alpha_1,1}(q) = \sum_{n \in \ZZ_{\geq 0}}
			\frac{q^{d(2n^2+2n+\frac{1}{2})}}{(q^d)_{2n+1}}}.
		\end{array}
	\end{align*}
	To prove the assertion for $\alpha_1$ and $\alpha_3$,
	we use the following Rogers–Ramanujan type identities (see~\cite[S. 98, S. 94, S. 83, and S. 86]{mc2008rogers} and references therein):
	\begin{align}
		\label{eq:RR mod 20 1}
		\sum_{n=0}^\infty \frac{q^{n^2}}{(q)_{2n}} &= 
		\prod_{\substack{n>0 \\ n \equiv \pm 1,\pm 3,\pm 4,\pm 5, \pm 7, \pm 9 \!\!\! \pmod{20}}} \frac{1}{1-q^{n}},\\
		\label{eq:RR mod 20 2}
		\sum_{n=0}^\infty \frac{q^{n^2+n}}{(q)_{2n+1}} &= 
		\prod_{\substack{n>0 \\ n \equiv \pm 1,\pm 2,\pm 5,\pm 6, \pm 8, \pm 9 \!\!\! \pmod{20}}} \frac{1}{1-q^{n}},\\
		\label{eq:RR mod 16 1}
		\sum_{n=0}^\infty \frac{q^{2n^2}}{(q)_{2n}} &= 
		\prod_{\substack{n>0 \\ n \equiv \pm 2,\pm 3,\pm 4, \pm 5 \!\!\! \pmod{16}}} \frac{1}{1-q^{n}},\\
		\label{eq:RR mod 16 2}
		\sum_{n=0}^\infty \frac{q^{2n^2+2n}}{(q)_{2n+1}} &= 
		\prod_{\substack{n>0 \\ n \equiv \pm 1,\pm 4,\pm 6, \pm 7 \!\!\! \pmod{16}}} \frac{1}{1-q^{n}}.
	\end{align}
	Using \eqref{eq:RR mod 20 1} and \eqref{eq:RR mod 20 2} together with the quintuple product identity, we have
	\begin{align*}
		q^{-d/40} \mathcal{Z}_{\alpha_3,\sigma}(q) = \frac{1}{2\eta(q^d)}
		\sum_{n \in \ZZ} a_{3,\sigma}(n) q^{dn^2/60},
	\end{align*}
	where
	\begin{align*}
		a_{3,0}(n) &=
		\begin{cases}
			1 & \text{if $n \equiv \pm 1 \pmod{30}$},\\
			-1 & \text{if $n \equiv \pm 11 \pmod{30}$},\\
			0 & \text{otherwise},
		\end{cases}\\
		a_{3,1}(n) &=
		\begin{cases}
			1 & \text{if $n \equiv \pm 4 \pmod{30}$},\\
			-1 & \text{if $n \equiv \pm 14 \pmod{30}$},\\
			0 & \text{otherwise}.
		\end{cases}
	\end{align*}
	Thus we obtain the assertion for $\alpha_3$.
	Similarly, using \eqref{eq:RR mod 16 1} and \eqref{eq:RR mod 16 2} together with the quintuple product identity, we have
	\begin{align*}
	q^{-d/48} \mathcal{Z}_{\alpha_1,\sigma} (q) = 
	\frac{1}{2\eta(q^d)} \sum_{n \in \ZZ} a_{1,\sigma}(n) q^{dn^2/48},
	\end{align*}
	where
	\begin{align*}
		a_{1,0}(n)&=
		\begin{cases}
			1 & \text{if $n \equiv \pm 1 \pmod{24}$},\\
			-1 & \text{if $n \equiv \pm 7 \pmod{24}$},\\
			0 & \text{otherwise},
		\end{cases}\\
		a_{1,1}(n)&=
		\begin{cases}
			1 & \text{if $n \equiv \pm 5 \pmod{24}$},\\
			-1 & \text{if $n \equiv \pm 11 \pmod{24}$},\\
			0 & \text{otherwise}.
		\end{cases}
	\end{align*}
	Thus we obtain the assertion for $\alpha_1$.
\end{proof}

We give some examples supporting Conjecture \ref{conj:q-series} for $r \geq 2$.

\begin{example}[Zagier's lists]
	\label{example:modular Zagier's lists}
	Any $2 \times 2$ or $3 \times 3$ matrix $K$ for the Cartan-like T-data in Table \ref{table:T data r=2} and \ref{table:T-data r=3} appears in lists of Zagier~\cite[Table 2 and 3]{Zagier} as an example where 
	\begin{align*}
	\sum_{m \in (\ZZ_{\geq 0})^r} \frac{q^{\frac{1}{2} m^{\mathsf{T}}K m + B^{\mathsf{T}} m + C}}{\prod_{a=1}^r (q)_{m_a}}
	\end{align*}
	appears to be a modular function for some $B \in \QQ^r$ and $C \in \QQ$.
	We can see that all sporadic examples with $B=0$ in his lists are obtained from $(A_+,A_-)$ or $(A_-,A_+)$ in our Table \ref{table:T data r=2} and \ref{table:T-data r=3}.
\end{example}

\begin{example}[Andrew-Gordon identity]
	\label{example:modular Andrew-Gordon}
	Let $\alpha$ be the Cartan-like T-datum associated with the tadpole type $T_r$ (see Example \ref{example:tadpole}).
	It is of finite type since its T-system can be obtained from the T-system associated with the bipartite belt of type $A_{2r}$, which is periodic, by an identification of variables.
	Since $\det \mathring{A}_+ = 1$,
	we have $S_{\alpha}=0$.
	By using Theorem 6.1 in~\cite{Nakb},
	we see that the rational number $c_{\alpha}$ is given by $c_{\alpha} = 1 - 3/(2r + 3)$.
	The partition $q$-series of $\alpha$ is given by
	\begin{align*}
		\mathcal{Z}_{\alpha,0}(q) = \sum_{n \in (\ZZ_{\geq 0})^r}
		\frac{q^{N_1^2 + \dots + N_r^2}}{ (q)_{n_1}  \cdots  (q)_{n_r}},
	\end{align*}
	where $N_a = n_a + \dots + n_r$.
	Using the Andrew-Gordon identity~\cite{Andrews74}
	\begin{align*}
		\sum_{n \in (\ZZ_{\geq 0})^r}
		\frac{q^{N_1^2 + \dots + N_r^2}}{ (q)_{n_1} \cdots (q)_{n_r}}
		= \prod_{\substack{n>0 \\ n \not\equiv 0,\pm (r+1) \!\!\! \pmod{2r+3}}} \frac{1}{1-q^n},
	\end{align*}
	together with the Jacobi triple product identity, we have
	\begin{align*}
		q^{-c_{\alpha}/24} \mathcal{Z}_{\alpha,0}(q) = \frac{1}{2 \eta(q)} \sum_{n \in \ZZ} a(n) q^{n^2 / (8(2r+3)) },
	\end{align*}
	where
	\begin{align*}
		a(n)=
		\begin{cases}
		1 & \text{if $n \equiv \pm 1 \pmod{4(2r+3)}$},\\
		-1 & \text{if $n \equiv \pm (4r+5) \pmod{4(2r+3)}$},\\
		0 & \text{otherwise}.
		\end{cases}
	\end{align*}
	This implies that $q^{-c_{\alpha}/24}\mathcal{Z}_{\alpha,0}(q)$ is a modular function.
\end{example}

\begin{example}[Fermionic formulas]
	\label{example:modular fermionic formula}
	For any quantum affine algebra $U_q(\hat{\mathfrak{g}})$ and positive integer with $\level \geq 2$,
	the level $\level$ restricted T-system and Y-system for $U_q(\hat{\mathfrak{g}})$ are defined (see~\cite{KNS2011}).
	Reading the exponents in the T-system and Y-system in~\cite[Section 2]{KNS2011},
	we can obtain the Cartan-like T-datum $\alpha(U_q(\hat{\mathfrak{g}}),\level)$,
	where we replace a normalization of the parameter $u$ appropriately so that $u \in \ZZ$ and the T-datum satisfies (\ref{item:N1}), and we also discard the parameter $\Omega$ in~\cite[Section 2.4]{KNS2011} for twisted $\hat{\mathfrak{g}}$.
	Explicitly, the T-datum $\alpha(U_q(\hat{\mathfrak{g}}),\level)$ is given in Table \ref{table:quantum affine T-datum},
	where in the first line we denote by $\alpha(X_n,\level)$ the T-datum in Theorem \ref{theorem:T-datum from cartan} associated with the Cartan matrix of type $X_n$ and the integer $\level$,
	and in the remaining lines we denote by $\alpha(Y \otimes Z)$ the T-datum obtained by the tensor product construction in Example \ref{example:tensor} from the Cartan matrices of types $Y$ and $Z$.
	\begin{table}
		\begin{align*}
			\begin{array}{l|l}
				\text{type of $\hat{\mathfrak{g}}$} &
				\text{T-datum $\alpha(U_q(\hat{\mathfrak{g}}),\level)$}\\ \hline
				X_{n}^{(1)} & \alpha(X_n , \level) \\
				A_{2n-2}^{(2)}  & \alpha(A_{\level-1} \otimes C_n) \\
				A_{2n}^{(2)} & \alpha(A_{\level-1} \otimes T_n) \\
				D_{n+1}^{(2)} & \alpha(A_{\level-1} \otimes B_n) \\
				E_6^{(2)} & \alpha(A_{\level-1} \otimes F_4) \\
				D_4^{(3)} & \alpha(A_{\level-1} \otimes G_2)
			\end{array}
		\end{align*}
		\caption{T-data associated with quantum affine algebras.}
		\label{table:quantum affine T-datum}
	\end{table}
	The T-datum $\alpha(U_q(\hat{\mathfrak{g}}),\level)$ is of finite type for any $U_q(\hat{\mathfrak{g}})$ and $\level$ by the periodicity results in~\cite{Keller,IIKNS,IIKKNa,IIKKNb}.
	The partition $q$-series of $\alpha(U_q(\hat{\mathfrak{g}}),\level)$ divided by a product of the Dedekind eta coincide with the $q$-series version of the fermionic formulas defined in~\cite[Section 5]{HKOTT}.
	They conjectured that these $q$-series coincide with string functions of integrable highest modules of $\hat{\mathfrak{g}}$~\cite[Conjecutre 5.3]{HKOTT}.
	If this conjecture holds, Conjecture \ref{conj:q-series} for $\alpha(U_q(\hat{\mathfrak{g}}),\level)$ follows from the results by Kac and Peterson~\cite{KacPet}.
\end{example}

\begin{example}[$q$-series from Nil-DAHA]
	\label{example:modular nil-daha}
	Let $X_n$ be the type of a finite type Cartan matrix,
	and $p$ be an integer with $p \geq 2$.
	Consider the T-datum $\alpha(X_n \otimes A_{p-1})$,
	where the meaning of this notation is the same as that in Example \ref{example:modular fermionic formula}.
	This is of finite type by~\cite{Keller}.
	Then the partition $q$-series of $\alpha(X_n \otimes A_{p-1})$
	are special cases of the $q$-series studied by Cherednik and Feigin in the theory of Fourier transform of nilpotent double affine Hecke algebras~\cite[Corollary 1.3]{CherednikFeigin}.
	In fact, they proved that their $q$-series are modular functions~\cite[Theorem 2.3]{CherednikFeigin}.
\end{example}

\bibliographystyle{amsplain} 
\bibliography{../bib/yyyy}
\end{document}